\renewcommand{\Im}{\operatorname{Im}\nolimits}
\renewcommand{\mod}{\operatorname{mod}\nolimits}
\renewcommand{\top}{\operatorname{top}\nolimits}
\newcommand{\SL}{\operatorname{SL}\nolimits}
\newcommand{\soc}{\operatorname{soc}\nolimits}
\newcommand{\Cok}{\operatorname{Cok}\nolimits}
\newcommand{\add}{\operatorname{add}\nolimits}
\newcommand{\simple}{\operatorname{sim}\nolimits}
\newcommand{\Hom}{\operatorname{Hom}\nolimits}
\newcommand{\RHom}{\mathbf{R}\strut\kern-.2em\operatorname{Hom}\nolimits}
\newcommand{\op}{\operatorname{op}\nolimits}
\newcommand{\pr}{\operatorname{pr}\nolimits}
\newcommand{\inj}{\operatorname{in}\nolimits}
\newcommand{\CM}{\operatorname{CM}\nolimits}
\newcommand{\id}{\operatorname{id}\nolimits}
\newcommand{\pd}{\operatorname{pd}\nolimits}
\newcommand{\Ker}{\operatorname{Ker}\nolimits}
\newcommand{\depth}{\operatorname{depth}\nolimits}
\newcommand{\gl}{\operatorname{gl.dim}\nolimits}
\newcommand{\dom}{\operatorname{dom.dim}\nolimits}
\newcommand{\Ext}{\operatorname{Ext}\nolimits}
\newcommand{\End}{\operatorname{End}\nolimits}
\newcommand{\Tr}{\operatorname{Tr}\nolimits}
\newcommand{\ul}{\underline}
\newtheorem{theorem}{Theorem}[section]
\newtheorem{corollary}[theorem]{Corollary}
\newtheorem{definition}[theorem]{Definition}
\newtheorem{lemma}[theorem]{Lemma}
\newtheorem{proposition}[theorem]{Proposition}
\newtheorem{example}[theorem]{Example}
\newcommand{\F}{{\mathbf{F}}}
\newcommand{\G}{{\mathbf{G}}}
\newcommand{\PPP}{{\mathbf{P}}}
\newcommand{\Z}{{\mathbf{Z}}}
\newcommand{\SSS}{{\mathbf{S}}}
\newcommand{\BB}{{\mathcal B}}
\newcommand{\DD}{{\mathcal D}}
\newcommand{\CC}{{\mathcal C}}
\newcommand{\KK}{{\mathcal K}}
\newcommand{\MM}{{\mathcal M}}
\newcommand{\NN}{{\mathcal N}}
\newcommand{\PP}{{\mathcal P}}
\newcommand{\II}{{\mathcal I}}
\newcommand{\UU}{{\mathcal U}}
\newcommand{\XX}{{\mathcal X}}
\newcommand{\GG}{{\mathcal G}}
\newcommand{\HH}{{\mathcal H}}
\begin{document}

\title{Cluster tilting for higher Auslander algebras}

\author{Osamu Iyama}
\address{Graduate School of Mathematics, Nagoya University,
Chikusa-ku, Nagoya, 464-8602, Japan}
\email{iyama@math.nagoya-u.ac.jp}

\begin{abstract}
The concept of cluster tilting gives a higher analogue of classical Auslander
correspondence between representation-finite algebras and Auslander algebras.
The $n$-Auslander-Reiten translation functor $\tau_n$ plays an important
role in the study of $n$-cluster tilting subcategories. We study
the category $\MM_n$ of preinjective-like modules obtained by applying
$\tau_n$ to injective modules repeatedly.
We call a finite dimensional algebra $\Lambda$ \emph{$n$-complete} if
$\MM_n=\add M$ for an $n$-cluster tilting object $M$.
Our main result asserts that the endomorphism algebra
$\End_\Lambda(M)$ is $(n+1)$-complete.
This gives an inductive construction of $n$-complete algebras.
For example, any representation-finite hereditary algebra
$\Lambda^{(1)}$ is $1$-complete. Hence the Auslander algebra $\Lambda^{(2)}$ of
$\Lambda^{(1)}$ is $2$-complete. Moreover, for any $n\ge1$, we have an
$n$-complete algebra $\Lambda^{(n)}$ which has an $n$-cluster tilting
object $M^{(n)}$ such that $\Lambda^{(n+1)}=\End_{\Lambda^{(n)}}(M^{(n)})$.
We give the presentation of $\Lambda^{(n)}$ by a quiver with relations.
We apply our results to construct $n$-cluster tilting subcategories of
derived categories of $n$-complete algebras.
\end{abstract}
\maketitle

\tableofcontents
The concept of cluster tilting \cite{BMRRT} is fundamental to
categorify Fomin-Zelevinsky cluster algebras \cite{FZ}, and
a fruitful theory has been developped in recent years (see survey papers
\cite{BuM,Re,Rin,K}).
It also played an important role from the viewpoint of
higher analogue of Auslander-Reiten theory \cite{I3,I4,I5} in the study of
rigid Cohen-Macaulay modules, Calabi-Yau algebras and categories,
and non-commutative crepant resolutions
\cite{BIRSc,BIRSm,BIKR,GLS1,GLS2,GLS3,IR,IY,KR1,KR2,KMV,KZ,V1,V2}.
There are a lot of recent work on higher cluster tilting
\cite{ABST,BaM,BT,EH,HZ1,HZ2,HZ3,JH1,JH2,L,T,W,Z,ZZ}.
In this paper we shall present a
systematic method to construct a series of finite dimensional algebras
$\Lambda$ with $n$-cluster tilting objects.

In the representation theory of a representation-finite
finite-dimensional algebra $\Lambda$ with an additive generator $M$ in
$\mod\Lambda$, the endomorphism algebra $\Gamma:=\End_\Lambda(M)$
called the Auslander algebra gives a prototype of the use of
functor categories in Auslander-Reiten theory.
The Auslander algebra $\Gamma$ keeps all information of the category
$\mod\Lambda$ in its algebraic structure, and it is a prominent result
due to Auslander \cite{A1,ARS} that Auslander algebras are
characterized by `regularity of dimension two'
\[\gl\Gamma\le 2\le\dom\Gamma.\]
Since almost split sequences in $\mod\Lambda$ correspond to minimal
projective resolutions of simple $\Gamma$-modules, the
Auslander-Reiten quiver of $\Lambda$ coincides with the quiver of
$\Gamma^{\op}$.
As a result the quiver of $\Gamma$ has the structure of translation
quivers. Moreover, the structure theory due to Riedtmann \cite{Rie},
Bongartz-Gabriel \cite{BG}, Igusa-Todorov \cite{IT1,IT2},
Bautista-Gabriel-Roiter-Salmeron \cite{BGRS},...
realizes Auslander algebras as factor algebras of path algebras
of translation quivers modulo mesh relations.
They can be regarded as an analogue of the commutative relation
$xy=yx$ in the formal power series ring $S_2:=k[[x,y]]$ of two
variables since the mesh category of the translation quiver
\[\xymatrix@C=0.5cm@R0.1cm{
&&&{\scriptstyle \cdots}&&&&{\scriptstyle \cdots}&&&&\\
&&\ \ \ar[dr]&&\ \ \ar[dr]&&{\scriptstyle x^2}\ar[dr]&&{\scriptstyle x^3y}&&\\
{\scriptstyle \cdots}&\ \ \ar[dr]\ar[ur]&&\ \ \ar[dr]\ar[ur]&&{\scriptstyle x}\ar[dr]\ar[ur]&&{\scriptstyle x^2y}\ar[dr]\ar[ur]&&{\scriptstyle \cdots}\\
&&\ \ \ar[dr]\ar[ur]&&{\scriptstyle 1}\ar[dr]\ar[ur]&&{\scriptstyle xy}\ar[dr]\ar[ur]&&{\scriptstyle x^2y^2}&&\\
{\scriptstyle \cdots}&\ \ \ar[dr]\ar[ur]&&\ \ \ar[dr]\ar[ur]&&{\scriptstyle y}\ar[dr]\ar[ur]&&{\scriptstyle xy^2}\ar[dr]\ar[ur]&&{\scriptstyle \cdots}\\
&&\ \ \ar[ur]&&\ \ \ar[ur]&&{\scriptstyle y^2}\ar[ur]&&{\scriptstyle xy^3}&&\\
&&&{\scriptstyle \cdots}&&&&{\scriptstyle \cdots}&&&&
}\]
gives a universal Galois covering of $S_2$ in Gabriel's sense \cite{G}.
This is a basic pattern of Auslander-Reiten quivers,
so it is suggestive in the representation theory to regard their
module categories as a certain analogy of $S_2$.
A typical example is given by the category $\CM(\Lambda)$ of
Cohen-Macaulay modules over a quotient singularity $\Lambda:=S_2^G$
corresponding to a finite subgroup $G$ of $\SL(2,k)$ \cite{A3,AR2,RV}.
In this case $\CM(\Lambda)$ has an additive generator $S_2$, and the
Auslander algebra $\Gamma:=\End_{\Lambda}(S_2)$ is isomorphic to
the skew group algebra $S_2*G$ which is regular in the sense that
$\gl\Gamma=2=\depth\Gamma$.
The Koszul complex
$0\to S\stackrel{{x\choose y}}{\longrightarrow}S^2\stackrel{(y,
  -x)}{\longrightarrow}S\to k\to0$
of $S$ induces almost split sequences in $\CM(\Lambda)$.
Hence the Auslander-Reiten quiver of $\Lambda$ is given by the McKay
quiver of $G$, and forms the translation quiver $\Z\Delta/\tau$
associated to an extended Dynkin diagram $\Delta$.

It is natural to consider a higher dimensional analogue of this
classical theory, and $n$-cluster tilting (=maximal
$(n-1)$-orthogonal) subcategories were introduced in \cite{I3,I4} in
this context.
The endomorphism algebra $\Gamma:=\End_\Lambda(M)$ of an $n$-cluster
tilting object $M$ in $\mod\Lambda$ is called an $n$-Auslander algebra,
and characterized by `regularity of dimension $n+1$'
\[\gl\Gamma\le n+1\le\dom\Gamma.\]
It is known that the category $\add M$ has $n$-almost split
sequences, which correspond to minimal projective resolutions of
simple $\Gamma$-modules. It is natural to regard $\Gamma$ as analogue
of the formal power series ring $S_{n+1}:=k[[x_1,\cdots,x_{n+1}]]$ of
$n+1$ variables.
Actually a typical example of $n$-cluster tilting objects is given by a
quotient singularity $\Lambda:=S_{n+1}^G$ corresponding to a finite
subgroup $G$ of $\SL(n+1,k)$ acting on $k^{n+1}\backslash\{0\}$
freely. In this case $\CM(\Lambda)$ has an
$n$-cluster tilting object $S_{n+1}$, and the $n$-Auslander algebra
$\Gamma:=\End_{\Lambda}(S_{n+1})$ is isomorphic to the skew group
algebra $S_{n+1}*G$. Again the Koszul complex of $S$ induces $n$-almost
split sequences in $\add S_{n+1}$.
So it is natural to hope in a certain generality that $n$-almost split
sequences in $\add M$ can be constructed as a certain analogue of Koszul
complexes of $S_{n+1}$, and that the basic pattern of quivers of
$n$-Auslander algebras is given by the Galois covering
\[\xymatrix@C=0.3cm@R0.1cm{
&&&{\scriptstyle \cdots}&&&&{\scriptstyle \cdots}&&&&\\
{\scriptstyle \cdots}&\ \ \ \ \ \ar[dr]&&\ \ \ \ \ \ar[dr]&&{\scriptstyle xz^2}\ar[dr]&&{\scriptstyle x^2yz^2}\ar[dr]&&{\scriptstyle \cdots}\\
&&\ \ \ \ \ \ar[dr]\ar[ur]&&{\scriptstyle z^2}\ar[dr]\ar[ur]&&{\scriptstyle xyz^2}\ar[dr]\ar[ur]&&{\scriptstyle x^2y^2z^2}&&\\
{\scriptstyle \cdots}&\ \ \ \ \ \ar[dr]\ar[ur]&&\ \ \ \ \ \ar[dr]\ar[ur]&&{\scriptstyle yz^2}\ar[dr]\ar[ur]&&{\scriptstyle xy^2z^2}\ar[dr]\ar[ur]&&{\scriptstyle \cdots}\\
&&\ \ \ \ \ \ar[ur]&&\ \ \ \ \ \ar[ur]&&{\scriptstyle y^2z^2}\ar[ur]&&{\scriptstyle xy^3z^2}&&\\
&{\scriptstyle \cdots}&\ \ \ \ \ \ar[dr]\ar[uuuul]&&\ \ \ \ \ \ar[dr]\ar[uuuul]&&{\scriptstyle xz}\ar[dr]\ar[uuuul]&&{\scriptstyle x^2yz}\ar[dr]\ar[uuuul]&&{\scriptstyle \cdots}\\
&&&\ \ \ \ \ \ar[dr]\ar[ur]\ar[uuuul]&&{\scriptstyle z}\ar[uuuul]\ar[dr]\ar[ur]&&{\scriptstyle xyz}\ar[dr]\ar[ur]\ar[uuuul]&&{\scriptstyle x^2y^2z}\ar[uuuul]&&&\mbox{(the case $n=2$),}\\
&{\scriptstyle \cdots}&\ \ \ \ \ \ar[dr]\ar[ur]\ar[uuuul]&&\ \ \ \ \ \ar[dr]\ar[ur]\ar[uuuul]&&{\scriptstyle yz}\ar[dr]\ar[ur]\ar[uuuul]&&{\scriptstyle xy^2z}\ar[dr]\ar[ur]\ar[uuuul]&&{\scriptstyle \cdots}\\
&&&\ \ \ \ \ \ar[ur]\ar[uuuul]&&\ \ \ \ \ \ar[ur]\ar[uuuul]&&{\scriptstyle y^2z}\ar[ur]\ar[uuuul]&&{\scriptstyle xy^3z}\ar[uuuul]&&\\
&&{\scriptstyle \cdots}&\ \ \ \ \ \ar[dr]\ar[uuuul]&&\ \ \ \ \ \ar[dr]\ar[uuuul]&&{\scriptstyle x}\ar[dr]\ar[uuuul]&&{\scriptstyle x^2y}\ar[dr]\ar[uuuul]&&{\scriptstyle \cdots}\\
&&&&\ \ \ \ \ \ar[dr]\ar[ur]\ar[uuuul]&&{\scriptstyle 1}\ar[uuuul]\ar[dr]\ar[ur]&&{\scriptstyle xy}\ar[dr]\ar[ur]\ar[uuuul]&&{\scriptstyle x^2y^2}\ar[uuuul]&&\\
&&{\scriptstyle \cdots}&\ \ \ \ \ \ar[dr]\ar[ur]\ar[uuuul]&&\ \ \ \ \ \ar[dr]\ar[ur]\ar[uuuul]&&{\scriptstyle y}\ar[dr]\ar[ur]\ar[uuuul]&&{\scriptstyle xy^2}\ar[dr]\ar[ur]\ar[uuuul]&&{\scriptstyle \cdots}\\
&&&&\ \ \ \ \ \ar[ur]\ar[uuuul]&&\ \ \ \ \ \ar[ur]\ar[uuuul]&&{\scriptstyle y^2}\ar[ur]\ar[uuuul]&&{\scriptstyle xy^3}\ar[uuuul]&&\\
&&&&&{\scriptstyle \cdots}&&&&{\scriptstyle \cdots}&&&&
}\]
 of $S_{n+1}$, which has the set $\Z^{n+1}$ of vertices.

The aim of this paper is to give a class of finite dimensional
algebras with
$n$-cluster tilting objects satisfying the desired properties above.
Our construction is inductive in the following sense:
We introduce a class of algebras $\Lambda$ called $n$-complete
algebras, which are
algebras with $n$-cluster tilting objects $M$ satisfying certain nice
properties. Our main result asserts that the endomorphism algebra
$\Gamma:=\End_\Lambda(M)$ is $(n+1)$-complete, hence $\Gamma$ has 
an $(n+1)$-cluster tilting object $N$.
This procedure continues repeatedly, so $\End_\Gamma(N)$ is
$(n+2)$-complete and has an $(n+2)$-cluster tilting object, and so
on. 
We notice here that we consider not only $n$-cluster tilting objects
in whole module categories $\mod\Lambda$ but also those in
full subcategories
\[T^{\perp}:=\{X\in\mod\Lambda\ |\ \Ext^i_\Lambda(T,X)=0\ (0<i)\}\]
associated to tilting $\Lambda$-modules $T$.
Such a generalization is natural from the viewpoint of study of
Auslander-type conditions \cite{I4,HI},
and indispensable for our inductive construction to work.
It is interesting that our inductive construction reminds
us of a classical result due to Auslander-Reiten \cite{AR1}
which asserts that the category of coherent functors over
a dualizing variety again forms a dualizing variety.

In forthcoming papers \cite{IH,IO1,IO2,OT} $n$-complete algebras
will be studied further.

\medskip
\noindent{\bf Acknowledgement }
The author would like to thank organizers and participants of
``XII International Conference on Representations of Algebras''
(Torun, August 2007), where results from this paper were presented.
Part of this work was done while the author visited Boston in
April 2008. He would like to thank Kiyoshi Igusa and Gordana Todorov
and people in Northeastern University for their hospitality.
He also thanks Zhaoyong Huang, Xiaojin Zhang and an anonymous referee
for helpful comments.

\medskip
\noindent{\bf Conventions }
Throughout this paper, all \emph{subcategories} are assumed to be full
and closed under isomorphism, direct sums, and direct summands.
We denote by $J_{\CC}$ the Jacobson radical of an additive category
$\CC$ \cite{ARS,ASS}.

All modules are usually right modules, and the
composition $fg$ of morphisms means first $g$, then $f$.
We denote by $\mod\Lambda$ the category of finitely generated
$\Lambda$-modules, by $J_\Lambda$ the Jacobson radical of $\Lambda$.
For $M\in\mod\Lambda$, we denote by $\add M$ the subcategory of
$\mod\Lambda$ consisting of direct summands of finite direct sums of
copies of $M$. For example $\add\Lambda$ is the category $\pr\Lambda$
of finitely generated projective $\Lambda$-modules, and
$\add D\Lambda$ is the category $\inj\Lambda$
of finitely generated injective $\Lambda$-modules.

\section{Our results}

In this section, we shall present our results in this paper.
Let $\Lambda$ be a finite dimensional algebra.


\subsection{$n$-cluster tilting in module categories}\label{section: nCT1}

Let us recall a classical concept due to Auslander-Smalo \cite{ASm}.
A subcategory $\CC$ of an additive category $\XX$ is called
\emph{contravariantly finite} if for any $X\in\XX$, there exists a morphism
$f\in\Hom_{\XX}(C,X)$ with $C\in\CC$ such that
$\Hom_{\XX}(-,C)\stackrel{f}{\to}\Hom_{\XX}(-,X)\to0$
is exact on $\CC$. Dually a \emph{covariantly finite subcategory}
is defined. A contravariantly and covariantly finite subcategory
is called \emph{functorially finite}.

\begin{definition}\label{define n-cluster tilting}
Let $n\ge1$. 
Let $\CC$ be a subcategory of $\mod\Lambda$.
We call $\CC$ \emph{$n$-rigid} if $\Ext^i_\Lambda(\CC,\CC)=0$ for any $0<i<n$.
We call $\CC$ \emph{$n$-cluster tilting} if it is functorially finite and
\begin{eqnarray*}
\CC&=&\{X\in\mod\Lambda\ |\ \Ext^i_\Lambda(X,\CC)=0\ (0<i<n)\}\\
&=&\{X\in\mod\Lambda\ |\ \Ext^i_\Lambda(\CC,X)=0\ (0<i<n)\}.
\end{eqnarray*}
This equality can be understood such that the pair $(\CC,\CC)$ forms a
`cotorsion pair' with respect to $\Ext^i$ for $0<i<n$.
We call an object $C\in\mod\Lambda$ \emph{$n$-cluster tilting}
(respectively, \emph{$n$-rigid}) if so is $\add C$.
Clearly $\mod\Lambda$ is a unique $1$-cluster tilting subcategory, and
$2$-cluster tilting subcategories are often called \emph{cluster
  tilting}.
\end{definition}

Let us start with introducing basic terminologies.
We have the duality
\[D:=\Hom_k(-,k):\mod\Lambda\leftrightarrow\mod\Lambda^{\op}.\]
We denote by
\[\nu=\nu_\Lambda:=D\Hom_\Lambda(-,\Lambda):\mod\Lambda\to\mod\Lambda\ \mbox{ and }\ 
\nu^-=\nu_\Lambda^-:=\Hom_{\Lambda^{\op}}(D-,\Lambda):\mod\Lambda\to\mod\Lambda\]
the Nakayama functors of $\Lambda$.
They induce mutually quasi-inverse equivalences
$\nu:\add\Lambda\to\add D\Lambda$ and $\nu^-:\add D\Lambda\to\add\Lambda$.
We denote by
\[\ul{\mod}\Lambda\ \mbox{ and }\ \overline{\mod}\Lambda\]
the stable categories of $\mod\Lambda$ \cite{ARS,ASS}.
For a subcategory $\XX$ of $\mod\Lambda$, we denote by $\ul{\XX}$ (respectively, $\overline{\XX}$) the corresponding subcategory of $\ul{\mod}\Lambda$ (respectively, $\overline{\mod}\Lambda$).
We denote by
\[\Tr:\underline{\mod}\Lambda\leftrightarrow\underline{\mod}\Lambda^{\op},\ \ 
\Omega:\underline{\mod}\Lambda\rightarrow\underline{\mod}\Lambda
\ \mbox{ and }\ 
\Omega^-:\overline{\mod}\Lambda\rightarrow\overline{\mod}\Lambda\]
Auslander-Bridger transpose duality, the syzygy functor and the cosyzygy functor \cite{ABr}.

For $n\ge1$, we define \emph{$n$-Auslander-Reiten translations} \cite{I3} by
\begin{eqnarray*}
\tau_n&:=&D\Tr\Omega^{n-1}:\ul{\mod}\Lambda\to\overline{\mod}\Lambda,\\
\tau_n^-&:=&\Tr D\Omega^{-(n-1)}:\overline{\mod}\Lambda\to\ul{\mod}\Lambda.
\end{eqnarray*}
They are by definition given as follows: For $X\in\mod\Lambda$, take a
minimal projective resolution and a minimal injective resolution
\[P_n\stackrel{f}{\to}P_{n-1}\to\cdots\to P_0\to X\to0\ \mbox{ and }\ 0\to X\to I_0\to\cdots\to I_{n-1}\stackrel{g}{\to}I_n.\]
Then we have
\begin{equation}
\tau_nX=\Ker(\nu P_n\stackrel{\nu f}{\longrightarrow}\nu P_{n-1})\ \mbox{ and }\ \tau_n^-X=\Cok(\nu^-I_{n-1}\stackrel{\nu^-g}{\longrightarrow}\nu^-I_n).
\end{equation}
The functors $\tau=\tau_1=D\Tr$ and $\tau^-=\tau_1^-=\Tr D$ are classical Auslander-Reiten translations,
and we have $\tau_n=\tau\Omega^{n-1}$ and $\tau_n^-=\tau^-\Omega^{-(n-1)}$ by definition.
Moreover $X\in\mod\Lambda$ satisfies $\tau_nX=0$ (respectively,
$\tau_n^-X=0$) if and only if $\pd X_\Lambda<n$ (respectively, $\id
X_\Lambda<n$).

For the case $\gl\Lambda\le n$, clearly $\tau_n$ and $\tau_n^-$ are induced
by the functors $D\Ext^n_\Lambda(-,\Lambda):\mod\Lambda\to\mod\Lambda$
and $\Ext^n_{\Lambda^{\op}}(D-,\Lambda):\mod\Lambda\to\mod\Lambda$
respectively. In this case, we always lift
$\tau_n$ and $\tau_n^-$ to endofunctors of $\mod\Lambda$ by putting
\begin{eqnarray*}
\tau_n&:=&D\Ext^n_\Lambda(-,\Lambda):\mod\Lambda\to\mod\Lambda,\\
\tau_n^-&:=&\Ext^n_{\Lambda^{\op}}(D-,\Lambda):\mod\Lambda\to\mod\Lambda.
\end{eqnarray*}
Then $\tau_n$ (respectively, $\tau_n^-$) clearly
preserves monomorphisms (respectively, epimorphisms) in $\mod\Lambda$.

\medskip
Let us consider the relationship between the functor $\tau_n$ and
$n$-cluster tilting subcategories.
The following results \cite[Th. 2.3]{I3} show that the functor
$\tau_n$ plays the role of Auslander-Reiten translation
for $n$-cluster tilting subcategories.

\begin{proposition}\label{n-AR translation}
\begin{itemize}
\item[(a)] For any $n$-cluster tilting subcategory $\CC$ of $\mod\Lambda$, the functors $\tau_n$ and $\tau_n^-$ induce mutually quasi-inverse equivalences
$\tau_n:\underline{\CC}\to\overline{\CC}$ and
$\tau_n^-:\overline{\CC}\to\underline{\CC}$.
\item[(b)] $\tau_n$ gives a bijection from isoclasses of indecomposable
non-projective objects in $\CC$ to
isoclasses of indecomposable non-injective objects in $\CC$.
\end{itemize}
\end{proposition}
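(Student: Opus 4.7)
The plan is to prove (a) via a higher Auslander--Reiten duality on $\CC$ combined with an explicit computation using minimal resolutions; part (b) will then be formal.

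First I would establish the higher AR duality
\[\Ext^i_\Lambda(Z,\tau_nY)\cong D\Ext^{n-i}_\Lambda(Y,Z)\ \mbox{ for }\ Y,Z\in\CC\ \mbox{ and }\ 0<i<n.\]
Writing $\tau_n=\tau\Omega^{n-1}$ and applying classical AR duality $\Ext^1(A,\tau B)\cong D\ul{\Hom}(B,A)$ together with the standard identification $\ul{\Hom}(\Omega^k A,B)\cong\Ext^k(A,B)$, one obtains $\Ext^i(Z,\tau_nY)\cong D\Ext^{n-1}(Y,\Omega^{i-1}Z)$. Since $\add\Lambda\subseteq\CC$ and $\CC$ is $n$-rigid, $\Ext^j(Y,P)=0$ for any projective $P$ and $0<j<n$, so dimension-shifting $\Omega^{i-1}Z$ back to $Z$ along its minimal projective resolution collapses the right hand side to $D\Ext^{n-i}(Y,Z)$. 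The $n$-rigidity of $\CC$ now forces $\Ext^i(Z,\tau_nY)=0$ for all $Z\in\CC$ and $0<i<n$, and the $n$-cluster tilting characterisation of $\CC$ puts $\tau_nY\in\CC$. The dual argument, using the minimal injective resolution of $X\in\CC$, yields $\tau_n^-X\in\CC$. Hence $\tau_n$ and $\tau_n^-$ restrict to functors between $\ul{\CC}$ and $\ol{\CC}$.

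To see they are mutually quasi-inverse, take $Y\in\CC$ with minimal projective resolution $P_n\to\cdots\to P_0\to Y\to0$. Applying $\nu$ produces a complex whose $i$-th cohomology is $D\Ext^i_\Lambda(Y,\Lambda)$; since $\Lambda\in\CC$, $n$-rigidity annihilates this for $0<i<n$, leaving the exact sequence
\[0\to\tau_nY\to\nu P_n\to\nu P_{n-1}\to\cdots\to\nu P_0\to\nu Y\to0.\]
Minimality of $P_\bullet$ together with the equivalence $\nu\colon\add\Lambda\to\add D\Lambda$ implies that the first $n+1$ terms constitute the initial segment of the minimal injective coresolution of $\tau_nY$; applying $\nu^-$ to its last two injective terms therefore returns $\tau_n^-\tau_nY=\Cok(\nu^-\nu P_1\to\nu^-\nu P_0)=\Cok(P_1\to P_0)=Y$. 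The dual computation proves $\tau_n\tau_n^-X\cong X$, and (b) follows formally: an equivalence of additive categories induces a bijection of isoclasses of indecomposables, and the indecomposables of $\ul{\CC}$ (respectively $\ol{\CC}$) are precisely the non-projective (respectively non-injective) indecomposables of $\CC$.

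The step I expect to require the most care is the minimality of the displayed injective coresolution of $\tau_nY$, which is what makes the formula $\tau_n^-\tau_nY=Y$ recover $Y$ on the nose rather than merely up to injective summands; minimality should follow from tracing radical/socle data through the equivalence $\nu$, but depends essentially on the minimality of $P_\bullet$.
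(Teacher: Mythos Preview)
The paper does not prove this proposition in the text; it simply cites \cite[Th.~2.3]{I3}. Your argument is essentially the one carried out there, and it is correct. Two remarks are worth making.

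First, the ``standard identification'' $\ul{\Hom}_\Lambda(\Omega^kA,B)\cong\Ext^k_\Lambda(A,B)$ is not a general fact: in general one only has a surjection $\Ext^k_\Lambda(A,B)\to\ul{\Hom}_\Lambda(\Omega^kA,B)$ (this is exactly what appears in the paper as Lemma~\ref{surjection}). It becomes an isomorphism when $\Ext^j_\Lambda(A,\Lambda)=0$ for $0<j<k$, since then any map $\Omega^kA\to Q$ with $Q$ projective extends over $\Omega^kA\hookrightarrow P_{k-1}$. In your situation $A=Y\in\CC$ and $k=n-1$, so $n$-rigidity against $\Lambda\in\CC$ supplies exactly this hypothesis and your isomorphism is legitimate; but the justification should be stated rather than called standard.

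Second, your worry about minimality of the injective coresolution of $\tau_nY$ is unnecessary. The functors $\tau_n$ and $\tau_n^-$ are defined on the stable categories $\ul{\mod}\Lambda$ and $\ol{\mod}\Lambda$, so computing $\tau_n^-$ from a non-minimal injective resolution only alters the answer by a projective summand, which vanishes in $\ul{\CC}$. Thus the exact sequence $0\to\tau_nY\to\nu P_n\to\cdots\to\nu P_0\to\nu Y\to0$ already gives $\tau_n^-\tau_nY\cong Y$ in $\ul{\CC}$ without any appeal to minimality; the ``on the nose'' recovery you aim for is stronger than what part~(a) requires.
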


Immediately we have the following results.

\begin{proposition}\label{2 kinds}
Let $M$ be an $n$-cluster tilting object of $\mod\Lambda$.
\begin{itemize}
\item[(a)] For any indecomposable object $X\in\add M$, precisely one of the following statement holds.
\begin{itemize}
\item[(i)] $X$ is $\tau_n$-periodic, i.e. $\tau_n^\ell X\simeq X$ for
  some $\ell>0$.
\item[(ii)] $X\simeq \tau_n^\ell I$ for some indecomposable injective $\Lambda$-module $I$ and $\ell\ge0$, and
  $X\simeq \tau_n^{-m}P$ for some indecomposable projective $\Lambda$-module $P$ and $m\ge0$.
\end{itemize}
\item[(b)] A bijection from isoclasses of indecomposable
injective $\Lambda$-modules to isoclasses of indecomposable projective
$\Lambda$-modules
is given by $I\mapsto\tau_n^{\ell_I}I$, where $\ell_I$ is a maximal
number $\ell$ satisfying $\tau_n^\ell I\neq0$.
\item[(c)] If $\gl\Lambda\le n$, then the above (i) does not occur.
\end{itemize}
\end{proposition}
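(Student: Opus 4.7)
The plan is to derive (a) as an orbit analysis using Proposition \ref{n-AR translation}, then (b) as an immediate consequence, and finally (c) by a homological contradiction under the gl-dim hypothesis.

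For part (a), fix indecomposable $X \in \add M$ and set $\CC = \add M$. Form the two-sided $\tau_n$-orbit of $X$: iterate $\tau_n$ forward so long as the current term is indecomposable non-projective --- by Proposition \ref{n-AR translation}(b) the result is again an indecomposable non-injective of $\CC$ --- and iterate $\tau_n^-$ backward on non-injectives. Since $\add M$ has only finitely many indecomposable objects, pigeonhole forces exactly one of two mutually exclusive outcomes: either the orbit closes into a cycle (case (i)), or it is a finite maximal path whose forward endpoint is projective (otherwise $\tau_n$ could extend it) and whose backward endpoint is injective (case (ii)). Moreover, one direction terminates if and only if the other does: a non-terminating forward sequence would, by pigeonhole together with injectivity of $\tau_n$ on isoclasses, force $X \simeq \tau_n^{b-a} X$ and hence periodicity, contradicting a hit on a projective.

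For part (b), the orbit decomposition in (a) partitions the indecomposables of $\CC$ into $\tau_n$-cycles and injective-to-projective paths. The map $I \mapsto \tau_n^{\ell_I} I$ returns the projective terminus of the unique path starting at the indecomposable injective $I$, where $\ell_I$ is the path length; it is injective because disjoint paths have disjoint endpoints, and surjective because every indecomposable projective $P$ sits at the forward end of its own orbit, whose other endpoint must be an injective by part (a).

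For part (c), assume $\gl \Lambda \leq n$, so that $\tau_n = D\Ext^n_\Lambda(-,\Lambda)$ lifts to a mono-preserving endofunctor of $\mod \Lambda$. First, every indecomposable non-projective $X \in \CC$ satisfies $\pd X = n$: by Proposition \ref{n-AR translation}(a), $\tau_n X \neq 0$, hence $\Ext^n_\Lambda(X,\Lambda) \neq 0$. Suppose for contradiction that $X$ is $\tau_n$-periodic with $\tau_n^\ell X \simeq X$. Applying $\nu$ to the minimal projective resolution of $X$, and using $\Ext^i_\Lambda(X,\Lambda) = 0$ for $0 < i < n$ (since $\Lambda \in \CC$), produces the exact sequence
\[0 \to \tau_n X \to \nu P_n \to \cdots \to \nu P_0 \to \nu X \to 0.\]
The isomorphism $\tau_n X \simeq X$ allows splicing this with the projective resolution of $X$, and iterating using periodicity yields an unbounded exact complex that would force $\pd X = \infty$, contradicting $\gl \Lambda \leq n$. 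The main obstacle is executing this iterated splicing rigorously, so that the resulting unbounded complex is genuinely acyclic and minimal enough to violate finite global dimension; parts (a) and (b) are by contrast straightforward orbit bookkeeping.
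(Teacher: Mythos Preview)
Your arguments for (a) and (b) are correct and essentially spell out what the paper leaves as ``immediate from Proposition~\ref{n-AR translation}(b).''

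For (c), however, your splicing argument has a genuine gap, and not merely the rigour issue you flag at the end. The exact sequence
\[
0 \to \tau_n X \to \nu P_n \to \cdots \to \nu P_0 \to \nu X \to 0
\]
has \emph{injective} middle terms $\nu P_i$, not projective ones. Splicing a projective resolution of $X\simeq\tau_n^\ell X$ onto its left end therefore does not yield a projective resolution of anything, and no amount of iteration will force $\pd X = \infty$. (You may be thinking of the fact that $\Omega$-periodicity of a non-projective module forces infinite projective dimension; but $\tau_n$-periodicity is not $\Omega$-periodicity, and there is no evident way to extract a syzygy recurrence from the sequence above.)

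The paper's argument is both simpler and different in kind: it uses the mono-preservation of $\tau_n$ (which you correctly noted) together with part~(b), rather than bypassing it. Take an injective hull $0 \to X \to I$. Each indecomposable summand of $I$ is injective, hence lies in case~(ii) of~(a), so by~(b) we have $\tau_n^{m} I = 0$ for some $m$. Since $\tau_n$ preserves monomorphisms when $\gl\Lambda\le n$, iterating gives $\tau_n^{m} X \hookrightarrow \tau_n^{m} I = 0$, whence $\tau_n^{m} X = 0$. This rules out periodicity directly.
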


\begin{proof}
(a)(b) Immediate from Proposition \ref{n-AR translation}(b).

(c) Fix any indecomposable object $X\in\add M$.
We consider two possibilities in (a).
It is enough to show that $\tau_n^\ell X=0$ holds for some $\ell\ge0$.
Take an injective hull $0\to X\to I$.
Since $\tau_n$ preserves monomorphisms because $\gl\Lambda\le n$,
we have an exact sequence $0\to\tau_n^iX\to\tau_n^iI$ for any $i\ge0$.
Since $\tau_n^\ell I=0$ holds for sufficiently large $\ell$ by (b),
we have $\tau_n^\ell X=0$. Thus we have shown the assertion.
\end{proof}

These observation motivates to introduce the following analogue of
preinjective modules, which was studied by Auslander-Solberg for $n=1$
\cite{ASo}.

\begin{definition}
We define the \emph{$\tau_n$-closure} of $D\Lambda$ by
\[\MM=\MM_n(D\Lambda):=\add\{\tau_n^i(D\Lambda)\ |\ i\ge0\}\subset\mod\Lambda.\]
\end{definition}

Immediately from Proposition \ref{n-AR translation}(a), we have the
following result.

\begin{proposition}\label{M and C}
Any $n$-cluster tilting subcategory $\CC$ of $\mod\Lambda$ contains $\MM$.
\end{proposition}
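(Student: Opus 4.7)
The plan is to show by induction on $i\ge 0$ that every $\tau_n^i(D\Lambda)$ lies in $\CC$, and then use that $\CC$ is closed under finite direct sums and direct summands to conclude $\MM=\add\{\tau_n^i(D\Lambda)\mid i\ge 0\}\subseteq\CC$.

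The base case is to show $D\Lambda\in\CC$. This follows directly from the definition of $n$-cluster tilting: any injective module $I$ satisfies $\Ext^i_\Lambda(\CC,I)=0$ for all $i>0$, so the characterization $\CC=\{X\in\mod\Lambda\mid\Ext^i_\Lambda(\CC,X)=0\ (0<i<n)\}$ forces $I\in\CC$, and hence $D\Lambda\in\CC$ since $\CC$ is closed under direct summands and finite sums.

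For the inductive step, assume $\tau_n^i(D\Lambda)\in\CC$ and consider $\tau_n^{i+1}(D\Lambda)=\tau_n(\tau_n^i(D\Lambda))$. Here I invoke Proposition \ref{n-AR translation}(a): the functor $\tau_n$ restricts to an equivalence $\underline{\CC}\to\overline{\CC}$. In particular, $\tau_n(\tau_n^i(D\Lambda))$ lies in $\overline{\CC}$, which means it is isomorphic in $\overline{\mod}\Lambda$ to some module $Y'\in\CC$; equivalently, there exist injective modules $I, I'$ with $\tau_n^{i+1}(D\Lambda)\oplus I\cong Y'\oplus I'$. Since injective modules belong to $\CC$ by the base-case argument, the right-hand side lies in $\CC$, hence so does the left-hand side, and closure under direct summands gives $\tau_n^{i+1}(D\Lambda)\in\CC$.

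Iterating yields $\tau_n^i(D\Lambda)\in\CC$ for all $i\ge 0$, and taking additive closure gives $\MM\subseteq\CC$. The only subtle point is making sure that the equivalence in Proposition \ref{n-AR translation}(a) really lifts back to an honest statement about modules rather than stable equivalence classes, but this is immediate once one observes that $\CC$ contains all injectives. There is no substantial obstacle here: the proposition is essentially a direct consequence of the equivalence $\tau_n\colon\underline{\CC}\to\overline{\CC}$ combined with the fact that $D\Lambda\in\CC$.
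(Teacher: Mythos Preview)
Your proof is correct and takes essentially the same approach as the paper, which simply states that the result is immediate from Proposition~\ref{n-AR translation}(a). You have spelled out in detail the induction and the passage from the stable equivalence back to honest modules via closure of $\CC$ under injectives and direct summands, which is exactly what the one-line proof in the paper leaves implicit.
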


Summarizing Propositions \ref{2 kinds}(c) and \ref{M and C},
we have the uniqueness result of $n$-cluster tilting objects
for algebras $\Lambda$ with $\gl\Lambda\le n$,
which is not valid if we drop the assumption $\gl\Lambda\le n$.

\begin{theorem}\label{unique n-cluster}
Assume $\gl\Lambda\le n$ and that $\Lambda$ has an $n$-cluster tilting
object $M$. Then $\MM=\add M$ holds. In particular, $\MM$ is a unique
$n$-cluster tilting subcategory of $\mod\Lambda$.
\end{theorem}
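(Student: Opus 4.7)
The plan is to exploit Propositions \ref{2 kinds} and \ref{M and C} together: the first forces every indecomposable summand of an $n$-cluster tilting object to be reachable from an injective by iterated application of $\tau_n$ (when $\gl\Lambda\le n$), while the second pins down $\MM$ as the minimal $n$-cluster tilting subcategory.

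First I would set $\CC:=\add M$ and recall that $\CC$ is an $n$-cluster tilting subcategory of $\mod\Lambda$. By Proposition \ref{M and C} we already have $\MM\subset\CC$, so the content is to prove the reverse inclusion $\CC\subset\MM$. Pick an arbitrary indecomposable summand $X$ of $M$. Proposition \ref{2 kinds}(a) gives two alternatives for $X$: either $X$ is $\tau_n$-periodic, or $X\simeq\tau_n^\ell I$ for some indecomposable injective $I$ and some $\ell\ge 0$. Under the hypothesis $\gl\Lambda\le n$, Proposition \ref{2 kinds}(c) rules out the periodic case, so $X$ lies in $\add\{\tau_n^i(D\Lambda)\mid i\ge 0\}=\MM$. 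Since $X$ was an arbitrary indecomposable summand, this yields $\CC\subset\MM$, and hence $\MM=\add M$.

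For the uniqueness statement, let $\CC'$ be any $n$-cluster tilting subcategory of $\mod\Lambda$. Applying Proposition \ref{M and C} to $\CC'$ gives $\MM\subset\CC'$, i.e.\ $\add M\subset\CC'$. Because $\CC'$ is $n$-rigid, every object $X\in\CC'$ satisfies $\Ext^i_\Lambda(M,X)=0$ for $0<i<n$, and the characterization of $\CC=\add M$ as an $n$-cluster tilting subcategory (the equality $\CC=\{X\in\mod\Lambda\mid\Ext^i_\Lambda(\CC,X)=0\ (0<i<n)\}$ from Definition \ref{define n-cluster tilting}) then forces $X\in\add M$. Thus $\CC'=\add M=\MM$.

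No step looks like a serious obstacle here: the argument is essentially a bookkeeping exercise combining the orbit description of indecomposables in Proposition \ref{2 kinds} with the minimality statement in Proposition \ref{M and C}. The only place where one must be a little careful is invoking Proposition \ref{2 kinds}(c), which is precisely where the assumption $\gl\Lambda\le n$ enters and is exactly what allows us to conclude that no hidden $\tau_n$-periodic summands can appear outside the $\tau_n$-closure of $D\Lambda$.
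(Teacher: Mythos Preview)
Your proof is correct and follows essentially the same route as the paper, which simply cites Propositions~\ref{2 kinds}(c) and~\ref{M and C}; you have merely spelled out the details, including the uniqueness argument that the paper leaves implicit. The only minor addition worth noting is that your uniqueness argument works for an arbitrary $n$-cluster tilting \emph{subcategory} $\CC'$ (not assumed to have an additive generator), which is slightly stronger than reapplying the first part to a hypothetical generator of $\CC'$.
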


Thus the condition $\gl\Lambda\le n$ seems to be basic in the
study of $n$-cluster tilting subcategories.

We note here that $\MM$ enjoys nice properties below for the case $n=2$,
which we shall prove in Section \ref{section: preliminaries}.
In particular, $\MM$ provides us a rich source of $2$-rigid objects.

\begin{proposition}\label{n-AR translation new}
Assume $n=2$.
\begin{itemize}
\item[(a)] $\MM$ is $2$-rigid.
\item[(b)] Assume $\gl\Lambda\le 2$. Then $\Lambda$ has a
$2$-cluster tilting object if and only if $\Lambda\in\MM$.
\end{itemize}
\end{proposition}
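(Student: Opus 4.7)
The plan for (a) is to use Auslander--Reiten duality to set up a recursion. Combining $\Ext^1_\Lambda(A,B) \simeq D\ul{\Hom}_\Lambda(\tau^- B, A)$ with the factorization $\tau_2 = \tau\Omega$ and the fact that $\tau^-\tau \simeq \id$ on $\ul{\mod}\Lambda$, one gets $\tau^-\tau_2^j D\Lambda \simeq \Omega \tau_2^{j-1} D\Lambda$ for $j \ge 1$; together with the standard identification $\ul{\Hom}_\Lambda(\Omega Y, X) \simeq \Ext^1_\Lambda(Y, X)$ this yields the recursion
\[\Ext^1_\Lambda(\tau_2^i D\Lambda, \tau_2^j D\Lambda) \simeq D\Ext^1_\Lambda(\tau_2^{j-1} D\Lambda, \tau_2^i D\Lambda).\]
The base case $j = 0$ is trivial since $D\Lambda$ is injective, and induction on $j$ closes (a).

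For the forward direction of (b), any $2$-cluster tilting subcategory $\CC$ automatically contains $\Lambda$ (because $\Ext^i_\Lambda(\CC, \Lambda) = 0$ for $i > 0$ forces $\Lambda \in \CC$), and Theorem \ref{unique n-cluster} identifies $\CC$ with $\MM$, so $\Lambda \in \MM$.

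For the converse, suppose $\Lambda \in \MM$. First I would produce an additive generator of $\MM$: using $\gl\Lambda \le 2$ and Proposition \ref{2 kinds}(b)(c), every indecomposable projective has the form $\tau_2^{\ell_I} I$ for some indecomposable injective $I$ with $\ell_I < \infty$; the hypothesis $\Lambda \in \MM$ forces every indecomposable injective to have finite $\ell_I$, so $\MM = \add M$ for a finite module $M$. Part (a) supplies $2$-rigidity of $M$, and $\add M$ is automatically functorially finite. It remains to verify the cluster tilting equalities: given $X$ with $\Ext^1_\Lambda(\MM, X) = 0$, the plan is to compute $\tau_2^- X = \Cok(\nu^- I_1 \to \nu^- I_2)$ from a minimal injective resolution of $X$ (of length $\le 2$) and apply Proposition \ref{n-AR translation} to realize $X$ in the $\tau_2$-orbit of $D\Lambda$; the dual argument (via a minimal projective resolution and the formula for $\tau_2 X$) handles the other inclusion. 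This last verification is the main obstacle, since it requires delicate control over how $\tau_2$ and $\tau_2^-$ interact with minimal resolutions when $\gl\Lambda \le 2$ --- material naturally developed in Section \ref{section: preliminaries}.
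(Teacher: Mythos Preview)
Your argument for (a) is essentially the paper's approach (Lemma~\ref{surjection} specialized to $n=2$): Auslander--Reiten duality gives $D\Ext^1_\Lambda(Y,\tau_2 X)\simeq\ul{\Hom}_\Lambda(\Omega X,Y)$, and combining with the natural surjection $\Ext^1_\Lambda(X,Y)\twoheadrightarrow\ul{\Hom}_\Lambda(\Omega X,Y)$ shows $\Ext^1_\Lambda(X,Y)=0$ forces $\Ext^1_\Lambda(Y,\tau_2 X)=0$. Two small slips: first, your ``standard identification'' $\ul{\Hom}_\Lambda(\Omega Y,X)\simeq\Ext^1_\Lambda(Y,X)$ is only a quotient map, not an isomorphism (for $\Lambda=kA_2$, $Y=S_1$, $X=S_2$ one has $\Omega Y=S_2$ projective, so the stable Hom vanishes while $\Ext^1$ is one-dimensional). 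Second, ``induction on $j$'' does not close as written, since your recursion swaps the two arguments; you need to induct on $i+j$ or apply the recursion twice to reduce both indices simultaneously. Both are easily repaired and do not affect the strategy.

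For (b), the forward direction is fine and matches the paper. The converse has a genuine gap: you invoke Propositions~\ref{n-AR translation} and~\ref{2 kinds}, both of which \emph{presuppose} the existence of an $n$-cluster tilting object---exactly what you are proving. The finiteness of $\ell_I$ has to be argued directly from $\Lambda\in\MM$ (each indecomposable projective $P$ equals $\tau_2^{m_P}I_P$ for a unique injective $I_P$; counting shows $P\mapsto I_P$ is a bijection). More seriously, your sketch for the cluster-tilting equalities is not a workable plan: invoking Proposition~\ref{n-AR translation} is circular, and iterating $\tau_2^-$ on an arbitrary $X$ gives no control. The paper instead reuses the surjection from (a): if $\Ext^1_\Lambda(X,\MM)=0$ then $\Ext^1_\Lambda(\tau_2 X,\tau_2\MM)=0$, hence $\Ext^1_\Lambda(\tau_2 X,\MM)=0$ since $\MM=\add(D\Lambda)\cup\tau_2\MM$. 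Iterating, and using that $\tau_2^\ell$ vanishes on all of $\mod\Lambda$ for large $\ell$ (because $\tau_2$ preserves monomorphisms and kills $D\Lambda$ eventually), one reaches $Y:=\tau_2^\ell X\neq 0$ with $\tau_2 Y=0$; then $\pd Y<2$ and $\Ext^1_\Lambda(Y,\Lambda)=0$ force $Y$ projective, so $Y\in\MM$, and one recovers $X\in\MM$ via $\tau_2^-$ and Lemma~\ref{tau_n on M}.
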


Let us calculate $\MM$ for a few examples.

\begin{example}\normalfont\label{M calculation}
Let $\Lambda$ and $\Lambda'$ be Auslander algebras
\[\xymatrix@C=0.3cm@R0.1cm{
&&{\scriptstyle 3}\ar[dl]\\
&{\scriptstyle 5}\ar[dl]\ar@{..>}[rr]&&{\scriptstyle 2}\ar[dl]\ar[ul]\\
{\scriptstyle 6}\ar@{..>}[rr]&&{\scriptstyle 4}\ar[ul]\ar@{..>}[rr]&&{\scriptstyle 1}\ar[ul]}\ \ \ \ \ \ \ \ \ \ \xymatrix@C=0.3cm@R0.1cm{
&{\scriptstyle 5}\ar[dl]\ar@{..>}[rr]&&{\scriptstyle 2}\ar[dl]\\
{\scriptstyle 6}\ar@{..>}[rr]&&{\scriptstyle 3}\ar[dl]\ar[ul]\\
&{\scriptstyle 4}\ar[ul]\ar@{..>}[rr]&&{\scriptstyle 1}\ar[ul]}\]
Then one can calculate
\begin{eqnarray*}
&D\Lambda=
\left(\begin{smallmatrix}
1
\end{smallmatrix}\oplus
\begin{smallmatrix}
1&\\
&2
\end{smallmatrix}\oplus
\begin{smallmatrix}
1&&\\
&2&\\
&&3
\end{smallmatrix}\oplus
\begin{smallmatrix}
&2\\
4&
\end{smallmatrix}\oplus
\begin{smallmatrix}
&2&\\
4&&3\\
&5&
\end{smallmatrix}\oplus
\begin{smallmatrix}
&&3\\
&5&\\
6&&
\end{smallmatrix}\right),\ 
\tau_2(D\Lambda)=
\left(\begin{smallmatrix}
4
\end{smallmatrix}\oplus
\begin{smallmatrix}
4&\\
&5
\end{smallmatrix}\oplus
\begin{smallmatrix}
&5\\
6&
\end{smallmatrix}\right),\ 
\tau_2^2(D\Lambda)=
\left(\begin{smallmatrix}
6
\end{smallmatrix}\right), 
\tau_2^3(D\Lambda)=0,&\\
&D\Lambda'=
\left(\begin{smallmatrix}
1
\end{smallmatrix}\oplus
\begin{smallmatrix}
2
\end{smallmatrix}\oplus
\begin{smallmatrix}
1&&2\\
&3&
\end{smallmatrix}\oplus
\begin{smallmatrix}
&&2\\
&3&\\
4&&
\end{smallmatrix}\oplus
\begin{smallmatrix}
1&&\\
&3&\\
&&5
\end{smallmatrix}\oplus
\begin{smallmatrix}
&3&\\
4&&5\\
&6&
\end{smallmatrix}\right),\ 
\tau_2(D\Lambda')=
\left(\begin{smallmatrix}
4
\end{smallmatrix}\oplus
\begin{smallmatrix}
5
\end{smallmatrix}\oplus
\begin{smallmatrix}
4&&5\\
&6&
\end{smallmatrix}\right),\ 
\tau_2^2(D\Lambda')=0.&
\end{eqnarray*}
The quivers of $\MM_2(D\Lambda)$ and $\MM_2(D\Lambda')$ are the following, where dotted arrows indicate $\tau_2$.
{\tiny\[\xymatrix@C=0.0cm@R0.0cm{
&&&&{\begin{smallmatrix}
1&&\\
&2&\\
&&3
\end{smallmatrix}}\ar[drr]\\
&&{\begin{smallmatrix}
&2&\\
4&&3\\
&5&
\end{smallmatrix}}\ar[drr]\ar[urr]
&&&&{\begin{smallmatrix}
1&&\ \\
&2&
\end{smallmatrix}}\ar[drrrr]\ar@{..>}[ddlll]\\
{\begin{smallmatrix}
&&3\\
&5&\\
6&&
\end{smallmatrix}}\ar[urr]
&&&&{\begin{smallmatrix}
&2&\ \\
4&&
\end{smallmatrix}}\ar[urr]\ar@{..>}[ddlll]
&&&&&&{\begin{smallmatrix}
\ &1&\ 
\end{smallmatrix}}\ar@{..>}[ddlllll]\\
&&&{\begin{smallmatrix}
4&&\ \\
&5&
\end{smallmatrix}}\ar[uul]\ar[drr]\\
&{\begin{smallmatrix}
\ \\
&5&\ \\
6&&
\end{smallmatrix}}\ar[uul]\ar[urr]
&&&&{\begin{smallmatrix}
\ &4&\ 
\end{smallmatrix}}\ar[uul]\ar@{..>}[dddlll]\\
\ \\
\ \\
&&{\begin{smallmatrix}
\ \\
\ &6&\ \\
\ 
\end{smallmatrix}}\ar[uuul]}
\ \ \ \ \ \ \ \ \ \xymatrix@C=0.0cm@R0.0cm{
&&{\begin{smallmatrix}
1&&\\
&3&\\
&&5
\end{smallmatrix}}\ar[drr]
&&&&&&{\begin{smallmatrix}
2
\end{smallmatrix}}\ar@{..>}[dddlllll]\\
{\begin{smallmatrix}
&3&\\
4&&5\\
&6&
\end{smallmatrix}}\ar[drr]\ar[urr]
&&&&{\begin{smallmatrix}
1&&2\\
&3&
\end{smallmatrix}}\ar[drrrr]\ar[urrrr]\ar@{..>}[dddlll]\\
&&{\begin{smallmatrix}
&&2\\
&3&\\
4&&
\end{smallmatrix}}\ar[urr]
&&&&&&{\begin{smallmatrix}
\ &1&\ 
\end{smallmatrix}}\ar@{..>}[dddlllll]\\
&&&{\begin{smallmatrix}
\ \\
5\\
\ 
\end{smallmatrix}}\ar[uuul]\\
&{\begin{smallmatrix}
\ \\
4&&5\\
&6&
\end{smallmatrix}}\ar[drr]\ar[urr]\ar[uuul]\\
&&&{\begin{smallmatrix}
\ \\
4\\
\ 
\end{smallmatrix}}\ar[uuul]
}\]}
By Proposition \ref{n-AR translation new}(a), we have that
$\MM_2(D\Lambda)$ is a 2-cluster tilting subcategory of $\mod\Lambda$,
while $\MM_2(D\Lambda')$ is \emph{not} a 2-cluster tilting subcategory
of $\mod\Lambda'$.
Nevertheless $\MM_2(D\Lambda')$ can be regarded as a $2$-cluster tilting subcategory of a certain subcategory of $\mod\Lambda$ defined as follows.
\end{example}

\begin{definition}\label{define n-cluster tilting 2}
{\rm (Relative version of Definition \ref{define n-cluster tilting})}
Let $n\ge1$. 
Let $\XX$ be an extension closed subcategory of $\mod\Lambda$.
We call a subcategory $\CC$ of $\XX$ \emph{$n$-cluster tilting} if it is functorially finite and
\begin{eqnarray*}
\CC&=&\{X\in\XX\ |\ \Ext^i_\Lambda(X,\CC)=0\ (0<i<n)\}\\
&=&\{X\in\XX\ |\ \Ext^i_\Lambda(\CC,X)=0\ (0<i<n)\}.
\end{eqnarray*}
We call an object $C\in\XX$ \emph{$n$-cluster tilting} if so is $\add C$.
\end{definition}

Especially we deal with subcategories $\XX$ of $\mod\Lambda$
associated with tilting $\Lambda$-modules.
Recall that a $\Lambda$-module $T$ is called \emph{tilting} \cite{M,H} if there exists $m\ge0$ such that
\begin{itemize}
\item $\pd T_\Lambda\le m$,
\item $\Ext^i_\Lambda(T,T)=0$ for any $i>0$,
\item there exists an exact sequence $0\to\Lambda\to T_0\to\cdots\to T_m\to0$ with $T_i\in\add T$.
\end{itemize}
In this case, we have an extension closed subcategory
\[T^{\perp}=\{X\in\mod\Lambda\ |\ \Ext^i_\Lambda(T,X)=0\ (0<i)\}\]
of $\mod\Lambda$. This is a functorially finite subcategory of $\mod\Lambda$,
and plays an important role in tilting theory \cite{H} analogous to
the category of Cohen-Macaulay modules over commutative rings
\cite{ABu,AR3}.

For tilting modules $T$ with $\pd T_\Lambda\le m$, we call $n$-cluster
tilting subcategories of the category $T^\perp$ as
\emph{$m$-relative}.
For the case $m=0$ (i.e. $T^\perp=\mod\Lambda$), we use the
terminology \emph{absolute} instead of $0$-relative.
In Example \ref{M calculation}, $\MM_2(D\Lambda)$ is an absolute
$2$-cluster tilting subcategory of $\Lambda$, and
$\MM_2(D\Lambda')$ is a $1$-relative $2$-cluster tilting subcategory
of $\Lambda$ associated to a tilting $\Lambda'$-module
$T=\left(\begin{smallmatrix}
&&2\\
&3&\\
4&&
\end{smallmatrix}\oplus
\begin{smallmatrix}
1&&\\
&3&\\
&&5
\end{smallmatrix}\oplus
\begin{smallmatrix}
&3&\\
4&&5\\
&6&
\end{smallmatrix}\oplus
\begin{smallmatrix}
4
\end{smallmatrix}\oplus
\begin{smallmatrix}
5
\end{smallmatrix}\oplus
\begin{smallmatrix}
4&&5\\
&6&
\end{smallmatrix}\right)$.

\medskip
It was shown in \cite[Th. 4.2.1]{I4} that $n$-cluster tilting objects
are closely related to algebras of finite global dimension:

\begin{theorem}\label{Auslander correspondence}
Let $n\ge1$ and $n\ge m\ge 0$.
For a finite dimensional algebra $\Gamma$, the following conditions are equivalent.
\begin{itemize}
\item[(a)] There exists a finite dimensional algebra $\Lambda$ and an $m$-relative $n$-cluster tilting object $M$ of $\Lambda$
such that $\Gamma\simeq\End_\Lambda(M)$.

\item[(b)] The following conditions are satisfied.
\begin{itemize}
\item[(i)] $\gl\Gamma\le n+1$.
\item[(ii)] The minimal injective resolution
\[0\to\Gamma\to I_0\to\cdots\to I_n\to I_{n+1}\to0\]
of the $\Gamma$-module $\Gamma$ satisfies $\pd(I_i)_\Gamma\le m$ for any $0\le i\le n$.
\item[(iii)] The opposite side version of (ii).
\end{itemize}
\end{itemize}
In this case we call $\Gamma$ an \emph{($m$-relative) $n$-Auslander algebra (of $\Lambda$)}.
\end{theorem}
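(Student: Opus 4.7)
The plan is to translate between $\Lambda$ and $\Gamma$ via the fundamental equivalence $F := \Hom_\Lambda(M,-) : \add M \xrightarrow{\sim} \pr\Gamma$ and its contravariant counterpart $D\Hom_\Lambda(-,M) : \add M \xrightarrow{\sim} \inj\Gamma$. Two structural observations drive the argument: since the tilting module $T$ is a relative projective in $T^\perp$ and $D\Lambda$ is a relative injective there, both $T$ and $D\Lambda$ lie in $\add M$; and every $X \in T^\perp$ admits a surjective right $\add M$-approximation whose kernel stays in $T^\perp$, because the inclusion $\add T \subset \add M$ guarantees that the approximation lifts all maps from $T$.

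For $(\mathrm{a})\Rightarrow(\mathrm{b})(\mathrm{i})$, I would fix a simple $\Gamma$-module $S$, lift its projective cover to an indecomposable summand $M_0$ of $M$, and iterate right $\add M$-approximations of successive kernels to obtain an exact complex
\[ 0 \to X_{n+1} \to M_n \to \cdots \to M_1 \to M_0 \]
inside $T^\perp$. After $n$ such steps the kernel $X_{n+1}$ satisfies $\Ext^i_\Lambda(M,X_{n+1}) = 0$ for $0<i<n$, so the defining equality of $n$-cluster tilting forces $X_{n+1} \in \add M$; applying $F$ converts this sequence into a projective resolution of $S$ of length at most $n+1$, yielding $\gl\Gamma \le n+1$.

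For $(\mathrm{a})\Rightarrow(\mathrm{b})(\mathrm{ii})$, I would describe the minimal injective coresolution of $\Gamma_\Gamma$ through $D\Hom_\Lambda(-,M)$, identifying each $I_i$ with $D\Hom_\Lambda(N_i,M)$ for a suitable $N_i \in \add M$ built by dual iterated approximations. To bound $\pd(I_i)_\Gamma \le m$, I would apply $F$ to the tilting coresolution $0 \to \Lambda \to T^0 \to \cdots \to T^m \to 0$, whose terms all belong to $\add T \subset \add M$; combined with the $D$-duality this produces a projective $\Gamma$-resolution of $I_i$ of length at most $m$. Statement (iii) then follows symmetrically on the $\Lambda^{\op}$ side via the Nakayama duality, with $DT$ playing the dual role as a cotilting module of injective dimension at most $m$.

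For the converse $(\mathrm{b})\Rightarrow(\mathrm{a})$, the natural construction takes $U$ to be an appropriate direct sum of the injective $\Gamma$-modules $I_0,\ldots,I_m$ from the resolution of $\Gamma_\Gamma$, sets $\Lambda := \End_\Gamma(U)^{\op}$, and recovers both $M$ and the tilting $\Lambda$-module $T$ of projective dimension at most $m$ through the resulting tilting equivalence $\RHom_\Gamma(U,-)$. The principal obstacle, and the most delicate step, is the verification that $M$ is $n$-cluster tilting in $T^\perp$: the condition $\gl\Gamma \le n+1$ kills $\Ext^{>n+1}_\Gamma$, while (ii) and (iii) provide the precise dictionary between $\Ext_\Gamma$ and $\Ext_\Lambda$ through the tilting equivalence, and these must be combined via a careful induction on degree to establish both of the Ext-vanishing equalities defining $n$-cluster tilting inside $T^\perp$.
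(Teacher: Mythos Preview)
The paper does not prove this theorem at all: it is quoted verbatim with a reference to \cite[Th.~4.2.1]{I4}, and the only related argument that appears here is the equivalence (i)$\Leftrightarrow$(ii)$\Leftrightarrow$(iii) in Theorem~\ref{basic}(b), itself deferred to \cite{I4}. So there is no in-paper proof to compare your proposal against; one can only assess whether your outline would work.

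Your plan for (a)$\Rightarrow$(b)(i) is the standard approximation argument and is fine. The step for (a)$\Rightarrow$(b)(ii), however, has a real gap. You correctly observe that each $I_i$ has the form $D\Hom_\Lambda(N_i,M)$ for some $N_i\in\add M$, but you never identify the $N_i$, and your proposed device --- applying $F$ to the tilting coresolution $0\to\Lambda\to T^0\to\cdots\to T^m\to0$ --- produces a complex of projective $\Gamma$-modules beginning with $\Hom_\Lambda(M,\Lambda)$, not a projective resolution of $I_i$. The phrase ``combined with the $D$-duality'' does not bridge this: one needs an argument that the relevant $N_i$ actually lie in $\add T$ (equivalently, that the first $n+1$ terms of the minimal injective resolution of $\Gamma_\Gamma$ arise from $\add T$ under $D\Hom_\Lambda(-,M)$), and it is precisely here that the hypothesis $\pd T_\Lambda\le m$ must be used in a substantive way. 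Without that identification, the bound $\pd(I_i)_\Gamma\le m$ is not established.

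For (b)$\Rightarrow$(a) your construction is off in the indexing: the module $U$ should be built from \emph{all} indecomposable injective $\Gamma$-modules of projective dimension at most $m$ (equivalently, from $I_0,\ldots,I_n$ by condition (ii)), not from $I_0,\ldots,I_m$. Beyond that, you correctly flag the verification of $n$-cluster tilting as the hard part, but ``a careful induction on degree'' is not yet a proof.
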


Now let us formalize Examples \ref{M calculation}, where $\MM$ give
a relative $n$-cluster tilting subcategory.

\begin{definition}\label{complete}
Let $\Lambda$ be a finite dimensional algebra and $n\ge1$. Let
$\MM=\MM_n(D\Lambda)$ be the $\tau_n$-closure of $D\Lambda$.
We define subcategories of $\MM$ by
\begin{eqnarray*}
\II(\MM)&:=&\add D\Lambda,\\
\PP(\MM)&:=&\{X\in\MM\ |\ \pd X_\Lambda<n\}=\{X\in\MM\ |\ \tau_nX=0\},\\
\MM_I&:=&\{X\in\MM\ |\ X\ \mbox{ has no non-zero summands in }\ \II(\MM)\},\\
\MM_P&:=&\{X\in\MM\ |\ X\ \mbox{ has no non-zero summands in }\ \PP(\MM)\}.
\end{eqnarray*}
\begin{itemize}
\item We call $\Lambda$ \emph{$\tau_n$-finite} if $\gl\Lambda\le n$
and $\tau_n^\ell(D\Lambda)=0$ holds for sufficiently large $\ell$.
In this case, it is easily shown that $\tau_n^\ell=0$ holds
(e.g. Proof of Proposition \ref{2 kinds}(c)).
\item We call $\Lambda$ \emph{$n$-complete} if $\gl\Lambda\le n$ and
the following conditions (A$_n$)--(C$_n$) are satisfied.
\begin{itemize}
\item[(A$_n$)] There exists a tilting $\Lambda$-module $T$ satisfying $\PP(\MM)=\add T$,
\item[(B$_n$)] $\MM$ is an $n$-cluster tilting subcategory of $T^{\perp}$,
\item[(C$_n$)] $\Ext^i_\Lambda(\MM_P,\Lambda)=0$ for any $0<i<n$.
\end{itemize}
We call $\Lambda$ \emph{absolutely $n$-complete} if $\PP(\MM)=\add\Lambda$.
\end{itemize}
\end{definition}

We have the properties of $n$-complete algebras below,
which we shall prove in Section \ref{section: preliminaries}.
The statement (a)--(c) are similar to Proposition \ref{n-AR translation}
and \ref{2 kinds}(b).

\begin{proposition}\label{weak is weak}
Let $\Lambda$ be an $n$-complete algebra.
\begin{itemize}
\item[(a)] We have mutually quasi-inverse equivalences $\tau_n:\MM_P\to\MM_I$
and $\tau_n^-:\MM_I\to\MM_P$.
\item[(b)] $\tau_n$ gives a bijection from isoclasses of indecomposable
objects in $\MM_P$ to those in $\MM_I$.
\item[(c)] A bijection from isoclasses of indecomposable
objects in $\II(\MM)$ to those in $\PP(\MM)$
is given by $I\mapsto\tau_n^{\ell_I}I$, where $\ell_I$ is a maximal
number $\ell$ satisfying $\tau_n^\ell I\neq0$.
\item[(d)] $\Lambda$ is $\tau_n$-finite.
\end{itemize}
\end{proposition}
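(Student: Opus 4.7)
The plan is to prove (a) via a relative form of Proposition~\ref{n-AR translation}, to deduce (b) formally, and then to obtain (c) as a consequence of (d); the finiteness assertion (d) is the main obstacle and requires genuine use of all three defining conditions of $n$-completeness.

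For (a), I would apply the relative analogue of Proposition~\ref{n-AR translation}(a) to the $n$-cluster tilting subcategory $\MM$ of $T^{\perp}$ supplied by (B$_n$). The roles of ``projectives'' and ``injectives'' in $T^{\perp}$ are played by $\add T=\PP(\MM)$ (by (A$_n$)) and $\add D\Lambda=\II(\MM)$ respectively, so that proposition provides mutually quasi-inverse equivalences between the stable category of $\MM$ modulo $\PP(\MM)$ and the stable category of $\MM$ modulo $\II(\MM)$. Since $\gl\Lambda\le n$, the functors $\tau_n$ and $\tau_n^-$ lift to endofunctors of $\mod\Lambda$ and, on $\MM$, annihilate precisely the objects in $\PP(\MM)$ and $\II(\MM)$; the restrictions therefore give honest equivalences $\tau_n:\MM_P\to\MM_I$ and $\tau_n^-:\MM_I\to\MM_P$. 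Part (b) is then immediate, since an equivalence of additive categories bijects isoclasses of indecomposables.

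The heart of the proof is (d). I would use the following preparatory observations. First, by (A$_n$) the tilting module $T$ has the same number of indecomposable summands as $\Lambda$, whence $|\PP(\MM)|=|\II(\MM)|$. Second, by the definition of $\MM$, every indecomposable object of $\MM$ is of the form $\tau_n^i I$ for some indecomposable summand $I$ of $D\Lambda$ and some $i\ge 0$. Third, the equivalence in (a) implies that $\tau_n$-orbits beginning at distinct indecomposables of $\II(\MM)$ never merge. The remaining and decisive step is to exclude $\tau_n$-periodic indecomposables. A periodic $X$ would lie in $\MM_P\cap\MM_I$, and here I would invoke (C$_n$): applying $\Hom_\Lambda(-,\Lambda)$ to the minimal projective resolution $0\to P_n\to\cdots\to P_0\to X\to 0$ yields, thanks to $\Ext^i_\Lambda(X,\Lambda)=0$ for $0<i<n$, an injective coresolution $0\to\tau_nX\to\nu P_n\to\cdots\to\nu P_0\to\nu X\to 0$ of $\tau_nX$ of length $n$. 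Combined with the $\add T$-coresolution of $\Lambda$ furnished by (A$_n$), one iterates along the periodic orbit to derive a contradiction with $\gl\Lambda\le n$. Once periodicity is ruled out, each forward orbit of an indecomposable $I\in\II(\MM)$ must terminate at a unique indecomposable of $\PP(\MM)$: a non-terminating orbit would, via $\tau_n^-$, produce infinitely many indecomposables in $\MM_I$ whose disjoint backward orbits would all need to meet $\II(\MM)$, contradicting the cardinality bound from the first observation. Taking $\ell$ to be the maximum over indecomposable summands $I$ of $D\Lambda$ of the orbit length yields $\tau_n^{\ell+1}(D\Lambda)=0$, proving (d).

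Once (d) is established, (c) is a formal consequence: for each indecomposable $I\in\II(\MM)$, the integer $\ell_I=\max\{\ell\ge 0\mid\tau_n^\ell I\ne 0\}$ is finite, and $\tau_n^{\ell_I}I$ lies in $\PP(\MM)$ because $\tau_n^{\ell_I+1}I=0$ and $\tau_n$ kills exactly $\PP(\MM)$ on $\MM$. Injectivity of $I\mapsto\tau_n^{\ell_I}I$ follows from the non-crossing of orbits, and surjectivity from the equality $|\II(\MM)|=|\PP(\MM)|$. The decisive obstacle in the whole argument is the exclusion of $\tau_n$-periodicity in (d); unlike the absolute case treated in Proposition~\ref{2 kinds}(c), where one embedded $X$ into its injective hull and invoked the a priori finiteness supplied by Proposition~\ref{2 kinds}(b), here we must leverage the tilting structure (A$_n$) and the Ext-vanishing (C$_n$) directly, as otherwise the argument would be circular.
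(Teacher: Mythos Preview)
Your route to (a) via (B$_n$) and a relative version of Proposition~\ref{n-AR translation} differs from the paper's, which derives (a) and (b) from (C$_n$) alone via Lemma~\ref{tau_n on M}: the vanishing $\Ext^i_\Lambda(\MM_P,\Lambda)=0$ for $0<i<n$ places $\MM_P$ inside $\GG_n$ (Lemma~\ref{XY}(b)), and then the equivalence $\tau_n:\GG_n\to\HH_n$ of Lemma~\ref{XY}(a) restricts to the desired equivalence $\MM_P\to\MM_I$ directly, without invoking the $n$-cluster tilting structure on $T^\perp$. Your approach may also work, but it relies on a relative form of Proposition~\ref{n-AR translation} that the paper does not state, and it imports more than is needed.

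The substantive gap is in (d). You misidentify periodicity as the decisive obstacle. Once (a) holds, periodicity is excluded for free: by the definition of $\MM$, every indecomposable is $\tau_n^i I$ for some indecomposable $I\in\II(\MM)$ and some $i\ge0$; if this object were periodic then applying $\tau_n^-$ repeatedly via (a) yields $\tau_n^\ell I\simeq I$ for some $\ell>0$, forcing the indecomposable injective $I$ into $\MM_I$, a contradiction. Your separate periodicity argument (``iterate along the orbit to contradict $\gl\Lambda\le n$'') is therefore unnecessary, and as written it is not a proof---you never say what quantity grows unboundedly or what inequality is violated.

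The genuine issue is excluding infinite non-periodic orbits, and your argument there does not work. If the forward orbit of some $I$ were infinite, the objects $\tau_n^k I$ ($k\ge1$) would indeed be infinitely many indecomposables in $\MM_I$, but their backward orbits are \emph{not} disjoint---they all coincide and meet $\II(\MM)$ only at $I$---so no contradiction with $|\II(\MM)|<\infty$ arises. The counting that works runs in the opposite direction: every indecomposable $P\in\PP(\MM)$ is $\tau_n^{\ell_I}I$ for a unique indecomposable $I\in\II(\MM)$ with finite orbit, giving an injection from indecomposables of $\PP(\MM)$ into those of $\II(\MM)$; by (A$_n$) both sets have the same finite cardinality, so the injection is a bijection and every orbit terminates. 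This is exactly (c), and (d) follows at once; the paper records this as ``(a)--(c) from Lemma~\ref{tau_n on M}, (d) from (c)''.
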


In particular, if $\Lambda$ is $n$-complete, then $\MM$ has an additive
generator $M$ by (d) above.
We call $\End_\Lambda(M)$ the \emph{cone} of $\Lambda$.
This is by definition an $(n-1)$-relative $n$-Auslander algebra of $\Lambda$.

For example, any finite dimensional algebra $\Lambda$ with
$\gl\Lambda<n$ is clearly $n$-complete since
$\MM=\PP(\MM)=\II(\MM)=\add D\Lambda$ holds.
It is interesting to know a characterization of $n$-complete algebras.
For the case $n=1$, we have a nice characterization (a) below.
Also the following (b) gives a simple interpretation of
absolute $n$-completeness.

\begin{proposition}\label{1-complete}
\begin{itemize}
\item[(a)] A finite dimensional algebra is $1$-complete if and only if it is representation-finite and hereditary.
\item[(b)] A finite dimensional algebra $\Lambda$ is absolutely $n$-complete if and only if $\gl\Lambda\le n$ and
$\Lambda$ has an absolute $n$-cluster tilting object.
\end{itemize}
\end{proposition}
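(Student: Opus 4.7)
For part (a), the plan in the direction $(\Leftarrow)$ is the following: when $\Lambda$ is representation-finite and hereditary, standard Auslander-Reiten theory shows that every indecomposable module is of the form $\tau^iI$ for an indecomposable injective $I$ and some $i\ge 0$, so $\MM=\mod\Lambda$; choosing $T=\Lambda$ (tilting of projective dimension zero) makes (A$_1$) read $\PP(\MM)=\add\Lambda=\add T$, makes (B$_1$) read $\MM=\mod\Lambda=T^\perp$, which is trivially $1$-cluster tilting in itself since the range $0<i<1$ is empty, and leaves (C$_1$) vacuous. In the direction $(\Rightarrow)$, a $1$-complete algebra has $\gl\Lambda\le 1$ and, by Proposition \ref{weak is weak}(d), satisfies $\tau^\ell(D\Lambda)=0$ for $\ell\gg 0$; since $\tau X=0$ characterizes projective indecomposables in the hereditary case, every indecomposable injective then shares a $\tau$-orbit with a projective, which, via the classification of AR-components of hereditary algebras, forces $\Lambda$ to be representation-finite (a non-Dynkin component would have an infinite preinjective part disjoint from its preprojectives).

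For part (b), direction $(\Leftarrow)$, assume $\gl\Lambda\le n$ and that $\Lambda$ has an absolute $n$-cluster tilting object $M$. Theorem \ref{unique n-cluster} gives $\MM=\add M$, so $\MM$ is an $n$-cluster tilting subcategory of $\mod\Lambda$ and in particular contains $\Lambda$. The key step is to verify $\PP(\MM)=\add\Lambda$: the inclusion $\supseteq$ is trivial, and for $\subseteq$ I take $X\in\PP(\MM)$ and combine $\Ext^i_\Lambda(X,\Lambda)=0$ for $0<i<n$ (from the cluster tilting condition together with $\Lambda\in\MM$) with $\Ext^i_\Lambda(X,\Lambda)=0$ for $i\ge n$ (which follows from $\pd X<n$) to conclude $\Ext^{>0}_\Lambda(X,\Lambda)=0$; the standard minimality-of-projective-resolutions argument (the last term $P_d$ of a minimal resolution, together with $P_d\to P_{d-1}$ factoring through the radical, forces $\Ext^d(X,\Lambda)\neq 0$ whenever $P_d\neq 0$) then gives $X\in\add\Lambda$. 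Setting $T=\Lambda$, condition (A$_n$) reads $\PP(\MM)=\add\Lambda=\add T$, (B$_n$) becomes the statement that $\MM$ is $n$-cluster tilting in $T^\perp=\mod\Lambda$, and (C$_n$) follows from $\Ext^i_\Lambda(\MM_P,\Lambda)\subseteq\Ext^i_\Lambda(\MM,\MM)=0$ for $0<i<n$ because $\Lambda\in\MM$. Thus $\Lambda$ is absolutely $n$-complete.

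In direction $(\Rightarrow)$ of (b), absolute $n$-completeness gives $\gl\Lambda\le n$, and combining (A$_n$) with the hypothesis $\PP(\MM)=\add\Lambda$ yields a basic tilting module $T$ with $\add T=\add\Lambda$, so we may take $T=\Lambda$ and hence $T^\perp=\mod\Lambda$. Condition (B$_n$) then asserts that $\MM$ is an $n$-cluster tilting subcategory of $\mod\Lambda$, and Proposition \ref{weak is weak}(d) provides the $\tau_n$-finiteness that makes $\MM=\add M$ for some $M$, which is the desired absolute $n$-cluster tilting object. The main obstacle throughout is the $\PP(\MM)=\add\Lambda$ step in (b) $(\Leftarrow)$: it is precisely the interplay between the orthogonality built into the definition of $n$-cluster tilting and the finite global dimension bound that upgrades the a priori weak condition $\pd X<n$ inside $\MM$ all the way to projectivity.
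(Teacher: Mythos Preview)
Your proof is correct, but both parts are organized differently from the paper's argument.

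For (b) $(\Leftarrow)$, the paper also invokes Theorem \ref{unique n-cluster} to get $\MM=\add M$ $n$-cluster tilting in $\mod\Lambda$, but then obtains $\PP(\MM)=\add\Lambda$ by the counting bijection of Propositions \ref{n-AR translation}(b) and \ref{2 kinds}(b) (indecomposable non-projectives in $\MM$ correspond to indecomposable non-injectives, and injectives to projectives via $I\mapsto\tau_n^{\ell_I}I$). Your route is more elementary: you use $\Lambda\in\MM$ and $n$-rigidity to kill $\Ext^i(X,\Lambda)$ for $0<i<n$, use $\pd X<n$ for $i\ge n$, and then the standard ``last term of a minimal resolution detects top $\Ext$'' argument to force $X$ projective. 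This avoids the Auslander--Reiten bijection machinery entirely, at the cost of repeating a small homological lemma.

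For (a), the paper proceeds by reduction to (b): it observes that any $1$-complete algebra is automatically absolutely $1$-complete (since $\PP(\MM)$ consists of projectives, a tilting module $T$ with $\add T=\PP(\MM)$ must satisfy $\add T=\add\Lambda$), and that an absolute $1$-cluster tilting object is just an additive generator of $\mod\Lambda$; both directions then fall out of (b). Your direct argument via AR-components and the $\tau_1$-finiteness from Proposition \ref{weak is weak}(d) is also valid, but it imports more structure theory (classification of components for hereditary algebras) than is strictly needed. The paper's reduction is shorter and keeps (a) as a formal corollary of (b).
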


\begin{proof}
(b) Since the `only if' part is clear, we only have to show the `if' part.
By Theorem \ref{unique n-cluster}, we have that $\MM$ is an
$n$-cluster tilting subcategory of $\mod\Lambda$. Thus (C$_n$) holds.
By Propositions \ref{n-AR translation}(b) and \ref{2 kinds}(b),
we have that $\Lambda$ is $\tau_n$-finite and $\PP(\MM)=\add\Lambda$ hold.
Thus (A$_n$) and (B$_n$) also holds.

(a) Any $1$-complete algebra is absolutely $1$-complete by definition.
Since absolute $1$-cluster tilting objects are nothing but additive
generators of $\mod\Lambda$, the assertion follows from (b).
\end{proof}


Now we state our main theorem in this paper.
It gives an inductive construction of algebras with $n$-cluster
tilting objects.

\begin{theorem}\label{main}
For any $n\ge1$, the cone of an $n$-complete algebra is $(n+1)$-complete.
\end{theorem}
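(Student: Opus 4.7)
The plan is to take $\Gamma := \End_\Lambda(M)$ for an additive generator $M$ of $\MM = \MM_n(D\Lambda)$ (which exists by Proposition \ref{weak is weak}(d)) and verify the four defining conditions of $(n+1)$-completeness for $\Gamma$. The bound $\gl\Gamma \le n+1$ is immediate from Theorem \ref{Auslander correspondence} applied to $M$, viewed as an $(n-1)$-relative $n$-cluster tilting object. The rest of the argument will go through the functor $\Phi := \Hom_\Lambda(M,-) : \mod\Lambda \to \mod\Gamma$, which restricts to a fully faithful equivalence $\MM \to \pr\Gamma$ sending $M \mapsto \Gamma$ and $D\Lambda$ to the projective summand of $\Gamma$ whose Nakayama-dual is $D\Gamma$.

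The first key step is to establish a compatibility of the form $\tau_{n+1} \circ \Phi \simeq \Phi \circ \tau_n$ on appropriate subcategories of $\MM_P$. By Theorem \ref{Auslander correspondence}, a minimal projective $\Gamma$-resolution of the simple quotient of $\Phi(X)$ is the image under $\Phi$ of an $n$-almost split sequence in $\MM$ ending in $X$. Substituting this resolution into the formula $\tau_{n+1}(-) = \Ker(\nu_\Gamma P_{n+1} \to \nu_\Gamma P_n)$ and combining with the relation $\nu_\Gamma \circ \Phi \simeq \Phi \circ \nu_\Lambda$ on $\MM$ yields the claim. Iterating gives $\tau_{n+1}^i(D\Gamma) \simeq \Phi(\tau_n^i(D\Lambda))$ modulo projectives, so $\MM' := \MM_{n+1}(D\Gamma)$ is the additive closure of $\Phi(\MM)$ together with the intermediate syzygies arising in $\add M$-resolutions of objects of $\Phi(\MM)$.

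This description immediately targets (A$_{n+1}$): the objects of $\PP(\MM')$ are of the form $\Phi(X)$ for $X$ running through a distinguished family inside $\MM$, and their additive closure $\add T'$ is shown to be a tilting $\Gamma$-module by translating back via $\Phi$. Specifically, the required $\add T'$-coresolution of $\Gamma$ comes from the $\add T$-coresolution of $\Lambda$ supplied by (A$_n$); the Ext-self-orthogonality of $T'$ follows from the vanishing in (C$_n$); and the bound $\pd T'_\Gamma \le n$ follows from Theorem \ref{Auslander correspondence}(b)(ii) together with the $(n-1)$-relative structure of $M$.

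The main obstacle is (B$_{n+1}$), asserting that $\MM'$ is an $(n+1)$-cluster tilting subcategory of $(T')^\perp$: one must verify the Ext-orthogonality $\Ext^i_\Gamma(\MM', \MM') = 0$ for $0 < i < n+1$, the two maximality statements characterizing $\MM'$ inside $(T')^\perp$, and functorial finiteness. The technical core is an Ext-transfer formula of the shape $\Ext^i_\Gamma(\Phi(X), \Phi(Y)) \cong \Ext^i_\Lambda(X, Y)$ for $X, Y \in \MM$ and $1 \le i \le n-1$, extracted from an $\add M$-projective resolution of $\Phi(Y)$ together with the cluster-tilting vanishing $\Ext^i_\Lambda(\MM, \MM) = 0$ for $0 < i < n$ granted by (B$_n$). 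Once this bridge is in hand, orthogonality and maximality for $\MM'$ inside $(T')^\perp$ reduce to the corresponding properties for $\MM$ inside $T^\perp$, and functorial finiteness is inherited from the $n$-cluster tilting structure on the $\Lambda$-side. Condition (C$_{n+1}$) is then obtained by applying the same transfer machinery to the $(n-1)$-relative injective resolution of $\Gamma$ supplied by Theorem \ref{Auslander correspondence}(b)(iii), combined with (C$_n$); this is where the full strength of the Auslander-type symmetry between (C$_n$) and its opposite is used.
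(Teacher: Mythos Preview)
Your proposal rests on a misidentification of the key functor. The map $\Phi=\Hom_\Lambda(M,-)$ sends $\MM$ onto $\pr\Gamma$; in particular $\tau_{n+1}\Phi(X)=0$ for every $X\in\MM$, so the compatibility ``$\tau_{n+1}\circ\Phi\simeq\Phi\circ\tau_n$'' is vacuous and cannot yield the description of $\NN:=\MM_{n+1}(D\Gamma)$ you propose. In the paper the relevant functor is $\G:=D\Hom_\Lambda(-,M):\MM\to\inj\Gamma$, and the interaction with $\tau_{n+1}$ is only a \emph{monomorphism} $\alpha:\tau_{n+1}\G\hookrightarrow\G\tau_n$ (Proposition~\ref{functorial}(d)), not an isomorphism. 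Iterating $\alpha$ one finds that the indecomposables of $\NN$ are the $\tau_{n+1}^{j}\G\tau_n^{i}I$ with $I\in\II(\MM)$ and $0\le i+j\le\ell_I$ (Corollary~\ref{indecomposable}), a genuinely two-parameter ``triangular'' family, not ``$\Phi(\MM)$ plus syzygies''. Your Ext-transfer formula $\Ext^i_\Gamma(\Phi(X),\Phi(Y))\cong\Ext^i_\Lambda(X,Y)$ is likewise uninformative: both sides vanish for $0<i<n$ (the left because $\Phi(X),\Phi(Y)$ are projective), so nothing about $\NN$ can be extracted from it.

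Even with the functor corrected to $\G$, the proposed reduction of (B$_{n+1}$) to (B$_n$) does not go through. The category $U^\perp\subset\mod\Gamma$ is not obtained from $T^\perp\subset\mod\Lambda$ by applying $\G$ or $\Phi$, and maximality of $\NN$ in $U^\perp$ is not a formal consequence of maximality of $\MM$ in $T^\perp$. The paper's proof is essentially constructive: after verifying (C$_{n+1}$) via Proposition~\ref{key} (using $\alpha$ and exactness of $\tau_n$ on $\MM_P$, Lemma~\ref{induction}) and building the tilting module $U$ for (A$_{n+1}$) by an explicit resolution argument (Lemma~\ref{generate}), it proves (B$_{n+1}$) by exhibiting, for every indecomposable of $\NN$, a source sequence of length $\le n+2$ obtained as a \emph{mapping cone} of the morphism $\tau_{n+1}^{\ell-1}\beta$ between two $\G$-images of source sequences in $\MM$ (Propositions~\ref{injective source} and \ref{n+1-almost split}); one then invokes the criterion Theorem~\ref{basic}(b)(iii)$\Rightarrow$(i). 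This mapping-cone step is the substantive idea missing from your outline.
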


For the case $n=1$, we have the result below immediately from
Proposition \ref{1-complete}(a) and Theorem \ref{main}.
This explains the reason why the Auslander algebras in Example
\ref{M calculation} have $2$-cluster tilting objects.

\begin{corollary}
Let $\Lambda$ be a representation-finite hereditary algebra and
$\Gamma$ an Auslander algebra of $\Lambda$. Then $\Gamma$ has a
$1$-relative $2$-cluster tilting object.
\end{corollary}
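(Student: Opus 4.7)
The plan is simply to chain Proposition \ref{1-complete}(a) with the main theorem (Theorem \ref{main}), and then unwind the definition of $n$-completeness to exhibit the desired object.

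First, since $\Lambda$ is representation-finite and hereditary, Proposition \ref{1-complete}(a) yields that $\Lambda$ is $1$-complete. In particular, $\Lambda$ is $\tau_1$-finite by Proposition \ref{weak is weak}(d), so $\MM = \MM_1(D\Lambda)$ admits an additive generator $M$. Since $\Lambda$ is hereditary representation-finite, every indecomposable $\Lambda$-module is preinjective, so $\MM_1(D\Lambda) = \mod\Lambda$ and $M$ is an additive generator of $\mod\Lambda$. Consequently, $\End_\Lambda(M)$ is precisely the Auslander algebra $\Gamma$, which coincides with the \emph{cone} of $\Lambda$ in the sense introduced after Proposition \ref{weak is weak}.

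Next, Theorem \ref{main} applies: since $\Lambda$ is $1$-complete, its cone $\Gamma$ is $2$-complete. Unwinding Definition \ref{complete} for $n=2$, there is a tilting $\Gamma$-module $T$ with
\[\add T = \PP(\MM_2(D\Gamma)) = \{X \in \MM_2(D\Gamma) \mid \pd X_\Gamma < 2\},\]
such that $\MM_2(D\Gamma)$ is a $2$-cluster tilting subcategory of $T^{\perp}$ (conditions (A$_2$) and (B$_2$)). In particular, $\pd T_\Gamma \le 1$. Applying Proposition \ref{weak is weak}(d) to $\Gamma$ itself, we see that $\Gamma$ is $\tau_2$-finite, so $\MM_2(D\Gamma)$ admits an additive generator $N$. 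This object $N$ is therefore a $2$-cluster tilting object of $T^{\perp}$ with $\pd T_\Gamma \le 1$, i.e., a $1$-relative $2$-cluster tilting object of $\Gamma$, as required.

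No genuine obstacle appears here, because all the real work has been carried out in Proposition \ref{1-complete}(a) and Theorem \ref{main}; the only thing to verify is the identification of the cone of $\Lambda$ with the classical Auslander algebra of $\Lambda$, which is immediate from the fact that $\MM_1(D\Lambda) = \mod\Lambda$ for hereditary representation-finite $\Lambda$.
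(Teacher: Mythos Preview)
Your proof is correct and follows exactly the approach indicated in the paper, which simply cites Proposition \ref{1-complete}(a) and Theorem \ref{main}; you have merely filled in the routine verifications that the cone of a representation-finite hereditary algebra is its Auslander algebra (since $\MM_1(D\Lambda)=\mod\Lambda$) and that $2$-completeness unwinds to give a $1$-relative $2$-cluster tilting object.
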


Our Theorem \ref{main} gives the following inductive construction
of algebras with $n$-cluster tilting objects.

\begin{corollary}\label{main2}
Let $\Lambda^{(1)}$ be a representation-finite hereditary algebra.
Then there exists an algebra $\Lambda^{(n)}$ for any $n\ge1$ such that
$\Lambda^{(n)}$ is an $n$-complete algebra with the cone
$\Lambda^{(n+1)}$.
\end{corollary}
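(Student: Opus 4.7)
The plan is to proceed by induction on $n$, invoking Proposition \ref{1-complete}(a) for the base case and Theorem \ref{main} for the inductive step. The statement of the corollary is essentially a formal assembly of the two main structural results already established in the paper, so the proof should be short; the real work has been done upstream.

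For the base case $n=1$, by hypothesis $\Lambda^{(1)}$ is representation-finite and hereditary, so Proposition \ref{1-complete}(a) gives immediately that $\Lambda^{(1)}$ is $1$-complete. For the inductive step, assume $\Lambda^{(n)}$ has been constructed and is $n$-complete. Then by Proposition \ref{weak is weak}(d), $\Lambda^{(n)}$ is $\tau_n$-finite, which means $\tau_n^\ell(D\Lambda^{(n)})=0$ for $\ell$ large enough. Consequently $\MM_n(D\Lambda^{(n)})$ has only finitely many indecomposable objects up to isomorphism, and therefore admits an additive generator $M^{(n)}$. Set
\[
\Lambda^{(n+1)} := \End_{\Lambda^{(n)}}(M^{(n)}),
\]
which is by definition the cone of $\Lambda^{(n)}$ (as introduced immediately after Proposition \ref{weak is weak}). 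Applying Theorem \ref{main} to the $n$-complete algebra $\Lambda^{(n)}$ yields that this cone $\Lambda^{(n+1)}$ is $(n+1)$-complete, completing the induction.

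There is no genuine obstacle at this stage: the only thing to verify is that the inductive hypothesis supplies exactly the data needed to form the cone (finiteness of $\MM_n$, hence the existence of the additive generator $M^{(n)}$), and Proposition \ref{weak is weak}(d) provides this. The substantive content is encapsulated in Theorem \ref{main}, whose proof would be the place where the real difficulty lies (verifying the conditions (A$_{n+1}$), (B$_{n+1}$), (C$_{n+1}$) for the cone, and in particular that $\gl\Lambda^{(n+1)}\le n+1$). As a byproduct of this induction one obtains, at each step, an $n$-cluster tilting object $M^{(n)}$ in the appropriate subcategory $T^\perp\subseteq\mod\Lambda^{(n)}$, which justifies the phrase "inductive construction of algebras with $n$-cluster tilting objects."
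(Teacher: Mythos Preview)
Your proof is correct and matches the paper's intended argument: the paper does not write out a separate proof for this corollary, presenting it instead as an immediate inductive consequence of Theorem~\ref{main} together with Proposition~\ref{1-complete}(a), which is exactly what you have spelled out. Your explicit invocation of Proposition~\ref{weak is weak}(d) to guarantee the existence of the additive generator $M^{(n)}$ is a helpful clarification of what the paper leaves implicit in its definition of the cone.
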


The quivers with relations of these algebras $\Lambda^{(n)}$ will be
given in Theorem \ref{n-AR quiver}.

Now we apply Corollary \ref{main2} to a special case.
We denote by $T_m(F)$ the $m\times m$ upper triangular matrix algebra
over an algebra $F$.
They form an important class of algebras by the following easy fact.

\begin{proposition}\label{0-Auslander}
Let $\Lambda$ be a ring-indecomposable finite dimensional algebra.
Then $\gl\Lambda\le1\le\dom\Lambda$ holds if and only if $\Lambda$ is
Morita equivalent to $T_m(F)$ for some division algebra $F$ and
$m\ge1$.
\end{proposition}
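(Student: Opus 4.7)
My plan is to verify both directions of the equivalence, with the forward direction requiring substantive structural analysis and the reverse being a direct computation.

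For the $(\Leftarrow)$ direction, I would identify $T_m(F)$ with the tensor algebra of the species on the linearly oriented quiver $1\to 2\to\cdots\to m$, with division algebra $F$ at each vertex and the regular $F$-bimodule $F$ on each arrow. Then $T_m(F)$ is hereditary. Its indecomposable projective $P_m$ at the sink is uniserial with composition factors $S_m,S_{m-1},\ldots,S_1$ from top to bottom, has simple socle $S_1$, and is in fact the injective envelope of $S_1$, hence projective-injective. Every other indecomposable projective $P_i$ embeds into $P_m$. Since $\soc\Lambda_\Lambda$ is a direct sum of copies of $S_1$, its injective envelope $I_0$ lies in $\add P_m\subset\add\Lambda$, giving $\dom\Lambda\ge 1$.

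For the $(\Rightarrow)$ direction, I would reduce to $\Lambda$ basic. Being hereditary and ring-indecomposable, $\Lambda$ is isomorphic to the tensor algebra of a species $(F_i,M_\alpha)$ on some connected acyclic quiver $Q$, with division algebra $F_i$ at each vertex $i$ and bimodule $M_\alpha$ on each arrow $\alpha$. By heredity, $\soc\Lambda_\Lambda$ is semisimple projective and its indecomposable constituents are the simples $S_s$ attached to sources $s$ of $Q$. Hence $\dom\Lambda\ge 1$ is equivalent to: for every source $s$, the indecomposable injective $I_s=D(\Lambda e_s)$ is projective. Matching the top of $I_s$ against the simple top $S_j$ of an indecomposable projective $P_j$, and comparing $k$-dimensions, forces for each source $s$ the existence of a unique sink $t_s$ reachable from $s$ by a unique directed path, with $s$ being the unique source mapping into $t_s$.

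A short combinatorial argument using these uniqueness consequences pins down $Q$: fix a source $s$ and follow the unique path $L\colon s=v_0\to\cdots\to v_m=t_s$. Any extra arrow incident to an interior vertex $v_i$ would produce either a second directed path from $s$ to $t_s$ or a second sink reachable from $s$, contradicting uniqueness. Hence each $v_i$ with $0<i<m$ has in- and out-degree one in $Q$, so $L$ is an isolated linear subquiver, and connectedness of $Q$ forces $Q=L=A_m$. With the quiver identified, the isomorphism $I(S_1)\cong P_m$ of $\Lambda$-modules (rather than merely equidimensional modules) forces $P_m$ to have simple socle $S_1$, equivalently $\dim_{F_1}(M_{m-1}\otimes_{F_{m-1}}\cdots\otimes_{F_1} M_1)=1$; combined with the identification of division algebras as endomorphism rings of the simples along the uniserial chain, this yields $F_1\cong\cdots\cong F_m=:F$ and each $M_\alpha\cong F$ as an $F$-$F$-bimodule, so $\Lambda\cong T_m(F)$. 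The main obstacle is this last identification of the species data; the combinatorial reduction to linear $A_m$ is routine once the dimension/top comparison is in place, and the $(\Leftarrow)$ direction is a direct verification.
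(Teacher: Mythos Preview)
Your approach is sound in outline but takes a genuinely different route from the paper. The paper argues purely module-theoretically: pick an indecomposable projective-injective $P_1$ (existing since $\dom\Lambda\ge 1$), form the chain $P_i:=P_1 J_\Lambda^{i-1}$, note each $P_i$ is indecomposable projective (heredity plus simplicity of $\soc P_1$), compute $\Hom_\Lambda(P_i,P_j)$ by extending any nonzero map along $P_i\hookrightarrow P_1$ to an automorphism of the injective module $P_1$, and show $P=\bigoplus_i P_i$ is a progenerator via ring-indecomposability. No species enter. Your argument through the tensor-algebra description of hereditary algebras is legitimate but invokes a structure theorem usually stated under the hypothesis that $\Lambda/J_\Lambda$ is separable over $k$; the paper's proof is field-independent. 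The step you flag as the ``main obstacle'' --- identifying the bimodule data once $Q$ is known to be linear $A_m$ --- is exactly where the paper's approach is cleaner: it bypasses all species bookkeeping by computing $\End_\Lambda(P)\cong T_m(F)$ directly, with $F=\End_\Lambda(P_1)$. Your approach, on the other hand, makes the underlying Ext-quiver combinatorics explicit and would adapt more naturally to related classification questions.
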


\begin{proof}
We provide a proof for the convenience of the reader.

Since $\dom\Lambda\ge1$, there exists an indecomposable projective-injective $\Lambda$-module $P_1$.
Put $P_i:=P_1J_\Lambda^{i-1}$ for any $i>0$.
Then there exists $m\ge1$ such that $P_{m+1}=0$.
Since $\gl\Lambda\le1$, each $P_i$ is a projective $\Lambda$-module.
Since $\soc P_1$ is simple, each $P_i$ is indecomposable, and so $P_i$ has a unique maximal submodule $P_{i+1}$.
Consequently $P_1$ has a unique composition series
\[P_1\supset P_2\supset\cdots\supset P_m\supset P_{m+1}=0.\]
We often use the fact that any non-zero morphism between indecomposable projective modules is injective, which is a conclusion of $\gl\Lambda\le1$.
In particular $F:=\End_\Lambda(P_1)$ is a division algebra.
Put $P:=\bigoplus_{i=1}^mP_i$.

(i) We shall show that $\End_\Lambda(P)\simeq T_m(F)$.

Since $P_1$ is injective, any non-zero morphism $P_i\to P_j$ extends to a morphism $P_1\to P_1$, which is an isomorphism.
Thus we have
\[\Hom_\Lambda(P_i,P_j)\simeq\left\{
\begin{array}{cc}
0&(i<j),\\
F&(i\ge j).
\end{array}\right.\]
This implies the assertion.

(ii) We shall show that $P$ is a progenerator of $\Lambda$.

Since $\Lambda$ is ring-indecomposable,
we only have to show that, for any indecomposable projective $\Lambda$-module $Q$ such that $\Hom_\Lambda(Q,P)\neq0$ or $\Hom_\Lambda(P,Q)\neq0$,
we have $Q\in\add P$.
If $\Hom_\Lambda(Q,P)\neq0$, then $\Hom_\Lambda(Q,P_1)\neq0$. Thus $Q$ is a submodule of $P_1$, and we have $Q\simeq P_i$ for some $i$.
If $\Hom_\Lambda(P,Q)\neq0$, then $\Hom_\Lambda(Q,P_1)\neq0$ since $P_1$ is an injective hull of each $P_i$.
Thus $Q\in\add P$ by the previous observation.
\end{proof}

Applying Corollary \ref{main2} to $T_m(F)$, we have a family of algebras with
$n$-cluster tilting objects. Moreover, they are absolute by the
following result.

\begin{theorem}\label{recall}
For any division algebra $F$ and $m\ge1$, there exists an algebra
$T_m^{(n)}(F)$ for any $n\ge 1$ such that $T_m^{(1)}(F)=T_m(F)$ and
$T_m^{(n)}(F)$ is an absolutely $n$-complete algebra with the cone
$T_m^{(n+1)}(F)$.
\end{theorem}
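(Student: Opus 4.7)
The plan is to construct the $T_m^{(n)}(F)$ by iterating the cone construction and verify absolute $n$-completeness inductively. Set $T_m^{(1)}(F):=T_m(F)$ and, whenever $T_m^{(n)}(F)$ has been defined and shown to be absolutely $n$-complete, take $T_m^{(n+1)}(F)$ to be its cone (which exists since an $n$-complete algebra is $\tau_n$-finite by Proposition \ref{weak is weak}(d), so $\MM$ has an additive generator). For the base case, $T_m(F)$ is ring-indecomposable with $\gl T_m(F)\le 1\le\dom T_m(F)$ by Proposition \ref{0-Auslander}, so it is representation-finite and hereditary, hence $1$-complete by Proposition \ref{1-complete}(a). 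Absolute $1$-completeness is the extra requirement $\PP(\MM)=\add\Lambda$, and since $\Lambda$ is hereditary we have $\PP(\MM)=\{X\in\MM\mid \pd X_\Lambda=0\}$, i.e.\ the indecomposable projective summands of $\MM$; because $T_m(F)$ is representation-finite hereditary, $\MM_1(D\Lambda)=\mod\Lambda$ contains all indecomposable projectives, giving $\PP(\MM)=\add\Lambda$.

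For the inductive step, assume $\Lambda:=T_m^{(n)}(F)$ is absolutely $n$-complete, let $M$ be an additive generator of $\MM_n(D\Lambda)$, and set $\Gamma:=T_m^{(n+1)}(F)=\End_\Lambda(M)$. Theorem \ref{main} immediately delivers $(n+1)$-completeness of $\Gamma$, so what remains is to upgrade this to absolute $(n+1)$-completeness, i.e.\ to verify $\PP(\MM_{n+1}(D\Gamma))=\add\Gamma$. By Theorem \ref{Auslander correspondence} applied with $m=n-1$, the indecomposable projective $\Gamma$-modules correspond bijectively via $\Hom_\Lambda(M,-)$ with the indecomposable summands of $M$, and among those the ones hit by $\Hom_\Lambda(M,-)$ from indecomposable summands of $\Lambda\subset M$ (which exist precisely because $\PP(\MM_n(D\Lambda))=\add\Lambda$) form a distinguished family.

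The strategy for the final verification is then: show that the bijection of Proposition \ref{weak is weak}(c) for $\Gamma$ identifies the indecomposable injective $\Gamma$-modules (which are $\Hom_\Lambda(M,-)$ applied to the summands of $D\Lambda=\II(\MM_n(D\Lambda))$) with the $\tau_{n+1}$-closure components meeting $\add\Gamma$, and then use the inductive hypothesis $\PP(\MM_n(D\Lambda))=\add\Lambda$ to control which indecomposables of $\MM_{n+1}(D\Gamma)$ have $\pd_\Gamma<n+1$. Concretely, I would argue that an indecomposable $X\in\MM_{n+1}(D\Gamma)$ with $\pd X_\Gamma<n+1$ must, under the correspondence with $\MM_n(D\Lambda)$-data, arise from an object of $\PP(\MM_n(D\Lambda))$, hence from a summand of $\Lambda$, hence $X\in\add\Gamma$.

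The main obstacle is precisely this last translation step: turning the absolute $n$-condition on $\Lambda$ into the absolute $(n+1)$-condition on $\Gamma$. This requires a clean bookkeeping of how projective dimensions over $\Gamma$ of objects in $\MM_{n+1}(D\Gamma)$ are governed by projective dimensions over $\Lambda$ of their $\Lambda$-theoretic preimages in $\MM_n(D\Lambda)$, together with the explicit compatibility of the functors $\tau_n$ (on the $\Lambda$-side) and $\tau_{n+1}$ (on the $\Gamma$-side) under $\Hom_\Lambda(M,-)$. Once that dictionary is set up — essentially the content of the structural results underlying Theorem \ref{main} — the conclusion $\PP(\MM_{n+1}(D\Gamma))=\add\Gamma$ will follow formally from $\PP(\MM_n(D\Lambda))=\add\Lambda$, closing the induction.
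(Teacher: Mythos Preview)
Your inductive setup is fine, and you have correctly isolated the only nontrivial point: passing from $\PP(\MM_n(D\Lambda))=\add\Lambda$ to $\PP(\MM_{n+1}(D\Gamma))=\add\Gamma$. But your final paragraph is not a proof of this step; it is a hope that ``the dictionary'' will deliver it formally. It does not. By Corollary~\ref{indecomposable}(b) the indecomposables of $\PP(\NN)$ are the $\tau_{n+1}^{j}\G\tau_n^{i}I$ with $i+j=\ell_I$. For $j=0$ one indeed lands in $\add\Gamma$ via $\G P\simeq\F\nu_\Lambda P$ for $P\in\add\Lambda$. For $j>0$, however, Lemma~\ref{right approximation}(b) only gives $\tau_{n+1}^{j}\G X\simeq\Hom_\Gamma(\Gamma_{j},\G\tau_n^{j}X)$, and there is no reason in general for $\Hom_\Gamma(\Gamma_{j},-)$ applied to a projective--injective $\Gamma$-module to be projective. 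Your claimed implication ``arises from $\PP(\MM_n(D\Lambda))$, hence $X\in\add\Gamma$'' is exactly the missing step.

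The paper closes this gap by strengthening the inductive hypothesis: it carries along not just absolute $n$-completeness but the additional datum of a tower of algebra surjections $\Lambda=\Lambda_0\to\Lambda_1\to\cdots$ with $\tau_n^{\ell}(D\Lambda)\simeq D\Lambda_{\ell}$ as $\Lambda$-modules. This extra hypothesis is what makes $\Hom_\Gamma(\Gamma_{j},\G\Lambda)\simeq\Hom_\Lambda(M,D\Lambda_{j})$ projective (Lemma~\ref{criterion for whole module category}), and it is self-propagating: Lemma~\ref{tau_{n+1} calculation} shows $\tau_{n+1}^{\ell}(D\Gamma)\simeq D\Gamma_{\ell}$, so the cone again satisfies the strengthened hypothesis. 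For $T_m(F)$ the tower is just $T_m(F)\to T_{m-1}(F)\to\cdots$, and the base case $\tau^{\ell}(D\Lambda)\simeq D\Lambda_{\ell}$ is an elementary check. Without introducing this auxiliary condition (or proving an equivalent lemma), your induction does not close.
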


The quiver of $T_m^{(n)}(k)$ will be given in Theorem \ref{n-AR
  quiver} as follows. It looks like an $(n+1)$-simplex.
The relations are given by commutative relations for each small square,
and zero relations for each small half square.
{\tiny\[T_4^{(1)}(k)\ \ \xymatrix@C=0.2cm@R0.0cm{
\bullet\ar[dr]\\
&\bullet\ar[dr]\\
&&\bullet\ar[dr]\\
&&&\bullet}
\ \ \ \ \ \ \ \ T_4^{(2)}(k)\xymatrix@C=0.2cm@R0.0cm{
&&&\bullet\ar[dr]\\
&&\bullet\ar[ur]\ar[dr]&&\bullet\ar[dr]\\
&\bullet\ar[ur]\ar[dr]&&\bullet\ar[ur]\ar[dr]&&\bullet\ar[dr]\\
\bullet\ar[ur]&&\bullet\ar[ur]&&\bullet\ar[ur]&&\bullet}
\ \ \ \ \ \ \ \ T_4^{(3)}(k)\xymatrix@C=0.2cm@R0.0cm{
&&&\bullet\ar[dr]\\
&&\bullet\ar[ur]\ar[dr]&&\bullet\ar[dr]\\
&\bullet\ar[ur]\ar[dr]&&\bullet\ar[ur]\ar[dr]&&\bullet\ar[dr]\\
\bullet\ar[ur]&&\bullet\ar[ur]&&\bullet\ar[ur]&&\bullet\\
 &&&\bullet\ar[dr]\ar[uuul]\\
 &&\bullet\ar[ur]\ar[dr]\ar[uuul]&&\bullet\ar[dr]\ar[uuul]\\
 &\bullet\ar[ur]\ar[uuul]&&\bullet\ar[ur]\ar[uuul]&&\bullet\ar[uuul]\\
\ \\
&&&\bullet\ar[dr]\ar[uuul]\\
&&\bullet\ar[ur]\ar[uuul]&&\bullet\ar[uuul]\\
\ \\
\ \\
&&&\bullet\ar[uuul]
}\]}
{\tiny\[
T_4^{(4)}(k)\xymatrix@C=0.2cm@R0.0cm{
&&&&&&&&&&&&&\bullet\ar[dr]&&&&\\
&&&&&&&&&&&&\bullet\ar[ur]\ar[dr]&&\bullet\ar[dr]&&&\\
&&&&&&&&&&&\bullet\ar[ur]\ar[dr]&&\bullet\ar[ur]\ar[dr]&&\bullet\ar[dr]&&\\
&&&&&&&&\bullet\ar[dr]\ar[drrrrr]&&
\bullet\ar[ur]&&\bullet\ar[ur]&&\bullet\ar[ur]&&\bullet\\
&&&&&&&\bullet\ar[ur]\ar[dr]\ar[drrrrr]&&\bullet\ar[dr]\ar[drrrrr]&&
&&\bullet\ar[dr]\ar[uuul]\\
&&&&&&\bullet\ar[ur]\ar[drrrrr]&&\bullet\ar[ur]\ar[drrrrr]&&\bullet\ar[drrrrr]
&&\bullet\ar[ur]\ar[dr]\ar[uuul]&&\bullet\ar[dr]\ar[uuul]&&\\
&&&\bullet\ar[dr]\ar[drrrrr]&&&&&&&
&\bullet\ar[ur]\ar[uuul]&&\bullet\ar[ur]\ar[uuul]&&\bullet\ar[uuul]&&\\
&&\bullet\ar[ur]\ar[drrrrr]&&\bullet\ar[drrrrr]&&
&&\bullet\ar[dr]\ar[uuul]\ar[drrrrr]\\
&&&&&&&\bullet\ar[ur]\ar[uuul]\ar[drrrrr]&&\bullet\ar[uuul]\ar[drrrrr]&
&&&\bullet\ar[dr]\ar[uuul]\\
\bullet\ar[drrr]&\ \ \ \ \ \ \ \ \ \ \ \ &&&&&&&&&
&&\bullet\ar[ur]\ar[uuul]&&\bullet\ar[uuul]\\
&&&\bullet\ar[uuul]\ar[drrrrr]\\
&&&&&&&&\bullet\ar[uuul]\ar[drrrrr]\\
&&&&&&&&&&&&&\bullet\ar[uuul]
}\]}

While there are a lot of algebras with relative $n$-cluster tilting
objects, algebras with absolute $n$-cluster tilting objects are rather rare.
In fact the following result shows a certain converse of Theorem
\ref{recall}.

\begin{theorem}\label{iterated Auslander}
Let $\Lambda$ be a ring-indecomposable finite dimensional algebra
satisfying $\gl\Lambda\le n\le\dom\Lambda$ for some $n\ge1$.
Then $\Lambda$ has an absolute $n$-cluster tilting subcategory
if and only if $\Lambda$ is Morita equivalent to $T_m^{(n)}(F)$ for
some division algebra $F$ and $m\ge1$.
\end{theorem}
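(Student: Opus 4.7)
The ``if'' direction follows from Theorem \ref{recall}: that theorem asserts $T_m^{(n)}(F)$ is absolutely $n$-complete, so by definition $\PP(\MM)=\add T_m^{(n)}(F)$; taking the tilting module $T$ in $(A_n)$ to be $T_m^{(n)}(F)$ itself, we have $T^{\perp}=\mod T_m^{(n)}(F)$, and $(B_n)$ then exhibits $\MM$ as an absolute $n$-cluster tilting subcategory. The inequality $\gl\le n$ is built into the definition of $n$-completeness, while $\dom\ge n$ follows by applying Theorem \ref{Auslander correspondence} with $m=0$ to the presentation $T_m^{(n)}(F)=\End_{T_m^{(n-1)}(F)}(M^{(n-1)})$ coming from the cone construction.

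For the ``only if'' direction I would proceed by induction on $n$. The base case $n=1$ is immediate: the hypotheses read $\gl\Lambda\le 1\le\dom\Lambda$, and Proposition \ref{0-Auslander} directly yields $\Lambda$ Morita equivalent to $T_m(F)=T_m^{(1)}(F)$. The condition of admitting an absolute $1$-cluster tilting subcategory is vacuous for such $\Lambda$, since any representation-finite algebra has $\mod\Lambda$ itself as such a subcategory.

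For the inductive step, assume the result at level $n-1\ge 1$ and let $\Lambda$ satisfy the hypotheses at level $n$. The minimal injective resolution of $\Lambda$ has the form $0\to\Lambda\to I_0\to\cdots\to I_n\to 0$ with $I_0,\ldots,I_{n-1}$ projective, so condition (b) of Theorem \ref{Auslander correspondence} is satisfied with ``$n$'' replaced by $n-1$ and $m=0$. This produces a finite dimensional algebra $\Lambda'$ together with an absolute $(n-1)$-cluster tilting object $M'\in\mod\Lambda'$ such that $\Lambda\cong\End_{\Lambda'}(M')$. The main obstacle of the argument is to verify that $\Lambda'$ itself satisfies the hypotheses of the theorem at level $n-1$: ring-indecomposability transfers because any nontrivial product decomposition of $\Lambda'$ would split $M'$ and hence $\Lambda\cong\End_{\Lambda'}(M')$, contradicting the hypothesis on $\Lambda$; the regularity inequalities $\gl\Lambda'\le n-1\le\dom\Lambda'$ should be extracted from $\gl\Lambda\le n\le\dom\Lambda$ by invoking the opposite-side assertion (iii) of Theorem \ref{Auslander correspondence}(b) together with the natural equivalence between $\mod\Lambda'$ and modules over $\End_\Lambda(P)^{\op}$, where $P$ is a basic projective-injective $\Lambda$-module. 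Once this verification is in place, the inductive hypothesis gives $\Lambda'\cong T_m^{(n-1)}(F)$, and Theorem \ref{unique n-cluster} forces $M'$ to coincide with the additive generator of $\MM_{n-1}(D\Lambda')$. Therefore $\Lambda\cong\End_{\Lambda'}(M')$ is the cone of $T_m^{(n-1)}(F)$, which by the recursive definition is precisely $T_m^{(n)}(F)$.
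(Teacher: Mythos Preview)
Your inductive skeleton—descend via Auslander correspondence to $\Lambda'$, apply the inductive hypothesis, then use uniqueness of the $(n-1)$-cluster tilting object (Theorem \ref{unique n-cluster}) to identify the cone—is exactly the paper's strategy. The gap is in the step you yourself flag as the ``main obstacle'': the inequalities $\gl\Lambda'\le n-1\le\dom\Lambda'$ do \emph{not} follow from $\gl\Lambda\le n\le\dom\Lambda$ via Theorem \ref{Auslander correspondence}(b)(iii) and the description $\Lambda'\simeq\End_\Lambda(P)^{\op}$. For a concrete failure take $n=2$: every Auslander algebra $\Lambda$ satisfies $\gl\Lambda\le 2\le\dom\Lambda$, but the underlying representation-finite algebra $\Lambda'$ is arbitrary—for instance $\Lambda'=k[x]/(x^2)$ has infinite global dimension. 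The conditions in Theorem \ref{Auslander correspondence}(b) are conditions on $\Lambda$, not on $\Lambda'$, and cannot by themselves constrain $\Lambda'$ this tightly.

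What is missing is precisely the use of the hypothesis that $\Lambda$ admits an absolute $n$-cluster tilting subcategory. The paper supplies this via two dedicated results in Section \ref{section: Absolute}. Proposition \ref{gl.dim n} shows that $\gl\Lambda'\le n-1$ is equivalent to $\Ext^i_\Lambda(D\Lambda,\Lambda)=0$ for $0<i\le n-1$; this vanishing holds because $D\Lambda$ and $\Lambda$ both lie in the $n$-rigid cluster tilting subcategory of $\mod\Lambda$. Proposition \ref{inverse iterate} then establishes $\dom\Lambda'\ge n-1$ by a more delicate argument: one applies $\Hom_{\Lambda'}(M',-)$ to the minimal injective resolution of $\Lambda'$, identifies the resulting cokernel as an injective $\Lambda$-module $X$, observes that $Y:=\tau_nX$ lies in the $n$-cluster tilting subcategory of $\mod\Lambda$ with $\pd Y_\Lambda<n$ and hence is projective, and deduces from minimality that the first $n-1$ terms of the injective resolution of $\Lambda'$ are projective. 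Both steps genuinely require the $n$-cluster tilting structure on $\mod\Lambda$, not merely the numerical bounds on global and dominant dimension.
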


For example an Auslander algebra $\Gamma$ of a ring-indecomposable
representation-finite algebra $\Lambda$
has an absolute $2$-cluster tilting object if and only if $\Lambda$ is
Morita equivalent to $T_m^{(2)}(F)$ for some division algebra $F$ and
$m\ge1$.

A key step to prove Theorem \ref{iterated Auslander} is the following
more explicit version of Auslander correspondence
(Theorem \ref{Auslander correspondence}), which will be shown in
Proposition \ref{gl.dim n}.

\begin{theorem}
Let $n\ge1$. For a finite dimensional algebra $\Gamma$, the following conditions are equivalent.
\begin{itemize}
\item[(a)] There exists a finite dimensional algebra $\Lambda$ with $\gl\Lambda\le n$
and an absolute $n$-cluster tilting object $M$ of $\Lambda$ such that $\Gamma\simeq\End_\Lambda(M)$.
\item[(b)] $\gl\Gamma\le n+1\le\dom\Gamma$ and $\Ext^i_\Gamma(D\Gamma,\Gamma)=0$ for any $0<i\le n$.
\end{itemize}
\end{theorem}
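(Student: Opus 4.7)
The plan is to deduce the equivalence from Theorem~\ref{Auslander correspondence} applied with $m = 0$. Indeed, condition (a) here is exactly condition (a) of that theorem for $m = 0$; the inequalities $\gl\Gamma \le n+1$ and $n+1 \le \dom\Gamma$ capture conditions (b)(i) and (b)(ii) with $m = 0$, since $\pd(I_i)_\Gamma \le 0$ simply means $I_i$ is projective. Hence the substantive new content is to show that, given these two inequalities, the Ext-vanishing $\Ext^i_\Gamma(D\Gamma, \Gamma) = 0$ for $0 < i \le n$ is equivalent to the opposite-side condition (b)(iii) of Theorem~\ref{Auslander correspondence} with $m = 0$.

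For the direction (a) $\Rightarrow$ (b), I would leverage that $\gl\Lambda \le n$ together with $\Lambda, D\Lambda \in \add M$. The minimal injective resolution $0 \to M \to J_0 \to \cdots \to J_n \to 0$ of $M$ in $\mod\Lambda$ has each $J_i \in \add D\Lambda \subset \add M$. Applying the fully faithful functor $\Hom_\Lambda(M,-) \colon \add M \to \pr\Gamma$ and using that $\add M$ is $n$-rigid then yields the minimal injective resolution
\[
0 \to \Gamma \to \Hom_\Lambda(M, J_0) \to \cdots \to \Hom_\Lambda(M, J_n) \to \Ext^n_\Lambda(M, M) \to 0
\]
of $\Gamma$ in $\mod\Gamma$, whose first $n+1$ terms are projective-injective and whose final term, via the higher Auslander-Reiten formula $D\Ext^n_\Lambda(X,Y) \cong \Hom_\Lambda(Y, \tau_n X)$ (valid since $\gl\Lambda \le n$), is identified with the injective module $D\Hom_\Lambda(M, \tau_n M)$. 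Computing $\Ext^i_\Gamma(D\Gamma, \Gamma)$ via this resolution and using the Nakayama identification $\Hom_\Gamma(D\Gamma, \nu_\Gamma Q) \cong Q$ for $Q \in \pr\Gamma$, together with $\Hom_\Lambda(M, J_i) = \nu_\Gamma \Hom_\Lambda(M, \nu_\Lambda^- J_i)$, reduces the question to the cohomology of the complex $\nu_\Lambda^- J_\bullet$ of projective $\Lambda$-modules, which by $n$-rigidity applied to $D\Lambda$ is concentrated in degrees $0$ and $n$; a final Nakayama-duality check handles the degree-$n$ contribution.

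For the direction (b) $\Rightarrow$ (a), we invoke Theorem~\ref{Auslander correspondence} to obtain an algebra $\Lambda$ and an $m$-relative $n$-cluster tilting object $M$ for some $m$; it remains to show $m = 0$. The Ext-vanishing hypothesis, combined with $\dom\Gamma \ge n+1$, forces the first $n+1$ terms of the minimal projective resolution of $D\Gamma$ in $\mod\Gamma$ to be projective-injective, which by duality is exactly condition (b)(iii) of Theorem~\ref{Auslander correspondence} with $m = 0$, giving absoluteness. The main obstacle is the Ext computation in (a) $\Rightarrow$ (b): carefully transferring Ext-groups between $\mod\Lambda$ and $\mod\Gamma$ via the equivalence on $\add M$ and the bookkeeping of Nakayama dualities on both sides. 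Once this framework is in place, the vanishing is a direct consequence of the $n$-rigidity of $\add M$.
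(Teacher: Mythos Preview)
Your framing contains a basic misreading: condition (a) of this theorem is \emph{not} condition (a) of Theorem~\ref{Auslander correspondence} with $m=0$; it carries the extra hypothesis $\gl\Lambda\le n$, and that hypothesis is the whole point. By Theorem~\ref{Auslander correspondence} with $m=0$, the inequalities $\gl\Gamma\le n+1\le\dom\Gamma$ alone are already equivalent to the existence of some $\Lambda$ and an absolute $n$-cluster tilting object $M$ with $\Gamma\simeq\End_\Lambda(M)$; condition (b)(iii) there for $m=0$ is automatic from (b)(ii) since dominant dimension is left-right symmetric. So the substantive content of the present theorem is: given such $\Lambda$, $M$, $\Gamma$, one has $\Ext^i_\Gamma(D\Gamma,\Gamma)=0$ for all $0<i\le n$ if and only if $\gl\Lambda\le n$. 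This is exactly Proposition~\ref{gl.dim n}.

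Your (b)$\Rightarrow$(a) argument misses this completely. You obtain $\Lambda$ and $M$ from Theorem~\ref{Auslander correspondence}, then spend the Ext-vanishing hypothesis on ``showing $m=0$'' --- but absoluteness is already free, and you never establish $\gl\Lambda\le n$, which is what (a) demands. The paper's proof of Proposition~\ref{gl.dim n} does \emph{not} assume $\gl\Lambda\le n$ a priori: it takes an injective resolution $0\to M\to I_0\to\cdots\to I_n\to\Omega^{-n-1}M\to0$, applies $\Hom_\Lambda(M,-)$ to obtain a projective resolution of the cokernel $C$ (which, since each $\F I_i$ is projective-injective and $\gl\Gamma\le n+1$, captures every non-projective indecomposable injective $\Gamma$-module), and then shows via Yoneda and an Auslander--Reiten duality argument that $\Ext^i_\Gamma(C,\Gamma)=0$ for $0<i\le n$ is equivalent to $\Omega^{-n-1}M\in\add M$ together with the last map $f_n$ splitting --- i.e.\ to $\id M_\Lambda\le n$, hence to $\gl\Lambda\le n$ by Lemma~\ref{easy gl.dim}(b). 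Your (a)$\Rightarrow$(b) sketch heads in a compatible direction for the ``if'' half, but because you build $\gl\Lambda\le n$ into the setup from the start, your framework cannot recover the converse; the paper's bidirectional argument is what is required.
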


In forthcoming papers \cite{IH,IO1,IO2}, absolutely $n$-complete algebras
will be called \emph{$n$-representation-finite algebras} and a lot of examples
will be constructed. Also combinatorial aspects of $T_m^{(n)}(F)$ will be
studied in \cite{OT}.

\subsection{$n$-cluster tilting in derived categories}

In Section \ref{section: derived},
we construct $n$-cluster tilting subcategories in triangulated
categories.
Let $\Lambda$ be a finite dimensional algebra with
$\id{}_{\Lambda}\Lambda=\id\Lambda_\Lambda<\infty$.
We denote by
\[\DD:=\KK^{\rm b}(\pr\Lambda)\]
the homotopy category of bounded complexes of finitely generated
projective $\Lambda$-modules,
and we identify it with $\KK^{\rm b}(\inj\Lambda)$ in the derived
category of $\Lambda$.
As in Definition \ref{define n-cluster tilting}, we call a
functorially finite subcategory $\CC$ of $\DD$ \emph{$n$-cluster
  tilting} if
\begin{eqnarray*}
\CC&=&\{X\in\DD\ |\ \Hom_{\DD}(X,\CC[i])=0\ (0<i<n)\}\\
&=&\{X\in\DD\ |\ \Hom_{\DD}(\CC,X[i])=0\ (0<i<n)\}.
\end{eqnarray*}
If $\mod\Lambda$ has an absolute $n$-cluster tilting subcategory and
$\gl\Lambda\le n$, then $\DD$ also has an $n$-cluster tilting subcategory
by the following result.

\begin{theorem}\label{derived}
Let $\Lambda$ be a finite dimensional algebra with $\gl\Lambda\le
n$. If $\CC$ is an absolute $n$-cluster tilting subcategory of
$\mod\Lambda$, then
\begin{equation}\label{tilde C}
\CC[n\Z]:=\add\{X[\ell n]\ |\ X\in\CC,\ \ell\in\Z\}
\end{equation}
is an $n$-cluster tilting subcategory of $\DD$.
\end{theorem}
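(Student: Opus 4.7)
The plan is to verify, setting $\widetilde\CC := \CC[n\Z]$, that (i) $\widetilde\CC$ is $n$-rigid in $\DD$, (ii) any $X \in \DD$ satisfying the defining $\Ext$-vanishings lies in $\widetilde\CC$, and (iii) $\widetilde\CC$ is functorially finite. Two facts will be used repeatedly: by Theorem \ref{unique n-cluster}, $\CC = \MM$, so $\Lambda, D\Lambda \in \CC$ and $\CC = \add M$ for a basic $M$ (since $\Lambda$ is $\tau_n$-finite); and $\gl\Lambda \le n$ makes $\Ext^i_\Lambda$ vanish outside $[0,n]$. Step (i) is then a direct computation: for $C_1, C_2 \in \CC$, $\ell_1, \ell_2 \in \Z$, and $0 < i < n$,
\[\Hom_\DD(C_1[\ell_1 n], C_2[\ell_2 n + i]) \cong \Ext^{(\ell_2 - \ell_1)n + i}_\Lambda(C_1, C_2),\]
which vanishes by the $n$-rigidity of $\CC$ when $\ell_1 = \ell_2$, and by $\gl\Lambda \le n$ otherwise (the exponent falls outside $[0,n]$).

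For step (ii), suppose $X$ satisfies the $\Ext$-vanishings; ranging $i$ over $n\Z$-shifts they become $\Hom_\DD(\CC, X[k]) = 0 = \Hom_\DD(X, \CC[k])$ for every integer $k \not\equiv 0 \pmod n$. Since $\Lambda \in \CC$ and $\Hom_\DD(\Lambda, X[k]) = H^k(X)$, the cohomology of $X$ is concentrated in degrees divisible by $n$; let $d_1 < \cdots < d_s$ be the nonzero ones. The truncation triangle
\[H^{d_1}(X)[-d_1] \to X \to \tau_{>d_1}X \to H^{d_1}(X)[-d_1 + 1]\]
splits because $\Hom_\DD(A, B) = 0$ whenever $A \in D^{\ge a}$ and $B \in D^{\le b}$ with $a - b > n$ (a standard consequence of $\gl\Lambda \le n$ via iterated truncation), and here $a - b \ge (d_1 + n) - (d_1 - 1) = n + 1$. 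Iterating yields $X \cong \bigoplus_r H^{d_r}(X)[-d_r]$, and each summand inherits the same $\Ext$-vanishings; reinterpreting them on the module $H^{d_r}(X)$ recovers exactly the $n$-cluster tilting characterization of $\CC$, so $H^{d_r}(X) \in \CC$ and $X \in \widetilde\CC$.

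For step (iii), decompose $M$ into indecomposables $M_1, \ldots, M_r$. For any $X \in \DD$ with bounded cohomology, $\Hom_\DD(M_i[\ell n], X) = \Hom_\DD(M_i, X[-\ell n])$ vanishes for $|\ell|$ outside a finite window (combining $\gl\Lambda \le n$ with the boundedness of $X$). Assembling a direct sum of the $M_i[\ell n]$ encoding these Hom groups and taking the canonical evaluation map produces a right $\widetilde\CC$-approximation of $X$; left approximations are constructed dually. The main obstacle is expected to be the triangle-splitting argument in step (ii), which rests on the lemma that Homs vanish between gapped truncations; the remaining parts are essentially bookkeeping with the $\Ext$-vanishings forced by $\gl\Lambda \le n$.
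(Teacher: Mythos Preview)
Your proposal is correct and follows essentially the same route as the paper: verify $n$-rigidity by the same case split on $\ell_1$ versus $\ell_2$, use $\Lambda\in\CC$ to force cohomology into degrees $\equiv 0\pmod n$, split $X$ into its cohomology pieces, and then invoke the $n$-cluster tilting property of $\CC$ in $\mod\Lambda$ on each piece. The only cosmetic difference is that the paper proves the splitting via an explicit injective resolution (its Lemma~\ref{split}), whereas you use truncation triangles together with the vanishing $\Hom_{\DD}(D^{\ge a},D^{\le b})=0$ for $a-b>n$; both arguments encode the same use of $\gl\Lambda\le n$. One small point of presentation: in step~(ii) you phrase things as if both vanishing hypotheses hold simultaneously, but for the $n$-cluster tilting definition you must show that \emph{each} condition alone forces $X\in\widetilde\CC$; your argument actually only uses $\Hom_{\DD}(\CC,X[k])=0$ (via $\Lambda$), and the other direction needs the dual argument using $D\Lambda\in\CC$, exactly as the paper does in its step~(iv).
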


Notice that we can not drop the assumption that $\CC$ is absolute.

For the case $n=1$, we have $\CC[\Z]=\DD$, which means the well-known
fact that any object in $\DD$ is a direct sum of stalk complexes
if $\Lambda$ is hereditary.
It is natural to hope that $\CC[n\Z]$ forms an
$(n+2)$-angulated category under a certain proper definition (\cite{GKO}).

In Theorem \ref{n-AR quiver}, we draw Auslander-Reiten quivers of
$\CC[n\Z]$ for $\Lambda=T_m^{(n)}(k)$ as follows.
The relations are again given by commutative relations for each small square,
and zero relations for each small half square.
{\tiny\[T_4^{(1)}(k)\xymatrix@C=0.2cm@R0.0cm{
&&\bullet\ar[dr]&&\bullet\ar[dr]&&\bullet\ar[dr]\\
&\bullet\ar[ur]\ar[dr]&&\bullet\ar[ur]\ar[dr]&&\bullet\ar[dr]\ar[ur]&&
\bullet\\
\cdots&&\bullet\ar[ur]\ar[dr]&&\bullet\ar[ur]\ar[dr]&&\bullet\ar[dr]\ar[ur]&&\cdots\\
&\bullet\ar[ur]&&\bullet\ar[ur]&&\bullet\ar[ur]&&\bullet}
\ \ \ \ \ \ \ \ T_4^{(2)}(k)\xymatrix@C=0.2cm@R0.0cm{
&&&&&\bullet\ar[dr]&&\cdots\\
&&&&\bullet\ar[ur]\ar[dr]&&\bullet\ar[dr]\\
&&&\bullet\ar[ur]\ar[dr]&&\bullet\ar[ur]\ar[dr]&&\bullet\ar[dr]\\
&&\bullet\ar[ur]&&\bullet\ar[ur]&&\bullet\ar[ur]&&\bullet\\
 &&&&\bullet\ar[dr]&&\\
 &&&\bullet\ar[ur]\ar[dr]&&\bullet\ar[dr]\ar[uuuul]\\
 &&\bullet\ar[ur]\ar[dr]&&\bullet\ar[ur]\ar[dr]\ar[uuuul]&&\bullet\ar[dr]\ar[uuuul]\\
 &\bullet\ar[ur]&&\bullet\ar[ur]\ar[uuuul]&&\bullet\ar[ur]\ar[uuuul]&&\bullet\ar[uuuul]\\
  &&&\bullet\ar[dr]&&\\
  &&\bullet\ar[ur]\ar[dr]&&\bullet\ar[dr]\ar[uuuul]\\
  &\bullet\ar[ur]\ar[dr]&&\bullet\ar[ur]\ar[dr]\ar[uuuul]&&\bullet\ar[dr]\ar[uuuul]\\
  \bullet\ar[ur]&&\bullet\ar[ur]\ar[uuuul]&&\bullet\ar[ur]\ar[uuuul]&&\bullet\ar[uuuul]\\
  &\cdots
 }\]}
{\tiny\[T_4^{(3)}(k)\xymatrix@C=0.2cm@R0.0cm{
&&&&&&&&&&&&&&&\bullet\ar[dr]&&&\cdots&\\
&&&&&&&&&&&&&&\bullet\ar[ur]\ar[dr]&&\bullet\ar[dr]&&&\\
&&&&&&&&&\bullet\ar[dr]&&&&\bullet\ar[ur]\ar[dr]&&\bullet\ar[ur]\ar[dr]&&\bullet\ar[dr]&&\\
&&&&&&&&\bullet\ar[ur]\ar[dr]&&\bullet\ar[dr]\ar[drrrrr]&&
\bullet\ar[ur]&&\bullet\ar[ur]&&\bullet\ar[ur]&&\bullet\\
&&&\bullet\ar[dr]&&&&\bullet\ar[ur]\ar[dr]&&\bullet\ar[ur]\ar[dr]\ar[drrrrr]&&\bullet\ar[dr]\ar[drrrrr]&&
&&\bullet\ar[dr]\ar[uuul]\\
&&\bullet\ar[ur]\ar[dr]&&\bullet\ar[dr]\ar[drrrrr]&&\bullet\ar[ur]&&
\bullet\ar[ur]\ar[drrrrr]&&\bullet\ar[ur]\ar[drrrrr]&&\bullet\ar[drrrrr]
&&\bullet\ar[ur]\ar[dr]\ar[uuul]&&\bullet\ar[dr]\ar[uuul]&&\\
&\bullet\ar[ur]\ar[dr]&&\bullet\ar[ur]\ar[dr]\ar[drrrrr]&&\bullet\ar[dr]\ar[drrrrr]&&&&\bullet\ar[dr]\ar[uuul]&&&
&\bullet\ar[ur]\ar[uuul]&&\bullet\ar[ur]\ar[uuul]&&\bullet\ar[uuul]&&\\
\bullet\ar[ur]&&\bullet\ar[ur]\ar[drrrrr]&&\bullet\ar[ur]\ar[drrrrr]&&\bullet\ar[drrrrr]&&
\bullet\ar[ur]\ar[dr]\ar[uuul]&&\bullet\ar[dr]\ar[uuul]\ar[drrrrr]\\
&&&\bullet\ar[dr]\ar[uuul]&&&&\bullet\ar[ur]\ar[uuul]&&\bullet\ar[ur]\ar[uuul]\ar[drrrrr]
&&\bullet\ar[uuul]\ar[drrrrr]&
&&&\bullet\ar[dr]\ar[uuul]\\
&&\bullet\ar[ur]\ar[dr]\ar[uuul]&&\bullet\ar[dr]\ar[uuul]\ar[drrrrr]&&&&&&&&
&&\bullet\ar[ur]\ar[uuul]&&\bullet\ar[uuul]\\
&\bullet\ar[ur]\ar[uuul]&&\bullet\ar[ur]\ar[uuul]\ar[drrrrr]&&\bullet\ar[uuul]\ar[drrrrr]&&&&\bullet\ar[dr]\ar[uuul]\\
&&&&&&&&\bullet\ar[ur]\ar[uuul]&&\bullet\ar[uuul]\ar[drrrrr]\\
&&&\bullet\ar[dr]\ar[uuul]&&&&&&&&&&&&\bullet\ar[uuul]\\
&&\bullet\ar[ur]\ar[uuul]&&\bullet\ar[uuul]\ar[drrrrr]\\
&&&&&&&&&\bullet\ar[uuul]\\
\cdots&&\\
&&&\bullet\ar[uuul]
}\]}

We also give another construction of an $n$-cluster tilting subcategory
of $\DD$ by using derived analogue of $n$-Auslander-Reiten
translations.
Recall that there exists an autoequivalence
\[\SSS:=D\circ\RHom_\Lambda(-,\Lambda)\simeq-\stackrel{\mathbf{L}}{\otimes}_\Lambda(D\Lambda):\DD\to\DD,\]
which gives the Serre functor of $\DD$,
i.e. there exists a functorial isomorphism
\[\Hom_{\DD}(X,Y)\simeq D\Hom_{\DD}(Y,\SSS X)\]
for any $X,Y\in\DD$ \cite{H,BK}. We define an autoequivalence of $\DD$ by
\[\SSS_n:=\SSS\circ[-n]:\DD\to\DD.\]
Any $n$-cluster tilting subcategory $\CC$ of $\DD$ satisfies
$\CC=\SSS_n\CC=\SSS_n^{-1}\CC$ \cite[Prop. 3.4]{IY}.
Therefore $\SSS_n$ plays the role of $n$-Auslander-Reiten translations, and it is natural to introduce the following subcategory.

\begin{definition}\label{S_n closure}
Define the \emph{$\SSS_n$-closure} of an object $X\in\DD$ by
\[\UU_n(X)=\add\{\SSS_n^\ell(X)\ |\ \ell\in\Z\}.\]
\end{definition}

The categories $\MM_n(D\Lambda)$ and $\UU_n(D\Lambda)$ are closely
related since $\tau_n^\ell\simeq H^0(\SSS_n^\ell-)$ holds on
$\mod\Lambda$ for any $\ell\ge0$ if $\gl\Lambda\le n$ by Lemma
\ref{tau and s_n}.
In particular, $\CC[n\Z]=\UU_n(\Lambda)$ holds in Theorem
\ref{derived} if $\CC$ has an additive generator.

We shall study the problem whether $\UU_n(\Lambda)$ is an $n$-cluster
tilting subcategory of $\DD$.
For a hereditary algebra $\Lambda$, one can easily show that
$\UU_1(\Lambda)$ is a $1$-cluster tilting subcategory of $\DD$ if and
only if $\Lambda$ is representation-finite.
This observation suggests that it is related to the $n$-complete
property. In fact we have the following another main result in Section
\ref{section: derived}.

\begin{theorem}\label{consequence}
Let $\Lambda$ be a $\tau_n$-finite algebra.
Then $\UU_n(\Lambda)$ is an $n$-cluster tilting subcategory of $\DD$.
Moreover, $\UU_n(T)$ is an $n$-cluster tilting subcategory of $\DD$
for any tilting complex $T\in\DD$ satisfying $\gl\End_{\DD}(T)\le n$.
\end{theorem}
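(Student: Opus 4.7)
The plan is to prove the main assertion about $\UU_n(\Lambda)$ and then deduce the ``moreover'' part via tilting. Assume $\Lambda$ is $\tau_n$-finite, so $\tau_n^r(D\Lambda)=0$ for some $r$ and $\MM$ has an additive generator $M$. The first step is to describe $\UU_n(\Lambda)$ concretely enough to establish functorial finiteness in $\DD$. Using the identification $\tau_n^\ell\simeq H^0(\SSS_n^\ell-)$ on $\mod\Lambda$ (Lemma~\ref{tau and s_n}), the formula $\SSS_n\Lambda=D\Lambda[-n]$, and the $\tau_n$-finite hypothesis, a cohomological induction shows that the indecomposable summands of the complexes $\SSS_n^\ell\Lambda$ (for $\ell\in\Z$) form, modulo shift by $n\Z$, an essentially finite set closely related to the indecomposables of $\MM$. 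Hence $\UU_n(\Lambda)$ is Krull--Schmidt with a finite generator modulo $[n\Z]$-translation, and functorial finiteness follows by standard arguments.

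For the rigidity $\Hom_{\DD}(\UU_n(\Lambda),\UU_n(\Lambda)[i])=0$ in the range $0<i<n$, the $\SSS_n$-invariance of $\UU_n(\Lambda)$ reduces the task to verifying that
\[\Hom_{\DD}(\Lambda,\SSS_n^\ell\Lambda[i])=H^i(\SSS_n^\ell\Lambda)=0\qquad (\ell\in\Z,\ 0<i<n).\]
Serre duality gives the identity $H^i(\SSS_n^\ell\Lambda)\simeq D\,H^{n-i}(\SSS_n^{1-\ell}\Lambda)$, which interchanges $\ell$ and $1-\ell$, so it suffices to treat $\ell\le 0$. The case $\ell=0$ is immediate since $\Lambda$ is concentrated in degree $0$. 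For $\ell\le -1$, the explicit formula $\SSS_n^{-1}\Lambda=\RHom_\Lambda(D\Lambda,\Lambda)[n]$ and its iterates express the relevant cohomology groups in terms of $\Ext^{>n}$-groups, which vanish by $\gl\Lambda\le n$, yielding the desired vanishings.

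For the maximality, suppose $X\in\DD$ satisfies the two-sided $\Hom$-vanishing against $\UU_n(\Lambda)$ in degrees $0<i<n$. Using the functorial finiteness from the first step, I would take a right $\UU_n(\Lambda)$-approximation $U\to X$, form the induced triangle in $\DD$, and exploit both the vanishing hypothesis on $X$ and the rigidity just established to conclude that the cocone lies in $\UU_n(\Lambda)$ and actually vanishes, forcing $X\in\UU_n(\Lambda)$. Finally, the ``moreover'' assertion follows by applying the first one to $\Gamma:=\End_{\DD}(T)$: the tilting equivalence $F=\RHom_\Lambda(T,-):\DD\to\KK^{\mathrm{b}}(\pr\Gamma)$ commutes with the Serre functor and the shift, hence with $\SSS_n$, and sends $T$ to $\Gamma$; so $F(\UU_n(T))=\UU_n(\Gamma)$, $\tau_n$-finiteness of $\Gamma$ is inherited from $\Lambda$ by transport of structure, and the first assertion applied to $\Gamma$ completes the proof.

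The hardest step will be the maximality. Since $\tau_n$-finiteness is strictly weaker than the absolute $n$-cluster tilting hypothesis of Theorem~\ref{derived}, the cohomology sheaves of a candidate $X$ need not a priori lie in $\MM$, so the argument cannot be reduced to the module-category situation. One must argue entirely inside $\DD$, combining $\SSS_n$-invariance, Serre duality, and the finite generation of $\UU_n(\Lambda)$ modulo $[n\Z]$ from the first step to propagate the vanishings through the approximation triangle and ensure termination of the induction.
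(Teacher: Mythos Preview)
Your overall structure is reasonable, but the maximality step has a genuine gap that the approximation-triangle approach alone cannot close. From the triangle $C\to U\to X\to C[1]$ with $U\to X$ a right $\UU_n(\Lambda)$-approximation, the hypothesis on $X$ together with rigidity of $\UU_n(\Lambda)$ only yields $\Hom_{\DD}(\UU_n(\Lambda),C[i])=0$ for $0<i<n$; concluding $C\in\UU_n(\Lambda)$ from this is precisely the statement you are trying to prove, so the argument is circular. Nor is there any invariant that has visibly decreased in passing from $X$ to $C$, so you have no induction to terminate. The sentence ``the cocone lies in $\UU_n(\Lambda)$ and actually vanishes'' is the entire content of the theorem, and you have not supplied a mechanism for it.

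The paper's proof is quite different here. It introduces the $t$-structure conditions (S$_n$): $\SSS_n\DD^{\ge0}\subset\DD^{\ge0}$, and (T$_n$): $\SSS_n^\ell\DD^{\ge0}\subset\DD^{\ge1}$ for $\ell\gg0$, and shows (Proposition~\ref{t-structure}) that $\tau_n$-finiteness is exactly $\gl\Lambda\le n$ together with (T$_n$). For an indecomposable $X$ with the vanishing hypothesis, (T$_n$) provides an integer $\ell$ with $\SSS_n^\ell X\in\DD^{\le0}$ but $Y:=\SSS_n^{\ell+1}X\notin\DD^{\le0}$. Since $\SSS_n^{-1}Y\in\DD^{\le0}$, one can realize $Y$ as a bounded complex of injectives concentrated in degrees $\le n$; the vanishing $H^i(Y)=0$ for $0<i<n$ (coming from $\Lambda\in\UU_n(\Lambda)$) together with the splitting Lemma~\ref{split} then forces $Y=H^n(Y)[-n]$ with $H^n(Y)$ injective, hence $Y\in\add(D\Lambda)[-n]\subset\UU_n(\Lambda)$. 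This $t$-structure localization via (T$_n$) is the key idea your sketch lacks.

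Two smaller issues. Your functorial finiteness argument via ``finite generator modulo $[n\Z]$-translation'' is neither obviously true nor needed: the paper obtains functorial finiteness directly from (T$_n$) via Lemma~\ref{derived T_n}(a), which says that $\Hom_{\DD}(X,\SSS_n^\ell Y)\neq0$ for only finitely many $\ell$. And for the ``moreover'' part, the claim that ``$\tau_n$-finiteness of $\Gamma$ is inherited by transport of structure'' is not automatic, because $\tau_n$-finiteness is phrased in terms of the standard $t$-structure, which changes under derived equivalence; the paper handles this by proving that condition (T$_n$) is derived-invariant (Lemma~\ref{derived T_n}(b)), and then invoking $\gl\Gamma\le n$ to recover $\tau_n$-finiteness of $\Gamma$ via Proposition~\ref{t-structure}(b).
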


As a special case, if $\gl\Lambda<n$ and $T$ is a tilting
$\Lambda$-module with $\pd T_\Lambda\le 1$, then $\gl\End_{\DD}(T)\le
n$ holds and $\UU_n(T)$ is an $n$-cluster tilting subcategory of $\DD$.
This generalizes the construction of $2$-cluster tilting objects in
cluster categories using tilting modules given in \cite{BMRRT}
as well as recent work of Amiot \cite[Prop. 5.4.2]{Am1} (see also
\cite[Th. 4.10]{Am2}) and Barot-Fernandez-Platzeck-Pratti-Trepode \cite{BFPPT}.

As a special case of Theorem \ref{consequence}, $\UU_n(\Lambda^{(n)})$
forms an $n$-cluster tilting subcategory of $\DD$ for algebras
$\Lambda^{(n)}$ given in Corollary \ref{main2}. The quivers with relations
of these categories will be given in Theorem \ref{n-AR quiver}.

\medskip
At the end of this section, we note the following left-right
symmetry of $\tau_n$-finite algebras, which will be shown
in Section \ref{section: derived}.

\begin{proposition}\label{left right}
A finite dimensional algebra $\Lambda$ is $\tau_n$-finite if and
only if $\Lambda^{\rm op}$ is $\tau_n$-finite.
\end{proposition}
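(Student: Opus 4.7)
The plan is to pass to the derived category $\DD:=\KK^{\rm b}(\pr\Lambda)$ and reformulate the $\tau_n$-finite condition in terms of the Serre functor $\SSS$. Recall $\SSS\Lambda\simeq D\Lambda\simeq \SSS_n\Lambda[n]$ in $\DD$, and that by Lemma~\ref{tau and s_n} (proved in Section~\ref{section: derived}), $\gl\Lambda\le n$ yields
\[
\tau_n^\ell X \;\simeq\; \Hom_\DD(\Lambda, \SSS_n^\ell X) \quad\text{and}\quad (\tau_n^-)^\ell X \;\simeq\; \Hom_\DD(\Lambda, \SSS_n^{-\ell} X)
\]
for every $X\in\mod\Lambda$ and $\ell\ge 0$. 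Since $\gl\Lambda = \gl\Lambda^{\op}$, we may assume this throughout.

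The first step is to establish, by Serre duality in $\DD$, the identity
\[
\tau_n^\ell(D\Lambda) \;\simeq\; D\bigl((\tau_n^-)^\ell\Lambda\bigr).
\]
Indeed, applying $\Hom_\DD(\Lambda,\SSS_n^\ell D\Lambda)\simeq D\Hom_\DD(\SSS_n^\ell D\Lambda,\SSS\Lambda)$, using $\SSS\Lambda = D\Lambda = \SSS_n\Lambda[n]$ to rewrite the target, cancelling the shifts $[n]$, and then translating with the autoequivalence $\SSS_n^{-\ell-1}$ gives
\[
\tau_n^\ell(D\Lambda) \;\simeq\; D\Hom_\DD(\SSS_n^\ell D\Lambda, D\Lambda) \;\simeq\; D\Hom_\DD(\Lambda, \SSS_n^{-\ell}\Lambda) \;\simeq\; D\bigl((\tau_n^-)^\ell\Lambda\bigr).
\]
Consequently $\Lambda$ is $\tau_n$-finite if and only if $(\tau_n^-)^\ell\Lambda = 0$ for all sufficiently large $\ell$.

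The second step is to relate $\tau_n^-$ on $\mod\Lambda$ to $\tau_n^{\op}$ on $\mod\Lambda^{\op}$ through the $k$-duality. Unwinding the formulas $\tau_n^- X = \Ext^n_{\Lambda^{\op}}(DX,\Lambda)$ and $\tau_n^{\op}Y = D\Ext^n_{\Lambda^{\op}}(Y,\Lambda^{\op})$ one reads off $\tau_n^- = D\,\tau_n^{\op}\,D$ as endofunctors of $\mod\Lambda$; iterating yields
\[
(\tau_n^-)^\ell\Lambda \;\simeq\; D\,(\tau_n^{\op})^\ell(D\Lambda^{\op}),
\]
so $(\tau_n^-)^\ell\Lambda = 0$ precisely when $(\tau_n^{\op})^\ell(D\Lambda^{\op}) = 0$, i.e.\ $\Lambda^{\op}$ is $\tau_n$-finite. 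Combining with the first step proves the proposition.

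The main obstacle is the Serre-duality manipulation in the first step: one has to track the module structures on the various $\Hom$-groups carefully enough that the displayed identifications live in $\mod\Lambda$ rather than merely among $k$-vector spaces. This is a routine verification using $\End_\DD(\Lambda)\simeq\End_\DD(D\Lambda)\simeq\Lambda$, and in any case for the vanishing statement we need, the $k$-linear isomorphisms already suffice.
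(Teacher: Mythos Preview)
Your proof is correct. Both your argument and the paper's ultimately rest on Lemma~\ref{tau and s_n} and Serre duality in $\DD$, but the packaging differs. The paper first reformulates $\tau_n$-finiteness as the conjunction of $\gl\Lambda\le n$ with the $t$-structure condition~(T$_n$) (Proposition~\ref{t-structure}(b)), and then asserts that~(T$_n$) is left-right symmetric; this last step, though not spelled out, amounts to observing that the duality $D:\DD(\Lambda)\to\DD(\Lambda^{\op})$ exchanges $\DD^{\ge0}$ with $\DD^{\le0}$ and intertwines $\SSS_{n,\Lambda}$ with $\SSS_{n,\Lambda^{\op}}^{-1}$, carrying~(T$_n$) for $\Lambda$ to the equivalent form $\SSS_n^{-\ell}\DD^{\le0}\subset\DD^{\le-1}$ for $\Lambda^{\op}$. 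Your route avoids the intermediate condition~(T$_n$) entirely: you produce a direct $k$-linear identification $\tau_n^\ell(D\Lambda)\simeq D\bigl((\tau_n^{\op})^\ell(D\Lambda^{\op})\bigr)$ by combining a Serre-duality computation with the elementary module-theoretic identity $\tau_n^-\simeq D\,\tau_n^{\op}\,D$. This is a little more hands-on but arguably more transparent, since it exhibits exactly which modules correspond under duality rather than passing through an abstract $t$-structure criterion. Your closing remark that $k$-linear isomorphisms suffice for the vanishing statement is well taken and correctly disposes of the bimodule bookkeeping.
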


We notice that easy examples show that $n$-completeness is not left-right
symmetric (e.g. \cite{IO1}).

\section{Preliminaries}\label{section: preliminaries}

In this section, we give some preliminary results.
Let us start with some properties of $n$-cluster titing objects.

\begin{definition}
Let $\CC$ be a Krull-Schmidt category.
\begin{itemize}
\item[(a)] For an object $X\in\CC$, a morphism $f_0\in J_{\CC}(X,C_1)$ is called \emph{left almost split} if $C_1\in\CC$ and
\[\Hom_{\CC}(C_1,-)\stackrel{f_0}{\to}J_{\CC}(X,-)\to0\]
is exact on $\CC$. A left minimal and left almost split morphism is called a \emph{source morphism}.

\item[(b)] We call a complex
\begin{eqnarray}\label{X source}
X\stackrel{f_0}{\to}C_1\stackrel{f_1}{\to}C_2\stackrel{f_2}{\to}\cdots
\end{eqnarray}
a \emph{source sequence} of $X$ if the following conditions are satisfied.
\begin{itemize}
\item[(i)] $C_i\in\CC$ and $f_i\in J_{\CC}$ for any $i$,

\item[(ii)] we have the following exact sequence on $\CC$.
\begin{eqnarray}\label{X source2}
\cdots\stackrel{f_2}{\to}\Hom_{\CC}(C_2,-)\stackrel{f_1}{\to}\Hom_{\CC}(C_1,-)\stackrel{f_0}{\to}J_{\CC}(X,-)\to0
\end{eqnarray}
\end{itemize}
A \emph{sink morphism} and a \emph{sink sequence} are defined dually.

\item[(c)] We call a complex
\[0\to X\stackrel{}{\to}C_1\stackrel{}{\to}C_2\stackrel{}{\to}\cdots\stackrel{}{\to}C_{n}\stackrel{}{\to}Y\to0\]
an \emph{$n$-almost split sequence} if this is a source sequence of $X\in\CC$ and a sink sequence of $Y\in\CC$.
\end{itemize}
\end{definition}

A source sequence \eqref{X source} corresponds to a minimal projective
resolution \eqref{X source2} of a functor $J_{\CC}(X,-)$ on $\CC$.
Thus any indecomposable object $X\in\CC$ has a unique source sequence
up to isomorphisms of complexes if it exists. 

Let us recall basic results for $n$-cluster tilting subcategories.

\begin{theorem}\label{basic}
Let $\Lambda$ be a finite dimensional algebra and $n\ge1$. Let $T$ be
a tilting $\Lambda$-module with $\pd T_\Lambda\le n$.
\begin{itemize}
\item[(a)] Let $\CC$ be an $n$-cluster tilting subcategory of $T^{\perp}$.
\begin{itemize}
\item[(i)] Any indecomposable object $X\in\CC\backslash\add D\Lambda$ (respectively, $Y\in\CC\backslash\add T$) has an $n$-almost split sequence
\[0\to X\stackrel{}{\to}C_1\stackrel{}{\to}C_2\stackrel{}{\to}\cdots\stackrel{}{\to}C_{n}\stackrel{}{\to}Y\to0\]
such that $Y\simeq\tau_n^-X$ and $X\simeq\tau_nY$.
\item[(ii)] Any indecomposable object $X\in\add D\Lambda$ has a source sequence of the form
\[X\stackrel{}{\to}C_1\to\cdots\to C_{n}\to0.\]
\item[(iii)] Any indecomposable object $X\in\add T$ has a sink sequence of the form
\[0\to C_n\to\cdots\to C_1\stackrel{}{\to}X.\]
\end{itemize}
\item[(b)] Let $\CC=\add M$ be an $n$-rigid subcategory of $T^{\perp}$ satisfying $T\oplus D\Lambda\in\CC$. Then the following conditions are equivalent.
\begin{itemize}
\item[(i)] $\CC$ is an $n$-cluster tilting subcategory of $T^{\perp}$.
\item[(ii)] $\Gamma:=\End_\Lambda(M)$ satisfies $\gl\Gamma\le n+1$.
\item[(iii)] Any indecomposable object $X\in\CC$ has a source sequence of the form
\[X\stackrel{}{\to}C_1\to\cdots\to C_{n+1}\to0.\]
\end{itemize}
\end{itemize}
\end{theorem}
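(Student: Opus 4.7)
The plan is to translate everything into minimal projective resolutions over $\Gamma:=\End_\Lambda(M)$, where $M$ is an additive generator of $\CC$ (functorial finiteness reduces us to this situation; in general one works in the functor category on $\CC$). The Yoneda functor
\[
F:=\Hom_\Lambda(-,M)\colon \CC\xrightarrow{\sim}\pr\Gamma
\]
is an equivalence, under which a source sequence of $X\in\CC$ corresponds to a minimal projective resolution of the simple $\Gamma$-module $S_X:=F(X)/\rad F(X)$. The whole proof consists in identifying these resolutions, bounding their lengths via $\gl\Gamma\le n+1$ (Theorem \ref{Auslander correspondence}), and then certifying that the resulting complex in $\CC$ is not merely Hom-exact but honestly exact in $\mod\Lambda$.

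For part (a), fix an indecomposable $X\in\CC$ and transport a minimal $\Gamma$-projective resolution of $S_X$ through $F^{-1}$ to a complex $X\to C_1\to C_2\to\cdots$ in $\CC$. Two points have to be verified. First, the complex is exact in $\mod\Lambda$ at every intermediate position: applying $\Hom_\Lambda(C,-)$ for $C\in\CC$ and using $\Ext^i_\Lambda(\CC,\CC)=0$ for $0<i<n$ promotes the Hom-exactness of $F$ to genuine exactness. Second, I must control the length. The dichotomy is: when $X\in\add D\Lambda$ the resolution of $S_X$ has length at most $n$ (an injective-like $X$ drops one degree via Nakayama duality), giving statement (ii); when $X\notin\add D\Lambda$ the full length $n+1$ is achieved, and the final term $C_{n+1}$ is identified with $Y=\tau_n^-X$ using Proposition \ref{n-AR translation}, so that the resulting sequence is simultaneously a sink sequence at $Y$ — this is statement (i). Statement (iii) is obtained by dualizing through $D\colon\mod\Lambda\leftrightarrow\mod\Lambda^{\op}$ and using $\pd T_\Lambda\le n$ to bound the dual resolution.

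For part (b), the implication (i)$\Rightarrow$(ii) is part of Theorem \ref{Auslander correspondence}, and (i)$\Rightarrow$(iii) follows from (a) with one extra step appended when the indecomposable lies outside $\add D\Lambda$. The converses (ii)$\Rightarrow$(i) and (iii)$\Rightarrow$(i) are established by iterating right $\CC$-approximations of any $X\in T^\perp$: the projective-dimension bound $\gl\Gamma\le n+1$, or equivalently the uniform length of source sequences in (iii), forces $\Ext^i_\Lambda(\CC,X)=0$ for $0<i<n$ after $n+1$ steps, placing $X$ in $\CC$; that $\CC$ is functorially finite comes from the approximations themselves. The main obstacle I anticipate is the precise bookkeeping distinguishing which simples $S_X$ satisfy $\pd_\Gamma S_X\le n$ from those with $\pd_\Gamma S_X=n+1$, because this is what partitions the indecomposables of $\CC$ among $\add D\Lambda$, $\add T$, and the interior of $\CC$. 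Carrying this out requires coordinating the Nakayama equivalence $\nu\colon\add\Lambda\to\add D\Lambda$ with the higher translations $\tau_n,\tau_n^-$ on both sides of $F$, and using the tilting hypothesis $\pd T_\Lambda\le n$ to close off the picture on the projective side.
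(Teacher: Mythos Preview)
Your identification of source sequences in $\CC$ with minimal projective resolutions of simple $\Gamma^{\op}$-modules is correct, and for part (b) the equivalence (ii)$\Leftrightarrow$(iii) you describe is exactly what the paper does. The paper handles (i)$\Leftrightarrow$(ii) by citing \cite[Th.\ 5.1(3)]{I4} rather than by the approximation argument you sketch.

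For part (a) there are two concrete gaps. First, your argument that the complex $X\to C_1\to C_2\to\cdots$ is exact in $\mod\Lambda$ is circular: you apply the covariant $\Hom_\Lambda(C,-)$ and invoke $\Ext^i_\Lambda(\CC,\CC)=0$, but $\Ext$-vanishing only transports exactness along a sequence that is already exact, which is precisely what you are trying to prove. The source-sequence condition gives exactness after the \emph{contravariant} $\Hom_\Lambda(-,Y)$ for each $Y\in\CC$; taking $Y=D\Lambda\in\CC$ and using $\Hom_\Lambda(-,D\Lambda)\simeq D$ does yield exactness at every $C_i$, but this still leaves open why $X\to C_1$ is a monomorphism when $X\notin\add D\Lambda$, and why the resulting complex is simultaneously a sink sequence ending in $\tau_n^-X$. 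The paper sidesteps all of this: for (i) it cites \cite[Th.\ 2.5.3]{I4}, where the $n$-almost split sequence is constructed directly as an exact sequence via iterated approximations; for (ii) it first builds an exact sequence $0\to X/\soc X\to C_1\to\cdots\to C_n\to 0$ (again by approximation, cf.\ \cite[Prop.\ 2.4.1]{I4}) and then prepends the surjection $X\to X/\soc X$.

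Second, your reduction of (a)(iii) to (a)(ii) via $D$ does not close: $D$ sends $\add T$ to $\add DT\subset\mod\Lambda^{\op}$, and $DT$ is cotilting, not the injective cogenerator of $\mod\Lambda^{\op}$, so statement (ii) does not apply to it. The paper instead passes through the tilting equivalence $\Hom_\Lambda(T,-)\colon T^\perp\to{}^\perp(DT)_{\Gamma}$ with $\Gamma=\End_\Lambda(T)$; this sends $\add T$ to projective $\Gamma$-modules, whereupon the dual of (ii) (indecomposable projectives admit sink sequences of length $n$) produces the required sink sequence, which is then transported back to $T^\perp$.
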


\begin{proof}
In \cite{I4}, $n$-cluster tilting subcategories of the category
\[{}^{\perp} U=\{X\in\mod\Lambda\ |\ \Ext^i_\Lambda(X,U)=0\ (0<i)\}.\]
for a cotilting $\Lambda$-module $U$ is treated instead of $T^{\perp}$.
We can apply results in \cite{I4} since $DT$ is a cotilting $\Lambda^{\op}$-module with $\id{}_{\Lambda}(DT)\le n+1$,
and $\CC$ is an $n$-cluster tilting subcategory of $T^{\perp}$ if and only if $D\CC$ is an $n$-cluster tilting subcategory of ${}^{\perp}(DT)$.

(a)(i) This is shown in \cite[Th. 2.5.3]{I4}.

(ii) It is easily shown (cf. \cite[Prop. 2.4.1(2-$\ell$)]{I4}) that there exists an exact sequence $0\to X/\soc X\to C_1\to\cdots\to C_n\to0$
such that $C_i\in\CC$ and
\[0\to\Hom_\Lambda(C_n,-)\to\cdots\to\Hom_\Lambda(C_1,-)\to\Hom_\Lambda(X/\soc X,-)\to0\]
is exact on $\CC$.
Connecting with the natural surjection $X\to X/\soc X$, we have a source sequence $X\to C_1\to\cdots\to C_n\to0$ of the desired form.

(iii) Let $\Gamma:=\End_\Lambda(T)$.
Then $DT$ is a cotilting $\Gamma$-module, and Tilting theorem \cite{H,M} gives an equivalence
\[\F=\Hom_\Lambda(T,-):T_\Lambda^\perp\to{}^\perp(DT)_\Gamma\]
which preserves $\Ext$-groups.
Thus we have an $n$-cluster tilting subcategory $\F\CC$ of ${}^\perp(DT)_\Gamma$.
For any indecomposable object $X\in\add T_\Lambda$, there exists a sink sequence
$0\to C_n\to\cdots\to C_0\to\F X$ of the indecomposable projective
$\Gamma$-module $\F X$ with $C_i\in\F\CC$ by the dual of (ii).
Applying the quasi-inverse of $\F$, we have the desired sink sequence of $X$.

(b)(i)$\Leftrightarrow$(ii) 
Apply \cite[Th. 5.1(3)]{I4} for $d:=0$ and $m:=\id{}_{\Lambda}(DT)\le n$.


(ii)$\Leftrightarrow$(iii) $C_{n+2}=0$ holds in the source sequence
\eqref{X source} if and only if the simple $\Gamma^{\op}$-module
$\top\Hom_\Lambda(X,M)$ has projective dimension at most $n+1$.
Since $\gl\Gamma\le n+1$ if and only if any simple
$\Gamma^{\op}$-module has projective dimension at most $n+1$, we have
the assertion.
\end{proof}

%
%

Put
\begin{eqnarray*}
\GG_n&:=&\{X\in\mod\Lambda\ |\ \Ext^i_\Lambda(X,\Lambda)=0\ (0\le i<n)\},\\
\HH_n&:=&\{X\in\mod\Lambda\ |\ \Ext^i_\Lambda(D\Lambda,X)=0\ (0\le i<n)\}.
\end{eqnarray*}

\begin{lemma}\label{XY}
Let $\Lambda$ be a finite dimensional algebra with $\gl\Lambda\le n$ and $X\in\mod\Lambda$.
\begin{itemize}
\item[(a)] We have mutually quasi-inverse equivalences
\[\tau_n=D\Ext^n_\Lambda(-,\Lambda):\GG_n\to\HH_n\ \mbox{ and }\ \tau_n^-=\Ext^n_{\Lambda^{\op}}(D-,\Lambda):\HH_n\to\GG_n.\]
\item[(b)] If $X$ has no non-zero projective summands and $\Ext^i_\Lambda(X,\Lambda)=0$ for any $0<i<n$, then $X\in\GG_n$.
\item[(c)] If $X$ has no non-zero injective summands and $\Ext^i_\Lambda(D\Lambda,X)=0$ for any $0<i<n$, then $X\in\HH_n$.
\end{itemize}
\end{lemma}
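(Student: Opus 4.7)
The statement has three parts; I will outline the strategy for each in turn. The central computation is the following. Given a minimal projective resolution
\[
0\to P_n\to P_{n-1}\to\cdots\to P_0\to X\to 0,
\]
applying $\Hom_\Lambda(-,\Lambda)$ and using the hypothesis $\Ext^i_\Lambda(X,\Lambda)=0$ for $0<i<n$ produces a complex whose cohomology consists only of $X^*=\Hom_\Lambda(X,\Lambda)$ in degree $0$ and $\Ext^n_\Lambda(X,\Lambda)$ in degree $n$. Dualizing by $D$ and using $\tau_nX=D\Ext^n_\Lambda(X,\Lambda)$, I obtain the exact sequence
\[
0\to\tau_nX\to\nu P_n\to\cdots\to\nu P_0\to\nu X\to 0,\qquad(\star)
\]
where $\nu X=DX^*$ is the term that may or may not vanish. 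Each $\nu P_i$ is injective.

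For (a), the hypothesis $X\in\GG_n$ forces $X^*=0$, so $\nu X=0$ and $(\star)$ is a genuine injective resolution of $\tau_nX$. Using the Nakayama identity $\Hom_\Lambda(D\Lambda,\nu P)\simeq P$ for any projective $P$, the complex $\Hom_\Lambda(D\Lambda,\nu P_\bullet)$ is canonically identified with $P_n\to\cdots\to P_0$, whose cohomology is $X$ in top degree $n$ and zero elsewhere. This proves $\tau_nX\in\HH_n$ and, via the explicit formula $\tau_n^-=\Cok(\nu^- I_{n-1}\to\nu^- I_n)$ applied to this injective resolution, also $\tau_n^-\tau_nX\simeq X$. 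The opposite identity $\tau_n\tau_n^-Y\simeq Y$ for $Y\in\HH_n$ follows by the dual argument starting from a minimal injective resolution of $Y$.

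For (b), I plan to reduce to (a) by showing that the no-projective-summand hypothesis forces the cokernel $\nu X$ in $(\star)$ to vanish. The derived biduality isomorphism $\RHom_{\Lambda^{\op}}(\RHom_\Lambda(X,\Lambda),\Lambda)\simeq X$ in $D^b(\mod\Lambda)$, which is available because $\gl\Lambda\le n$, combined with the Ext vanishing hypothesis that collapses $\RHom_\Lambda(X,\Lambda)$ into the two cohomologies $X^*$ and $\Ext^n_\Lambda(X,\Lambda)$, yields an exact sequence
\[
0\to\tau_n^-\tau_nX\to X\to X^{**}\to 0.
\]
The map $X\to X^{**}$ factors through the projective cover $P_0\twoheadrightarrow X$, so $X^{**}$ is a quotient of $P_0$. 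Using minimality of $P_0\to X$ together with the hypothesis that $X$ has no projective summand, any nonzero $X^{**}$ would split off as a projective summand of $X$, a contradiction; hence $X^{**}=0$, and then (a) applied to the resulting identification $X\simeq\tau_n^-\tau_nX$ places $X$ in $\GG_n$. Part (c) is strictly dual to (b), starting instead from a minimal injective resolution of $X$.

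The main obstacle is the final step in (b): turning ``$X$ has no projective summand'' into ``$X^{**}=0$''. This requires a careful comparison between the non-minimal injective resolution of $\tau_nX$ obtained by splicing $(\star)$ with an injective resolution of $\nu X$ and the minimal injective resolution of $\tau_nX$, which has length at most $n$ since $\gl\Lambda\le n$; any excess length in the spliced resolution must cancel via split injective summands, and under the Nakayama correspondence between indecomposable projective and injective modules, these cancellations translate to projective summands of $X$.
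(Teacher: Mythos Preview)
Your treatment of (a) matches the paper's. For (b) your route is different, and while it can be made to work, it has a gap as written and is considerably more roundabout than the paper's argument.

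The paper argues directly on the dualized minimal projective resolution
\[
0\to X^*\xrightarrow{f_0^*}P_0^*\xrightarrow{f_1^*}P_1^*\to\cdots\to P_n^*\to\Ext^n_\Lambda(X,\Lambda)\to 0.
\]
Since $X$ has no projective summand, $f_1:P_1\to P_0$ is left minimal, and hence $f_1^*$ is right minimal under the duality $\Hom_\Lambda(-,\Lambda)$ on projectives. On the other hand, this sequence is a projective resolution of $\Ext^n_\Lambda(X,\Lambda)$ in $\mod\Lambda^{\op}$ of length $n+1$; since $\gl\Lambda^{\op}\le n$, the cokernel of $X^*\hookrightarrow P_0^*$ (an $n$-th syzygy) is projective, so the inclusion splits. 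A right minimal map admits no nonzero direct summand of its source inside its kernel, and $X^*=\ker f_1^*$ is such a summand, so $X^*=0$. That is the whole proof.

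Your plan via the sequence $0\to\tau_n^-\tau_nX\to X\xrightarrow{\mathrm{ev}}X^{**}\to 0$ is valid (it does follow from biduality and $\gl\Lambda\le n$), but the step ``any nonzero $X^{**}$ would split off as a projective summand of $X$'' is not justified by minimality of $P_0\to X$ alone: you need $X^{**}$ to be projective, i.e.\ $X^*$ to be projective, and that is exactly the split-mono observation above. Your ``main obstacle'' paragraph gestures at this via comparison of injective resolutions, and unwinding that (the $n$-th cosyzygy of $\tau_nX$ in $(\star)$ is injective, forcing $\nu P_0\twoheadrightarrow\nu X$ to split, hence $X^*$ projective) does work---but it is the same fact reached by a longer path. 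Finally, once you have $X^{**}=0$ you get $X^*=0$ and $X\in\GG_n$ immediately by definition; the appeal to (a) at the end is unnecessary.
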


\begin{proof}
Although the assertions are elementary, we provide a proof for the convenience of the reader.

(a) For $X\in\GG_n$, take a projective resolution
\begin{equation}\label{pr}
0\to P_n\to\cdots\to P_0\to X\to0.
\end{equation}
Applying $\nu$, we have an exact sequence
\begin{equation}\label{ir}
0\to\tau_nX\to\nu P_n\to\cdots\to\nu P_0\to0
\end{equation}
where we use $\Ext^i_\Lambda(X,\Lambda)=0$ for any $0\le i<n$.
The sequence (\ref{ir}) gives an injective resolution of $\tau_nX$.
Applying $\nu^-$ to (\ref{ir}), we have a complex
\[0\to P_n\to\cdots\to P_0\to0\]
whose homology at $P_i$ is $\Ext^{n-i}_\Lambda(D\Lambda,\tau_nX)$.
Comparing with (\ref{pr}), we have $\tau_nX\in\HH_n$ and $\tau_n^-\tau_nX\simeq X$.

(b) Take a projective resolution
\[0\to P_n\to\cdots\to P_1\stackrel{f_1}{\to} P_0\stackrel{f_0}{\to}X\to0.\]
Applying $\Hom_\Lambda(-,\Lambda)$, we have an exact sequence
\[0\to\Hom_\Lambda(X,\Lambda)\stackrel{f_0}{\to}\Hom_\Lambda(P_0,\Lambda)\stackrel{f_1}{\to}\Hom_\Lambda(P_1,\Lambda)\to\cdots\to\Hom_\Lambda(P_n,\Lambda)\to\Ext^n_\Lambda(X,\Lambda)\to0.\]
Since $X$ has no non-zero projective summand, $f_1$ is left minimal.
Since $\Hom_\Lambda(-,\Lambda):\add\Lambda_\Lambda\to\add{}_{\Lambda}\Lambda$ is an equivalence,
$f_1:\Hom_\Lambda(P_0,\Lambda)\to\Hom_\Lambda(P_1,\Lambda)$ is right minimal.
Since $f_0:\Hom_\Lambda(X,\Lambda)\to\Hom_\Lambda(P_0,\Lambda)$ is a split monomorphism by $\gl\Lambda\le n$, we have $\Hom_\Lambda(X,\Lambda)=0$ and $X\in\GG_n$.

(c) This is dual of (b).
\end{proof}

\begin{lemma}\label{tau_n on M}
Let $\Lambda$ be a finite dimensional algebra with $\gl\Lambda\le n$
and $\MM$ the $\tau_n$-closure of $D\Lambda$ satisfying the condition
(C$_n$) in Definition \ref{complete}.
Then we have the following.
\begin{itemize}
\item[(a)] We have full functors
\[\tau_n=D\Ext^n_\Lambda(-,\Lambda):\MM\to\MM\ \mbox{ and }\ \tau_n^-=\Ext^n_{\Lambda^{\op}}(D-,\Lambda):\MM\to\MM\]
which give mutually quasi-inverse equivalences
\[\tau_n:\MM_P\to\MM_I\ \mbox{ and }\ \tau_n^-:\MM_I\to\MM_P.\]
\item[(b)] $\tau_n$ gives a bijection from isoclasses of indecomposable objects in $\MM_P$ to those in $\MM_I$.
\item[(c)] $\MM_P\subset\GG_n$ and $\MM_I\subset\HH_n$.
\item[(d)] $\Hom_\Lambda(\MM_P,\PP(\MM))=0$.
\item[(e)] $\Hom_\Lambda(\tau_n^i(D\Lambda),\tau_n^j(D\Lambda))=0$ for any $i<j$.
\end{itemize}
\end{lemma}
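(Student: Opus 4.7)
The statement has five parts, but (c) and (a) are the central mechanisms and (b), (d), (e) follow from them. My plan is to establish (c) first from (C$_n$) and Lemma~\ref{XY}, then use (c) to upgrade Lemma~\ref{XY}(a) to the restricted equivalence in (a), and finally to exploit (a) to handle (d) and (e).

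For (c), any $X\in\MM_P$ has no projective summand (a projective summand of an $\MM$-object sits in $\PP(\MM)$), so (C$_n$) plus Lemma~\ref{XY}(b) gives $X\in\GG_n$. For $Y\in\MM_I$, having no injective summand lets me exhibit $Y$ as a direct summand of $\tau_n A$ for some $A\in\MM$; decomposing $A=A_P\oplus A_{\PP}$ with $A_{\PP}\in\PP(\MM)$ kills the $\PP$-part under $\tau_n$, so $Y$ is a summand of $\tau_n A_P$ with $A_P\in\MM_P\subseteq\GG_n$, and Lemma~\ref{XY}(a) gives $\tau_n A_P\in\HH_n$.

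For (a), both $\tau_n$ and $\tau_n^-$ preserve $\MM$ (trivially for $\tau_n$; for $\tau_n^-$ using that $\tau_n^-$ vanishes on $\II(\MM)$ and that $\tau_n^-(\MM_I)\subseteq\MM_P\subseteq\MM$ by the next sentence). The restriction $\tau_n\colon\MM_P\to\MM_I$ is well-defined because $\tau_n X\in\HH_n$ by (c), and injective objects in $\HH_n$ are zero. The harder direction $\tau_n^-\colon\MM_I\to\MM_P$ requires the disjointness $\GG_n\cap\PP(\MM)=0$, which I would prove by noting that any $W$ in the intersection has $\Ext^i_\Lambda(W,\Lambda)=0$ for all $i\ge0$ (the $i\ge n$ range coming from $\pd W<n$), and then dualizing a finite projective resolution via $\add\Lambda_\Lambda\leftrightarrow\add{}_\Lambda\Lambda$ forces $W=0$. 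Quasi-inverseness is then Lemma~\ref{XY}(a). Fullness of $\tau_n$ on $\MM$ reduces to fullness of the equivalence: for $X,Y\in\MM$ the hom $\Hom_\Lambda(\tau_n X,\tau_n Y)$ equals $\Hom_\Lambda(\tau_n X_P,\tau_n Y_P)\cong\Hom_\Lambda(X_P,Y_P)$, and any morphism in this last group extends to $X\to Y$ by pre-composing with the projection $X\to X_P$ and post-composing with the inclusion $Y_P\to Y$. Part (b) is then immediate.

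For (d), take $Z\in\MM_P\subseteq\GG_n$ and $X\in\PP(\MM)$ with $\pd X=m<n$, choose a projective resolution $0\to Q_m\to\cdots\to Q_0\to X\to0$, and dimension-shift $\Hom_\Lambda(Z,-)$ across the short exact sequences $0\to\Omega^{i+1}X\to Q_i\to\Omega^iX\to0$. Since $\Ext^j_\Lambda(Z,Q_i)=0$ for $0\le j<n$, the chain telescopes to $\Hom_\Lambda(Z,X)\cong\Ext^m_\Lambda(Z,Q_m)=0$. For (e) I would induct on $i$: the base $i=0$ holds because $\tau_n^j(D\Lambda)\in\MM_I\subseteq\HH_n$ for $j\ge1$ (same argument as in (c)), so $\Hom_\Lambda(D\Lambda,\tau_n^j(D\Lambda))=0$. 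For $1\le i<j$, write $\tau_n^i(D\Lambda)=\tau_n A_P$ and $\tau_n^j(D\Lambda)=\tau_n B_P$ using the $\MM_P$-parts of $\tau_n^{i-1}(D\Lambda)$ and $\tau_n^{j-1}(D\Lambda)$; the equivalence in (a) identifies $\Hom_\Lambda(\tau_n A_P,\tau_n B_P)$ with $\Hom_\Lambda(A_P,B_P)$, which is a direct summand of $\Hom_\Lambda(\tau_n^{i-1}(D\Lambda),\tau_n^{j-1}(D\Lambda))=0$ by induction. The main obstacle is the disjointness $\GG_n\cap\PP(\MM)=0$ in (a); this is where the finite projective dimension hypothesis really enters, and it is what makes the whole setup of $n$-complete algebras tick.
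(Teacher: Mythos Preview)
Your proof is correct and follows the paper's strategy closely: (c) from (C$_n$) and Lemma~\ref{XY}(b), then (a) via the equivalence $\tau_n:\GG_n\to\HH_n$ of Lemma~\ref{XY}(a), with (b), (d), (e) as corollaries; your arguments for (d) and (e) are essentially the paper's.

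The one notable divergence is in (a): you single out $\GG_n\cap\PP(\MM)=0$ as ``the main obstacle'' needed for $\tau_n^-(\MM_I)\subseteq\MM_P$, whereas the paper never uses this fact. Once $\MM_P\subseteq\GG_n$ is known, the paper simply argues that the essential image of the fully faithful functor $\tau_n|_{\MM_P}$ is exactly $\MM_I$: any indecomposable $Y\in\MM_I$ is a summand of $\tau_n A_P$ for some $A_P\in\MM_P$ (this is the computation you already did in (c)), and since $\tau_n:\GG_n\to\HH_n$ is an equivalence the summand lifts to some $X\mid A_P$ with $\tau_nX\cong Y$. Thus $\tau_n(\MM_P)=\MM_I$, and the quasi-inverse automatically carries $\MM_I$ back to $\MM_P$. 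Your disjointness claim is true and your sketch via $\RHom_\Lambda(W,\Lambda)=0$ for perfect $W$ works, but it is an unnecessary detour; moreover, on its own it only rules out $\PP(\MM)$-summands of $\tau_n^-Y$ and does not by itself place $\tau_n^-Y$ inside $\MM$ --- that containment really comes from the summand-lifting argument above, which you have available from (c) but do not invoke at this point.
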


\begin{proof}
(a)(c) We have $\MM_P\subset\GG_n$ by the condition (C$_n$) and Lemma \ref{XY}(b).
By Lemma \ref{XY}(a), we have a fully faithful functor $\tau_n:\MM_P\to\HH_n$.
By our construction of $\MM$, we have $\tau_n(\MM_P)=\MM_I$.
Thus we have an equivalence $\tau_n:\MM_P\to\MM_I$ with quasi-inverse $\tau_n^-$.
Since $\tau_n(\PP(\MM))=0$ and $\tau_n^-(\II(\MM))=0$, we have full functors
$\tau_n:\MM\to\MM$ and $\tau_n^-:\MM\to\MM$.

(b) Immediate from (a).
%

(d) For any $X\in\PP(\MM)$, take a projective resolution
\[0\to P_{n-1}\to\cdots\to P_0\to X\to0.\]
Applying $\Hom_\Lambda(\MM_P,-)$ and using $\Ext^i_\Lambda(\MM_P,\Lambda)=0$ for any $0\le i<n$, we have $\Hom_\Lambda(\MM_P,X)=0$.

(e) We have $\MM_I\subset\HH_{n}$ by (c), so $\Hom_\Lambda(D\Lambda,\tau_{n}^{j-i}(D\Lambda))=0$.
Since $\tau_{n}:\MM\to\MM_I$ is a full functor by (a), we have $\Hom_\Lambda(\tau_{n}^i(D\Lambda),\tau_{n}^j(D\Lambda))=0$.
\end{proof}

Now we shall prove Proposition \ref{weak is weak}.

By Lemma \ref{tau_n on M}, we have (a)--(c) Immediately.
The assertion (d) follows immediately from (c).
\qed

\medskip
We give the following general property of $\tau_n$-closures.

\begin{proposition}\label{n-rigid}
Let $\Lambda$ be a finite dimensional algebra and $\MM$ the $\tau_n$-closure of $D\Lambda$. 
\begin{itemize}
\item[(a)] If $\Ext^i_\Lambda(\MM_P,\Lambda)=0$ holds for any $1<i<n$, then $\MM$ is $n$-rigid.
\item[(b)] If $n=2$, then $\MM$ is $2$-rigid.
\end{itemize}
\end{proposition}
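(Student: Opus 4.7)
The plan is to deduce (b) from (a): when $n=2$, the condition in (a) is vacuous (the range $1<i<n$ is empty), so (b) follows immediately. For (a), I will prove
\[\Ext^i_\Lambda(\tau_n^a(D\Lambda),\tau_n^b(D\Lambda))=0\qquad\text{for all }a,b\ge0\text{ and }0<i<n\]
by induction on $a+b$. The base case $b=0$ is immediate since $D\Lambda$ is injective.

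For the inductive step, set $X:=\tau_n^{b-1}(D\Lambda)$ and $Y:=\tau_n^a(D\Lambda)$; by discarding summands on which $\tau_n$ vanishes I may assume $X\in\MM_P$. The factorization $\tau_n=\tau\circ\Omega^{n-1}$, the classical Auslander-Reiten formula $\Ext^1_\Lambda(Y',\tau Z)\cong D\ul{\Hom}_\Lambda(Z,Y')$, and dimension shifting on the first argument together yield
\[\Ext^i_\Lambda(Y,\tau_nX)\cong D\Ext^{n-1}_\Lambda(X,\Omega^{i-1}Y)\qquad(1\le i\le n-1).\]
Next I dimension shift on the second argument through the short exact sequences $0\to\Omega^kY\to P_{k-1}\to\Omega^{k-1}Y\to0$; each step from $\Ext^j$ to $\Ext^{j-1}$ needs $\Ext^{j-1}_\Lambda(X,\Lambda)=0=\Ext^j_\Lambda(X,\Lambda)$, so iterating $i-1$ times converts $\Ext^{n-1}_\Lambda(X,\Omega^{i-1}Y)$ into $\Ext^{n-i}_\Lambda(X,Y)$, provided $\Ext^j_\Lambda(X,\Lambda)=0$ for every $j\in\{n-i,\dots,n-1\}$. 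Under the hypothesis $\Ext^j_\Lambda(\MM_P,\Lambda)=0$ for $1<j<n$, this range lies in $(1,n)$ precisely when $i\le n-2$, and for those $i$ I obtain $\Ext^i_\Lambda(Y,\tau_nX)\cong D\Ext^{n-i}_\Lambda(X,Y)$, which vanishes by the inductive hypothesis (since $a+(b-1)<a+b$).

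The expected main obstacle is the remaining degree $i=n-1$, where the missing vanishing $\Ext^1_\Lambda(X,\Lambda)=0$ blocks the final dimension shift. My plan is to stop one step short, obtaining
\[\Ext^{n-1}_\Lambda(X,\Omega^{n-2}Y)\cong\Ext^2_\Lambda(X,\Omega Y),\]
and then to apply the long exact sequence associated to $0\to\Omega Y\to P_0\to Y\to0$. For $n\ge 3$ the hypothesis yields $\Ext^2_\Lambda(X,\Lambda)=0$, hence $\Ext^2_\Lambda(X,P_0)=0$, so this sequence produces a surjection $\Ext^1_\Lambda(X,Y)\twoheadrightarrow\Ext^2_\Lambda(X,\Omega Y)$; the inductive hypothesis then gives $\Ext^1_\Lambda(X,Y)=0$ and finishes the argument. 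For $n=2$ no dimension shifting is required, and the single identity $\Ext^1_\Lambda(Y,\tau_2X)\cong D\Ext^1_\Lambda(X,Y)$ drives the induction.
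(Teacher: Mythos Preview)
Your argument is essentially correct and follows the same approach as the paper: Auslander--Reiten duality combined with dimension shifting. One imprecision: the displayed formula
\[\Ext^i_\Lambda(Y,\tau_nX)\cong D\Ext^{n-1}_\Lambda(X,\Omega^{i-1}Y)\]
is not in general an isomorphism. What AR duality gives is $\Ext^i_\Lambda(Y,\tau_nX)\cong D\ul{\Hom}_\Lambda(\Omega^{n-1}X,\Omega^{i-1}Y)$, and there is only a surjection $\Ext^{n-1}_\Lambda(X,\Omega^{i-1}Y)\twoheadrightarrow\ul{\Hom}_\Lambda(\Omega^{n-1}X,\Omega^{i-1}Y)$, hence an injection after dualizing. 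This is harmless, since an injection suffices for your vanishing argument; but you should state it as such.

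The paper organizes the same computation more uniformly. Its key lemma asserts a surjection $\Ext^{n-i}_\Lambda(X,Y)\to D\Ext^i_\Lambda(Y,\tau_nX)$ under the hypothesis $\Ext^j_\Lambda(X,\Lambda)=0$ for $n-i<j<n$ (open at both ends). The point is that by staying in the stable category one gets $\ul{\Hom}_\Lambda(\Omega^{n-1}X,\Omega^{i-1}Y)\cong\ul{\Hom}_\Lambda(\Omega^{n-i}X,Y)$ needing only those vanishing degrees strictly between $n-i$ and $n$, and then uses the general surjection $\Ext^{n-i}_\Lambda(X,Y)\to\ul{\Hom}_\Lambda(\Omega^{n-i}X,Y)$. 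For $i=n-1$ this requires exactly $\Ext^j_\Lambda(X,\Lambda)=0$ for $1<j<n$, which is the hypothesis of the proposition, so no separate case is needed. Your dimension shift on the second variable instead requires the closed range $n-i\le j\le n-1$, which is why $i=n-1$ becomes special for you; your workaround (stopping at $\Ext^2_\Lambda(X,\Omega Y)$ and surjecting from $\Ext^1_\Lambda(X,Y)$) is correct, but the paper's formulation avoids the split entirely.
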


For the proof, we need the following general observation.

\begin{lemma}\label{surjection}
Let $\Lambda$ be a finite dimensional algebra, $X,Y\in\mod\Lambda$ and
$0<i<n$. If $\Ext^j_\Lambda(X,\Lambda)=0$ for any $n-i<j<n$, then
we have a surjection $\Ext^{n-i}_\Lambda(X,Y)\to D\Ext^i_\Lambda(Y,\tau_nX)$.
\end{lemma}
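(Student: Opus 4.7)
The plan is to build the desired surjection from a projective resolution of $X$ by applying the Nakayama functor and using the natural duality $\Hom_\Lambda(Y,\nu P)\simeq D\Hom_\Lambda(P,Y)$ for projective $P$. Concretely, let $\cdots\to P_{n+1}\to P_n\to\cdots\to P_0\to X\to 0$ be a projective resolution of $X$. Applying the (right exact) functor $\nu$ produces a chain complex $\nu P_\bullet$ whose $j$-th homology is $L_j\nu(X)=D\Ext^j_\Lambda(X,\Lambda)$. By hypothesis this homology vanishes in the range $n-i<j<n$, so $\nu P_\bullet$ is exact at $\nu P_{n-1},\ldots,\nu P_{n-i+1}$; combined with the defining identity $\tau_nX=\Ker(\nu P_n\to\nu P_{n-1})$ this yields an exact sequence
\[0\to\tau_nX\to\nu P_n\to\nu P_{n-1}\to\cdots\to\nu P_{n-i+1}\to Z\to 0,\]
where $Z:=\Im(\nu P_{n-i+1}\to\nu P_{n-i})\subset\nu P_{n-i}$.

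Next I would split this into short exact sequences $S_j:0\to K_j\to\nu P_{n-j}\to K_{j+1}\to 0$ with $K_0=\tau_nX$ and $K_i=Z$, and apply $\Hom_\Lambda(Y,-)$. Since each $\nu P_{n-j}$ is injective, the long exact sequences collapse to isomorphisms $\Ext^m_\Lambda(Y,K_{j+1})\simeq\Ext^{m+1}_\Lambda(Y,K_j)$ for $m\ge 1$, plus a right-exact piece at the end. Iterating produces
\[\Ext^i_\Lambda(Y,\tau_nX)\simeq\Ext^1_\Lambda(Y,K_{i-1})\simeq\Cok\bigl(\Hom_\Lambda(Y,\nu P_{n-i+1})\to\Hom_\Lambda(Y,Z)\bigr).\]

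The third step is to embed this cokernel into the $(n-i)$-th homology of $\Hom_\Lambda(Y,\nu P_\bullet)$. Because $Z\subset\Ker(\nu P_{n-i}\to\nu P_{n-i-1})$ and $\Hom_\Lambda(Y,-)$ is left exact, $\Hom_\Lambda(Y,Z)$ sits inside $\Ker(\Hom_\Lambda(Y,\nu P_{n-i})\to\Hom_\Lambda(Y,\nu P_{n-i-1}))$; moreover $\Im(\Hom_\Lambda(Y,\nu P_{n-i+1})\to\Hom_\Lambda(Y,\nu P_{n-i}))$ automatically factors through $\Hom_\Lambda(Y,Z)$. Hence the cokernel above injects into $H_{n-i}(\Hom_\Lambda(Y,\nu P_\bullet))$.

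Finally, the Nakayama duality $\Hom_\Lambda(Y,\nu P_j)\simeq D\Hom_\Lambda(P_j,Y)$ identifies $\Hom_\Lambda(Y,\nu P_\bullet)$ with $D\Hom_\Lambda(P_\bullet,Y)$, and since $D$ is exact and contravariant this gives $H_{n-i}(\Hom_\Lambda(Y,\nu P_\bullet))\simeq D\Ext^{n-i}_\Lambda(X,Y)$. Dualizing the injection from the previous paragraph produces the desired surjection $\Ext^{n-i}_\Lambda(X,Y)\twoheadrightarrow D\Ext^i_\Lambda(Y,\tau_nX)$. The main technical point, and essentially the only non-formal step, is the verification in the third step that the cokernel describing $\Ext^i_\Lambda(Y,\tau_nX)$ embeds into the homology of $\Hom_\Lambda(Y,\nu P_\bullet)$; everything else is bookkeeping with dimension shifts and the Nakayama duality.
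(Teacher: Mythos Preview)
Your argument is correct. The only minor caveat is that the identity $\tau_nX=\Ker(\nu P_n\to\nu P_{n-1})$ holds literally for a \emph{minimal} projective resolution (as in the paper's formula~(1)); for an arbitrary resolution one picks up an injective summand, which is harmless here since you only use $\Ext^i_\Lambda(Y,-)$ with $i>0$.

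Your route, however, is genuinely different from the paper's. The paper invokes classical Auslander--Reiten duality to rewrite $D\Ext^i_\Lambda(Y,\tau_nX)$ as $\ul{\Hom}_\Lambda(\Omega^{n-1}X,\Omega^{i-1}Y)$, then uses the vanishing hypothesis together with a standard stable-category identity (cited from Auslander) to shift this to $\ul{\Hom}_\Lambda(\Omega^{n-i}X,Y)$, and finally quotes the general surjection $\Ext^{n-i}_\Lambda(X,Y)\twoheadrightarrow\ul{\Hom}_\Lambda(\Omega^{n-i}X,Y)$. So the paper's proof is a short three-line reduction to known facts, at the cost of relying on AR duality and an external reference. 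Your approach unpacks everything in terms of the Nakayama functor applied to a projective resolution: it is longer and more computational, but entirely self-contained, and it makes transparent exactly where the vanishing of $\Ext^j_\Lambda(X,\Lambda)$ enters (namely, to get exactness of $\nu P_\bullet$ in the relevant range). The functorial isomorphism $\Hom_\Lambda(Y,\nu P)\simeq D\Hom_\Lambda(P,Y)$ that you use is of course the projective-level incarnation of AR duality, so the two arguments are ultimately cousins; yours just avoids passing through the stable category.
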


\begin{proof}
By Auslander-Reiten duality, we have
\[D\Ext^i_\Lambda(Y,\tau_nX)\simeq D\Ext^1_\Lambda(\Omega^{i-1}Y,\tau\Omega^{n-1}X)\simeq\ul{\Hom}_\Lambda(\Omega^{n-1}X,\Omega^{i-1}Y).\]
Since we assumed $\Ext^j_\Lambda(X,\Lambda)=0$ for $n-i<j<n$,
it is easily checked (e.g. \cite[7.4]{A2}) that
\[\ul{\Hom}_\Lambda(\Omega^{n-1}X,\Omega^{i-1}Y)\simeq\ul{\Hom}_\Lambda(\Omega^{n-i}X,Y).\]
Since in general we have a surjection
\[\Ext^{n-i}_\Lambda(X,Y)\to\ul{\Hom}_\Lambda(\Omega^{n-i}X,Y),\]
we have the desired surjection.
\end{proof}

Now we shall prove Proposition \ref{n-rigid}.
We only have to show (a).

(i) Let $X,Y\in\MM$. We shall show that, if $\Ext^i_\Lambda(X,Y)=0$ for any $0<i<n$,
then $\Ext^i_\Lambda(Y,\tau_nX)=0$ and $\Ext^i_\Lambda(\tau_nX,\tau_nY)=0$ for any $0<i<n$.

We can assume that $X\in\MM_P$.
Then we have $\Ext^j_\Lambda(X,\Lambda)=0$ for any $1<j<n$ by our assumption.
Thus we have $\Ext^i_\Lambda(Y,\tau_nX)=0$ for any $0<i<n$ by Lemma
\ref{surjection}. Replacing $(X,Y)$ by $(Y,\tau_nX)$, we have
$\Ext^i_\Lambda(\tau_nX,\tau_nY)=0$ for any $0<i<n$.

(ii) Let $0\le j,k$.
Since $\Ext^i_\Lambda(\tau_n^j(D\Lambda),D\Lambda)=0$ for any $0<i<n$,
we have $\Ext^i_\Lambda(\tau_n^{j+k}(D\Lambda),\tau_n^k(D\Lambda))=0$ and $\Ext^i_\Lambda(\tau_n^k(D\Lambda),\tau_n^{j+k+1}(D\Lambda))=0$
for any $0<i<n$ by (i).
Thus we have $\Ext^i_\Lambda(\MM,\MM)=0$ for any $0<i<n$.
\qed

\medskip
Now we shall prove Proposition \ref{n-AR translation new}.
It follows from Proposition \ref{n-rigid}(b) and the following result.

\begin{lemma}
Assume $\gl\Lambda\le n$. Then there exists an $n$-cluster tilting object
in $\mod\Lambda$ if and only if $\MM$ is $n$-rigid and $\Lambda\in\MM$.
\end{lemma}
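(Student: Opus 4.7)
For the forward direction, suppose $M$ is an $n$-cluster tilting object in $\mod\Lambda$. Any $n$-cluster tilting subcategory contains $\add\Lambda$ (since $\Ext^i_\Lambda(\Lambda,-)=0$ for $i>0$) and $\add D\Lambda$, so $\Lambda\oplus D\Lambda\in\add M$. Since $\gl\Lambda\le n$, Theorem \ref{unique n-cluster} gives $\MM=\add M$, whence $\MM$ is $n$-rigid and $\Lambda\in\MM$.

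For the reverse direction, assume $\MM$ is $n$-rigid and $\Lambda\in\MM$. Since $\Lambda\in\MM$, the $n$-rigidity of $\MM$ yields $\Ext^i_\Lambda(\MM_P,\Lambda)=0$ for $0<i<n$, i.e.\ condition (C$_n$) of Definition \ref{complete}. Thus Lemma \ref{tau_n on M} applies: we have quasi-inverse equivalences $\tau_n\colon\MM_P\to\MM_I$ and $\tau_n^-\colon\MM_I\to\MM_P$, and $\tau_n$ induces a bijection on isoclasses of indecomposables between $\MM_P$ and $\MM_I$.

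The main step is to show $\Lambda$ is $\tau_n$-finite, so that $\MM=\add M$ for some $M$. For each indecomposable injective summand $I_j$ of $D\Lambda$, consider its $\tau_n$-orbit $I_j,\tau_nI_j,\tau_n^2I_j,\ldots$. By the equivalence, as long as $\tau_n^kI_j$ is nonzero and non-projective, it lies in $\MM_P$ and $\tau_n^{k+1}I_j$ is again a nonzero indecomposable in $\MM_I$; the orbit terminates precisely when some $\tau_n^kI_j$ is projective, at which point $\tau_n^{k+1}I_j=0$. Since $\Lambda\in\MM$, every indecomposable projective arises as some $\tau_n^\ell I_j$; comparing the numbers of indecomposable projectives and injectives (and using that two projectives in the same orbit must coincide because $\tau_n$ of a projective is $0$), the assignment $I_j\mapsto$ (terminal projective of its orbit) is forced to be a bijection. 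Hence every orbit terminates, $\tau_n^\ell(D\Lambda)=0$ for large $\ell$, and $\MM$ has an additive generator $M$.

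Finally, to conclude that $\MM$ is an $n$-cluster tilting subcategory of $\mod\Lambda$, apply Theorem \ref{basic}(b) with the trivial tilting module $T=\Lambda$ (so $T^\perp=\mod\Lambda$, $\pd T_\Lambda=0\le n$, and $T\oplus D\Lambda=\Lambda\oplus D\Lambda\in\MM=\add M$). Since $\MM$ is $n$-rigid, it suffices to verify condition (iii) of Theorem \ref{basic}(b): every indecomposable $X\in\MM$ admits a source sequence $X\to C_1\to\cdots\to C_{n+1}\to 0$ with $C_i\in\MM$. The construction uses the $\tau_n$-equivalence together with $\gl\Lambda\le n$: for indecomposable $X\in\MM_P$ we splice a minimal projective resolution (of length $\le n$) with the induced injective resolution of $\tau_n X\in\MM_I$ obtained via $\nu$, and for indecomposable $X\in\II(\MM)$ we use the projection $X\to X/\soc X$ together with the source-sequence structure of $X/\soc X$ inside $\MM$; in either case the resulting complex lives in $\MM$ because $\MM$ is closed under $\tau_n$ by Lemma \ref{tau_n on M}(a) and the length bound $n+1$ is forced by $\gl\Lambda\le n$.

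The main obstacle is this last step: producing source sequences of the correct length inside $\MM$. The existence of the terms in $\MM$ is guaranteed by the $\tau_n$-structure supplied by Lemma \ref{tau_n on M}, and the length bound is the delicate part, which is handled by tracking how the bound $\gl\Lambda\le n$ controls the projective/injective resolutions that get spliced together.
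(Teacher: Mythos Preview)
Your forward direction and the first two reductions of the reverse direction (deriving (C$_n$) from $\Lambda\in\MM$, invoking Lemma \ref{tau_n on M}, and the orbit-counting argument for $\tau_n$-finiteness) are fine and match the paper's approach.

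The gap is in your final step. You try to verify Theorem \ref{basic}(b)(iii) by constructing source sequences in $\MM$, but the constructions you describe do not work. For $X\in\II(\MM)$ you appeal to ``the source-sequence structure of $X/\soc X$ inside $\MM$,'' but $X/\soc X$ need not lie in $\MM$, and the exact sequence $0\to X/\soc X\to C_1\to\cdots\to C_n\to 0$ with $C_i\in\MM$ you have in mind is precisely the content of Theorem \ref{basic}(a)(ii), which already \emph{assumes} $\MM$ is $n$-cluster tilting. For $X\in\MM_P$ your ``splice a minimal projective resolution with the induced injective resolution of $\tau_nX$'' produces an exact sequence $0\to\tau_nX\to\nu P_n\to\cdots\to\nu P_0\to 0$ starting at $\tau_nX$, not at $X$; and even viewed as a candidate source sequence for $\tau_nX$, the first map $\tau_nX\to\nu P_n$ (an injective hull) is not left almost split in $\MM$ in general, so the induced functor sequence does not hit $J_\MM(\tau_nX,-)$. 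In short, you are implicitly using the existence of $n$-almost split sequences, which is what you are trying to establish.

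The paper avoids this by verifying the defining orthogonality condition directly. The key tool is Lemma \ref{surjection}: from $\Ext^i_\Lambda(X,\MM)=0$ for $0<i<n$ one gets $\Ext^i_\Lambda(\tau_nX,\tau_n\MM)=0$, hence $\Ext^i_\Lambda(\tau_nX,\MM)=0$ since $\tau_n\MM$ together with $D\Lambda$ generates $\MM$. Iterating and using $\tau_n$-finiteness (which, as in Proposition \ref{2 kinds}(c), holds for \emph{all} $X\in\mod\Lambda$ via the injective hull), one reaches some $Y=\tau_n^\ell X\neq 0$ with $\tau_nY=0$; then $\pd Y_\Lambda<n$ and $\Ext^i_\Lambda(Y,\Lambda)=0$ for $0<i<n$ force $Y$ projective, hence $Y\in\MM$ and $X\simeq\tau_n^{-\ell}Y\in\MM$. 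The other orthogonality condition is dual. This route is both shorter and non-circular; I recommend replacing your last paragraph with it.
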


\begin{proof}
`only if' part is clear from Theorem \ref{unique n-cluster}.
We shall show `if' part.

(i) Since $\Lambda\in\MM$, we have that $\MM$ satisfies the condition
(C$_n$) in Definition \ref{complete}.
Thus $\tau_n:\MM_P\to\MM_I$ is an equivalence by Lemma \ref{tau_n on M}(a).
A bijection from isoclasses of indecomposable
objects in $\II(\MM)$ to those in $\PP(\MM)$
is given by $I\mapsto\tau_n^{\ell_I}I$, where $\ell_I$ is a maximal
number $\ell$ satisfying $\tau_n^\ell I\neq0$.
In particular, $\tau_n^\ell(D\Lambda)=0$ holds for sufficiently
large $\ell$, and $\MM$ has an additive generator $M$.
Moreover, we have that $\tau_n^\ell X=0$ holds for any
$X\in\mod\Lambda$ by a similar argument as in the proof of Proposition
\ref{2 kinds}(c).

(ii) We shall show that $\Ext^i_\Lambda(X,\MM)=0$ for any $0<i<n$ implies $X\in\MM$.

We know $\Ext^i_\Lambda(\tau_nX,\tau_n\MM)=0$ for any $0<i<n$ by Lemma
\ref{surjection}. By our construction of $\MM$, we have
$\Ext^i_\Lambda(\tau_nX,\MM)=0$ for any $0<i<n$.
Consequently we have $\Ext^i_\Lambda(\tau_n^\ell X,\MM)=0$ for
any $\ell\ge0$ and $0<i<n$.
By (i), we can take a maximal number $\ell\ge0$ satisfying
$Y:=\tau_n^\ell X\neq0$. Since $\tau_nY=0$, we have $\pd Y_\Lambda<n$.
Since we have $\Ext^i_\Lambda(Y,\Lambda)=0$ for any $0<i<n$
by $\Lambda\in\MM$, we have that $Y$ is a projective $\Lambda$-module.
Thus we have $Y\in\MM$.
By Lemma \ref{tau_n on M}(a) again, we have $X\simeq\tau_n^{-\ell}Y\in\MM$.

(iii) By a dual argument, we have that $\Ext^i_\Lambda(\MM,X)=0$ for any
$0<i<n$ implies $X\in\MM$. Consequently $M$ given in (i) is an $n$-cluster
tilting object of $\mod\Lambda$.
\end{proof}

\section{Proof of Theorems \ref{main}\ and \ref{recall}}
\label{Proof of Main Theorem}

Throughout this section we assume that $\Lambda$ is $n$-complete with the $\tau_n$-closure
$\MM=\add M$ of $D\Lambda$ and $\PP(\MM)=\add T$ unless stated otherwise.
We put
\[\Gamma:=\End_\Lambda(M).\]

We have the following result immediately from Theorem \ref{Auslander correspondence}(a)$\Rightarrow$(b).

\begin{lemma}\label{tau and ext 2}
$\gl\Gamma\le n+1$.
\end{lemma}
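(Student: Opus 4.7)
The plan is to apply Theorem~\ref{Auslander correspondence}(a)$\Rightarrow$(b) directly. The hypothesis of that theorem requires exhibiting an $m$-relative $n$-cluster tilting object for some $m$ with $n \ge m \ge 0$. Since $\Lambda$ is assumed $n$-complete, the defining condition $(B_n)$ says precisely that $\MM$ is an $n$-cluster tilting subcategory of $T^\perp$ for a tilting module $T$, and since $\MM = \add M$ by hypothesis, this exhibits $M$ as an $n$-cluster tilting object of $T^\perp$.

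The only thing left is to check that $T$ has sufficiently small projective dimension. By condition $(A_n)$ we have $\PP(\MM) = \add T$, and by the very definition of $\PP(\MM)$ every summand of $T$ satisfies $\pd X_\Lambda < n$. Hence $\pd T_\Lambda \le n-1$, so $M$ qualifies as an $(n-1)$-relative $n$-cluster tilting object in the sense of the terminology introduced after Theorem~\ref{Auslander correspondence}.

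Applying Theorem~\ref{Auslander correspondence}(a)$\Rightarrow$(b) with $m = n-1$ then yields in particular part (b)(i), namely $\gl\Gamma \le n+1$, which is exactly the claim.

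There is no real obstacle: the lemma is a bookkeeping check that the data provided by the axioms of an $n$-complete algebra fit the hypotheses of the Auslander correspondence in its relative form. The only point worth flagging is the convention $\pd T_\Lambda \le n-1 \le n$, which guarantees that the integer $m$ lies in the allowed range $n \ge m \ge 0$ required by Theorem~\ref{Auslander correspondence}.
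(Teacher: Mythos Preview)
Your proof is correct and follows exactly the same approach as the paper, which simply states that the result is immediate from Theorem~\ref{Auslander correspondence}(a)$\Rightarrow$(b). You have spelled out the verification that $M$ is an $(n-1)$-relative $n$-cluster tilting object (which the paper also notes just after Definition~\ref{complete}, calling $\Gamma$ an $(n-1)$-relative $n$-Auslander algebra), so there is nothing to add.
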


This allows us to put
\begin{eqnarray*}
\tau_{n+1}&:=&D\Ext^{n+1}_\Gamma(-,\Gamma):\mod\Gamma\to\mod\Gamma,\\
\tau_{n+1}^-&:=&\Ext^{n+1}_{\Gamma^{\op}}(D-,\Gamma):\mod\Gamma\to\mod\Gamma.
\end{eqnarray*}
Clearly $\tau_{n+1}$ (respectively, $\tau_{n+1}^-$) preserves monomorphisms (respectively, epimorphisms) in $\mod\Gamma$. Moreover
$X\in\mod\Gamma$ satisfies $\tau_{n+1}X=0$ (respectively, $\tau_{n+1}^-X=0$)
if and only if $\pd X_\Gamma<n+1$ (respectively, $\id X_\Gamma<n+1$).
We denote by
\[\NN:=\MM_{n+1}(D\Gamma)=\add\{\tau_{n+1}^i(D\Gamma)\ |\ i\ge0\}\]
the $\tau_{n+1}$-closure of $D\Gamma$.

\subsection{$(n+1)$-rigidity of $\NN$}\label{section: (n+1)-rigidity}

The aim of this subsection is to show that $\NN$ is $(n+1)$-rigid.
As usual, we put
\begin{eqnarray*}
\II(\NN)&:=&\add D\Gamma,\\
\PP(\NN)&:=&\{X\in\NN\ |\ \pd X_\Gamma<n+1\}=\{X\in\NN\ |\ \tau_{n+1}X=0\},\\
\NN_I&:=&\{X\in\NN\ |\ X\ \mbox{ has no non-zero summands in }\ \II(\NN)\},\\
\NN_P&:=&\{X\in\NN\ |\ X\ \mbox{ has no non-zero summands in }\ \PP(\NN)\}.
\end{eqnarray*}
For $X\in\MM$, put
\[\ell_X:=\sup\{i\ge0\ |\ \tau_n^iX\neq0\}.\]
Let us start with the following easy observation.

\begin{proposition}\label{indecomposable in M}
\begin{itemize}
\item[(a)] $\ell_X<\infty$ for any $X\in\MM$.
\item[(b)] A bijection from isoclasses of indecomposable objects in
$\MM$ to pairs $(I,i)$ of isoclasses of indecomposable objects
$I\in\II(\MM)$ and $0\le i\le\ell_I$ is given by $(I,i)\mapsto\tau_n^iI$.
\end{itemize}
\end{proposition}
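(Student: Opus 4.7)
The plan is to deduce (a) directly from $\tau_n$-finiteness, and to prove the bijection in (b) by the usual three steps: well-definedness, surjectivity, and injectivity. For (a), Proposition \ref{weak is weak}(d) gives that $\Lambda$ is $\tau_n$-finite, so $\tau_n^N=0$ on $\mod\Lambda$ for some $N$; hence $\tau_n^N X=0$ and $\ell_X\le N<\infty$ for every $X\in\MM$. This also supplies the finiteness of $\MM=\add M$ needed below.

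For well-definedness, I would show by induction on $i$ that $\tau_n^i I$ is indecomposable for each indecomposable $I\in\II(\MM)$ and every $0\le i\le\ell_I$. The base $i=0$ is clear. For the induction step, assume $\tau_n^i I$ is indecomposable and $i<\ell_I$. Then $\tau_n(\tau_n^i I)=\tau_n^{i+1}I\neq 0$ forces $\tau_n^i I\notin\PP(\MM)$, and since it is indecomposable, $\tau_n^i I\in\MM_P$. Proposition \ref{weak is weak}(b) then yields that $\tau_n^{i+1}I$ is indecomposable. As a byproduct, $\tau_n^k I\in\MM_I$ for each $1\le k\le\ell_I$, since it lies in the image of $\tau_n:\MM_P\to\MM_I$.

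For surjectivity, given an indecomposable $X\in\MM$ I construct a sequence $X_0:=X$ and $X_{k+1}:=\tau_n^- X_k$ so long as $X_k\in\MM_I$, stopping once $X_k\in\II(\MM)$. By the equivalence $\tau_n^-:\MM_I\to\MM_P$ of Proposition \ref{weak is weak}(a), each $X_k$ for $k\ge 1$ is indecomposable and lies in $\MM_P$, and one has $X\simeq\tau_n^k X_k$ throughout. To see termination, suppose the sequence never meets $\II(\MM)$; since $\MM$ has only finitely many isoclasses of indecomposables, $X_i\simeq X_j$ for some $i<j$. Iterating the equivalence $\tau_n^-$ then propagates the isomorphism, showing the sequence is periodic with period $j-i$, which forces $X_i$ to be $\tau_n$-periodic and contradicts $\tau_n^N X_i=0$ from part (a). At the termination index $k$, setting $I:=X_k$ and $i:=k$ yields $X\simeq\tau_n^i I$ with $\tau_n^i I\neq 0$, hence $i\le\ell_I$.

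For injectivity, suppose $\tau_n^i I\simeq\tau_n^j J$ with $0\le i\le\ell_I$, $0\le j\le\ell_J$, and WLOG $i\le j$. By the well-definedness step, $\tau_n^k I$ lies in $\MM_I$ for $1\le k\le\ell_I$, so the equivalence $\tau_n^-:\MM_I\to\MM_P$ of Proposition \ref{weak is weak}(a) may be applied $i$ times to give $I\simeq\tau_n^{j-i}J$. If $j>i$, then $\tau_n^{j-i}J\in\MM_I$, but $I$ is an indecomposable summand of $D\Lambda$ and so cannot lie in $\MM_I$, a contradiction. Therefore $i=j$ and $I\simeq J$. The main obstacle is the termination argument in surjectivity, which is the only place one must combine the finiteness of indecomposables in $\add M$ with the vanishing $\tau_n^N=0$ to rule out $\tau_n$-periodic indecomposables in $\MM$.
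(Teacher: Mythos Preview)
Your proof is correct and takes essentially the same approach as the paper: both rest on the equivalence $\tau_n:\MM_P\to\MM_I$ of Proposition~\ref{weak is weak} (equivalently Lemma~\ref{tau_n on M}(b)). The paper is terser and reverses the order---it deduces (b) in one line from Lemma~\ref{tau_n on M}(b) and the definition of $\MM$, and then obtains (a) from the finiteness of indecomposable isoclasses in $\MM=\add M$---whereas you get (a) directly from $\tau_n$-finiteness and feed it into the termination step of your surjectivity argument for (b).
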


\begin{proof}
(b) By Lemma \ref{tau_n on M}(b) and the definition of $\MM$, any indecomposable object in $\MM$
can be written uniquely as $\tau_n^iI$ for indecomposable $I\in\II(\MM)$ and $0\le i\le\ell_I$.

(a) Since $\MM$ contains only finitely many isoclasses of indecomposable objects, we have $\ell_I<\infty$ for any indecomposable $I\in\II(\MM)$.
Since $\ell_{\tau_n^iI}=\ell_I-i$, we have the assertion.
\end{proof}

We put
\begin{eqnarray*}
\GG_{n+1}&:=&\{X\in\mod\Gamma\ |\ \Ext^i_\Gamma(X,\Gamma)=0\ (0\le i<n+1)\},\\
\HH_{n+1}&:=&\{X\in\mod\Gamma\ |\ \Ext^i_\Gamma(D\Gamma,X)=0\ (0\le i<n+1)\}.
\end{eqnarray*}
We need the following observation.

\begin{lemma}\label{XY2}
\begin{itemize}
\item[(a)] We have functors
\[\tau_{n+1}=D\Ext^{n+1}_\Gamma(-,\Gamma):\mod\Gamma\to\HH_{n+1}\ \mbox{ and }\ \tau_{n+1}^-=\Ext^{n+1}_{\Gamma^{\op}}(D-,\Gamma):\mod\Gamma\to\GG_{n+1}\]
which give mutually quasi-inverse equivalences
\[\tau_{n+1}:\GG_{n+1}\to\HH_{n+1}\ \mbox{ and }\ \tau_{n+1}^-:\HH_{n+1}\to\GG_{n+1}.\]
\item[(b)] $\GG_{n+1}$ and $\HH_{n+1}$ are Serre subcategories of $\mod\Gamma$.
\end{itemize}
\end{lemma}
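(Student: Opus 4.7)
My plan is to derive Lemma~\ref{XY2} by specializing Lemma~\ref{XY} to the algebra $\Gamma$ with $n+1$ in place of $n$, using the crucial input $\gl\Gamma\le n+1$ just established in Lemma~\ref{tau and ext 2}. For part~(a), the quasi-inverse equivalences $\tau_{n+1}\colon\GG_{n+1}\to\HH_{n+1}$ and $\tau_{n+1}^-\colon\HH_{n+1}\to\GG_{n+1}$ are exactly the content of Lemma~\ref{XY}(a) applied to $\Gamma$.

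The one point in~(a) not immediate from this reduction is the stronger claim that $\tau_{n+1}X\in\HH_{n+1}$ for every $X\in\mod\Gamma$, not only for $X\in\GG_{n+1}$. I would prove this by taking a minimal projective resolution $0\to P_{n+1}\xrightarrow{f} P_n\to\cdots\to P_0\to X\to 0$ (which has length at most $n+1$ by $\gl\Gamma\le n+1$), applying $\nu$ to obtain the embedding $\tau_{n+1}X\hookrightarrow\nu P_{n+1}$, and then applying $\Hom_\Gamma(D\Gamma,-)$ to the short exact sequence $0\to\tau_{n+1}X\to\nu P_{n+1}\to\mathrm{im}(\nu f)\to 0$. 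The Nakayama identity $\Hom_\Gamma(D\Gamma,\nu P)\cong P$ for $P\in\pr\Gamma$ together with $\Ext^{\ge 1}_\Gamma(D\Gamma,\nu P)=0$ (injectivity of $\nu P$) lets one read off $\Hom_\Gamma(D\Gamma,\tau_{n+1}X)\hookrightarrow\ker(f)=0$ by exactness of the resolution. Vanishing of $\Ext^i_\Gamma(D\Gamma,\tau_{n+1}X)$ for $1\le i\le n$ then follows by iterating the same dimension-shifting argument along the successive images produced by $\nu$-applying the remainder of the resolution.

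For part~(b), I would use the equivalent derived-categorical description: $X\in\GG_{n+1}$ iff $\RHom_\Gamma(X,\Gamma)$ is concentrated in cohomological degree $n+1$ (the upper bound being automatic from $\gl\Gamma\le n+1$), and dually $Y\in\HH_{n+1}$ iff $\RHom_\Gamma(D\Gamma,Y)$ is concentrated in degree $n+1$. Closure of both subcategories under extensions is then immediate from the long exact sequence of $\Ext$. For closure under subobjects and quotients in a short exact sequence $0\to A\to B\to C\to 0$ with $B\in\GG_{n+1}$, one tracks cohomology along the distinguished triangle $\RHom_\Gamma(C,\Gamma)\to\RHom_\Gamma(B,\Gamma)\to\RHom_\Gamma(A,\Gamma)\to\RHom_\Gamma(C,\Gamma)[1]$ and argues that the concentration in degree $n+1$ of the middle term propagates to the outer two.

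The main obstacle I anticipate lies in this last step of~(b): mere concentration of one term of a distinguished triangle in a single cohomological degree does not automatically force the same for the other two, so one must exploit the specific structure of $\Gamma$ as an $(n-1)$-relative $n$-Auslander algebra of $\Lambda$ provided by Theorem~\ref{Auslander correspondence}, which supplies the additional Ext-vanishings needed for the concentration to propagate through the long exact sequence.
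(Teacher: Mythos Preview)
Your reduction of the equivalence statement in (a) to Lemma~\ref{XY} (applied to $\Gamma$ with $n+1$ in place of $n$) is correct and matches the paper exactly. Your argument that $\Hom_\Gamma(D\Gamma,\tau_{n+1}X)=0$ via $\ker f=0$ is also fine.

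The gap is in your treatment of $\Ext^i_\Gamma(D\Gamma,\tau_{n+1}X)=0$ for $1\le i\le n$. The dimension-shifting you describe would work if the complex $\nu P_{n+1}\to\nu P_n\to\cdots\to\nu P_0$ were exact, but for general $X$ it is not: its homology at $\nu P_i$ is $D\Ext^{i}_\Gamma(X,\Gamma)$, which vanishes for $0<i<n+1$ precisely when $X\in\GG_{n+1}$. So ``iterating along successive images'' breaks down at the first place where $\Ext^i_\Gamma(X,\Gamma)\neq 0$. Concretely, from $0\to\tau_{n+1}X\to\nu P_{n+1}\to C\to 0$ with $C=\Im(\nu f)$, you get $\Ext^1_\Gamma(D\Gamma,\tau_{n+1}X)\cong\Hom_\Gamma(D\Gamma,C)/\Im f$, and there is no reason, using only $\gl\Gamma\le n+1$, why $\Hom_\Gamma(D\Gamma,C)=\Im f$. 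Equivalently, the claim $\tau_{n+1}X\in\HH_{n+1}$ amounts to $\Ext^i_{\Gamma^{\op}}(\Ext^{n+1}_\Gamma(X,\Gamma),\Gamma)=0$ for $0\le i\le n$, i.e.\ a grade condition on $\Ext^{n+1}_\Gamma(X,\Gamma)$; this is a genuine restriction on $\Gamma$, not a consequence of finite global dimension alone.

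The paper handles this by invoking, at the very start of the proof, the Auslander-type conditions of Theorem~\ref{Auslander correspondence}(b)(ii)(iii) on the injective coresolutions of $\Gamma$ and $\Gamma^{\op}$, and then citing \cite[6.1]{I1} for $\tau_{n+1}X\in\HH_{n+1}$ and \cite[Prop.~2.4]{I2} for the Serre property. In other words, the extra structural input you correctly flag as necessary for (b) is already needed for the ``stronger claim'' in (a); you have placed it one step too late. Once you feed Theorem~\ref{Auslander correspondence}(b)(ii)(iii) into both parts, your outline aligns with the paper's argument.
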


\begin{proof}
We use the properties Theorem \ref{Auslander correspondence}(b)(ii) and (iii).

(a) We have the desired functors since $\tau_{n+1}X\in\HH_{n+1}$ and $\tau_{n+1}^-X\in\GG_{n+1}$ for any $X\in\mod\Gamma$ by \cite[6.1]{I1}.
They give equivalences between $\GG_{n+1}$ and $\HH_{n+1}$ by Lemma \ref{XY}.

(b) This follows from \cite[Prop. 2.4]{I2}.
\end{proof}

Define functors
\begin{eqnarray*}
\F^i&:=&\Ext_\Lambda^i(M,-):\mod\Lambda\to\mod\Gamma,\\
\G^i&:=&D\Ext_\Lambda^i(-,M):\mod\Lambda\to\mod\Gamma.
\end{eqnarray*}
Put $\F:=\F^0$ and $\G:=\G^0$ for simplicity.
They induce equivalences
\[\F:\MM\to\add\Gamma\ \mbox{ and }\ \G:\MM\to\add D\Gamma\subset\NN.\]
A crucial role is played by a monomorphism
\[\alpha:\tau_{n+1}\G\to\G\tau_n\]
of functors on $\MM$ given by the following proposition.

\begin{proposition}\label{functorial}
\begin{itemize}
\item[(a)] There exists an isomorphism $\F\nu_\Lambda\simeq\G$ of functors on $\add\Lambda$,
\item[(b)] There exists an isomorphism $\nu_\Gamma\F\simeq\G$ of functors on $\MM$,
\item[(c)] There exist isomorphisms $\F\tau_n\simeq\G^n$ and $\G\tau_n^-\simeq\F^n$ of functors on $\MM$,
\item[(d)] There exists a monomorphism $\alpha:\tau_{n+1}\G\to\G\tau_n$ of functors on $\MM$.
\item[(e)] $\alpha_X:\tau_{n+1}\G X\to\G\tau_nX$ is a minimal right $\HH_{n+1}$-approximation for any $X\in\MM$.
\item[(f)] We have a functorial monomorphism
\[\alpha^\ell:\tau_{n+1}^\ell\G\xrightarrow{\tau_{n+1}^{\ell-1}\alpha}\tau_{n+1}^{\ell-1}\G\tau_n\xrightarrow{\tau_{n+1}^{\ell-2}\alpha_{\tau_n}}
\tau_{n+1}^{\ell-2}\G\tau_n^2\to\cdots\to
\tau_{n+1}\G\tau_n^{\ell-1}\xrightarrow{\alpha_{\tau_n^{\ell-1}}}\G\tau_n^\ell.\]
\end{itemize}
\end{proposition}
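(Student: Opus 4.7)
The plan is to establish (a)--(c) by unwinding the defining formulas, and then to use the source sequences of length $n+1$ supplied by Theorem \ref{basic}(b)(iii) to construct the natural transformation $\alpha$ of (d)--(f).

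For (a), tensor-hom adjunction gives, for $P\in\add\Lambda$,
\[\F\nu_\Lambda P=\Hom_\Lambda(M,D\Hom_\Lambda(P,\Lambda))\simeq D(\Hom_\Lambda(P,\Lambda)\otimes_\Lambda M)\simeq D\Hom_\Lambda(P,M)=\G P,\]
using the standard isomorphism $\Hom_\Lambda(P,\Lambda)\otimes_\Lambda M\simeq\Hom_\Lambda(P,M)$ for projective $P$. For (b), the equivalence $\F:\MM\to\add\Gamma$ satisfies $\F M=\Gamma$, and full faithfulness yields
\[\nu_\Gamma\F X=D\Hom_\Gamma(\F X,\Gamma)=D\Hom_\Gamma(\F X,\F M)\simeq D\Hom_\Lambda(X,M)=\G X.\]

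For (c), take a minimal projective resolution $0\to P_n\to\cdots\to P_0\to X\to 0$ of $X\in\MM$. By the formula $\tau_nX=\Ker(\nu_\Lambda P_n\to\nu_\Lambda P_{n-1})$ and Lemma \ref{XY}(a), one has a four-term exact sequence $0\to\tau_nX\to\nu_\Lambda P_n\to\nu_\Lambda P_{n-1}$. Since $\F$ is left exact and $\F\nu_\Lambda\simeq\G$ on $\add\Lambda$ by (a),
\[\F\tau_nX\simeq\Ker(\G P_n\to\G P_{n-1})=D\Cok(\Hom_\Lambda(P_{n-1},M)\to\Hom_\Lambda(P_n,M))=D\Ext^n_\Lambda(X,M)=\G^nX.\]
The identity $\G\tau_n^-\simeq\F^n$ follows by the dual argument starting from the minimal injective resolution of $X$ and using $\nu_\Lambda^-$.

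For (d)--(f), the strategy is as follows. For an indecomposable $X\in\MM$, Theorem \ref{basic}(b)(iii) provides a source sequence $X\to C_1\to\cdots\to C_{n+1}\to 0$ in $\MM$. Applying the equivalence $\F:\MM\to\add\Gamma$, Yoneda yields a minimal projective resolution
\[0\to\F C_{n+1}\to\cdots\to\F C_1\to\F X\to S_X\to 0\]
of $S_X:=\top\F X$ in $\mod\Gamma$; it has length $n+1$ because $\gl\Gamma\le n+1$. To construct $\alpha$, one compares this with a minimal projective resolution of $\G X$ itself, which also has length at most $n+1$; the short exact sequence $0\to S_X\to\G X\to\G X/S_X\to 0$ (which exhibits $S_X=\soc\G X$) provides the bridge between the two resolutions. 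Applying $\nu_\Gamma$ and invoking (b), the leftmost kernel induces the natural monomorphism $\alpha_X:\tau_{n+1}\G X\to\G\tau_nX$; the map into $\G\tau_nX$ arises because $\G C_1$ factors through $\G\tau_nX$ by (c). Naturality in $X\in\MM$ follows from naturality of source sequences. For (e), the approximation property uses $\tau_{n+1}\G X\in\HH_{n+1}$ (Lemma \ref{XY2}(a)) together with the fact that any morphism from $Y\in\HH_{n+1}$ to $\G\tau_nX$ extends uniquely along the projective resolution of $\G X$ and hence factors through $\alpha_X$, thanks to the characterization $\HH_{n+1}=\{Y\in\mod\Gamma\,|\,\Ext^i_\Gamma(D\Gamma,Y)=0,\ 0\le i\le n\}$. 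Minimality of $\alpha_X$ comes from the minimality of the source sequence. Finally, (f) is obtained by iterating (d), monomorphicity being preserved at each step because $\tau_{n+1}$ preserves monomorphisms, as noted after Lemma \ref{tau and ext 2}.

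The main obstacle lies in (d)--(e): precisely identifying the leftmost kernel with $\tau_{n+1}\G X$ (rather than merely $\tau_{n+1}S_X$, which is only a submodule of $\tau_{n+1}\G X$ via the long exact sequence from $0\to S_X\to\G X\to\G X/S_X\to 0$), and establishing the right-$\HH_{n+1}$-approximation property. Both require careful bookkeeping between the two projective resolutions and the comparison morphisms under $\nu_\Gamma$.
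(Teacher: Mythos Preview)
Your treatment of (a)--(c) is essentially the paper's argument. The issue is in (d)--(e), where your detour through source sequences in $\MM$ and the simple module $S_X=\soc\G X$ creates exactly the obstacle you flag at the end, and this obstacle is entirely avoidable.

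For (d), the paper does not build a projective resolution of $S_X$ and then try to relate it to one of $\G X$. Instead it observes that the exact sequence you already produced in (c),
\[0\to\G^nX\to\G P_n\to\cdots\to\G P_0\to\G X\to0,\]
is \emph{itself} a projective resolution of $\G X$: each $\G P_i\simeq\F\nu_\Lambda P_i$ lies in $\add\Gamma$ by (a), and the leftmost term $\G^nX\simeq\F\tau_nX$ is also projective since $\tau_nX\in\MM$. Applying $\nu_\Gamma$ to this resolution yields directly
\[0\to\tau_{n+1}\G X\to\nu_\Gamma\G^nX\to\nu_\Gamma\G P_n,\]
and the identification $\nu_\Gamma\G^nX\simeq\nu_\Gamma\F\tau_nX\simeq\G\tau_nX$ (using (c) then (b)) gives $\alpha_X:\tau_{n+1}\G X\hookrightarrow\G\tau_nX$ on the nose. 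No comparison of two resolutions, no passage through $S_X$, and no ambiguity about ``$\tau_{n+1}\G X$ versus $\tau_{n+1}S_X$'' arises.

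For (e), your proposed argument via lifting morphisms along a projective resolution is not how the paper proceeds, and as stated it is unclear why such a lift would land in the image of $\alpha_X$. The paper instead uses that $\HH_{n+1}$ is a Serre subcategory (Lemma~\ref{XY2}(b)): from the three-term sequence above, $(\G\tau_nX)/(\tau_{n+1}\G X)$ embeds in $\nu_\Gamma\G P_n\simeq\G\nu_\Lambda P_n$, so it suffices that $\soc\G\nu_\Lambda P_n\notin\HH_{n+1}$. This holds because $\nu_\Lambda P_n$ is injective, so by Theorem~\ref{basic}(a)(ii) the simple socle of $\G\nu_\Lambda P_n$ has injective dimension at most $n$, hence cannot lie in $\HH_{n+1}$. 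Minimality follows since $\tau_{n+1}\G X\in\HH_{n+1}$ itself. Your (f) is correct and matches the paper.
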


\begin{proof}
(a)(b) Immediate.

(c) We only show $\F\tau_n\simeq\G^n$.
Since $\gl\Lambda\le n$, any $X\in\MM$ has a projective resolution
\begin{equation}\label{p1}
0\to P_n\to\cdots\to P_0\to X\to0.
\end{equation}
Applying $\G$, we have an exact sequence
\begin{equation}\label{p4}
0\to\G^nX\to\G P_n\to\cdots\to\G P_0\to\G X\to0,
\end{equation}
where we use $n$-rigidity of $\MM$.
On the other hand, applying $\nu_\Lambda$ to (\ref{p1}), we have an exact sequence
\[0\to\tau_nX\to\nu_\Lambda P_n\to\nu_\Lambda P_{n-1}.\]
Applying $\F$, we have an exact sequence
\begin{equation}\label{p3}
0\to\F\tau_nX\to\F\nu_\Lambda P_n\to\F\nu_\Lambda P_{n-1}.
\end{equation}
Comparing (\ref{p4}) and (\ref{p3}) by using (a), we have a commutative diagram
\[\begin{array}{ccccccc}
0\to&\G^nX&\to&\G P_n&\to&\G P_{n-1}&\to\cdots\to\G P_0\to\G X\to0,\\
&&&\wr |&&\wr |&\\
0\to&\F\tau_nX&\to&\F\nu_\Lambda P_n&\to&\F\nu_\Lambda P_{n-1}&
\end{array}\]
of exact sequences. Thus we have $\F\tau_nX\simeq\G^nX$.

(d) Since $\G P_i\simeq\F\nu_\Lambda P_i$ and $\gl\Lambda\le n$, the sequence (\ref{p4}) gives a projective resolution of a $\Gamma$-module $\G X$. Applying $\nu_\Gamma$, we have an exact sequence
\begin{equation}\label{tau_{n+1}GX}
0\to\tau_{n+1}\G X\to\nu_\Gamma\G^n X\to \nu_\Gamma\G P_n.
\end{equation}
Since $\nu_\Gamma\G^n\simeq\nu_\Gamma\F\tau_n\simeq\G\tau_n$ by (c) and (b) respectively, we have the assertion.

(e) We have $\tau_{n+1}\G X\in\HH_{n+1}$ by Lemma \ref{XY2}(a).
By \eqref{tau_{n+1}GX}, we have that $(\G\tau_nX)/(\tau_{n+1}\G X)$ is a submodule of $\nu_\Gamma\G P_n$.
By Lemma \ref{XY2}(b), we only have to show that $\soc\nu_\Gamma\G P_n$ does not belong to $\HH_{n+1}$. Since
\[\soc\nu_\Gamma\G P_n=\soc\nu_\Gamma\F\nu_\Lambda P_n=\soc\G\nu_\Lambda P_n\]
has injective dimension at most $n$ by Theorem \ref{basic}(a)(ii),
we have that $\soc\nu_\Gamma\G P_n$ does not belong to $\HH_{n+1}$.

(f) Since the functor $\tau_{n+1}$ preserves monomorphisms,
we have the assertion by (d).
\end{proof}

%
%

We need the following general observation, which is valid for arbitrary $\Lambda$ with $\gl\Lambda\le n$.

\begin{lemma}\label{hom and ext^n}
Let $n\ge1$ and $\Lambda$ a finite dimensional algebra with $\gl\Lambda\le n$.
Let $X\in\mod\Lambda$ and
\[0\to X_{0}\stackrel{f_{0}}{\longrightarrow}X_{1}\stackrel{f_{1}}{\longrightarrow}\cdots\stackrel{f_{n-1}}{\longrightarrow}X_n\stackrel{f_n}{\longrightarrow}X_{n+1}\to0\]
an exact sequence in $\mod\Lambda$ with $X_i\in\add X$.
\begin{itemize}
\item[(a)] If $W\in\mod\Lambda$ satisfies $\Ext^i_\Lambda(W,X)=0$ for any $0<i<n$, then we have an exact sequence
\begin{eqnarray*}
0\to\Hom_\Lambda(W,X_{0})\stackrel{f_{0}}{\longrightarrow}\cdots\stackrel{f_n}{\longrightarrow}\Hom_\Lambda(W,X_{n+1})
\to\Ext^n_\Lambda(W,X_{0})\stackrel{f_{0}}{\longrightarrow}\cdots\stackrel{f_n}{\longrightarrow}\Ext^n_\Lambda(W,X_{n+1})\to0.
\end{eqnarray*}
\item[(b)] If $Y\in\mod\Lambda$ satisfies $\Ext^i_\Lambda(X,Y)=0$ for any $0<i<n$, then we have an exact sequence
\begin{eqnarray*}
0\to\Hom_\Lambda(X_{n+1},Y)\stackrel{f_{n}}{\longrightarrow}\cdots\stackrel{f_{0}}{\longrightarrow}\Hom_\Lambda(X_{0},Y)
\to\Ext^n_\Lambda(X_{n+1},Y)\stackrel{f_{n}}{\longrightarrow}\cdots\stackrel{f_{0}}{\longrightarrow}\Ext^n_\Lambda(X_{0},Y)\to0.
\end{eqnarray*}
\end{itemize}
\end{lemma}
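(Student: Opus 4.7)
The plan is to prove part (a) by dimension shifting; part (b) follows by the dual argument applied to the opposite algebra $\Lambda^{\op}$. First decompose the given exact sequence into short exact pieces
\[S_k: \; 0 \to K_k \to X_k \to K_{k+1} \to 0 \qquad (1 \le k \le n),\]
where $K_1 := X_0$, $K_{n+1} := X_{n+1}$, and $K_k := \Im f_{k-1}$ for $2 \le k \le n$.

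Next, apply $\Hom_\Lambda(W,-)$ to each $S_k$ and use the two vanishings $\Ext^i_\Lambda(W, X_k) = 0$ for $0 < i < n$ (from $X_k \in \add X$ and the hypothesis on $W$) and $\Ext^i_\Lambda(W, -) = 0$ for $i > n$ (from $\gl\Lambda \le n$). Most terms of the associated long exact sequence vanish, and what remains organizes into three reusable pieces for each $k$: a low four-term sequence
\[0 \to \Hom_\Lambda(W, K_k) \to \Hom_\Lambda(W, X_k) \to \Hom_\Lambda(W, K_{k+1}) \to \Ext^1_\Lambda(W, K_k) \to 0,\]
middle dimension-shift isomorphisms
\[\Ext^i_\Lambda(W, K_{k+1}) \;\simeq\; \Ext^{i+1}_\Lambda(W, K_k) \qquad (1 \le i \le n-2),\]
and a symmetric high four-term sequence
\[0 \to \Ext^{n-1}_\Lambda(W, K_{k+1}) \to \Ext^n_\Lambda(W, K_k) \to \Ext^n_\Lambda(W, X_k) \to \Ext^n_\Lambda(W, K_{k+1}) \to 0.\]

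Iterating the middle isomorphisms across $k = 2, \ldots, n$ produces a chain
\[\Ext^1_\Lambda(W, K_n) \;\simeq\; \Ext^2_\Lambda(W, K_{n-1}) \;\simeq\; \cdots \;\simeq\; \Ext^{n-1}_\Lambda(W, K_2),\]
and a straightforward induction on $j$, starting from $K_1 = X_0 \in \add X$, gives $\Ext^i_\Lambda(W, K_j) = 0$ for $0 < i < n - j + 1$. Splicing the low four-term sequences over $k = 1, \ldots, n$ with these vanishings shows that $0 \to \Hom_\Lambda(W, X_0) \to \cdots \to \Hom_\Lambda(W, X_{n+1})$ is exact at positions $0, 1, \ldots, n$ with cokernel $\Ext^1_\Lambda(W, K_n)$ at the last position; splicing the high four-term sequences shows $\Ext^n_\Lambda(W, X_0) \to \cdots \to \Ext^n_\Lambda(W, X_{n+1}) \to 0$ is exact at positions $1, \ldots, n+1$ with kernel $\Ext^{n-1}_\Lambda(W, K_2)$ at the first position. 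The displayed chain identifies these two groups, providing the required connecting map $\Hom_\Lambda(W, X_{n+1}) \to \Ext^n_\Lambda(W, X_0)$ and assembling the complete long exact sequence of (a).

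The only real obstacle is the indexing and careful tracking of what each spliced map does; conceptually this is simply the degeneration of the spectral sequence $E_1^{p,q} = \Ext^q_\Lambda(W, X_p) \Rightarrow 0$ whose $E_1$ page lives only in rows $q = 0$ and $q = n$, but the dimension-shift approach above avoids invoking any spectral sequence machinery.
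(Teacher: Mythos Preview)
Your proposal is correct and follows essentially the same approach as the paper's own proof: both decompose the long exact sequence into short exact pieces $0\to K_k\to X_k\to K_{k+1}\to0$ (the paper writes $L_i$ for your $K_i$), apply $\Hom_\Lambda(W,-)$, extract the same low/middle/high pieces from the resulting long exact sequences, and splice using the dimension-shift isomorphisms together with the vanishings inherited from $K_1,K_{n+1}\in\add X$. The paper makes the symmetric induction from $K_{n+1}=X_{n+1}$ explicit (needed to kill the $\Ext^{n-1}_\Lambda(W,K_{k+1})$ terms when splicing the high sequences), whereas you only state the induction from $K_1$; but since you correctly identify the kernel of the high splice as $\Ext^{n-1}_\Lambda(W,K_2)$, you clearly have the full picture, and your spectral-sequence remark is a nice conceptual gloss the paper omits.
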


\begin{proof}
We only prove (a). We can assume $n>1$.
Put $L_i:=\Im f_{i-1}$. Then we have an exact sequence
\[0\to L_{i}\to X_i\to L_{i+1}\to0\ \ \ (1\le i\le n).\]
Applying $\Hom_\Lambda(W,-)$, we have exact sequences
\begin{eqnarray*}
&0\to\Hom_\Lambda(W,L_{i})\to\Hom_\Lambda(W,X_i)\to\Hom_\Lambda(W,L_{i+1})\to\Ext^1_\Lambda(W,L_{i})\to0,&\\
&0\to\Ext^{n-1}_\Lambda(W,L_{i+1})\to\Ext^n_\Lambda(W,L_{i})\to\Ext^n_\Lambda(W,X_i)\to\Ext^n_\Lambda(W,L_{i+1})\to0,&
\end{eqnarray*}
and an isomorphism
\[\Ext^j_\Lambda(W,L_{i+1})\simeq\Ext^{j+1}_\Lambda(W,L_{i})\ \ \ (0<j<n-1).\]
Since $L_{1}=X_0$ and $L_{n+1}=X_{n+1}$ belong to $\add X$, we have
\begin{eqnarray*}
&&\Ext^1_\Lambda(W,L_{i})\simeq\Ext^2_\Lambda(W,L_{i-1})\simeq\cdots\simeq\left\{\begin{array}{cc}\Ext^{i}_\Lambda(W,L_{1})=0&(1\le i<n)\\
\Ext^{n-1}_\Lambda(W,L_{2})&(i=n),
\end{array}\right.\\
&&\Ext^{n-1}_\Lambda(W,L_{i+1})\simeq\Ext^{n-2}_\Lambda(W,L_{i+2})\simeq\cdots\simeq\Ext^{i-1}_\Lambda(W,L_{n+1})=0\ \ \ (1<i\le n).
\end{eqnarray*}
Thus we have exact sequences
\begin{eqnarray*}
0\to\Hom_\Lambda(W,L_{i})\to\Hom_\Lambda(W,X_i)&\to&\Hom_\Lambda(W,L_{i+1})\to0\ \ \ (1\le i<n),\\
0\to\Ext^n_\Lambda(W,L_{i})\to\Ext^n_\Lambda(W,X_i)&\to&\Ext^n_\Lambda(W,L_{i+1})\to0\ \ \ (1<i\le n),\\
0\to\Hom_\Lambda(W,L_{n})\to\Hom_\Lambda(W,X_n)\to\Hom_\Lambda(W,L_{n+1})&\to&\Ext^n_\Lambda(W,L_{1})\\
&\to&\Ext^n_\Lambda(W,X_1)\to\Ext^n_\Lambda(W,L_2)\to0.
\end{eqnarray*}
Connecting them, we have the desired exact sequence.
\end{proof}

The following result is crucial to study properties of $\Gamma$.

\begin{lemma}\label{long sequence}
Let
\[0\to M_{0}\stackrel{f_{0}}{\longrightarrow} M_{1}\stackrel{f_{1}}{\longrightarrow}\cdots\stackrel{f_{n-1}}{\longrightarrow}M_n\stackrel{f_n}{\longrightarrow}M_{n+1}\to0\]
be an exact sequence in $\mod\Lambda$ with $M_i\in\MM$.
Put $X:=\Cok\F f_n$ and $Y:=\Ker\G f_{0}$.
\begin{itemize}
\item[(a)] We have exact sequences
\begin{eqnarray*}
&0\to \F M_{0}\stackrel{\F f_{0}}{\longrightarrow}\F M_{1}\stackrel{\F f_{1}}{\longrightarrow}\cdots\stackrel{\F f_{n-1}}{\longrightarrow}\F M_n\stackrel{\F f_n}{\longrightarrow}\F M_{n+1}\to X\to0,&\\
&0\to Y\to \G M_{0}\stackrel{\G f_{0}}{\longrightarrow}\G M_{1}\stackrel{\G f_{1}}{\longrightarrow}\cdots\stackrel{\G f_{n-1}}{\longrightarrow}\G M_n\stackrel{\G f_n}{\longrightarrow}\G M_{n+1}\to0.&
\end{eqnarray*}

\item[(b)] We have $X\in\GG_{n+1}$, $Y\in\HH_{n+1}$,
  $\tau_{n+1}X\simeq Y$ and $\tau_{n+1}^-Y\simeq X$.

\item[(c)] We have exact sequences
\begin{eqnarray*}
&0\to \G^nM_{0}\stackrel{\G^nf_{0}}{\longrightarrow} \G^nM_{1}\stackrel{\G^nf_{1}}{\longrightarrow}\cdots\stackrel{\G^nf_{n-1}}{\longrightarrow}\G^nM_n\stackrel{\G^nf_n}{\longrightarrow}\G^nM_{n+1}\to Y\to0,&\\
&0\to \F\tau_n M_{0}\stackrel{\F\tau_nf_{0}}{\longrightarrow}\F\tau_n M_{1}\stackrel{\F\tau_nf_{1}}{\longrightarrow}\cdots\stackrel{\F\tau_nf_{n-1}}{\longrightarrow}\F\tau_n M_n\stackrel{\F\tau_nf_n}{\longrightarrow}\F\tau_n M_{n+1}\to Y\to0.&
\end{eqnarray*}

\item[(d)] If $M_i\in\MM_P$ for any $i$, then we have an exact sequence
\[0\to\tau_n M_{0}\stackrel{\tau_nf_{0}}{\longrightarrow}\tau_n M_{1}\stackrel{\tau_nf_{1}}{\longrightarrow}\cdots\stackrel{\tau_nf_{n-1}}{\longrightarrow}\tau_n M_n\stackrel{\tau_nf_n}{\longrightarrow}\tau_n M_{n+1}\to0.\]

\item[(e)] We have exact sequences
\begin{eqnarray*}
&0\to X\to\F^nM_{0}\stackrel{\F^nf_{0}}{\longrightarrow}\F^nM_{1}\stackrel{\F^nf_{1}}{\longrightarrow}\cdots\stackrel{\F^nf_{n-1}}{\longrightarrow}\F^nM_n\stackrel{\F^nf_{n+1}}{\longrightarrow}\F^nM_{n+1}\to0,&\\
&0\to X\to\G\tau_n^-M_{0}\stackrel{\G\tau_n^-f_{0}}{\longrightarrow}\G\tau_n^-M_{1}\stackrel{\G\tau_n^-f_{1}}{\longrightarrow}\cdots\stackrel{\G\tau_n^-f_{n-1}}{\longrightarrow}\G\tau_n^-M_n\stackrel{\G\tau_n^-f_n}{\longrightarrow}\G\tau_n^-M_{n+1}\to0.&
\end{eqnarray*}

\item[(f)] If $M_i\in\MM_I$ for any $i$, then we have an exact sequence
\[0\to\tau_n^-M_{0}\stackrel{\tau_n^-f_{0}}{\longrightarrow}\tau_n^-M_{1}\stackrel{\tau_n^-f_{1}}{\longrightarrow}\cdots\stackrel{\tau_n^-f_{n-1}}{\longrightarrow}\tau_n^-M_n\stackrel{\tau_n^-f_n}{\longrightarrow}\tau_n^-M_{n+1}\to0.\]
\end{itemize}
\end{lemma}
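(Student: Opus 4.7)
The plan is to deduce (a), (c), (e) from one application of Lemma \ref{hom and ext^n} (to the pair $M,M$), then derive (b) by interpreting that output as a projective resolution over $\Gamma$, and finally extract (d) and (f) by feeding $\Lambda$ and $D\Lambda$ into Lemma \ref{hom and ext^n} once more. The main inputs will be the $n$-rigidity of $\MM$ (from its being $n$-cluster tilting in $T^{\perp}$), condition (C$_n$), the inclusions $\MM_P\subset\GG_n$ and $\MM_I\subset\HH_n$ from Lemma \ref{tau_n on M}(c), and the functorial isomorphisms of Proposition \ref{functorial}.

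For (a), (c), (e), I apply Lemma \ref{hom and ext^n}(a) with $W=M$ and Lemma \ref{hom and ext^n}(b) with $Y=M$ and then $k$-dualize the latter; the hypothesis $\Ext^i_\Lambda(M,M)=0$ for $0<i<n$ is $n$-rigidity. One obtains the long exact sequences
\[0\to\F M_0\to\cdots\to\F M_{n+1}\to\F^n M_0\to\cdots\to\F^n M_{n+1}\to 0,\]
\[0\to\G^n M_0\to\cdots\to\G^n M_{n+1}\to\G M_0\to\cdots\to\G M_{n+1}\to 0.\]
Splitting each at the transition from the $\F$/$\G^n$-half to the $\F^n$/$\G$-half inserts $X=\Cok\F f_n=\Ker(\F^n M_0\to\F^n M_1)$ and $Y=\Ker\G f_0=\Cok(\G^n M_n\to\G^n M_{n+1})$, yielding (a), the first sequence of (c), and the first sequence of (e). The second sequences of (c) and (e) follow by substituting $\G^n\simeq\F\tau_n$ and $\F^n\simeq\G\tau_n^-$ from Proposition \ref{functorial}(c).

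For (b), the first sequence of (a) is a projective resolution of length $n+1$ of $X$ over $\Gamma$, since $\F:\MM\to\add\Gamma$ is an equivalence. Applying $\Hom_\Gamma(-,\Gamma)$ and using the Yoneda identification $\Hom_\Gamma(\F N,\Gamma)\simeq\Hom_\Lambda(N,M)$ for $N\in\MM$, the complex computing $\Ext^\bullet_\Gamma(X,\Gamma)$ becomes $\Hom_\Lambda(M_\bullet,M)$. By Lemma \ref{hom and ext^n}(b) with $Y=M$, this complex is exact in degrees $0,\ldots,n$, and its degree-$(n+1)$ cohomology is $\Cok(\Hom(M_1,M)\to\Hom(M_0,M))$, whose $k$-dual is $\Ker(\G f_0)=Y$. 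Hence $X\in\GG_{n+1}$ and $\tau_{n+1}X\simeq Y$; the equivalence $\tau_{n+1}^-:\HH_{n+1}\to\GG_{n+1}$ of Lemma \ref{XY2}(a) then gives $Y\in\HH_{n+1}$ and $\tau_{n+1}^-Y\simeq X$.

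For (d) and (f), I apply Lemma \ref{hom and ext^n}(b) with $Y=\Lambda$ and Lemma \ref{hom and ext^n}(a) with $W=D\Lambda$ respectively. The required vanishings $\Ext^i_\Lambda(M,\Lambda)=0$ and $\Ext^i_\Lambda(D\Lambda,M)=0$ for $0<i<n$ hold: the first from (C$_n$) on the $\MM_P$-part of $M$ together with $\Lambda\in T^{\perp}$ for the $T$-part; the second from $n$-rigidity, since $D\Lambda,M\in\MM$. For $M_i\in\MM_P\subset\GG_n$ one has $\Hom_\Lambda(M_i,\Lambda)=0$, and for $M_i\in\MM_I\subset\HH_n$, $\Hom_\Lambda(D\Lambda,M_i)=0$, so the long exact sequence collapses to an exact sequence of top-degree $\Ext$'s. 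Identifying $D\Ext^n_\Lambda(-,\Lambda)=\tau_n$ by definition, and $\Ext^n_\Lambda(D\Lambda,-)\simeq\tau_n^-$ on $\HH_n$ via an injective coresolution combined with $\Hom_\Lambda(D\Lambda,I)\simeq\nu^- I$ for $I\in\inj\Lambda$ (a direct $k$-duality check), then yields (d) and (f). The main bookkeeping is in (b): aligning the $\Gamma$-projective resolution of $X$ with the $\Lambda$-complex $\Hom_\Lambda(M_\bullet,M)$ via Yoneda and matching its single nonvanishing cohomology with $Y$ is what produces the relation $\tau_{n+1}X\simeq Y$.
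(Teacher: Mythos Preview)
Your argument is correct and follows essentially the paper's route: Lemma~\ref{hom and ext^n} applied with $W=M$ and with $Y=M$ yields (a), (c), (e); part (b) comes from reading the first sequence of (a) as a projective resolution of $X$ over $\Gamma$ and applying $\nu_\Gamma$ (your Yoneda identification $\Hom_\Gamma(\F N,\Gamma)\simeq\Hom_\Lambda(N,M)$ is precisely the dual of Proposition~\ref{functorial}(b), which is what the paper invokes); and (d), (f) are a second pass through Lemma~\ref{hom and ext^n}.

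One point to tighten in (d): you assert $\Ext^i_\Lambda(M,\Lambda)=0$ for $0<i<n$, handling the $\PP(\MM)=\add T$ summands via ``$\Lambda\in T^{\perp}$''. This claim is neither proved nor in general true for tilting modules of positive projective dimension, and it is unnecessary here. Since (d) assumes every $M_i\in\MM_P$, you may apply Lemma~\ref{hom and ext^n}(b) with $X$ chosen in $\MM_P$ (e.g.\ $X=\bigoplus_i M_i$) rather than $X=M$; the hypothesis then reduces to $\Ext^i_\Lambda(\MM_P,\Lambda)=0$ for $0<i<n$, which is exactly (C$_n$). The paper proceeds this way, citing $\MM_P\subset\GG_n$ from Lemma~\ref{tau_n on M}(c), which already packages the $\Hom$-vanishing you invoke separately.
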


\begin{proof}
(a) Since $M$ is $n$-rigid, we have desired exact sequences by Lemma \ref{hom and ext^n}.

(b) We apply $\nu_\Gamma$ to the upper sequence in (a) and compare with the lower sequence in (a) by using Proposition \ref{functorial}(b).
Then we have $X\in\GG_{n+1}$ and $\tau_{n+1}X\simeq Y$. We have
$Y\in\HH_{n+1}$ and $\tau_{n+1}^-Y\simeq X$ by Lemma \ref{XY}(a).

(c) The upper sequence is exact by Lemma \ref{hom and ext^n}(b).
Since $\F\tau_n\simeq\G^n$ holds by Proposition \ref{functorial}(c), the lower sequence is exact.

(d) We have $\Ext^i_\Lambda(\MM_P,\Lambda)=0$ for any $0\le i<n$ by Lemma \ref{tau_n on M}(c).
Thus the sequence is exact by Lemma \ref{hom and ext^n}(b).

(e)(f) These are shown dually.
\end{proof}

We need the following easy observation.

\begin{lemma}\label{induction2}
Let 
\[M_{0}\stackrel{f_{0}}{\longrightarrow}M_1\stackrel{f_1}{\longrightarrow}M_2\stackrel{f_2}{\longrightarrow}\cdots\]
be a complex with $M_i\in\MM$.
Assume $M_{0}\in\MM_P$ and that each $f_i$ is left minimal.
Then $M_i\in\MM_P$ for any $i$ and
we have a complex
\[\tau_nM_{0}\stackrel{\tau_nf_{0}}{\longrightarrow}\tau_nM_1\stackrel{\tau_nf_1}{\longrightarrow}\cdots\]
such that each $\tau_nf_i$ is left minimal.
\end{lemma}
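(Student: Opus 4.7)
The plan is to prove the two statements in order: first that each $M_i$ lies in $\MM_P$, then that $\tau_n f_i$ is left minimal; both reduce to categorical properties of the equivalence $\tau_n\colon\MM_P\to\MM_I$ from Lemma \ref{tau_n on M}(a).

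I would argue by induction on $i$ that $M_i\in\MM_P$. The base case $i=0$ is the hypothesis. For the inductive step, suppose $M_i\in\MM_P$ and write $M_{i+1}=P\oplus N$ where $P\in\PP(\MM)$ is the maximal summand of $M_{i+1}$ lying in $\PP(\MM)$. By Lemma \ref{tau_n on M}(d) we have $\Hom_\Lambda(\MM_P,\PP(\MM))=0$, so the composition $M_i\xrightarrow{f_i}M_{i+1}\to P$ is zero, meaning $f_i$ factors as $\smash{\binom{0}{f_i'}}\colon M_i\to P\oplus N$. Then the endomorphism $g=\mathrm{diag}(0,\id_N)$ of $M_{i+1}$ satisfies $gf_i=f_i$, but $g$ is an automorphism only if $P=0$. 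Left minimality of $f_i$ therefore forces $P=0$, so $M_{i+1}\in\MM_P$.

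With $M_i\in\MM_P$ established, the second claim is almost formal. Applying the functor $\tau_n$ yields a complex since $\tau_n$ is a functor, and by Lemma \ref{tau_n on M}(a) each $\tau_nM_i$ lies in $\MM_I$. Moreover the restriction $\tau_n\colon\MM_P\to\MM_I$ is an equivalence of additive categories. Left minimality of a morphism $f\colon X\to Y$ is the purely categorical statement that every $g\in\End(Y)$ with $gf=f$ is an automorphism; this condition is preserved by any equivalence of additive categories because such an equivalence induces an isomorphism of endomorphism rings and identifies automorphisms on both sides. Consequently each $\tau_nf_i\colon\tau_nM_i\to\tau_nM_{i+1}$ is left minimal, which is exactly the remaining claim.

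The argument is essentially routine and I do not anticipate a substantial obstacle; the only point requiring care is the invocation of left minimality in the inductive step, where one must ensure that the splitting $M_{i+1}=P\oplus N$ is genuine (so that the matrix form of $f_i$ makes sense) and that $P\in\PP(\MM)$ really satisfies $\Hom_\Lambda(M_i,P)=0$ for $M_i\in\MM_P$. Both are delivered directly by Lemma \ref{tau_n on M}, so the proof should be short.
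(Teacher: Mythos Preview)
Your proof is correct and follows essentially the same approach as the paper: the paper also invokes $\Hom_\Lambda(\MM_P,\PP(\MM))=0$ from Lemma~\ref{tau_n on M}(d) together with left minimality for the inductive step, and then the equivalence $\tau_n:\MM_P\to\MM_I$ from Lemma~\ref{tau_n on M}(a) to transport left minimality. You have simply spelled out in detail (with the explicit decomposition $M_{i+1}=P\oplus N$ and the endomorphism $g=\mathrm{diag}(0,\id_N)$) what the paper compresses into a single sentence.
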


\begin{proof}
We have $\Hom_\Lambda(\MM_P,\PP(\MM))=0$ by Lemma \ref{tau_n on M}(d).
Since any $f_i$ is left minimal, we have $M_i\in\MM_P$ inductively.
Since $\tau_n:\MM_P\to\MM_I$ is an equivalence by Lemma \ref{tau_n on M}(a),
each $\tau_nf_i$ is also left minimal.
\end{proof}

We shall use the following special case in inductive argument.

\begin{lemma}\label{induction}
Let
\[0\to M_{0}\stackrel{f_{0}}{\longrightarrow}M_1\stackrel{f_{1}}{\longrightarrow}\cdots\stackrel{f_{n-1}}{\longrightarrow}M_n\stackrel{f_n}{\longrightarrow}M_{n+1}\to0\]
be an exact sequence in $\mod\Lambda$ with $M_i\in\MM$.
Assume $M_{0}\in\MM_P$ and that each $f_i$ is left minimal.
Then $M_i\in\MM_P$ for any $i$
and we have an exact sequence
\[0\to\tau_n M_{0}\stackrel{\tau_nf_{0}}{\longrightarrow}\tau_n M_{1}\stackrel{\tau_nf_{1}}{\longrightarrow}\cdots\stackrel{\tau_nf_{n-1}}{\longrightarrow}\tau_n M_n\stackrel{\tau_nf_n}{\longrightarrow}\tau_n M_{n+1}\to0\]
such that each $\tau_nf_i$ is left minimal.
\end{lemma}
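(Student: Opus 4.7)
The plan is to combine Lemma \ref{induction2} with Lemma \ref{long sequence}(d); once we verify that the hypothesis ``all $M_i$ lie in $\MM_P$'' holds, exactness after applying $\tau_n$ is immediate, and left-minimality is already handled by Lemma \ref{induction2}.

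More precisely, I would first forget the exactness at the right end and simply view the given data as a complex
\[M_{0}\xrightarrow{f_{0}}M_1\xrightarrow{f_1}\cdots\xrightarrow{f_n}M_{n+1}\]
with $M_i\in\MM$, $M_0\in\MM_P$, and each $f_i$ left minimal. By Lemma \ref{induction2} this already forces $M_i\in\MM_P$ for every $0\le i\le n+1$ (propagating along the complex via $\Hom_\Lambda(\MM_P,\PP(\MM))=0$ from Lemma \ref{tau_n on M}(d)), and simultaneously guarantees that each morphism $\tau_n f_i$ is left minimal. This settles the ``$M_i\in\MM_P$'' claim and the left-minimality claim for the $\tau_n f_i$.

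Next I would invoke Lemma \ref{long sequence}(d): since our original sequence is exact \emph{and} all $M_i$ belong to $\MM_P$ by the previous step, the sequence
\[0\to\tau_n M_{0}\xrightarrow{\tau_n f_{0}}\tau_n M_{1}\xrightarrow{\tau_n f_{1}}\cdots\xrightarrow{\tau_n f_{n-1}}\tau_n M_n\xrightarrow{\tau_n f_n}\tau_n M_{n+1}\to0\]
is exact. Combined with the left-minimality already obtained, this yields the desired exact sequence with all $\tau_n f_i$ left minimal.

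There is essentially no obstacle: the statement is exactly a minimality-plus-exactness packaging of the two preceding lemmas. The only thing worth being careful about is to apply Lemma \ref{induction2} \emph{before} Lemma \ref{long sequence}(d), because the latter requires the hypothesis $M_i\in\MM_P$ for all $i$, which is precisely what the former supplies from the a priori weaker assumption $M_0\in\MM_P$ plus left-minimality of the $f_i$. \qed
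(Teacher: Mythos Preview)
Your proposal is correct and matches the paper's own proof, which simply reads ``Immediate from Lemma \ref{induction2} and Lemma \ref{long sequence}(d).'' You have merely spelled out the logical order in which those two lemmas are applied.
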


\begin{proof}
Immediate from Lemma \ref{induction2} and Lemma \ref{long sequence}(d).
\end{proof}

We are ready to show the following key observation.

\begin{proposition}\label{key}
Let $X\in\MM$ be indecomposable and $\ell\ge0$. Put $I:=\G X$.
\begin{itemize}
\item[(a)] If $\ell>\ell_X$, then $\tau_{n+1}^\ell I=0$.
\item[(b)] If $\ell=\ell_X$, then $\tau_{n+1}^\ell I$ is an indecomposable object in $\PP(\NN)$.
\item[(c)] If $0\le\ell<\ell_X$, then $\tau_{n+1}^\ell I\in\GG_{n+1}$ and $\pd(\tau_{n+1}^\ell I)_\Gamma=n+1$.
\end{itemize}
\end{proposition}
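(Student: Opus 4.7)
The main engine is the functorial monomorphism $\alpha^\ell : \tau_{n+1}^\ell \G \hookrightarrow \G \tau_n^\ell$ from Proposition~\ref{functorial}(f), combined with the structural results on $\G,\F$ and the Serre property of $\HH_{n+1}$ and $\GG_{n+1}$ from Lemma~\ref{XY2}.

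Part (a) is immediate: if $\ell > \ell_X$ then $\tau_n^\ell X = 0$ by definition of $\ell_X$, so the target $\G \tau_n^\ell X$ of $\alpha^\ell_X$ vanishes, forcing $\tau_{n+1}^\ell I = 0$. For (b) and (c) I would proceed by induction on $\ell_X$. In the base case $\ell_X = 0$ we have $X \in \PP(\MM)$ and $\tau_n X = 0$; then $\alpha_X$ gives $\tau_{n+1} I = 0$, whence $I = \G X$ lies in $\PP(\NN)$ and is indecomposable because $\G : \MM \to \add D\Gamma$ is an equivalence.

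For the inductive step ($\ell_X \geq 1$) the plan is to reduce from $X$ to $X' := \tau_n X$, which by Proposition~\ref{weak is weak}(a)--(b) is indecomposable with $\ell_{X'} = \ell_X - 1$. The bridge is the $n$-almost split sequence $0 \to X' \to C_1 \to \cdots \to C_n \to X \to 0$ of Theorem~\ref{basic}(a)(i); applying Lemma~\ref{long sequence}(a)--(b) produces the exact sequence $0 \to Y \to \G X' \to \G C_1 \to \cdots \to \G C_n \to \G X \to 0$ with $Y \in \HH_{n+1}$ and $Y \simeq \tau_{n+1}(S_X)$, where $S_X$ is the simple top of the projective $\Gamma$-module $\F X$. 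Together with the minimality of $\alpha_X$ as a right $\HH_{n+1}$-approximation (Proposition~\ref{functorial}(e)) and the fact that $\HH_{n+1}$ is Serre (Lemma~\ref{XY2}(b)), one identifies $\tau_{n+1} \G X$ with a canonical $\HH_{n+1}$-submodule of $\G X'$, then iterates: applying $\tau_{n+1}^{\ell-1}$ (left exact) to $0 \to \tau_{n+1} \G X \to \G X' \to \Cok\alpha_X \to 0$ and invoking the inductive hypothesis for $X'$ allows one to read off the properties of $\tau_{n+1}^\ell \G X$ from those of $\tau_{n+1}^{\ell-1}\G X'$. In case (b), $\ell = \ell_X$: part (a) at $\ell_X + 1$ gives $\PP(\NN)$-membership, and the inductive reduction together with the indecomposability of $\G\tau_n^{\ell_X}X \in \add D\Gamma$ yields the indecomposability of $\tau_{n+1}^{\ell_X} I$. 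In case (c), $\ell < \ell_X$: one combines the $(n+1)$-step projective resolution of $\G X$ derived in the proof of Proposition~\ref{functorial}(d) with Lemma~\ref{XY2}(a) to verify $\tau_{n+1}^\ell I \in \GG_{n+1}$ and $\pd(\tau_{n+1}^\ell I)_\Gamma = n+1$; nonvanishing $\tau_{n+1}^{\ell+1} I \neq 0$ (which pins down $\pd = n+1$) is exactly part (b) or (c) at $\ell+1 \leq \ell_X$ by induction.

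The main obstacle is the cokernel analysis underlying the inductive reduction: $\alpha_X$ is a strict mono (not an iso) in general, so one cannot identify $\tau_{n+1}^\ell \G X$ with $\tau_{n+1}^{\ell-1}\G X'$ directly. The substitute is to use the absence of $\HH_{n+1}$-submodules in $\Cok \alpha_X$ (from the minimality of the $\HH_{n+1}$-approximation and the Serre property of $\HH_{n+1}$) in order to control the failure of right exactness of $\tau_{n+1}^{\ell-1}$, and to use Lemma~\ref{long sequence}(a)--(c) to keep track of the specific submodule $\tau_{n+1}(S_X) \subseteq \G X'$ that propagates through the iterations. Making this propagation uniform for all $0 \leq \ell \leq \ell_X$ is the delicate point.
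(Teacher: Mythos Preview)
Your argument for (a) is fine and matches the paper. The gap is in your inductive step for (c), and you correctly identify it yourself: the monomorphism $\tau_{n+1}^{\ell-1}\alpha_X : \tau_{n+1}^\ell\G X \hookrightarrow \tau_{n+1}^{\ell-1}\G X'$ is not an isomorphism, and nothing you write pins down its cokernel well enough to transfer $\GG_{n+1}$-membership or projective dimension from $\tau_{n+1}^{\ell-1}\G X'$ back to $\tau_{n+1}^\ell\G X$. Knowing that $\Cok\alpha_X$ has no nonzero $\HH_{n+1}$-submodule says nothing about $\tau_{n+1}^{\ell-1}(\Cok\alpha_X)$, and left exactness of $\tau_{n+1}^{\ell-1}$ alone does not let you conclude anything about the image of the mono. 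Your sketch for (b) then inherits the same gap, since it rests on the same reduction.

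The paper avoids this obstacle by taking a different route that produces genuine \emph{isomorphisms} rather than monomorphisms. Instead of the $n$-almost split sequence, start from the minimal injective resolution $0\to\tau_nX\to I_0\to\cdots\to I_n\to0$. Lemma~\ref{induction} propagates this to an exact sequence $0\to\tau_n^{\ell+1}X\to\tau_n^\ell I_0\to\cdots\to\tau_n^\ell I_n\to0$ for each $0\le\ell<\ell_X$. Applying $\F$ and Lemma~\ref{long sequence}(a) gives an explicit $(n+2)$-term projective resolution of a $\Gamma$-module $X_\ell$, with $X_0\simeq\F^n\tau_nX\simeq\G X$ by Proposition~\ref{functorial}(c). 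The key is then Lemma~\ref{long sequence}(b)(c), which together yield an honest isomorphism $\tau_{n+1}X_\ell\simeq X_{\ell+1}$ and $X_\ell\in\GG_{n+1}$. Iterating gives $\tau_{n+1}^\ell I\simeq X_\ell$, and both $\GG_{n+1}$-membership and $\pd=n+1$ are read off directly from the explicit resolution (the leftmost projective $\F\tau_n^{\ell+1}X$ is nonzero precisely when $\ell<\ell_X$). Part (b) then follows: $\PP(\NN)$-membership from (a) at $\ell_X+1$, and indecomposability because each $\tau_{n+1}^jI$ for $j<\ell_X$ lies in $\GG_{n+1}$ by (c), so the equivalence $\tau_{n+1}:\GG_{n+1}\to\HH_{n+1}$ of Lemma~\ref{XY2}(a) preserves indecomposability at every step starting from the indecomposable $I=\G X$.
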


\begin{proof}
(a) Since $\tau_{n+1}^\ell I$ is a submodule of $\G\tau_n^\ell X=0$ by Proposition \ref{functorial}(f), we have $\tau_{n+1}^\ell I=0$.

(c) $\tau_n^\ell X\in\MM_P$ holds for any $0\le\ell<\ell_X$.
Take a minimal injective resolution
\begin{equation}\label{tau_nX no injective resolution}
0\to\tau_nX\stackrel{f_{-1}}{\to}I_0\stackrel{f_0}{\to}\cdots\stackrel{f_{n-1}}{\to}I_n\to0.
\end{equation}
Since each $f_i$ is left minimal, we have an exact sequence
\[0\to\tau^{\ell+1}_nX\stackrel{\tau_n^\ell f_{-1}}{\longrightarrow}\tau_n^\ell I_0\stackrel{\tau_n^\ell f_0}{\longrightarrow}\cdots\stackrel{\tau_n^\ell f_{n-1}}{\longrightarrow}\tau_n^\ell I_n\to0\]
for any $0\le\ell<\ell_X$ by applying Lemma \ref{induction} to \eqref{tau_nX no injective resolution} repeatedly.
Applying $\F$, we have a $\Gamma$-module $X_{\ell}$ with an exact sequence
\[0\to\F\tau^{\ell+1}_nX\stackrel{\F\tau_n^\ell f_{-1}}{\longrightarrow}\F\tau_n^\ell I_0\stackrel{\F\tau_n^\ell f_0}{\longrightarrow}\cdots\stackrel{\F\tau_n^\ell f_{n-1}}{\longrightarrow}\F\tau_n^\ell I_n\to X_{\ell}\to0\]
by Lemma \ref{long sequence}(a).
We have $X_0=\F^n\tau_nX\simeq\G X=I$ by Proposition \ref{functorial}(c).
Using Lemma \ref{long sequence}(b) and (c) repeatedly, we have
$\tau_{n+1}^\ell I\simeq\tau_{n+1}^\ell
X_0\simeq\tau_{n+1}^{\ell-1}X_1\simeq\cdots\simeq
X_{\ell}\in\GG_{n+1}$ for any $0\le\ell<\ell_X$.

(b) Since $\tau_{n+1}^{\ell_X+1}I=0$ by (a), we have $\tau_{n+1}^{\ell_X}I\in\PP(\NN)$.
We have that $\tau_{n+1}^{\ell_X}I$ is indecomposable by (c) and Lemma \ref{XY2}(a).
\end{proof}

Summarizing with Lemma \ref{tau and ext 2}, we have the following conclusion.

\begin{lemma}\label{C n+1}
$\Gamma$ is $\tau_{n+1}$-finite and satisfies the condition
(C$_{n+1}$) in Definition \ref{complete}.
\end{lemma}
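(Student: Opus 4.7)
The statement bundles together three facts about $\Gamma$: (i) $\gl\Gamma\le n+1$, (ii) $\tau_{n+1}^\ell(D\Gamma)=0$ for $\ell\gg 0$, and (iii) $\Ext^i_\Gamma(\NN_P,\Gamma)=0$ for $0<i<n+1$. My plan is to assemble these directly from Lemma \ref{tau and ext 2} and Proposition \ref{key}, so essentially the lemma is a bookkeeping statement about what has already been established.

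First, (i) is exactly Lemma \ref{tau and ext 2}, so nothing remains to do there. For (ii), I would use the equivalence $\G:\MM\to\add D\Gamma$ to observe that every indecomposable summand of $D\Gamma$ is of the form $\G X$ for some indecomposable $X\in\MM$. Proposition \ref{indecomposable in M}(a) gives $\ell_X<\infty$ for each such $X$, and since $\MM$ has only finitely many isoclasses of indecomposables (as $\MM=\add M$), the integer
\[
\ell_0:=\max\{\ell_X\mid X\in\MM\text{ indecomposable}\}
\]
is finite. Then Proposition \ref{key}(a) yields $\tau_{n+1}^{\ell_0+1}\G X=0$ for every such $X$, hence $\tau_{n+1}^{\ell_0+1}(D\Gamma)=0$.

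For (iii), the key is to identify the indecomposable objects of $\NN_P$ explicitly. Since $\NN$ is by definition the $\tau_{n+1}$-closure of $D\Gamma$, every indecomposable in $\NN$ is of the form $\tau_{n+1}^\ell\G X$ for some indecomposable $X\in\MM$ and some $0\le\ell\le\ell_X$ (the upper bound by Proposition \ref{key}(a)). Proposition \ref{key}(b) tells me that the extremal case $\ell=\ell_X$ lands in $\PP(\NN)$, while Proposition \ref{key}(c) shows that for $0\le\ell<\ell_X$ we have $\tau_{n+1}^\ell\G X\in\GG_{n+1}$ with $\pd=n+1$, so such modules lie outside $\PP(\NN)$. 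Hence the indecomposable summands of any $N\in\NN_P$ are precisely of the form $\tau_{n+1}^\ell\G X$ with $0\le\ell<\ell_X$, and for each of them the defining property of $\GG_{n+1}$ gives $\Ext^i_\Gamma(-,\Gamma)=0$ for $0\le i<n+1$. Summing over summands gives (C$_{n+1}$).

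There is no genuine obstacle here: all the analytic work has been done in the proof of Proposition \ref{key} (which required Proposition \ref{functorial}(f) to compare $\tau_{n+1}^\ell\G$ with $\G\tau_n^\ell$, and the long-exact-sequence machinery of Lemma \ref{long sequence}). The only thing to be careful about is the classification of indecomposables of $\NN$ via the parametrization $(X,\ell)\mapsto\tau_{n+1}^\ell\G X$; this is the analogue for $\Gamma$ of Proposition \ref{indecomposable in M}(b) and is read off from the three cases of Proposition \ref{key}.
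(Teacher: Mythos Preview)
Your proof is correct and follows exactly the route the paper takes: the lemma is stated as an immediate consequence of Lemma \ref{tau and ext 2} and Proposition \ref{key}, and your argument unpacks precisely this. The only minor imprecision is that at this stage one only knows each indecomposable of $\NN$ is a \emph{summand} of some $\tau_{n+1}^\ell\G X$ (indecomposability of $\tau_{n+1}^\ell\G X$ itself is established later in Corollary \ref{indecomposable}), but since $\GG_{n+1}$ and $\PP(\NN)$ are both closed under summands this does not affect your conclusion.
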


We have the main results in this section.

\begin{theorem}\label{n+1-rigid}
\begin{itemize}
\item[(a)] $\NN$ is $(n+1)$-rigid.
\item[(b)] We have full functors
\[\tau_{n+1}=D\Ext^{n+1}_\Gamma(-,\Gamma):\NN\to\NN\ \mbox{ and }\ \tau_{n+1}^-=\Ext^{n+1}_{\Gamma^{\op}}(D-,\Gamma):\NN\to\NN\]
which give mutually quasi-inverse equivalences
\[\tau_{n+1}:\NN_P\to\NN_I\ \mbox{ and }\ \tau_{n+1}^-:\NN_I\to\NN_P.\]
\item[(c)] $\tau_{n+1}$ gives a bijection from isoclasses of indecomposable objects in $\NN_P$ to those in $\NN_I$.
\item[(d)] $\NN_P\subset\GG_{n+1}$ and $\NN_I\subset\HH_{n+1}$.
\item[(e)] $\Hom_\Gamma(\tau_{n+1}^i(D\Gamma),\tau_{n+1}^j(D\Gamma))=0$ for any $i<j$.
\item[(f)] $\Ext^i_\Gamma(\PP(\NN),\NN)=0$ for any $i>0$.
\end{itemize}
\end{theorem}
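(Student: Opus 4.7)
My plan is to derive the theorem from two inputs that are already in place: Lemma~\ref{C n+1}, which records that $\Gamma$ is $\tau_{n+1}$-finite and satisfies (C$_{n+1}$), and the general criterion Proposition~\ref{n-rigid}(a) for rigidity of a $\tau$-closure. Once these are invoked, almost every item of the theorem is obtained by reading off the conclusions that have already been proved for $(\Lambda,\MM,n)$ and transferring them to $(\Gamma,\NN,n+1)$.

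The first step is to apply Lemma~\ref{tau_n on M} with $(\Lambda,\MM,n)$ replaced by $(\Gamma,\NN,n+1)$. Its hypotheses, $\gl\Gamma\le n+1$ and the condition (C$_{n+1}$), are supplied by Lemma~\ref{tau and ext 2} and Lemma~\ref{C n+1} respectively. This immediately yields the full functors $\tau_{n+1},\tau_{n+1}^-\colon\NN\to\NN$ and the quasi-inverse equivalences $\tau_{n+1}\colon\NN_P\to\NN_I$ and $\tau_{n+1}^-\colon\NN_I\to\NN_P$ of (b), the bijection on isoclasses in (c), the inclusions $\NN_P\subset\GG_{n+1}$ and $\NN_I\subset\HH_{n+1}$ of (d), and the orthogonality $\Hom_\Gamma(\tau_{n+1}^i(D\Gamma),\tau_{n+1}^j(D\Gamma))=0$ for $i<j$ of (e).

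The remaining rigidity statement (a) is the one step that is not a pure transfer, so it is the only real obstacle, but it is already within reach. By (d) we have $\NN_P\subset\GG_{n+1}$, so $\Ext^i_\Gamma(\NN_P,\Gamma)=0$ for every $0\le i<n+1$, in particular for $1<i<n+1$. Applying Proposition~\ref{n-rigid}(a) with $(\Lambda,\MM,n)$ replaced by $(\Gamma,\NN,n+1)$, we conclude that $\NN$ is $(n+1)$-rigid, which is exactly (a).

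Finally, (f) is a short consequence of (a) together with $\gl\Gamma\le n+1$ and the definition of $\PP(\NN)$. For $X\in\PP(\NN)$ the bound $\pd X_\Gamma<n+1$ kills $\Ext^i_\Gamma(X,-)$ for $i\ge n+1$, while for $1\le i\le n$ the $(n+1)$-rigidity established in (a) gives $\Ext^i_\Gamma(\PP(\NN),\NN)=0$. In short, the substantive hurdle of this theorem was overcome before its statement, in Proposition~\ref{key} and Lemma~\ref{C n+1}; the theorem itself is essentially the repackaging of those results through Lemma~\ref{tau_n on M} and Proposition~\ref{n-rigid}(a).
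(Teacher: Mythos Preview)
Your proposal is correct and essentially identical to the paper's own proof: both invoke Lemma~\ref{tau and ext 2} and Lemma~\ref{C n+1} to obtain $\gl\Gamma\le n+1$ and (C$_{n+1}$), then read off (b)--(e) from Lemma~\ref{tau_n on M} and (a) from Proposition~\ref{n-rigid}(a), and finally deduce (f) from (a) together with the bound $\pd X_\Gamma<n+1$ for $X\in\PP(\NN)$. The only cosmetic difference is that you route the hypothesis of Proposition~\ref{n-rigid}(a) through (d), whereas the paper cites (C$_{n+1}$) directly; either works.
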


\begin{proof}
$\Gamma$ is $\tau_{n+1}$-finite and $\NN$ satisfies
the condition (C$_{n+1}$) by Lemmas \ref{tau and ext 2} and \ref{C n+1}.
Thus (a) follows from Lemma \ref{n-rigid}(a), and
(b)--(e) follow from Lemma \ref{tau_n on M}.
By (a) and $\pd X_\Gamma<n+1$ for any $X\in\PP(\NN)$, we have (f).
\end{proof}

We also have the following description of indecomposable objects in $\NN$.

\begin{corollary}\label{indecomposable}
\begin{itemize}
\item[(a)] There exist bijections among
\begin{itemize}
\item[$\bullet$] isoclasses of indecomposable objects in $\NN$,
\item[$\bullet$] pairs $(X,i)$ of isoclasses of indecomposable objects $X\in\MM$ and $0\le i\le \ell_X$,
\item[$\bullet$] triples $(I,i,j)$ of isoclasses of indecomposable objects $I\in\II(\MM)$ and $0\le i,j$ satisfying $i+j\le \ell_I$.
\end{itemize}
They are given by $\tau_{n+1}^j\G\tau_n^iI\leftrightarrow(\tau_n^iI,j)\leftrightarrow(I,i,j)$.
\item[(b)] Under the bijection in (a), $\tau_{n+1}^j\G\tau_n^iI$ belongs to $\PP(\NN)$ if and only if $i+j=\ell_I$.
\item[(c)] $\NN$ has an additive generator.
\end{itemize}
\end{corollary}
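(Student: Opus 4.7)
The plan is to parametrize the indecomposable objects of $\NN$ by iterating $\tau_{n+1}$ starting from $\add D\Gamma$, and then to translate this parametrization into the ones described in (a) using $\MM$ and $\II(\MM)$. The key input is Proposition \ref{key}, which already controls precisely when $\tau_{n+1}^j\G X$ vanishes, when it lies in $\PP(\NN)$, and when it lies in $\GG_{n+1}$.

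First I would verify that every indecomposable in $\NN$ is isomorphic to $\tau_{n+1}^j\G X$ for some indecomposable $X\in\MM$ and some $0\le j\le\ell_X$. Since $D\Gamma\simeq\G M$ and $\G$ is additive, any indecomposable summand of $\tau_{n+1}^j(D\Gamma)$ appears as a summand of $\tau_{n+1}^j\G X$ for some indecomposable $X\in\MM$; these modules are nonzero exactly for $j\le\ell_X$ by Proposition \ref{key}(a)(b)(c). Indecomposability at $j=\ell_X$ is Proposition \ref{key}(b), and for $0\le j<\ell_X$ I would run a descending induction on $\ell_X-j$: Proposition \ref{key}(c) places $\tau_{n+1}^j\G X$ in $\GG_{n+1}$, and the equivalence $\tau_{n+1}:\GG_{n+1}\to\HH_{n+1}$ from Lemma \ref{XY2}(a), together with the Serre property of $\GG_{n+1}$ (Lemma \ref{XY2}(b)), transports indecomposability of $\tau_{n+1}^{j+1}\G X$ back to $\tau_{n+1}^j\G X$. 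Combined with Proposition \ref{indecomposable in M}(b) and the identity $\ell_{\tau_n^iI}=\ell_I-i$, this yields a surjection from the triple index set onto the indecomposables of $\NN$.

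The delicate step, and in my view the main obstacle, is injectivity of the parametrization. Suppose $\tau_{n+1}^j\G X\simeq\tau_{n+1}^{j'}\G X'$ with $0\le j\le j'$. The crucial observation is that for $1\le k\le\ell_X$ we have $\tau_{n+1}^{k-1}\G X\in\GG_{n+1}$ by Proposition \ref{key}(c), so the quasi-inverse $\tau_{n+1}^-:\HH_{n+1}\to\GG_{n+1}$ satisfies $\tau_{n+1}^-\tau_{n+1}^k\G X\simeq\tau_{n+1}^{k-1}\G X$. Iterating this identity $j$ times reduces the isomorphism to $\G X\simeq\tau_{n+1}^{j'-j}\G X'$. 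If $j'>j$, then the right-hand side is a summand of $\tau_{n+1}^{j'-j}(D\Gamma)$, and by Theorem \ref{n+1-rigid}(e) we have $\Hom_\Gamma(D\Gamma,\tau_{n+1}^{j'-j}(D\Gamma))=0$, so it admits no nonzero map from $D\Gamma$. Since $\G X\in\II(\NN)=\add D\Gamma$ certainly does, this is a contradiction; hence $j=j'$, and then $X\simeq X'$ follows from the equivalence $\G:\MM\to\II(\NN)$.

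Once (a) is established, (b) follows immediately: $\tau_{n+1}^j\G X$ lies in $\PP(\NN)$ iff $\tau_{n+1}(\tau_{n+1}^j\G X)=0$, which by Proposition \ref{key}(a)(b) holds precisely when $j=\ell_X$, i.e.\ $j=\ell_I-i$. For (c), the index set of triples $(I,i,j)$ is finite because $\II(\MM)=\add D\Lambda$ has finitely many isoclasses of indecomposables and each $\ell_I$ is finite by Proposition \ref{indecomposable in M}(a), so $\NN$ has only finitely many indecomposables up to isomorphism and therefore admits an additive generator.
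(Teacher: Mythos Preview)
Your proof is correct, but you take a longer route than the paper for the uniqueness/indecomposability step in (a). The paper dispatches this in one line by citing Theorem~\ref{n+1-rigid}(c): since $\tau_{n+1}$ gives a bijection between indecomposables in $\NN_P$ and indecomposables in $\NN_I$, the same counting argument as in Proposition~\ref{indecomposable in M}(b) shows every indecomposable in $\NN$ is uniquely of the form $\tau_{n+1}^j\G X$ with $X\in\MM$ indecomposable and $j\ge0$. You instead re-derive this by hand---proving indecomposability of $\tau_{n+1}^j\G X$ via descending induction through the equivalence $\tau_{n+1}:\GG_{n+1}\to\HH_{n+1}$ and the Serre property (Lemma~\ref{XY2}), and proving injectivity by cancelling powers of $\tau_{n+1}$ and invoking the Hom-vanishing of Theorem~\ref{n+1-rigid}(e). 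These arguments are fine, but they essentially unpack the proof of Theorem~\ref{n+1-rigid}(b)(c) (which goes through Lemma~\ref{tau_n on M} applied to $\Gamma$ once (C$_{n+1}$) is known); since that theorem precedes this corollary, you may as well use it directly. Your treatment of (b) and (c), and the passage to triples via Proposition~\ref{indecomposable in M}, matches the paper.
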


\begin{proof}
(a) By Theorem \ref{n+1-rigid}(c), any indecomposable object in $\NN$ can be written uniquely as $\tau_{n+1}^j\G X$ for an indecomposable object $X\in\MM$ and $0\le j$.
By Proposition \ref{key}, $\tau_{n+1}^j\G X$ is non-zero if and only if $0\le j\le\ell_X$.
By Proposition \ref{indecomposable in M}, $X$ can be written uniquely as $\tau_n^iI$ for an indecomposable object
$I\in\II(\MM)$ and $0\le i\le\ell_I$. Since $\ell_X=\ell_I-i$, we have the assertion.

(b)(c) Immediate from (a).
\end{proof}

\subsection{Tilting $\Gamma$-module in $\PP(\NN)$}

By Corollary \ref{indecomposable}, there exists a $\Gamma$-module $U$ such that $\PP(\NN)=\add U$.
In this subsection, we shall show that $U$ is a tilting $\Gamma$-module.

We have $\pd U_\Gamma<n+1$ by definition of $\PP(\NN)$.
By Theorem \ref{n+1-rigid}(f), we have $\Ext^i_\Gamma(U,U)=0$ for any $0<i$.
By Theorem \ref{n+1-rigid}(c), we have that $U$ and $\Gamma$ have the same number of non-isomorphic indecomposable direct summands.
For the case $n=1$, these imply that $U$ is a tilting $\Gamma$-module
\cite{ASS,H}. But we need more argument for arbitrary $n$.

\begin{lemma}\label{TU}
\begin{itemize}
\item[(a)] $\tau_{n+1}^\ell\G\tau_n^{-\ell}X\in\PP(\NN)$ for any $0\le\ell$ and $X\in\PP(\MM)$.
\item[(b)] $\tau_{n+1}^i\G\tau_n^{-\ell}X\in\NN_P$ for any $0\le i<\ell$ and $X\in\MM$.
\end{itemize}
\end{lemma}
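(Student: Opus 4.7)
The plan is to reduce both parts to combinatorics on the $\tau_n$-orbits of indecomposable injectives. By Proposition \ref{indecomposable in M}, every indecomposable $X\in\MM$ has the form $X=\tau_n^k I$ for a unique indecomposable $I\in\II(\MM)$ and $0\le k\le\ell_I$, and by Corollary \ref{indecomposable} the same kind of labelling parametrizes the indecomposables of $\NN$. The bridge between the two pictures is the functorial monomorphism $\alpha^\ell:\tau_{n+1}^\ell\G\hookrightarrow\G\tau_n^\ell$ of Proposition \ref{functorial}(f). Before treating (a) and (b) I would record the orbit identity
\[
\tau_n^{-j}(\tau_n^k I)\;=\;\begin{cases}\tau_n^{k-j}I & (k\ge j),\\ 0 & (k<j),\end{cases}
\]
which follows from Lemma \ref{tau_n on M}(a) once one checks, via the indecomposability of $\tau_n^c I$ and the bijection in Lemma \ref{tau_n on M}(b), that $\tau_n^c I\in\MM_I$ whenever $1\le c\le\ell_I$ and $\tau_n^c I\in\MM_P$ whenever $0\le c\le\ell_I-1$.

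For (a), I would apply $\alpha^{\ell+1}$ with argument $\tau_n^{-\ell}X$ to obtain a monomorphism
\[
\tau_{n+1}^{\ell+1}\G\tau_n^{-\ell}X\;\hookrightarrow\;\G\tau_n^{\ell+1}\tau_n^{-\ell}X,
\]
so it suffices to show the right hand side vanishes when $X\in\PP(\MM)$. Reducing to the indecomposable case gives $X=\tau_n^{\ell_I}I$, and the orbit identity yields $\tau_n^{\ell+1}\tau_n^{-\ell}X=\tau_n^{\ell_I+1}I=\tau_n X=0$; the case $\ell>\ell_I$ is already trivial because $\tau_n^{-\ell}X=0$. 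Hence $\tau_{n+1}(\tau_{n+1}^\ell\G\tau_n^{-\ell}X)=0$, which by definition places $\tau_{n+1}^\ell\G\tau_n^{-\ell}X$ in $\PP(\NN)$.

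For (b), I would again reduce to an indecomposable $X=\tau_n^k I$ and handle $k<\ell$ as the trivial vanishing case. For $k\ge\ell$ the orbit identity gives $\tau_n^{-\ell}X=\tau_n^{k-\ell}I$, so $\tau_{n+1}^i\G\tau_n^{-\ell}X$ is the indecomposable object of $\NN$ labelled by the triple $(I,k-\ell,i)$ of Corollary \ref{indecomposable}(a); by Corollary \ref{indecomposable}(b) it belongs to $\PP(\NN)$ precisely when $(k-\ell)+i=\ell_I$. The hypotheses $0\le i<\ell$ and $k\le\ell_I$ force $(k-\ell)+i<k\le\ell_I$, ruling this out, so the summand lies in $\NN_P$. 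The main subtlety here is not conceptual but bookkeeping: one must interpret $\tau_n^{-\ell}X$ as the iterate of the extended endofunctor $\tau_n^-\colon\MM\to\MM$ from Lemma \ref{tau_n on M}(a), carefully track the cases in which it vanishes, and verify the orbit identity above, after which both parts collapse to a monotonicity comparison of exponents.
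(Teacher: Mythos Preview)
Your proposal is correct and follows essentially the same approach as the paper: both arguments reduce to indecomposable summands, invoke the orbit parametrization of Proposition~\ref{indecomposable in M} and Corollary~\ref{indecomposable}, and decide membership in $\PP(\NN)$ versus $\NN_P$ by comparing exponents against $\ell_I$. The only cosmetic difference is that for (a) you show $\tau_{n+1}^{\ell+1}\G\tau_n^{-\ell}X=0$ directly via $\alpha^{\ell+1}$, whereas the paper observes $\ell_{\tau_n^{-\ell}X}=\ell$ and quotes Corollary~\ref{indecomposable}(b) (equivalently Proposition~\ref{key}); this is the same computation read at two adjacent levels of the exposition.
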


\begin{proof}
(a) We can assume $\tau_n^{-\ell}X\neq0$.
Since $\ell_{\tau_n^{-\ell}X}=\ell$, the assertion follows from Corollary \ref{indecomposable}(b).

(b) Any indecomposable summand $Y$ of $\tau_n^{-\ell}X$ satisfies $\ell_Y\ge\ell$. Thus the assertion follows from Corollary \ref{indecomposable}(b).
\end{proof}

We shall often use the following result.

\begin{lemma}\label{grade exact}
Let
\[0\to X\stackrel{f_0}{\to}N_1\stackrel{f_1}{\to}N_2\stackrel{f_2}{\to}\cdots\stackrel{f_n}{\to}N_{n+1}\stackrel{f_{n+1}}{\to}N_{n+2}\to0\]
be an exact sequence in $\mod\Gamma$ with $N_i\in\NN_P$ and $X\in\GG_{n+1}$.
Then the sequence
\[0\to\tau_{n+1}X\stackrel{\tau_{n+1}f_0}{\longrightarrow}\tau_{n+1}N_1\stackrel{\tau_{n+1}f_1}{\longrightarrow}\tau_{n+1}N_2\stackrel{\tau_{n+1}f_2}{\longrightarrow}\cdots\stackrel{\tau_{n+1}f_{n}}{\longrightarrow}\tau_{n+1}N_{n+1}\stackrel{\tau_{n+1}f_{n+1}}{\longrightarrow}\tau_{n+1}N_{n+2}\to0\]
is exact.
\end{lemma}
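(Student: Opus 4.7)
The plan is to reduce the statement to the equivalence $\tau_{n+1}:\GG_{n+1}\to\HH_{n+1}$ already established in Lemma \ref{XY2}(a), using that $\GG_{n+1}$ and $\HH_{n+1}$ are Serre subcategories of $\mod\Gamma$ by Lemma \ref{XY2}(b).

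First I would check that every term of the given exact sequence lies in $\GG_{n+1}$. We are told $X\in\GG_{n+1}$, and each $N_i$ lies in $\NN_P\subset\GG_{n+1}$ by Theorem \ref{n+1-rigid}(d). Since $\GG_{n+1}$ is a Serre subcategory of $\mod\Gamma$, its inclusion into $\mod\Gamma$ is exact, so the sequence
\[0\to X\stackrel{f_0}{\to}N_1\stackrel{f_1}{\to}\cdots\stackrel{f_{n+1}}{\to}N_{n+2}\to 0\]
is also exact when viewed as a sequence in the abelian category $\GG_{n+1}$.

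Next I would apply the equivalence $\tau_{n+1}:\GG_{n+1}\to\HH_{n+1}$ from Lemma \ref{XY2}(a). Any equivalence between abelian categories is automatically exact (it preserves kernels, cokernels, and images), so it preserves exact sequences of arbitrary length. Hence we obtain an exact sequence
\[0\to\tau_{n+1}X\to\tau_{n+1}N_1\to\cdots\to\tau_{n+1}N_{n+2}\to 0\]
in $\HH_{n+1}$, and since $\HH_{n+1}$ is Serre in $\mod\Gamma$ this is exact in $\mod\Gamma$ as well.

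There is no real obstacle: all of the work has been absorbed into Lemma \ref{XY2}. If one preferred a hands-on argument avoiding the equivalence, one could filter the sequence by short exact pieces $0\to Y_{i-1}\to N_i\to Y_i\to 0$ (with $Y_0=X$), apply $\Hom_\Gamma(-,\Gamma)$, and use $\Ext^j_\Gamma(\GG_{n+1},\Gamma)=0$ for $0\le j\le n$ together with dimension shifting to inductively show $Y_i\in\GG_{n+1}$ and assemble the resulting $D\Ext^{n+1}_\Gamma(-,\Gamma)$ pieces into the desired long exact sequence; but this is just an unpacking of Lemma \ref{XY2}, so the conceptual argument above is both cleaner and shorter.
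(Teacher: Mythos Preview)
Your argument is correct and cleaner than the paper's. You exploit two facts already proved in Lemma~\ref{XY2}: that $\GG_{n+1}$ and $\HH_{n+1}$ are Serre subcategories of $\mod\Gamma$, and that $\tau_{n+1}:\GG_{n+1}\to\HH_{n+1}$ is an equivalence. Since all terms of the sequence lie in $\GG_{n+1}$ (using Theorem~\ref{n+1-rigid}(d) for the $N_i$), the sequence is exact in $\GG_{n+1}$; an additive equivalence of abelian categories is exact, so the image sequence is exact in $\HH_{n+1}$, hence in $\mod\Gamma$.

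The paper instead argues by direct computation: it invokes Lemma~\ref{hom and ext^n}(b) (with $n$ replaced by $n+1$ and $Y=\Gamma$), using $\Ext^i_\Gamma(\NN_P,\Gamma)=0$ for $0\le i<n+1$ from Theorem~\ref{n+1-rigid}(d). That lemma produces a long exact sequence splicing $\Hom_\Gamma(-,\Gamma)$ and $\Ext^{n+1}_\Gamma(-,\Gamma)$; since all the $\Hom$ terms vanish (every term lies in $\GG_{n+1}$), one is left with exactly the dual of the desired sequence. Your ``hands-on'' alternative at the end is essentially this argument. The paper's route is more explicit about \emph{why} $\tau_{n+1}$ is exact on $\GG_{n+1}$---it unpacks the long exact sequence---whereas your route packages that into the single word ``equivalence''. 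Both are valid; yours is shorter and more conceptual, the paper's is self-contained at the level of $\Ext$ computations.
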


\begin{proof}
We have $\Ext^i_\Gamma(\NN_P,\Gamma)=0$ for any $0\le i<n+1$ by Theorem \ref{n+1-rigid}(d).
We have the desired exact sequence by applying Lemma \ref{hom and ext^n}(b), where we replace $n$ there by $n+1$.
\end{proof}

\begin{lemma}\label{generate}
Let $I\in\II(\MM)$ be indecomposable.
\begin{itemize}
\item[(a)] There exist exact sequences
\begin{eqnarray*}
&0\to \nu_\Lambda^-I\stackrel{}{\to}T_0\stackrel{f_0}{\to}\cdots\stackrel{f_{n-2}}{\to}T_{n-1}\to0&\\
&0\to\F I\to\G T_0\stackrel{\G f_0}{\longrightarrow}\cdots\stackrel{\G f_{n-2}}{\longrightarrow}\G T_{n-1}\to0&
\end{eqnarray*}
with $T_i\in\PP(\MM)$.
\item[(b)] For any $0<\ell\le\ell_I$, there exist exact sequences
\begin{equation}\label{T-resolution}
0\to T_n\stackrel{f_n}{\to}\cdots\stackrel{f_1}{\to}T_0\stackrel{f_0}{\to}\tau_n^{\ell-1}I\to0
\end{equation}
\[0\to\F\tau_n^\ell I\to\tau_{n+1}^\ell\G\tau_n^{-\ell}T_n\stackrel{\tau_{n+1}^\ell\G\tau_n^{-\ell}f_n}{\longrightarrow}\cdots\stackrel{\tau_{n+1}^\ell\G\tau_n^{-\ell}f_1}{\longrightarrow}\tau_{n+1}^\ell\G\tau_n^{-\ell}T_0\to0\]
with $T_i\in\PP(\MM)$.
\end{itemize}
\end{lemma}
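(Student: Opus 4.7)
For part (a), the plan is as follows. Since $I\in\II(\MM)=\add D\Lambda$, the Nakayama equivalence $\nu_\Lambda^-\colon\add D\Lambda\to\add\Lambda$ makes $\nu_\Lambda^- I$ a projective $\Lambda$-module. By axiom $(A_n)$, $\PP(\MM)=\add T$ for a tilting $\Lambda$-module $T$, and $\pd T_\Lambda\le n-1$ since $\PP(\MM)\subseteq\{X\in\MM\mid\pd X<n\}$. The coresolution axiom of tilting theory provides an exact sequence $0\to\Lambda\to T_0'\to\cdots\to T_{n-1}'\to 0$ with $T_i'\in\add T$, and restricting to the summand corresponding to $\nu_\Lambda^- I$ yields the first exact sequence. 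To obtain the second, apply the right-exact functor $\G=D\Hom_\Lambda(-,M)$: its left-derived functors $\G^i=D\Ext^i_\Lambda(-,M)$ vanish for $i>0$ on $\nu_\Lambda^- I$ (because it is projective) and on each $T_j$ (because $M\in T^\perp$), so $\G$ preserves exactness, and Proposition~\ref{functorial}(a) identifies $\G\nu_\Lambda^- I\simeq\F\nu_\Lambda\nu_\Lambda^- I\simeq\F I$.

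For part (b), the first sequence again comes from tilting theory: since $\tau_n^{\ell-1}I\in\MM\subset T^\perp$ (axiom $(B_n)$), it admits a $\add T$-resolution of length at most $\pd T\le n-1$, and padding by $T_n=0$ gives the stated form. For the second sequence, in the base case $\ell=1$, re-index the first sequence with $M_{n+1}=I$ and apply Lemma~\ref{long sequence}(e) to obtain
\[0\to X\to\G\tau_n^- T_n\to\cdots\to\G\tau_n^- T_0\to\G\tau_n^- I\to 0.\]
Since $I\in\II(\MM)$, the trailing term $\G\tau_n^- I$ vanishes. Lemma~\ref{long sequence}(b)(c), combined with Proposition~\ref{functorial}(c) and the vanishing $\G^n T_i=0$ (from $\pd T_i<n$), yields $X\in\GG_{n+1}$ with $\tau_{n+1}X\simeq\F\tau_n I$. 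By Lemma~\ref{TU}(b), $\G\tau_n^- T_i\in\NN_P$, so Lemma~\ref{grade exact} applies and produces the required exact sequence $0\to\F\tau_n I\to\tau_{n+1}\G\tau_n^- T_n\to\cdots\to\tau_{n+1}\G\tau_n^- T_0\to 0$.

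For general $\ell\ge 1$, the plan is to iterate this procedure. Each iteration combines Lemma~\ref{long sequence}(e) (which introduces a layer of $\tau_n^-$ and $\G$) with Lemma~\ref{grade exact} (which applies $\tau_{n+1}$ while preserving exactness), and the natural chain of monomorphisms $\alpha^k\colon\tau_{n+1}^k\G\hookrightarrow\G\tau_n^k$ from Proposition~\ref{functorial}(f) tracks how the evolving kernel is identified with $\F\tau_n^k I$. Membership of intermediate modules in $\NN_P$---the hypothesis of Lemma~\ref{grade exact}---is guaranteed by Lemma~\ref{TU}(b). The main obstacle lies in orchestrating this iteration: the target terms $\tau_{n+1}^\ell\G\tau_n^{-\ell}T_i$ arise only after $\ell$ nested applications, and one must verify at each stage that the exactness hypotheses are met and that the successive functorial identifications via Proposition~\ref{functorial}(f) propagate correctly.
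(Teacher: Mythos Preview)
Your treatment of part~(a) and the $\ell=1$ case of~(b) is correct and aligns with the paper. Two issues deserve comment.

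First, a minor point on the first sequence in~(b): the assertion that the $\add T$-resolution of $\tau_n^{\ell-1}I$ has length at most $\pd T\le n-1$ is not correct in general (already for $n=1$, where $T=\Lambda$, a non-projective module needs length~$1$). The paper instead uses a dimension-shift: taking an $\add T$-resolution with syzygies $\Im f_i\in T^\perp$, one has $\Ext^1_\Lambda(\Im f_n,\Im f_{n+1})\simeq\Ext^{n+1}_\Lambda(\tau_n^{\ell-1}I,\Im f_{n+1})=0$ by $\gl\Lambda\le n$, so the short exact sequence at stage~$n$ splits and $\Im f_n\in\add T$. This yields length~$\le n$, which is all the lemma claims.

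Second, and more substantially, your iteration for general~$\ell$ does not work as described. Lemma~\ref{long sequence}(e) converts an exact sequence in $\mod\Lambda$ into one in $\mod\Gamma$; once you are in $\mod\Gamma$ you cannot apply~(e) again to ``introduce another layer of~$\tau_n^-$'', so alternating~(e) with Lemma~\ref{grade exact} cannot produce terms $\tau_{n+1}^\ell\G\tau_n^{-\ell}T_i$. The paper separates the two phases. First, assuming the $f_i$ are right minimal, the dual of Lemma~\ref{induction} (using $\tau_n^{\ell-1-i}I\in\MM_I$) is applied repeatedly \emph{in $\mod\Lambda$} to obtain exact sequences
\[0\to\tau_n^{-i}T_n\to\cdots\to\tau_n^{-i}T_0\to\tau_n^{\ell-1-i}I\to0\qquad(0\le i\le\ell-1).\]
Applying $\F$ and Lemma~\ref{long sequence}(a)(b)(c) to these yields $\Gamma$-modules $X_i\in\GG_{n+1}$ with $\tau_{n+1}X_i\simeq X_{i-1}$ and $X_{-1}\simeq\F\tau_n^\ell I$; this chain of isomorphisms, not Proposition~\ref{functorial}(f), is what identifies the eventual kernel. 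Then Lemma~\ref{long sequence}(e) is applied \emph{once}, to the $i=\ell-1$ sequence, giving $0\to X_{\ell-1}\to\G\tau_n^{-\ell}T_n\to\cdots\to\G\tau_n^{-\ell}T_0\to0$ (the tail $\G\tau_n^-I$ vanishes). Finally Lemma~\ref{grade exact} is applied $\ell$ times, using $\tau_{n+1}^iX_{\ell-1}\in\GG_{n+1}$ and Lemma~\ref{TU}(b), to reach the desired sequence with kernel $\tau_{n+1}^\ell X_{\ell-1}\simeq\F\tau_n^\ell I$. The right-minimality assumption, which you do not mention, is essential for the $\mod\Lambda$ iteration to preserve exactness.
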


\begin{proof}
(a) Since $\nu_\Lambda^-I$ is a projective $\Lambda$-module and $T$ is a tilting $\Lambda$-module with $\pd T_\Lambda<n$, we have the upper sequence.
Applying $\G$, we have an exact sequence
\[0\to\G\nu_\Lambda^-I\stackrel{}{\longrightarrow}\G T_0\stackrel{\G f_0}{\longrightarrow}\cdots\stackrel{\G f_{n-2}}{\longrightarrow}\G T_{n-1}\to0\]
by Lemma \ref{hom and ext^n} since $\Ext^i_\Lambda(\Lambda\oplus T,M)=0$ for any $i>0$.
Since $\G\nu_\Lambda^-I=\F I$ by Proposition \ref{functorial}(a), we have the lower sequence.

(b) Since $\tau_n^{\ell-1}I\in\MM\subseteq T^{\perp}$,
there exists an exact sequence
\[\cdots\stackrel{f_2}{\to}T_1\stackrel{f_1}{\to}T_0\stackrel{f_0}{\to}\tau_n^{\ell-1}I\to0\]
with $T_i\in\PP(\MM)$ and $\Im f_i\in T^{\perp}$.
Since
\[\Ext^1_\Lambda(\Im f_{n},\Im f_{n+1})\simeq\Ext^2_\Lambda(\Im f_{n-1},\Im f_{n+1})\simeq\cdots\simeq\Ext^{n+1}_\Lambda(\Im f_0,\Im f_{n+1})=0\]
by $\gl\Lambda\le n$, we have that $\Im f_n\in\PP(\MM)$. Thus we have the sequence (\ref{T-resolution}).

Clearly we can assume that each $f_i$ is right minimal.
Applying $\tau_n^-$ to (\ref{T-resolution}) repeatedly, we have an exact sequence
\begin{equation}\label{tau_n to T-resolution}
0\to\tau_n^{-i}T_n\stackrel{\tau_n^{-i}f_n}{\longrightarrow}\cdots\stackrel{\tau_n^{-i}f_1}{\longrightarrow}\tau_n^{-i}T_0\stackrel{\tau_n^{-i}f_0}{\longrightarrow}\tau_n^{\ell-1-i}I\to0
\end{equation}
for any $0\le i\le\ell-1$ by the dual of Lemma \ref{induction} since $\tau_n^iI\in\MM_I$ for any $0<i$.

Applying $\F$ to (\ref{tau_n to T-resolution}), we have a $\Gamma$-module $X_i$ with an exact sequence
\[0\to\F\tau_n^{-i}T_n\stackrel{\F\tau_n^{-i}f_n}{\longrightarrow}\cdots\stackrel{\F\tau_n^{-i}f_1}{\longrightarrow}\F\tau_n^{-i}T_0\stackrel{\F\tau_n^{-i}f_0}{\longrightarrow}\F\tau_n^{\ell-1-i}I\to X_i\to0\]
for any $0\le i\le \ell-1$ by Lemma \ref{long sequence}(a), and for $i=-1$ by Lemma \ref{long sequence}(c).
Thus $X_{-1}\simeq\F\tau_n^\ell I$.
By Lemma \ref{long sequence}(b), we have $X_i\in\GG_{n+1}$ and
$\tau_{n+1}X_{i}\simeq X_{i-1}$ for any $0\le i\le \ell-1$. Especially we have
\begin{equation}\label{conditions on X_{l-1}}
\tau_{n+1}^iX_{\ell-1}\in\GG_{n+1}\ (0\le i\le\ell-1)\ \mbox{ and }\ 
\tau_{n+1}^\ell X_{\ell-1}\simeq X_{-1}\simeq\F\tau_n^\ell I.
\end{equation}

On the other hand, applying Lemma \ref{long sequence}(e) to the sequence (\ref{tau_n to T-resolution}) for $i=\ell-1$, we have an exact sequence
\begin{equation}\label{p10}
0\to X_{\ell-1}\to\G\tau_n^{-\ell}T_n\stackrel{\G\tau_n^{-\ell}f_n}{\longrightarrow}\cdots\stackrel{\G\tau_n^{-\ell}f_1}{\longrightarrow}\G\tau_n^{-\ell}T_0\to0.
\end{equation}
We have $\tau_{n+1}^i\G\tau_n^{-\ell}T\in\NN_P$ for any $0\le i\le\ell-1$ by Lemma \ref{TU}(b).
Applying $\tau_{n+1}$ to (\ref{p10}) repeatedly, we have an exact sequence
\[0\to\tau_{n+1}^iX_{\ell-1}\to\tau_{n+1}^i\G\tau_n^{-\ell}T_n\stackrel{\tau_{n+1}^i\G\tau_n^{-\ell}f_n}{\longrightarrow}\cdots\stackrel{\tau_{n+1}^i\G\tau_n^{-\ell}f_1}{\longrightarrow}\tau_{n+1}^i\G\tau_n^{-\ell}T_0\to0\]
for any $0\le i\le\ell$ by Lemma \ref{grade exact} since we have (\ref{conditions on X_{l-1}}).
Putting $i=\ell$, we have the desired sequence by (\ref{conditions on X_{l-1}}).
\end{proof}

Summarizing above results, we have the following desired result.

\begin{theorem}\label{U is tilting}
There exists a tilting $\Gamma$-module $U$ such that $\pd U_\Gamma<n+1$ and $\PP(\NN)=\add U$.
\end{theorem}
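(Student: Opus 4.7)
The plan is to verify the three defining conditions of a tilting module for $U$. The existence of $U$ with $\PP(\NN)=\add U$ is immediate from Corollary \ref{indecomposable}(c). The bound $\pd U_\Gamma < n+1$ holds by the very definition of $\PP(\NN)$, and $\Ext^i_\Gamma(U,U)=0$ for all $i>0$ is precisely the content of Theorem \ref{n+1-rigid}(f). So the only substantive task is to produce an exact coresolution
\[0\to\Gamma\to U_0\to U_1\to\cdots\to U_n\to 0\]
with $U_i\in\add U$.

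Since $\F\colon\MM\to\add\Gamma$ is an equivalence with $\F M=\Gamma$, I may construct the coresolution summand-by-summand: for each indecomposable $X\in\MM$ I need a finite $\add U$-coresolution of the indecomposable projective $\F X$, and then take direct sums (padding shorter coresolutions by zeros) to obtain one for $\Gamma$. By Proposition \ref{indecomposable in M}(b), each indecomposable $X\in\MM$ has the form $X=\tau_n^j I$ for a unique indecomposable $I\in\II(\MM)$ and $0\le j\le\ell_I$, so the construction splits into the cases $j=0$ and $j>0$.

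For $j=0$, i.e.\ $X=I\in\II(\MM)$, I invoke Lemma \ref{generate}(a), which supplies an exact sequence
\[0\to\F I\to\G T_0\to\cdots\to\G T_{n-1}\to 0\]
with $T_i\in\PP(\MM)$. The required verification is that each $\G T_i$ lies in $\PP(\NN)=\add U$. Any indecomposable summand of $T_i$ belongs to $\PP(\MM)$, hence by Proposition \ref{indecomposable in M}(b) has the form $\tau_n^{\ell_{I'}}I'$ for some indecomposable $I'\in\II(\MM)$. Under the parametrization of Corollary \ref{indecomposable}(a), $\G\tau_n^{\ell_{I'}}I'$ corresponds to the triple $(I',\ell_{I'},0)$, whose indices sum to $\ell_{I'}$, so by Corollary \ref{indecomposable}(b) this summand lies in $\PP(\NN)$.

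For $j>0$, I apply Lemma \ref{generate}(b) with $\ell=j$ to obtain
\[0\to\F\tau_n^j I\to\tau_{n+1}^j\G\tau_n^{-j}T_n\to\cdots\to\tau_{n+1}^j\G\tau_n^{-j}T_0\to 0\]
with $T_k\in\PP(\MM)$. Here the requisite membership $\tau_{n+1}^j\G\tau_n^{-j}T_k\in\PP(\NN)=\add U$ is exactly the statement of Lemma \ref{TU}(a). Assembling the direct sums of these two types of coresolutions (extending by terminal zeros as needed so all have length $n$) yields the desired coresolution of $\Gamma$, which establishes the third tilting condition and completes the proof. The only real obstacle is the combinatorial bookkeeping that places $\G T_i$ and $\tau_{n+1}^j\G\tau_n^{-j}T_k$ into $\PP(\NN)$; the heavy homological algebra has already been packaged into Lemmas \ref{generate} and \ref{TU} together with Corollary \ref{indecomposable}.
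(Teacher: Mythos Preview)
Your proof is correct and follows essentially the same approach as the paper's own proof: both reduce to Lemma \ref{generate} for the coresolution of each indecomposable projective $\F X$ and to Lemma \ref{TU}(a) for the membership of the terms in $\PP(\NN)$. Your write-up simply unpacks the case distinction $j=0$ versus $j>0$ and the verification that $\G T_i\in\PP(\NN)$ (via Corollary \ref{indecomposable}(b), which is exactly how Lemma \ref{TU}(a) is proved) more explicitly than the paper's one-line citation.
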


\begin{proof}
As we observed at the beginning of this subsection, we already have $\pd U_\Gamma<n+1$ and $\Ext^i_\Gamma(U,U)=0$ for any $i>0$.
By Lemma \ref{generate} and Lemma \ref{TU}(a),
there exists an exact sequence
\[0\to\Gamma\to U_0\to\cdots\to U_n\to0\]
with $U_i\in\add U$.
Thus $U$ is a tilting $\Gamma$-module.
\end{proof}

\subsection{Mapping cone construction of $(n+1)$-almost split sequences}\label{section: Mapping}

In this subsection, we complete our proof of Theorem \ref{main}.
Our method is to construct source sequences in $\NN$ and apply Theorem \ref{basic}(b)(iii)$\Rightarrow$(i).
A crucial role is played by a monomorhism
\[\alpha:\tau_{n+1}\G\to\G\tau_n\]
of functors on $\MM$ in Proposition \ref{functorial}(d).

%

\begin{lemma}\label{tau directed}
Fix an indecomposable object $X\in\MM$ and $\ell\ge0$. Take a source morphism $f_0:X\to M_1$ in $\MM$.
\begin{itemize}
\item[(a)] Any morphism $\tau_{n+1}^\ell\G X\to\tau_{n+1}^i\G M$ with $i>\ell$ is zero.
\item[(b)] Any morphism $\tau_{n+1}^\ell\G X\to\tau_{n+1}^\ell\G M$ which is not a split monomorphism factors through $\tau_{n+1}^\ell\G f_0:\tau_{n+1}^\ell\G X\to\tau_{n+1}^\ell\G M_1$.
\item[(c)] Any morphism $\tau_{n+1}^\ell\G X\to\tau_{n+1}^i\G M$ with $0\le i<\ell$ factors through $\tau_{n+1}^{\ell-1}\alpha_X:\tau_{n+1}^\ell\G X\to\tau_{n+1}^{\ell-1}\G\tau_nX$.
\end{itemize}
\end{lemma}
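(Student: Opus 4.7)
The plan is to reduce each of the three statements to an elementary base case by repeatedly using the fullness of $\tau_{n+1}:\NN\to\NN$ from Theorem~\ref{n+1-rigid}(b).  Since a composition of full functors is full, for any $A,B\in\NN$ with $\ell,i\geq 1$ every morphism $g:\tau_{n+1}(\tau_{n+1}^{\ell-1}\G X)\to\tau_{n+1}(\tau_{n+1}^{i-1}\G M)$ lifts to some $g_1:\tau_{n+1}^{\ell-1}\G X\to\tau_{n+1}^{i-1}\G M$ with $\tau_{n+1}g_1=g$.  This ``shift'' is the basic move in all three arguments.

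For (a), I iterate this shift $\ell$ times to obtain a morphism $h:\G X\to\tau_{n+1}^{i-\ell}\G M$ with $g=\tau_{n+1}^\ell h$.  Because $i-\ell\geq 1$, the target lies in $\NN_I\subset\HH_{n+1}$ by Lemma~\ref{XY2}(a), so $\Hom_\Gamma(D\Gamma,\tau_{n+1}^{i-\ell}\G M)=0$; since $\G X\in\add D\Gamma$ this forces $h=0$ and hence $g=0$.

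For (b), shifting $\ell$ times instead yields $h:\G X\to\G M$ with $g=\tau_{n+1}^\ell h$.  Since $\G:\MM\to\II(\NN)$ is an equivalence, $h=\G h'$ for a unique $h':X\to M$ in $\MM$.  If $h'$ were a split monomorphism then so would be $g=\tau_{n+1}^\ell\G h'$, against the hypothesis; because $X$ is indecomposable, this forces $h'\in J_\MM(X,M)$, so the source-morphism property of $f_0$ gives a factorization $h'=h''\circ f_0$, and applying $\tau_{n+1}^\ell\G$ produces the desired factorization of $g$.

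For (c), I induct on $\ell$.  The base case $\ell=1$ reduces to showing that any $g:\tau_{n+1}\G X\to\G M$ factors through the monomorphism $\alpha_X$, which is immediate because $\G M$ is an injective $\Gamma$-module.  In the inductive step, if $i\geq 1$ the fullness-shift gives $g=\tau_{n+1}g_1$ with $g_1:\tau_{n+1}^{\ell-1}\G X\to\tau_{n+1}^{i-1}\G M$ satisfying $0\leq i-1<\ell-1$, so by induction $g_1=g_1'\circ\tau_{n+1}^{\ell-2}\alpha_X$ and hence $g=\tau_{n+1}g_1'\circ\tau_{n+1}^{\ell-1}\alpha_X$; if instead $i=0$ then one appeals directly to injectivity of $\G M$ together with the fact that $\tau_{n+1}^{\ell-1}\alpha_X$ remains a monomorphism (since $\tau_{n+1}$ preserves monomorphisms, as recorded right after Lemma~\ref{tau and ext 2}, and $\alpha_X$ is a monomorphism by Proposition~\ref{functorial}(d)).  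The main obstacle is the bookkeeping in (c), namely combining the non-unique fullness lift with the two different factorization mechanisms (induction for $i\geq 1$ versus injectivity for $i=0$) while ensuring that the iterated monomorphism $\tau_{n+1}^{\ell-1}\alpha_X$ persists at every stage.
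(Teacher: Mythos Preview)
Your proof is correct and uses essentially the same ingredients as the paper: fullness of $\tau_{n+1}$ on $\NN$ (Theorem~\ref{n+1-rigid}(b)), the inclusion $\tau_{n+1}(\NN)\subset\HH_{n+1}$, and the fact that $\tau_{n+1}^{\ell-1}\alpha_X$ is a monomorphism into an injective. The only difference is in (c): rather than inducting on $\ell$ with a case split on $i$, the paper lifts $i$ times in one step to reduce directly to a morphism $g:\tau_{n+1}^{\ell-i}\G X\to\G M$, then uses injectivity of $\G M$ once against the monomorphism $\tau_{n+1}^{\ell-i-1}\alpha_X$ and applies $\tau_{n+1}^i$ --- this avoids your case distinction entirely.
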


\begin{proof}
(a) Immediate from Theorem \ref{n+1-rigid}(e).

(b) By Theorem \ref{n+1-rigid}(b), any morphism $\tau_{n+1}^\ell\G X\to\tau_{n+1}^\ell\G M$ which is not a split monomorphism 
can be written as $\tau_{n+1}^\ell\G g$ with a morphism $g:X\to M$ which is not a split monomorphism.
Since $g$ factors through $f_0$, we have that $\tau_{n+1}^\ell\G g$ factors through $\tau_{n+1}^\ell\G f_0$.

(c) By Theorem \ref{n+1-rigid}(b), any morphism $\tau_{n+1}^\ell\G X\to\tau_{n+1}^i\G M$ can be written as
$\tau_{n+1}^ig$ for a morphism $g:\tau_{n+1}^{\ell-i}\G X\to\G M$.
Since $\tau_{n+1}$ preserves monomorphisms,
we have that $\tau_{n+1}^{\ell-i-1}\alpha_X:\tau_{n+1}^{\ell-i}\G X\to \tau_{n+1}^{\ell-i-1}\G\tau_nX$ is a monomorphism.
Since $\G M$ is an injective $\Gamma$-module, $g$ factors through $\tau_{n+1}^{\ell-i-1}\alpha_X$.
Thus $\tau_{n+1}^ig$ factors through $\tau_{n+1}^\ell \alpha_X$.
\end{proof}

Immediately we have the following conclusion.

\begin{proposition}\label{source}
Fix an indecomposable object $X\in\MM$ and $0\le\ell\le\ell_X$. Take a source morphism $f_0:X\to M_1$ in $\MM$.
Then a left almost split morphism of $\tau_{n+1}^\ell\G X$ is given by
\[{\tau_{n+1}^\ell\G f_0\choose\tau_{n+1}^{\ell-1}\alpha_X}:\tau_{n+1}^\ell\G X\to(\tau_{n+1}^\ell\G M_1)\oplus(\tau_{n+1}^{\ell-1}\G\tau_nX).\]
\end{proposition}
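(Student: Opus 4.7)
The plan is to verify directly that $g:={\tau_{n+1}^\ell\G f_0\choose\tau_{n+1}^{\ell-1}\alpha_X}$ satisfies the two defining conditions of a left almost split morphism of the indecomposable object $\tau_{n+1}^\ell\G X$: namely $g\in J_\NN$, and every morphism out of $\tau_{n+1}^\ell\G X$ that is not a split monomorphism factors through $g$. The key inputs are the three factorization alternatives provided by Lemma \ref{tau directed} and the classification of indecomposable objects of $\NN$ via triples $(I,i,j)$ given by Corollary \ref{indecomposable}(a). The edge case $\ell=0$ (where the second component is to be read as absent) is handled analogously using only parts (a) and (b) of Lemma \ref{tau directed}.

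The main content is the factorization property. Given $h\colon\tau_{n+1}^\ell\G X\to N$ in $\NN$ that is not a split monomorphism, I would invoke Corollary \ref{indecomposable}(a) to decompose $N\simeq\bigoplus_j\tau_{n+1}^{i_j}\G Y_j$ with $Y_j\in\MM$ indecomposable and $0\le i_j\le\ell_{Y_j}$, and write $h=(h_j)_j$. Since $\tau_{n+1}^\ell\G X$ is indecomposable its endomorphism ring is local, so the standard Krull--Schmidt lifting argument forces that no component $h_j$ can itself be a split monomorphism (otherwise the corresponding projection of a splitting of $h_j$ would split $h$). Now apply Lemma \ref{tau directed} termwise: (a) gives $h_j=0$ when $i_j>\ell$; (b) gives $h_j=\sigma_j\circ(\tau_{n+1}^\ell\G f_0)$ for some $\sigma_j$ when $i_j=\ell$; and (c) gives $h_j=\rho_j\circ(\tau_{n+1}^{\ell-1}\alpha_X)$ for some $\rho_j$ when $i_j<\ell$. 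Assembling the $\sigma_j$'s and $\rho_j$'s into a single morphism $g'\colon(\tau_{n+1}^\ell\G M_1)\oplus(\tau_{n+1}^{\ell-1}\G\tau_nX)\to N$ then yields $h=g'\circ g$, as required.

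For the radical condition, since the source is indecomposable, $g$ is a split monomorphism iff one of its two components is. The first component $\tau_{n+1}^\ell\G f_0$ is not split: $f_0\in J_\MM$ is not split, $\G\colon\MM\to\add D\Gamma$ is an equivalence, and the restricted equivalence $\tau_{n+1}\colon\NN_P\to\NN_I$ of Theorem \ref{n+1-rigid}(b) transports non-split morphisms to non-split morphisms at each iteration (the intermediate objects $\tau_{n+1}^j\G X,\tau_{n+1}^j\G M_1$ for $0\le j<\ell$ all lie in $\NN_P$ by Corollary \ref{indecomposable}(b), using $\ell\le\ell_X$). The second component $\tau_{n+1}^{\ell-1}\alpha_X$ is not split because, writing $X=\tau_n^iI$, its source and target correspond under Corollary \ref{indecomposable}(a) to the distinct triples $(I,i,\ell)$ and $(I,i+1,\ell-1)$ respectively, hence are non-isomorphic indecomposables. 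This last verification is the subtlest point: a direct analysis of $\alpha_X$ via its construction as a minimal right $\HH_{n+1}$-approximation (Proposition \ref{functorial}(e)) would require checking that $(\G\tau_nX)/(\tau_{n+1}\G X)$ is nonzero, which is considerably less transparent, and the classification in Corollary \ref{indecomposable}(a) cleanly sidesteps this obstacle.
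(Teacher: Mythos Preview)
Your argument is correct and is exactly what the paper intends: it states that Proposition~\ref{source} follows ``immediately'' from Lemma~\ref{tau directed}, and you have spelled out precisely that deduction, together with the radical condition that the paper leaves implicit.

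One small imprecision is worth flagging. In arguing that $\tau_{n+1}^\ell\G f_0$ is not a split monomorphism, you assert that $\tau_{n+1}^j\G M_1\in\NN_P$ for all $0\le j<\ell$. This need not hold: a summand $Y$ of $M_1$ with $\ell_Y<\ell$ gives $\tau_{n+1}^{\ell_Y}\G Y\in\PP(\NN)$, so the iterated equivalence $\tau_{n+1}:\NN_P\to\NN_I$ cannot be applied verbatim. The clean fix is to bypass this and use Theorem~\ref{n+1-rigid}(b) directly: $\tau_{n+1}^\ell\G:\MM\to\NN$ is a full functor (a composite of full functors), so the induced ring map $\End_\MM(X)\twoheadrightarrow\End_\NN(\tau_{n+1}^\ell\G X)$ is surjective between local rings and hence carries the radical into the radical. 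Then any putative retraction of $\tau_{n+1}^\ell\G f_0$ lifts (by fullness) to a morphism $M_1\to X$ whose composite with $f_0$ maps to a unit, forcing that composite itself to be a unit and contradicting $f_0\in J_\MM$.
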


Recall that any indecomposable object $X\in\MM$ has a source sequence by Theorem \ref{basic}(a).
By using it, we shall construct source sequences of an indecomposable object $\tau^\ell_{n+1}\G X$ in $\NN$ for $0\le\ell\le\ell_X$.

First we consider the case $\ell=0$.

\begin{proposition}\label{injective source}
For an indecomposable object $X\in\MM$, take a source sequence
\[X\stackrel{f_0}{\to}M_{1}\stackrel{f_1}{\to}\cdots\stackrel{f_{n-1}}{\to}M_{n}\stackrel{f_n}{\to}M_{n+1}\to0\]
in $\MM$. Applying $\G$, we have an exact sequence
\[\G X\stackrel{\G f_0}{\longrightarrow}\G M_{1}\stackrel{\G f_1}{\longrightarrow}\cdots\stackrel{\G f_{n-1}}{\longrightarrow}\G M_{n}\stackrel{\G f_n}{\longrightarrow}\G M_{n+1}\to0\]
which is a source sequence of $\G X$ in $\NN$.
\end{proposition}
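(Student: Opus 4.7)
The plan is to establish the statement in two parts: the exactness of the applied complex in $\mod\Gamma$, and the source-sequence condition in $\NN$. The latter requires, by definition, that each $\G f_i$ lie in $J_\NN$ and that the associated Hom-sequence be exact on $\NN$.

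For exactness in $\mod\Gamma$ I plan a uniform treatment via the Hom-interpretation of source sequences. By definition the source sequence $X\to M_1\to\cdots\to M_{n+1}\to 0$ in $\MM$ yields an exact sequence $\cdots\to\Hom_\Lambda(M_1,-)\to J_\MM(X,-)\to0$ on $\MM$, which in the present situation is actually a projective resolution $0\to\Hom_\Lambda(M_{n+1},-)\to\cdots\to\Hom_\Lambda(M_1,-)\to J_\MM(X,-)\to0$ (with $M_{n+1}=0$ in the case $X\in\II(\MM)$). Evaluating at $M$ and splicing with the radical inclusion $0\to J_\MM(X,M)\to\Hom_\Lambda(X,M)\to\top\Hom_\Lambda(X,M)\to0$ produces a projective resolution of the simple $\Gamma^{\op}$-module $\top\Hom_\Lambda(X,M)$ by indecomposable projective summands $\Hom_\Lambda(M_i,M)$ of ${}_\Gamma\Gamma$. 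Applying the duality $D$ turns this into an exact sequence
\[0\to D\top\Hom_\Lambda(X,M)\to\G X\xrightarrow{\G f_0}\G M_1\to\cdots\to\G M_{n+1}\to0\]
in $\mod\Gamma$, whence the desired exactness. (In the case $X\notin\II(\MM)$ one could alternatively invoke Lemma~\ref{long sequence}(a) directly, since there the source sequence is an $n$-almost split sequence and hence already exact in $\mod\Lambda$.)

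For the source-sequence condition in $\NN$, the containment $\G f_i\in J_\NN$ is immediate, since $\G\colon\MM\to\add D\Gamma$ is an equivalence identifying Jacobson radicals and $f_i\in J_\MM$ by assumption. For exactness of the Hom-sequence $\cdots\to\Hom_\NN(\G M_1,-)\to J_\NN(\G X,-)\to0$ on $\NN$, I use Corollary~\ref{indecomposable} to reduce to evaluation at each indecomposable $\tau_{n+1}^j\G Y$ with $Y\in\MM$ indecomposable and $0\le j\le\ell_Y$. When $j>0$, Theorem~\ref{n+1-rigid}(e) forces $\Hom_\NN(\G M_i,\tau_{n+1}^j\G Y)=0=\Hom_\NN(\G X,\tau_{n+1}^j\G Y)$, since $\G M_i,\G X\in\add D\Gamma$, so the sequence is trivially exact. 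When $j=0$, the equivalence $\G$ identifies the sequence pointwise with the Hom-exactness of the original source sequence of $X$ in $\MM$.

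The main subtle point is the first part in the case $X\in\II(\MM)$: the source sequence is not exact at $X$ there (its kernel being $\soc X$, which need not lie in $\MM$), so Lemma~\ref{long sequence}(a) does not apply directly. The projective-resolution-plus-duality route above sidesteps this and treats both cases $X\in\II(\MM)$ and $X\notin\II(\MM)$ by a single argument.
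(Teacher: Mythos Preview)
Your proof is correct. The approach is essentially the same as the paper's, but you supply considerably more detail where the paper is terse. The paper's proof reads, in full: ``Clearly the sequence is exact. Since each $f_i$ is left minimal and $\G$ is a fully faithful functor, each $\G f_i$ is left minimal. Moreover $\G f_0$ is a source morphism in $\NN$ by Proposition~\ref{source}.'' Your argument for exactness in $\mod\Gamma$ (dualize the projective resolution of the simple $\Gamma^{\op}$-module $\top\Hom_\Lambda(X,M)$) is exactly what underlies the paper's ``clearly''. For the source-sequence property, the paper only records that $\G f_0$ is a source morphism and that each $\G f_i$ is left minimal, leaving the Hom-exactness on $\NN$ at the remaining positions to the reader; you verify this directly via the case split on $j$ using Corollary~\ref{indecomposable} and Theorem~\ref{n+1-rigid}(e). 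This is a clean and self-contained way to finish, and it sidesteps any appeal to dimension-shifting with the $(n+1)$-rigidity of $\NN$ (as the paper does explicitly in the parallel Proposition~\ref{n+1-almost split}(b) via Lemma~\ref{hom and ext^n}). Your remark that the case $X\in\II(\MM)$ cannot be handled by Lemma~\ref{long sequence}(a) alone is well taken, and your uniform treatment via duality is a good way around it.
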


\begin{proof}
Clearly the sequence is exact. Since each $f_i$ is left minimal and $\G$ is a fully faithful functor, each $\G f_i$ is left minimal.
Moreover $\G f_0$ is a source morphism in $\NN$ by Proposition \ref{source}.
\end{proof}

Next we consider the case $0<\ell\le\ell_X$.
Recall that any indecomposable object in $\MM_I$
is a left term of an $n$-almost split sequence by Theorem \ref{basic}(a).
Let us start with the following observation.

\begin{lemma}\label{tau correspondence}
For an indecomposable object $X\in\MM_P$, take the following $n$-almost split sequence.
\[0\to\tau_n X\stackrel{f_0}{\to}M_{1}\stackrel{f_1}{\to}\cdots\stackrel{f_{n-1}}{\to}M_{n}\stackrel{f_n}{\to}X\to0\]
\begin{itemize}
\item[(a)] We have the following source sequence in $\MM$.
\begin{equation}\label{sequence in (a)}
X\stackrel{\tau_n^-f_0}{\longrightarrow}\tau_n^-M_{1}\stackrel{\tau_n^-f_1}{\longrightarrow}\cdots\stackrel{\tau_n^-f_{n-1}}{\longrightarrow}\tau_n^-M_{n}\stackrel{\tau_n^-f_n}{\longrightarrow}\tau_n^-X\to0
\end{equation}
\item[(b)] $\soc\G X\in\GG_{n+1}$ and $\tau_{n+1}(\soc\G X)\simeq\soc(\G\tau_nX)$.
\end{itemize}
\end{lemma}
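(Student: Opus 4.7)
For part (a), the plan is to apply $\tau_n^-$ termwise to the given $n$-almost split sequence, which I view as a source sequence of $\tau_n X\in\MM_I$ via Theorem \ref{basic}(a)(i). Since $\tau_n^-$ restricts to a quasi-inverse equivalence $\MM_I\to\MM_P$ (Lemma \ref{tau_n on M}(a)) and $\tau_n^-\tau_n X\simeq X$, the output is a complex starting at $X$. Exactness will be extracted from Lemma \ref{long sequence}(f), whose hypothesis is that every term of the sequence lie in $\MM_I$. The left end $\tau_n X$ is in $\MM_I$ by construction. For the middle terms $M_1,\ldots,M_n$, I argue in the spirit of Lemma \ref{induction2}: any indecomposable summand in $\II(\MM)$ would, via $\Hom_\Lambda(\II(\MM),\MM_I)=0$ (a consequence of Lemma \ref{tau_n on M}(e)) and the left-minimality of the source-sequence morphisms $f_i$, force a contradiction. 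The right end $X\in\MM_P$ is either in $\MM_I$---in which case Lemma \ref{long sequence}(f) applies directly---or in $\II(\MM)$, in which case $\tau_n^-X=0$ and the expected shape coincides with the source sequence in Theorem \ref{basic}(a)(ii). Left minimality and left almost splitness of the morphisms $\tau_n^-f_i$ are transported by the equivalence $\tau_n^-:\MM_I\to\MM_P$, and uniqueness of source sequences then identifies the output with a source sequence of $X$ in $\MM$.

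For part (b), I apply Lemma \ref{long sequence}(a),(b) to the given $n$-almost split sequence, obtaining exact sequences
\[\F\tau_n X\hookrightarrow\F M_1\to\cdots\to\F M_n\to\F X\twoheadrightarrow\Cok\F f_n,\qquad \Ker\G f_0\hookrightarrow\G\tau_nX\to\G M_1\to\cdots\to\G M_n\twoheadrightarrow\G X,\]
with $\Cok\F f_n\in\GG_{n+1}$ and $\tau_{n+1}(\Cok\F f_n)\simeq\Ker\G f_0$. Since $X$ is indecomposable, $\F X$ is the indecomposable projective $\Gamma$-module with simple top $S_X$; as $f_n\in J_{\MM}$, the map $\F f_n$ lands in $\rad\F X$, so the first sequence is the minimal projective resolution of $S_X$ and $\Cok\F f_n\simeq S_X$. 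Via $\G\simeq\nu_\Gamma\F$ on $\MM$ (Proposition \ref{functorial}(b)), $\G X$ is the indecomposable injective at the corresponding vertex, hence $\soc\G X\simeq S_X$. Combining these identifies $\soc\G X$ with $\Cok\F f_n\in\GG_{n+1}$ and gives $\tau_{n+1}(\soc\G X)\simeq\Ker\G f_0$.

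To identify $\Ker\G f_0$ with $\soc\G\tau_n X$, I observe that it is a nonzero submodule of the indecomposable injective $\G\tau_n X$ (whose socle is the simple $S_{\tau_n X}$), so it suffices to prove $\Ker\G f_0$ is simple. Now $\tau_{n+1}:\GG_{n+1}\to\HH_{n+1}$ is an equivalence (Lemma \ref{XY2}(a)) between Serre subcategories of $\mod\Gamma$ (Lemma \ref{XY2}(b)), hence sends simples to simples; therefore $\tau_{n+1}S_X=\Ker\G f_0$ is simple and must coincide with $\soc\G\tau_n X$. The principal obstacle is the bookkeeping in (a), in particular the careful propagation of the $\MM_I$-property through the middle terms of the $n$-almost split sequence so that Lemma \ref{long sequence}(f) is applicable.
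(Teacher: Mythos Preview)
Your argument for part (b) is correct and close in spirit to the paper's: identifying $\Cok\F f_n$ with $\top\F X\simeq\soc\G X$ via the sink-morphism property of $f_n$, and then using that $\tau_{n+1}$ is an equivalence between Serre subcategories (hence preserves simples) to pin down $\Ker\G f_0$ as $\soc\G\tau_nX$, is a clean alternative to the paper's comparison of the two $\G$-images via Lemma~\ref{long sequence}(a)(e). One small bonus of your route is that it does not rely on part (a).

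Part (a), however, has a real gap. The propagation ``$M_i\in\MM_I$'' does not follow from left minimality together with $\Hom_\Lambda(\II(\MM),\MM_I)=0$. The mechanism of Lemma~\ref{induction2} is: if $M_{i-1}\in\MM_P$ and $f_{i-1}$ is left minimal, then any $\PP(\MM)$-summand of $M_i$ receives only the zero map from $M_{i-1}$ (because $\Hom_\Lambda(\MM_P,\PP(\MM))=0$), contradicting left minimality. The analogous forward step for $\MM_I$ would require $\Hom_\Lambda(\MM_I,\II(\MM))=0$, which is false since $\Hom_\Lambda(-,D\Lambda)\simeq D(-)$. The vanishing you cite points the other way and pairs with \emph{right} minimality, propagating $\MM_I$ backwards from the right-hand term---but that induction must start at $X$, so it only runs when $X\in\MM_I$. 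In your remaining case $X\in\II(\MM)$ the middle terms need not lie in $\MM_I$ (already for $n=1$ and $\Lambda=kA_3$, the almost split sequence $0\to P_2\to P_1\oplus S_2\to I_2\to 0$ has the injective $P_1=I_3$ as a summand of $M_1$), Lemma~\ref{long sequence}(f) is unavailable, and the $\tau_n^-$-applied complex is genuinely not exact; your appeal to Theorem~\ref{basic}(a)(ii) gives only existence of \emph{some} source sequence of the right length, not that this specific complex is one.

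The paper avoids all of this by never asserting exactness or $M_i\in\MM_I$. Since $\tau_n^-$ annihilates injectives, every term of the new complex lies in $\MM_P$; then $\Hom_\Lambda(\MM_P,\PP(\MM))=0$ makes the defining Hom-exactness of a source sequence trivial on $\PP(\MM)$, and on $\MM_P$ it is transported from the original source sequence of $\tau_nX$ via the full functor $\tau_n^-$, where injective summands of the $M_i$ contribute nothing on either side precisely because $\Hom_\Lambda(\II(\MM),\MM_I)=0$.
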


\begin{proof}
(a) Any term in (\ref{sequence in (a)}) belongs to $\MM_P$.
Since we have $\Hom_\Lambda(\MM_P,\PP(\MM))=0$ by Lemma \ref{tau_n on M}(d) and we have an equivalence $\tau_n^-:\MM_I\to\MM_P$, 
we have the assertion.

(b) Applying $\G$ to source sequences of $X$ and $\tau_nX$, we have exact sequences
\begin{eqnarray*}
&0\to\soc\G X\to\G X\stackrel{\G \tau_n^-f_0}{\longrightarrow}\G \tau_n^-M_{1}\stackrel{\G \tau_n^-f_1}{\longrightarrow}\cdots\stackrel{\G \tau_n^-f_{n-1}}{\longrightarrow}\G \tau_n^-M_{n}\stackrel{\G \tau_n^-f_n}{\longrightarrow}\G \tau_n^-X\to0,&\\
&0\to\soc(\G\tau_n X)\to\G\tau_n X\stackrel{\G f_0}{\longrightarrow}\G M_{1}\stackrel{\G f_1}{\longrightarrow}\cdots\stackrel{\G f_{n-1}}{\longrightarrow}\G M_{n}\stackrel{\G f_n}{\longrightarrow}\G X\to0.&
\end{eqnarray*}
Comparing these sequences by Lemma \ref{long sequence}(a)(e), we have the assertions.
\end{proof}

We also need the following information.

\begin{proposition}\label{simple correspondence}
For an indecomposable object $X\in\MM$ and $\ell\ge0$, let $S:=\tau_{n+1}^\ell(\soc\G X)$.
\begin{itemize}
\item[(a)] We have $S\simeq\soc(\G\tau_n^\ell X)$.
\item[(b)] If $0\le\ell<\ell_X$, then $S\in\GG_{n+1}$. If $\ell=\ell_X$,
then $S\notin\GG_{n+1}$. If $\ell>\ell_X$, then $S=0$.
\end{itemize}
\end{proposition}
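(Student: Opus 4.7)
The plan is to prove (a) first by induction on $\ell$ using Lemma \ref{tau correspondence}(b) as the engine, and then to deduce (b) by analyzing the three ranges of $\ell$ separately, with the key trick in the delicate case $\ell = \ell_X$ being that $\tau_{n+1}$ annihilates $\G Y$ for $Y \in \PP(\MM)$.

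For part (a), the base case $\ell = 0$ is tautological. For the inductive step with $1 \le \ell \le \ell_X$, the point is that $\tau_n^{\ell-1}X$ is an indecomposable object of $\MM_P$: it is indecomposable by Proposition \ref{indecomposable in M}, and it lies in $\MM_P$ because $\tau_n(\tau_n^{\ell-1}X) = \tau_n^\ell X \neq 0$ when $\ell \le \ell_X$. Hence Lemma \ref{tau correspondence}(b) applies to $\tau_n^{\ell-1}X$ and yields $\tau_{n+1}(\soc\G\tau_n^{\ell-1}X) \simeq \soc\G\tau_n^\ell X$; combined with the induction hypothesis this produces $S = \tau_{n+1}^\ell(\soc\G X) \simeq \soc\G\tau_n^\ell X$ as required. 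For $\ell > \ell_X$, one has $\tau_n^\ell X = 0$, so $\soc\G\tau_n^\ell X = 0$, and the isomorphism $S \simeq 0$ will be a byproduct of (b).

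Part (b) now follows case by case. If $\ell < \ell_X$, then $\tau_n^\ell X \in \MM_P$ and Lemma \ref{tau correspondence}(b) directly gives $\soc\G\tau_n^\ell X \in \GG_{n+1}$, so $S \in \GG_{n+1}$ by (a). For $\ell = \ell_X$, (a) identifies $S$ with the nonzero simple $\Gamma$-module $\soc\G Y$, where $Y := \tau_n^{\ell_X}X$ is an indecomposable object of $\PP(\MM)$, so $\tau_n Y = 0$. The monomorphism $\alpha_Y\colon \tau_{n+1}\G Y \hookrightarrow \G\tau_n Y = 0$ of Proposition \ref{functorial}(d) forces $\tau_{n+1}\G Y = 0$; since $\tau_{n+1}$ preserves monomorphisms and $S \hookrightarrow \G Y$, this gives $\tau_{n+1}S = 0$. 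If $S$ were in $\GG_{n+1}$, the equivalence $\tau_{n+1}\colon \GG_{n+1} \to \HH_{n+1}$ of Lemma \ref{XY2}(a) would yield $S \simeq \tau_{n+1}^-\tau_{n+1}S = 0$, contradicting that $S$ is a nonzero simple. Finally, for $\ell > \ell_X$ one iterates $S = \tau_{n+1}^{\ell-\ell_X}(\tau_{n+1}^{\ell_X}(\soc\G X))$, and since the inner term equals the simple whose $\tau_{n+1}$ vanishes, $S = 0$.

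The only genuinely subtle step is the $\ell = \ell_X$ case of (b); the cleverness is to detect non-membership in $\GG_{n+1}$ not by computing $\Ext^i_\Gamma(S,\Gamma)$ directly, but by exploiting the contrapositive of Lemma \ref{XY2}(a): within $\GG_{n+1}$ only the zero object is annihilated by $\tau_{n+1}$, so vanishing of $\tau_{n+1}$ on a nonzero module automatically excludes it from $\GG_{n+1}$. Once this trick is in place, the whole proposition reduces to routine manipulations already encoded in Lemma \ref{tau correspondence} and Proposition \ref{functorial}.
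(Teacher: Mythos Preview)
Your proof is correct and follows essentially the same strategy as the paper: both reduce (a) to Lemma~\ref{tau correspondence}(b) for the $\MM_P$ case and then deduce (b) by elementary bookkeeping.

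The one genuine difference lies in how you treat the case where $Y=\tau_n^{\ell_X}X\in\PP(\MM)$. The paper argues that $\soc\G Y=\top\F Y$ has projective dimension at most $n$ by invoking the sink sequence of $Y\in\add T$ from Theorem~\ref{basic}(a)(iii), whence $\tau_{n+1}(\soc\G Y)=0$. You instead use the functorial monomorphism $\alpha_Y\colon\tau_{n+1}\G Y\hookrightarrow\G\tau_n Y=0$ from Proposition~\ref{functorial}(d), together with the fact that $\tau_{n+1}$ preserves monomorphisms, to reach the same vanishing. Your route is slightly more self-contained within the $\alpha$-machinery and avoids the appeal to sink sequences; the paper's route makes the projective-dimension reason transparent. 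Organizationally, the paper is a touch cleaner because it proves the $\ell=1$ step of (a) for \emph{all} indecomposable $X\in\MM$ at once (handling $\PP(\MM)$ there rather than deferring it to (b)), so (a) follows for every $\ell$ by straight iteration, and the forward reference you make from (a) to (b) for $\ell>\ell_X$ becomes unnecessary.
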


\begin{proof}
(a) We only have to show the case $\ell=1$.
If $X\in\MM_P$, then the assertion follows from Lemma \ref{tau correspondence}(b).
Assume $X\in\PP(\MM)$. Since $\tau_nX=0$, the right hand side is zero.
Since $\soc\G X=\top\F X$ has projective dimension at most $n$ by Theorem \ref{basic}(a)(iii),
the left hand side is also zero.

(b) Immediate from (a) and Lemma \ref{tau correspondence}(b).
\end{proof}

For $X\in\MM$, define a morphism $\iota_X:\tau_n\tau_n^-X\to X$ by
taking an isomorphism $X\simeq Y\oplus I$ with $Y\in\MM_I$ and $I\in\II(\MM)$
and putting
\[\iota_X:\tau_n\tau_n^-X\simeq Y\xrightarrow{{1\choose0}}Y\oplus I\simeq X.\]
We denote by $\beta_X$ the morphism
\[\beta_X:\tau_{n+1}\G\tau_n^-X\stackrel{\alpha_{\tau_n^-X}}{\longrightarrow}\G\tau_n\tau_n^-X\stackrel{\G \iota_X}{\to}\G X.\]
Although $\iota_X$ depends on the choice of decomposition of $X$,
we have the following.

\begin{lemma}\label{beta}
$\beta_X$ is independent of the choice of decomposition of $X$.
In particular, $\beta$ gives a monomorphism $\beta:\tau_{n+1}\G\tau_n^-\to\G$ of functors on $\MM$.
\end{lemma}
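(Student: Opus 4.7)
The main content is the well-definedness of $\beta_X$; the naturality of $\beta$ and the fact that it is a monomorphism will then follow in routine fashion.

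I would proceed in two steps. First, for two decompositions $X \simeq Y \oplus I \simeq Y' \oplus I'$ with $Y,Y' \in \MM_I$ and $I,I' \in \II(\MM)$, I show that the resulting morphisms $\iota_X, \iota'_X : \tau_n\tau_n^- X \to X$ differ by a morphism that factors through $\II(\MM) = \add D\Lambda$. Indeed, in the stable category $\overline{\mod}\Lambda$ the functor $\tau_n\tau_n^-$ restricted to $\MM$ is canonically isomorphic to the identity on $\overline{\MM} = \overline{\MM_I}$, using the quasi-inverse equivalences of Lemma \ref{tau_n on M}(a) and the vanishing of $\II(\MM)$ in $\overline{\mod}\Lambda$. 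Both $\iota_X$ and $\iota'_X$ lift this canonical stable isomorphism to $\mod\Lambda$, so $\iota_X - \iota'_X$ vanishes in $\overline{\mod}\Lambda$ and therefore factors through an injective $\Lambda$-module, which, since $\Lambda$ is finite-dimensional, lies in $\add D\Lambda$.

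Second, writing $\iota_X - \iota'_X = \mu\nu$ with $\nu : \tau_n\tau_n^- X \to J$, $\mu : J \to X$, $J \in \II(\MM)$, I must verify that $\G\nu \circ \alpha_{\tau_n^- X} = 0$ as a morphism $\tau_{n+1}\G\tau_n^- X \to \G J$. Since $\tau_{n+1}\G\tau_n^- X \in \HH_{n+1}$ by Lemma \ref{XY2}(a) and $\HH_{n+1}$ is a Serre subcategory of $\mod\Gamma$ by Lemma \ref{XY2}(b), the image of this composite is an $\HH_{n+1}$-submodule of $\G J$. Thus the problem reduces to showing that $\G J$ contains no nonzero $\HH_{n+1}$-submodule, and by passing to indecomposable summands, to showing that $\soc \G J_0 \notin \HH_{n+1}$ for each indecomposable $J_0 \in \II(\MM)$. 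By Proposition \ref{functorial}(b), $\G J_0 \simeq \nu_\Gamma \F J_0$ is the indecomposable injective $\Gamma$-module with simple socle $S_{J_0}$ corresponding to $J_0$ as an indecomposable summand of $M$; applying $\F$ to the length-$n$ source sequence $J_0 \to C_1 \to \cdots \to C_n \to 0$ of $J_0$ in $\MM$ from Theorem \ref{basic}(a)(ii) produces the minimal projective resolution of $S_{J_0}$, and a direct comparison with the defining conditions of $\HH_{n+1}$ (using the injective resolution of $\Gamma$ of length $n+1$ from Theorem \ref{Auslander correspondence}) yields $S_{J_0} \notin \HH_{n+1}$.

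Once well-definedness is settled, the monomorphism property of $\beta_X$ is immediate: $\alpha_{\tau_n^- X}$ is a monomorphism by Proposition \ref{functorial}(d), and $\G\iota_X$ is a split monomorphism since $\iota_X$ is one in $\mod\Lambda$ and $\G$ preserves direct summand inclusions. Naturality of $\beta$ as a morphism of functors on $\MM$ follows from naturality of $\alpha$ together with the compatibility of the $\iota_X$'s with morphisms in $\MM$, modulo discrepancies that again factor through $\II(\MM)$ and vanish by the second step. The main obstacle is the verification that $S_{J_0} \notin \HH_{n+1}$, which rests on the translation between the source sequence of $J_0$ in $\MM$ and the minimal projective resolution of $S_{J_0}$ over $\Gamma$, and on a careful matching against the vanishing conditions defining $\HH_{n+1}$.
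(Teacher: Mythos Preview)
Your overall strategy matches the paper's: reduce to showing $\Hom_\Gamma(\tau_{n+1}\G\tau_n^-X,\G J)=0$ for $J\in\II(\MM)$, use that $\tau_{n+1}\G\tau_n^-X\in\HH_{n+1}$ and that $\HH_{n+1}$ is Serre, and conclude by proving $\soc\G J_0\notin\HH_{n+1}$ for each indecomposable $J_0\in\II(\MM)$ via the source sequence of Theorem~\ref{basic}(a)(ii). Your extra detail in Step~1 (that $\iota_X-\iota_X'$ factors through injectives because both maps represent the same stable isomorphism) and your remarks on naturality and monomorphicity are correct and make explicit what the paper leaves implicit.

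There is, however, a technical slip in your execution of Step~2. Applying $\F=\Hom_\Lambda(M,-)$ to the source sequence $J_0\to C_1\to\cdots\to C_n\to0$ does \emph{not} produce a projective resolution of $S_{J_0}$: source sequences are designed so that $\Hom_\Lambda(-,M)$, not $\Hom_\Lambda(M,-)$, becomes exact. The correct move (and what the paper does) is to apply $\G=D\Hom_\Lambda(-,M)$. By the defining property of a source sequence one obtains an exact sequence
\[
0\to\soc\G J_0\to\G J_0\to\G C_1\to\cdots\to\G C_n\to0
\]
of injective $\Gamma$-modules, hence $\id(\soc\G J_0)_\Gamma\le n$. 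This immediately gives $\tau_{n+1}^-(\soc\G J_0)=0$, so $\soc\G J_0\notin\HH_{n+1}$ by Lemma~\ref{XY2}(a). There is no need to invoke the injective resolution of $\Gamma$ from Theorem~\ref{Auslander correspondence}; the bound on injective dimension does the job directly. With this correction your argument goes through and coincides with the paper's.
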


\begin{proof}
By Theorem \ref{basic}(a)(ii), we have $\id(\soc\G I)_\Gamma\le n$.
Since we have $\tau_{n+1}\G\tau_n^-X\in\HH_{n+1}$ by Lemma \ref{XY2}(a)
and $\HH_{n+1}$ is a Serre subcategory of $\mod\Gamma$ by Lemma \ref{XY2}(b),
we have $\Hom_\Gamma(\tau_{n+1}\G\tau_n^-X,\G I)=0$.
This implies the assertion.
\end{proof}

\begin{proposition}\label{n+1-almost split}
For an indecomposable object $X\in\MM_P$ and $0<\ell\le\ell_X$, take the following $n$-almost split sequence in $\MM$.
\[0\to\tau_n X\stackrel{f_0}{\to}M_{1}\stackrel{f_1}{\to}\cdots\stackrel{f_{n-1}}{\to}M_{n}\stackrel{f_n}{\to}X\to0\]
\begin{itemize}
\item[(a)] We have the following commutative diagram of exact sequences.
\begin{eqnarray*}
&0\to\tau_{n+1}^\ell(\soc\G X)\to\tau_{n+1}^\ell\G X\xrightarrow{\tau_{n+1}^\ell\G\tau_n^- f_0}\tau_{n+1}^\ell\G\tau_n^{-}M_{1}\to\cdots
\xrightarrow{\tau_{n+1}^\ell\G\tau_n^- f_n}\tau_{n+1}^\ell\G\tau_n^{-}X\to0&\\
&\ \ \ \ \ \ \downarrow\wr\ \ \ \ \ \ \ \ \ \ \ \ \ \ \ \downarrow^{\tau_{n+1}^{\ell-1}\beta_{\tau_nX}}\ \ \ \ \ \ \ \ \ \ \ \ \ \ \ \ \ \ 
\downarrow^{\tau_{n+1}^{\ell-1}\beta_{M_1}}\ \ \ \ \ \ \ \ \ \ \ \ \ \ \ \ \ \ \ \ \downarrow^{\tau_{n+1}^{\ell-1}\beta_X}&\\
&0\to\tau_{n+1}^{\ell-1}(\soc\G\tau_n X)\ \to\ \tau_{n+1}^{\ell-1}\G\tau_n X\ \xrightarrow{\tau_{n+1}^{\ell-1}\G f_0}\ \tau_{n+1}^{\ell-1}\G M_{1}\to
\ \ \cdots\ \ \xrightarrow{\tau_{n+1}^{\ell-1}\G f_n}\ \tau_{n+1}^{\ell-1}\G X\to0&
\end{eqnarray*}
\item[(b)] Taking a mapping cone, we have the following $(n+1)$-almost split sequence in $\NN$.
\begin{eqnarray*}
0\to\tau_{n+1}^\ell\G X\xrightarrow{{\tau_{n+1}^\ell\G\tau_n^- f_0\choose\tau_{n+1}^{\ell-1}\beta_{\tau_nX}}}(\tau_{n+1}^\ell\G\tau_n^{-}M_1)\oplus(\tau_{n+1}^{\ell-1}\G\tau_nX)
\xrightarrow{{\tau_{n+1}^\ell\G\tau_n^- f_1  \ \ \ \ \ 0\ \ \ \ \choose\tau_{n+1}^{\ell-1}\beta_{M_1}\ \ \ -\tau_{n+1}^{\ell-1}\G f_0}}\cdots\\
\cdots\xrightarrow{{\tau_{n+1}^\ell\G\tau_n^- f_n\ \ \ \ \ 0\ \ \ \ \ \choose\tau_{n+1}^{\ell-1}\beta_{M_{n}}\ -\tau_{n+1}^{\ell-1}\G f_{n-1}}}(\tau_{n+1}^\ell\G\tau_n^{-}X)\oplus(\tau_{n+1}^{\ell-1}\G M_{n})
\xrightarrow{(\tau_{n+1}^{\ell-1}\beta_X\ -\tau_{n+1}^{\ell-1}\G f_n)}\tau_{n+1}^{\ell-1}\G X\to0
\end{eqnarray*}
\end{itemize}
\end{proposition}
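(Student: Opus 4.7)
The argument proceeds in two stages, matching parts (a) and (b).

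\textbf{Stage 1 (part (a)).} I would construct the two horizontal rows separately and verify commutativity via naturality of $\beta$. The top row arises from applying $\G$ to the source sequence of $X$ in $\MM$ provided by Lemma \ref{tau correspondence}(a); Proposition \ref{injective source} shows that this is a source sequence of the injective $\Gamma$-module $\G X$ in $\NN$, and the kernel of the first map in $\mod\Gamma$ is $\soc\G X$ (this can be verified directly by applying Lemma \ref{long sequence}(e) to the given $n$-almost split sequence and identifying $\Cok\F f_n$ with $\soc\G X$ via Lemma \ref{long sequence}(b) together with Proposition \ref{simple correspondence}(a)). I then apply $\tau_{n+1}$ iteratively, invoking Lemma \ref{grade exact} at each step: the middle terms $\tau_{n+1}^i\G X,\ \tau_{n+1}^i\G\tau_n^- M_j,\ \tau_{n+1}^i\G\tau_n^- X$ lie in $\NN_P$ (they lie in $\GG_{n+1}$ by Proposition \ref{key}(c), and $\GG_{n+1}\cap\PP(\NN)=0$ by Lemma \ref{XY2}), while $\tau_{n+1}^i\soc\G X\in\GG_{n+1}$ for $i<\ell_X$ by Proposition \ref{simple correspondence}(b). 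The bottom row is produced in parallel from the given $n$-almost split sequence, which is also the source sequence of $\tau_n X$ in $\MM$ by Theorem \ref{basic}(a)(i); its leftmost term matches after identifying $\tau_{n+1}^{\ell-1}\soc\G\tau_n X\simeq\tau_{n+1}^\ell\soc\G X$ via Lemma \ref{tau correspondence}(b). The vertical arrows are $\tau_{n+1}^{\ell-1}$ applied to the instances of the natural transformation $\beta:\tau_{n+1}\G\tau_n^-\to\G$ from Lemma \ref{beta}; the inner squares commute by naturality of $\beta$ at each $f_i$, and the leftmost square commutes because the isomorphism used to align the leftmost terms is itself induced by $\beta$.

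\textbf{Stage 2 (part (b)).} Since each $\tau_{n+1}^{\ell-1}\beta_{(-)}$ is a monomorphism (Proposition \ref{functorial}(d) together with $\tau_{n+1}$ preserving monomorphisms), the mapping cone of the morphism from part (a) is exact, yielding the displayed long exact sequence in $\NN$. To identify it as the $(n+1)$-almost split sequence, I would match its first map with the left almost split morphism provided by Proposition \ref{source}: the source morphism of $X$ in $\MM$ is $\tau_n^- f_0$ by Lemma \ref{tau correspondence}(a), and $\beta_{\tau_n X}=\alpha_X$ because $X\in\MM_P$ forces $\tau_n X\in\MM_I$, so $\iota_{\tau_n X}=\id$ and $\tau_n^-\tau_n X\simeq X$. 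Left minimality of the first map is inherited from minimality of the source sequence in $\MM$ using the equivalence $\tau_{n+1}:\NN_P\to\NN_I$ of Theorem \ref{n+1-rigid}(b). Because the cone is exact and of the correct length, it must coincide with the source sequence of $\tau_{n+1}^\ell\G X$ in $\NN$ (whose existence and uniqueness up to isomorphism follow from Proposition \ref{source} combined with Theorem \ref{basic}(b)(iii)). A symmetric argument—or direct inspection of the final map $(\tau_{n+1}^{\ell-1}\beta_X,\ -\tau_{n+1}^{\ell-1}\G f_n)$—shows that the cone is simultaneously a sink sequence of $\tau_{n+1}^{\ell-1}\G X$, so it is $(n+1)$-almost split.

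\textbf{Main obstacle.} The delicate point is the bookkeeping needed to guarantee that the sequence in part (a) survives $\ell$ successive applications of $\tau_{n+1}$. Propositions \ref{key} and \ref{simple correspondence} dictate which of $\GG_{n+1}$, $\NN_P$, or $\PP(\NN)$ each iterated term inhabits, and the boundary case $\ell=\ell_X$ is especially subtle because $\tau_{n+1}^{\ell_X}\soc\G X\notin\GG_{n+1}$; there the final iteration of Lemma \ref{grade exact} must be handled as a limit of the preceding ones rather than as an independent application.
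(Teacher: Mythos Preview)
Your treatment of part (a) follows the paper's argument closely and is essentially correct.

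For part (b), however, there is a genuine gap. You establish that the mapping cone is exact and that its first map agrees with the left almost split morphism of Proposition~\ref{source}, but you never verify condition~(ii) in the definition of a source sequence, namely that applying $\Hom_{\NN}(-,Y)$ to the cone yields an exact sequence for every $Y\in\NN$. Your attempt to bypass this via ``existence and uniqueness'' is circular: Proposition~\ref{source} supplies only the source \emph{morphism}, and Theorem~\ref{basic}(b)(iii) is precisely the condition that Proposition~\ref{source in N} (and hence Theorem~\ref{N is cluster tilting}) is meant to establish \emph{using} the present proposition. At this stage $\NN$ is not yet known to be $(n+1)$-cluster tilting, so no abstract existence result for source sequences in $\NN$ is available; and in any case an exact sequence of the correct length beginning with a source morphism need not be a source sequence.

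The paper closes this gap directly: once the cone is seen to be exact with all terms in $\NN$ and all maps in $J_{\NN}$, one invokes the $(n+1)$-rigidity of $\NN$ (Theorem~\ref{n+1-rigid}(a)) together with Lemma~\ref{hom and ext^n}(b) (with $n$ replaced by $n+1$) to conclude that $\Hom_\Gamma(-,Y)$ applied to the cone is exact for every $Y\in\NN$. This is the missing ingredient in your sketch. A smaller point: your ``main obstacle'' about the boundary case $\ell=\ell_X$ is not actually an obstacle. To obtain the row at level $\ell$ one applies Lemma~\ref{grade exact} to the row at level $\ell-1$, and its hypothesis $\tau_{n+1}^{\ell-1}(\soc\G X)\in\GG_{n+1}$ holds because $\ell-1<\ell_X$; no limiting procedure is needed.
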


\begin{proof}
(a) By Lemma \ref{tau correspondence}(a), we have an exact sequence
\begin{eqnarray}\label{first}
0\to\soc\G X\to\G X\stackrel{\G\tau_n^-f_0}{\longrightarrow}\G\tau_n^{-}M_{1}\stackrel{\G\tau_n^-f_1}{\longrightarrow}\cdots
\stackrel{\G\tau_n^-f_{n-1}}{\longrightarrow}\G\tau_n^{-}M_{n}\stackrel{\G\tau_n^-f_n}{\longrightarrow}\G\tau_n^{-}X\to0.
\end{eqnarray}
Since $\ell\le\ell_X$, we have $\tau_n^iX\in\MM_P$ for any $0\le i<\ell$.
By Proposition \ref{simple correspondence}, we have
\[\tau_{n+1}^i(\soc\G X)\simeq\soc(\G\tau_n^iX)\ \ \ (0\le i\le\ell)\ \mbox{ and }\ \tau_{n+1}^i(\soc\G X)\in\GG_{n+1}\ \ \ (0\le i<\ell).\]
Since $\tau_{n+1}^i\G X\in\NN_P$ for any $0\le i<\ell$ by Proposition \ref{key}(c), we have an exact sequence
\[0\to\tau_{n+1}^i(\soc\G X)\to\tau_{n+1}^i\G X\stackrel{\tau_{n+1}^i\G\tau_n^-f_0}{\longrightarrow}\tau_{n+1}^i\G\tau_n^{-}M_{1}\stackrel{\tau_{n+1}^i\G\tau_n^-f_1}{\longrightarrow}
\cdots\stackrel{\tau_{n+1}^i\G\tau_n^-f_n}{\longrightarrow}\tau_{n+1}^i\G\tau_n^{-}X\to0\]
for any $0\le i\le\ell$ by applying Lemma \ref{induction2}
(replace $\MM$ there by $\NN$) and Lemma \ref{grade exact}
to the sequence (\ref{first}) repeatedly.

By a similar argument, we have an exact sequence 
\[0\to\tau_{n+1}^i(\soc\G\tau_nX)\to\tau_{n+1}^i\G\tau_nX\stackrel{\tau_{n+1}^i\G f_0}{\longrightarrow}
\tau_{n+1}^i\G M_{1}\stackrel{\tau_{n+1}^i\G f_1}{\longrightarrow}\cdots\stackrel{\tau_{n+1}^i\G f_n}{\longrightarrow}\tau_{n+1}^i\G X\to0\]
for any $0\le i<\ell$.
Thus we have the desired exact sequences.

Using the morphism $\tau_{n+1}^{\ell-1}\beta:\tau_{n+1}^\ell\G\tau_n^-\to\tau_{n+1}^{\ell-1}\G$ of functors, we have the desired commutative diagram.
The left hand side morphism $\tau_{n+1}^\ell(\soc\G X)\to\tau_{n+1}^{\ell-1}(\soc\G\tau_nX)$ is an isomorphism since it is a monomorphism between simple $\Gamma$-modules.

(b) Clearly our sequence is exact. By Proposition \ref{source},
the morphism ${\tau_{n+1}^\ell\G\tau_n^- f_0\choose\tau_{n+1}^{\ell-1}\beta_{\tau_nX}}:\tau_{n+1}^\ell\G X\to(\tau_{n+1}^\ell\G\tau_n^{-}M_1)\oplus(\tau_{n+1}^{\ell-1}\G\tau_nX)$ is left almost split.
Since all morphisms in our sequence are contained in the Jacobson radical of the category $\MM$,
they are left minimal.
By Lemma \ref{hom and ext^n}(b) (replace $n$ there by $n+1$), our sequence gives a source sequence of $\tau_{n+1}^\ell\G X$.
Since source sequences are unique up to isomorphisms of complexes, it is an $(n+1)$-almost split sequence by Theorem \ref{basic}(a)(i).
\end{proof}

Consequently we have the following.

\begin{proposition}\label{source in N}
Any indecomposable object $X\in\NN$ has a source sequence in $\NN$ of the form
\[X\to N_1\to\cdots\to N_{n+2}\to0.\]
\end{proposition}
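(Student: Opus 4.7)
The plan is to use the classification of indecomposables in $\NN$ from Corollary \ref{indecomposable}(a) to reduce the claim to two cases already handled in the preceding subsection. Specifically, any indecomposable $Z\in\NN$ can be written as $Z=\tau_{n+1}^\ell\G X$ for a unique indecomposable $X\in\MM$ and $0\le\ell\le\ell_X$, and I would split into the cases $\ell=0$ and $\ell\ge 1$.

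For the case $\ell=0$, the object is $Z=\G X$ with $X\in\MM$ indecomposable. By Theorem \ref{basic}(a)(i)(ii), $X$ admits a source sequence in $\MM$ of the form $X\to M_1\to\cdots\to M_n\to M_{n+1}\to 0$ (with $M_{n+1}=0$ in case $X\in\II(\MM)$). Proposition \ref{injective source} shows that applying $\G$ yields a source sequence $\G X\to \G M_1\to\cdots\to \G M_{n+1}\to 0$ of $\G X$ in $\NN$, which is of the desired shape upon setting $N_{n+2}:=0$.

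For the case $\ell\ge 1$, since $X$ is indecomposable with $\ell_X\ge\ell\ge 1$, one has $\tau_nX\ne 0$ and hence $X\in\MM_P$. Theorem \ref{basic}(a)(i) then supplies an $n$-almost split sequence $0\to\tau_nX\to M_1\to\cdots\to M_n\to X\to 0$ in $\MM$, and Proposition \ref{n+1-almost split}(b) produces an $(n+1)$-almost split sequence in $\NN$ of the form
\[
0\to\tau_{n+1}^\ell\G X\to N_1\to\cdots\to N_{n+1}\to\tau_{n+1}^{\ell-1}\G X\to 0.
\]
Since any $(n+1)$-almost split sequence is by definition a source sequence of its leftmost term, taking $N_{n+2}:=\tau_{n+1}^{\ell-1}\G X$ yields the required source sequence of $\tau_{n+1}^\ell\G X$ in $\NN$.

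The hard part is not in this proposition itself but in the results it rests on: the functorial monomorphism $\beta:\tau_{n+1}\G\tau_n^-\to\G$ of Lemma \ref{beta}, the identification of left almost split morphisms in Proposition \ref{source}, and the mapping cone argument used in Proposition \ref{n+1-almost split}(b) are where the new content lies. Given those inputs, the present statement reduces to the case split and bookkeeping above, and I do not expect any serious additional obstacle.
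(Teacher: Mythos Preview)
Your proposal is correct and follows essentially the same route as the paper: the paper's proof is simply ``Immediate from Proposition \ref{injective source} and Proposition \ref{n+1-almost split}'', which is exactly your case split on $\ell=0$ versus $\ell\ge 1$ via Corollary \ref{indecomposable}(a). Your added remarks about where the real work lies (Propositions \ref{source} and \ref{n+1-almost split}, Lemma \ref{beta}) are accurate and match the paper's development.
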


\begin{proof}
Immediate from Proposition \ref{injective source} and Proposition \ref{n+1-almost split}.
\end{proof}

We have the following conclusion.

\begin{theorem}\label{N is cluster tilting}
$N$ is an $(n+1)$-cluster tilting object in $U^{\perp}$.
\end{theorem}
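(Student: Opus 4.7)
The plan is to invoke the implication (iii)$\Rightarrow$(i) of Theorem \ref{basic}(b), applied with the dimension parameter $n$ there replaced by $n+1$, the base algebra taken to be $\Gamma$, the tilting module taken to be $U$, and the rigid subcategory taken to be $\NN=\add N$. All the ingredients needed to feed into that theorem have in fact been assembled in the preceding subsections, so the proof amounts to checking hypotheses.

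First, I would verify the set-up. By Theorem \ref{U is tilting}, $U$ is a tilting $\Gamma$-module with $\pd U_\Gamma<n+1$, and $\add U=\PP(\NN)\subseteq\NN$. By definition, $D\Gamma=\II(\NN)\subseteq\NN$, so $U\oplus D\Gamma\in\NN$. Theorem \ref{n+1-rigid}(a) says that $\NN$ is $(n+1)$-rigid, while Theorem \ref{n+1-rigid}(f) gives $\Ext^i_\Gamma(\PP(\NN),\NN)=0$ for all $i>0$, i.e.\ $\NN\subseteq U^{\perp}$. Since by Corollary \ref{indecomposable}(c) the category $\NN$ has an additive generator $N\in U^{\perp}$, the subcategory $\NN=\add N$ is functorially finite in $U^{\perp}$. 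Thus $\NN$ is an $(n+1)$-rigid subcategory of $U^{\perp}$ containing $U\oplus D\Gamma$, which is exactly the hypothesis of Theorem \ref{basic}(b) with $n$ replaced by $n+1$.

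Next, I would check condition (iii) of Theorem \ref{basic}(b) at level $n+1$: every indecomposable object of $\NN$ admits a source sequence of the form
\[X\to N_1\to\cdots\to N_{n+2}\to 0\]
in $\NN$. This is precisely the content of Proposition \ref{source in N}, which itself combines Proposition \ref{injective source} (covering the injective case $X\in\II(\NN)$, where applying $\G$ to a source sequence in $\MM$ already gives the desired source sequence in $\NN$) with Proposition \ref{n+1-almost split} (covering the remaining indecomposables $\tau_{n+1}^\ell\G X$ for $0<\ell\le\ell_X$ via the mapping cone of $\tau_{n+1}^{\ell-1}\beta$ against the $n$-almost split sequence ending in $X$).

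Having verified the hypotheses and condition (iii), the implication (iii)$\Rightarrow$(i) of Theorem \ref{basic}(b) yields that $\NN$ is an $(n+1)$-cluster tilting subcategory of $U^{\perp}$, which is the assertion $\NN=\add N$ is $(n+1)$-cluster tilting in $U^{\perp}$. There is no genuine obstacle left at this stage; the real work was done in the two preceding subsections, where the $(n+1)$-rigidity of $\NN$ was extracted from the functorial monomorphism $\alpha\colon \tau_{n+1}\G\to\G\tau_n$, and the source sequences in $\NN$ were built by a mapping-cone construction from the $n$-almost split sequences in $\MM$.
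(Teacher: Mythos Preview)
Your proposal is correct and follows essentially the same approach as the paper: invoke Theorem~\ref{basic}(b)(iii)$\Rightarrow$(i) (with $n$ replaced by $n+1$) together with Proposition~\ref{source in N}. You have simply spelled out in more detail the verification of the hypotheses of Theorem~\ref{basic}(b)---in particular that $\NN\subseteq U^{\perp}$ via Theorem~\ref{n+1-rigid}(f)---which the paper leaves implicit.
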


\begin{proof}
This follows from Theorem \ref{basic}(b)(iii)$\Rightarrow$(i) and Proposition \ref{source in N}.
\end{proof}

Now we complete the proof of Theorem \ref{main}.
We have that $\Gamma$ is $\tau_{n+1}$-finite and satisfies (C$_{n+1}$)
by Lemma \ref{C n+1}, (A$_{n+1}$) by Theorem \ref{U is tilting},
and (B$_{n+1}$) by Theorem \ref{N is cluster tilting}.\qed

\subsection{Description of the cone of $\Gamma$}

The aim of this subsection is to give a description of the cone of $\Gamma$ and prove Theorem \ref{recall} as an application.
For simplicity, we assume that our additive generator $M$ of $\MM$ is basic,
so $\Gamma$ is also basic.

For $\ell\ge0$, we denote by
\[\simple^\ell\Gamma\ \mbox{ (respectively, $\simple_{\ell}\Gamma$)}\]
the set of simple $\Gamma$-modules $S$ satisfying $\tau_{n+1}^\ell S\neq0$ (respectively, $\tau_{n+1}^{-\ell}S\neq0$).
Define subcategories of $\mod\Gamma$ by
\begin{eqnarray*}
\GG_{n+1}^\ell&:=&\{X\in\mod\Gamma\ |\ \mbox{any composition factor $S$ of $X$ belongs to }\simple^\ell\Gamma\},\\
\HH_{n+1}^\ell&:=&\{X\in\mod\Gamma\ |\ \mbox{any composition factor $S$ of $X$ belongs to }\simple_{\ell}\Gamma\}.
\end{eqnarray*}
We have the following preliminary properties.

\begin{lemma}\label{XY l}
\begin{itemize}
\item[(a)] $\simple^\ell\Gamma$ (respectively, $\simple_\ell\Gamma$) consists of $\soc\G X$ for any indecomposable $X\in\MM$
satisfying $\tau_n^\ell X\neq0$ (respectively, $\tau_n^{-\ell}X\neq0$).
\item[(b)] We have $\GG_{n+1}^1=\GG_{n+1}$ and $\HH_{n+1}^1=\HH_{n+1}$.
\item[(c)] For any $\ell>0$, we have equivalences
\[\GG_{n+1}^\ell\stackrel{\tau_{n+1}}{\longrightarrow}\GG_{n+1}^{\ell-1}\cap\HH_{n+1}^1\stackrel{\tau_{n+1}}{\longrightarrow}\GG_{n+1}^{\ell-2}\cap\HH_{n+1}^2\stackrel{\tau_{n+1}}{\longrightarrow}
\cdots\stackrel{\tau_{n+1}}{\longrightarrow}\GG_{n+1}^1\cap\HH_{n+1}^{\ell-1}\stackrel{\tau_{n+1}}{\longrightarrow}\HH_{n+1}^\ell\]
whose quasi-inverses are given by $\tau_{n+1}^-$.
\end{itemize}
\end{lemma}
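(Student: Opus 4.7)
The plan is to reduce all three parts to the bijection between indecomposable objects of $\MM$ and simple $\Gamma$-modules, given by $X\mapsto S_X:=\soc\G X=\top\F X$ (the second identification uses $\nu_\Gamma\F\simeq\G$ from Proposition \ref{functorial}(b)), and then to exploit the Serre-subcategory structure of $\GG_{n+1}$ and $\HH_{n+1}$ (Lemma \ref{XY2}(b)) together with the abelian equivalence $\tau_{n+1}\colon\GG_{n+1}\to\HH_{n+1}$ of Lemma \ref{XY2}(a). Each part of Lemma \ref{XY l} then follows by translating between indecomposables of $\MM$ and composition factors in $\mod\Gamma$.

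For (a), Proposition \ref{simple correspondence}(a) yields $\tau_{n+1}^\ell S_X\simeq S_{\tau_n^\ell X}$, which is nonzero iff $\tau_n^\ell X\neq0$; this is exactly the claim for $\simple^\ell\Gamma$. For the $\simple_\ell\Gamma$ half, I would apply the same identity with $X$ replaced by $\tau_n^{-\ell}Y$ (whenever nonzero) and then use that $\tau_{n+1}$ and $\tau_{n+1}^-$ are quasi-inverse between $\GG_{n+1}$ and $\HH_{n+1}$ to rewrite it as $\tau_{n+1}^{-\ell}S_Y\simeq S_{\tau_n^{-\ell}Y}$, with both sides simultaneously nonzero; iterated use of Lemma \ref{tau correspondence}(b) handles the remaining case $\tau_n^{-\ell}Y=0$, forcing $\tau_{n+1}^{-\ell}S_Y=0$.

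For (b), specializing Proposition \ref{simple correspondence}(b) to $\ell=0$ says $S_X\in\GG_{n+1}$ iff $\ell_X>0$ iff $\tau_nX\neq0$, which by (a) is exactly the condition $S_X\in\simple^1\Gamma$. Because $\GG_{n+1}$ is a Serre subcategory (Lemma \ref{XY2}(b)), it is determined by its simple objects, so $\GG_{n+1}=\GG_{n+1}^1$. The identity $\HH_{n+1}=\HH_{n+1}^1$ is dual, obtained via $\tau_{n+1}^-$ in place of $\tau_{n+1}$.

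For (c), the equivalence $\tau_{n+1}\colon\GG_{n+1}\to\HH_{n+1}$ of abelian categories sends simple objects to simple objects bijectively and preserves composition multiplicities. A simple $S\in\simple^\ell\Gamma\subseteq\GG_{n+1}$ (the inclusion uses $\ell\ge1$ and (b)) is sent to a simple $\tau_{n+1}S$ satisfying $\tau_{n+1}^{\ell-1}(\tau_{n+1}S)=\tau_{n+1}^\ell S\neq0$ and $\tau_{n+1}^-(\tau_{n+1}S)\simeq S\neq0$, hence $\tau_{n+1}S\in\simple^{\ell-1}\Gamma\cap\simple_1\Gamma$; the reverse direction via $\tau_{n+1}^-$ is symmetric. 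Since both $\GG_{n+1}^\ell$ and $\GG_{n+1}^{\ell-1}\cap\HH_{n+1}^1$ are Serre subcategories generated by these simples, the bijection of simples extends to an equivalence of the ambient abelian subcategories. Applying the same argument with indices shifted to $(\ell-k,k)$ at the $k$-th stage produces the full chain. The main obstacle I expect is the dual half of (a): Proposition \ref{simple correspondence} is stated only for $\tau_{n+1}^\ell$, so the identity $\tau_{n+1}^{-\ell}S_Y\simeq S_{\tau_n^{-\ell}Y}$ must be carefully extracted by transporting through the quasi-inverse $\tau_{n+1}^-$, after which the remaining steps amount to standard Serre-subcategory bookkeeping under an equivalence of abelian categories.
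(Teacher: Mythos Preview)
Your proposal is correct and follows essentially the same route as the paper, which compresses the argument considerably (the paper's proof of (a) is one line citing Proposition~\ref{simple correspondence}(a), and (c) is one line citing (b) and Lemma~\ref{XY2}(a)). One small imprecision: for the vanishing case $\tau_n^{-\ell}Y=0$ in the $\simple_\ell\Gamma$ half of (a), the fact you ultimately need is that $\id(\soc\G Z)_\Gamma\le n$ for $Z\in\II(\MM)$, which comes from Theorem~\ref{basic}(a)(ii) (as in the proof of Lemma~\ref{beta}) rather than from Lemma~\ref{tau correspondence}(b).
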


\begin{proof}
(a) Immediate from Proposition \ref{simple correspondence}(a).

(b) We only show $\GG_{n+1}^1=\GG_{n+1}$.
By Proposition \ref{simple correspondence}(b), we have that a simple $\Gamma$-module $S$ belongs to $\GG_{n+1}^1$ if and only if it belongs to $\GG_{n+1}$.
Since $\GG_{n+1}^1$ and $\GG_{n+1}$ are Serre subcategories by Lemma \ref{XY2}(b), we have the assertion.

(c) By (b) and Lemma \ref{XY2}(a), we have an equivalence $\tau_{n+1}:\GG_{n+1}^1\to\HH_{n+1}^1$. This gives the desired equivalences.
\end{proof}

There exist idempotents $e^\ell$ and $e_\ell$ of $\Gamma$ such that
the factor algebras $\Gamma^{\ell}:=\Gamma/\langle e^\ell\rangle$ and
$\Gamma_{\ell}:=\Gamma/\langle e_\ell\rangle$ satisfy
\[\GG_{n+1}^\ell=\mod\Gamma^{\ell}\ \mbox{ and }\ \HH_{n+1}^\ell=\mod\Gamma_{\ell}.\]
We have surjections
\[\Gamma=\Gamma^0\to\Gamma^1\to\Gamma^2\to\cdots\ \mbox{ and }\ \Gamma=\Gamma_0\to\Gamma_{1}\to\Gamma_{2}\to\cdots\]
of algebras. We have the following description of $e_\ell$.

\begin{lemma}\label{e ell}
$\add(e_\ell M)=\add\{\tau_n^i(D\Lambda)\ |\ 0\le i<\ell\}$ holds for any $\ell\ge0$.
\end{lemma}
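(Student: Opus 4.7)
The plan is to identify the summands of $e_\ell M$ by passing through the Morita correspondence between idempotents of $\Gamma$, summands of $M$, and simple $\Gamma$-modules, then compute using Proposition~\ref{indecomposable in M}(b). Since $\Gamma=\End_\Lambda(M)$, the equivalence $\F:\MM\to\add\Gamma$ identifies indecomposable summands $X$ of $M$ with indecomposable projective $\Gamma$-modules $\F X$; write $S_X:=\top\F X$ for the corresponding simple. By Proposition~\ref{functorial}(b), $\G X\simeq\nu_\Gamma\F X$ is the indecomposable injective with socle $S_X$, i.e.\ $\soc\G X\simeq S_X$.

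Next I would translate the condition ``$X$ is a summand of $e_\ell M$'' into ``$S_X\notin\simple_\ell\Gamma$''. From $\Gamma_\ell=\Gamma/\langle e_\ell\rangle$ and $\HH_{n+1}^\ell=\mod\Gamma_\ell$, a simple $\Gamma$-module $S$ lies in $\simple_\ell\Gamma$ exactly when $S$ is a $\Gamma_\ell$-module, which happens iff $Se_\ell=0$. Because $Se_\ell=\Hom_\Gamma(e_\ell\Gamma,S)$ and $e_\ell\Gamma\simeq\F(e_\ell M)$, we have $S_Xe_\ell\ne 0$ iff $X$ is a direct summand of $e_\ell M$. Thus $X$ is a summand of $e_\ell M$ iff $S_X\notin\simple_\ell\Gamma$. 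Combining this with Lemma~\ref{XY l}(a), which characterises $\simple_\ell\Gamma$ as the socles $\soc\G X$ with $\tau_n^{-\ell}X\ne 0$, yields the key equivalence: for indecomposable $X\in\MM$,
\[
X\in\add(e_\ell M)\iff \tau_n^{-\ell}X=0.
\]

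Finally I would enumerate such $X$ using Proposition~\ref{indecomposable in M}(b), which writes each indecomposable $X\in\MM$ uniquely as $X\simeq\tau_n^iI$ with $I\in\II(\MM)$ indecomposable and $0\le i\le\ell_I$. Applying $\tau_n^-$ repeatedly and using that $\tau_n^-I=0$ for injective $I$, one sees $\tau_n^{-\ell}(\tau_n^iI)=\tau_n^{i-\ell}I$ when $i\ge\ell$ (nonzero for $i\le\ell_I$) and $\tau_n^{-\ell}(\tau_n^iI)=0$ whenever $i<\ell$. Hence the indecomposable summands of $e_\ell M$ are precisely the nonzero modules $\tau_n^iI$ for indecomposable $I\in\II(\MM)$ and $0\le i<\ell$, which is exactly $\add\{\tau_n^i(D\Lambda)\mid 0\le i<\ell\}$.

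The only subtle point is the Morita translation $S_Xe_\ell\ne 0\Leftrightarrow X\in\add(e_\ell M)$; the rest is essentially bookkeeping of the $\tau_n$-orbit structure already supplied by Proposition~\ref{indecomposable in M}. No deep extra input is needed, so this is mostly a matter of stringing together the identifications cleanly.
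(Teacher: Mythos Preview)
Your proof is correct and follows essentially the same route as the paper's: identify $X\in\add(e_\ell M)$ with $\soc\G X\notin\simple_\ell\Gamma$ via the definition of $e_\ell$, then invoke Lemma~\ref{XY l}(a) to translate this into $\tau_n^{-\ell}X=0$, and finally unpack the $\tau_n$-orbit description of $\MM$. The paper compresses your Morita bookkeeping and the final orbit enumeration into single sentences, but the argument is the same.
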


\begin{proof}
For an indecomposable object $X\in\MM$, we put $S:=\soc\G X$.
By definition of $e_\ell$, we have that $X\in\add(e_\ell M)$ if and only if $S\notin\simple_\ell\Gamma$.
By Lemma \ref{XY l}(a), we have that $S\notin\simple_\ell\Gamma$ if and only if
$\tau_n^{-\ell}X=0$ if and only if $X\in\add\{\tau_n^i(D\Lambda)\ |\ 0\le i<\ell\}$.
\end{proof}

Now we need the functorial monomorphism $\alpha^\ell:\tau_{n+1}^\ell\G\to\G\tau_n^\ell$ given in Proposition \ref{functorial}(f).
The following result generalizes Proposition \ref{functorial}(e).

\begin{lemma}\label{right approximation}
\begin{itemize}
\item[(a)] $\alpha^\ell_X:\tau_{n+1}^\ell\G X\to\G\tau_n^\ell X$ is a minimal right $\HH_{n+1}^\ell$-approximation of $\G\tau_n^\ell X$
for any $X\in\MM$ and $\ell\ge0$.
\item[(b)] We have $\tau_{n+1}^\ell\G=\Hom_\Gamma(\Gamma_{\ell},\G\tau_n^\ell-)$.
\end{itemize}
\end{lemma}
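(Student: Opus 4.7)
The plan is to prove part (a) by induction on $\ell$, and then deduce part (b) from it, noting that (a) and (b) are essentially equivalent. Specifically, for any $\Gamma$-module $V$, the submodule $\Hom_\Gamma(\Gamma_\ell,V)=\{v\in V\mid\langle e_\ell\rangle v=0\}$ is the largest $\HH_{n+1}^\ell$-submodule of $V$, because $\HH_{n+1}^\ell=\mod\Gamma_\ell$ is a Serre subcategory of $\mod\Gamma$; moreover the inclusion $\Hom_\Gamma(\Gamma_\ell,V)\hookrightarrow V$ is automatically a minimal right $\HH_{n+1}^\ell$-approximation (any map from an $\HH_{n+1}^\ell$-object has image in this largest submodule by the Serre property, and the inclusion is right minimal as a monomorphism). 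Applying this with $V=\G\tau_n^\ell X$ and comparing with (a) yields (b).

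For (a), I proceed by induction on $\ell$. The case $\ell=0$ is trivial, since $\alpha^0_X=\id_{\G X}$ and $\HH_{n+1}^0=\mod\Gamma$, and the case $\ell=1$ is exactly Proposition \ref{functorial}(e). For $\ell\ge 2$, I unfold the definition from Proposition \ref{functorial}(f) into the two-step factorization
\[\alpha^\ell_X \;=\; \alpha^{\ell-1}_{\tau_n X}\circ\tau_{n+1}^{\ell-1}\alpha_X\colon\ \tau_{n+1}^\ell\G X\xrightarrow{\tau_{n+1}^{\ell-1}\alpha_X}\tau_{n+1}^{\ell-1}\G\tau_n X\xrightarrow{\alpha^{\ell-1}_{\tau_n X}}\G\tau_n^\ell X.\]
By the induction hypothesis applied to the object $\tau_n X\in\MM$ at level $\ell-1$, the second arrow is a minimal right $\HH_{n+1}^{\ell-1}$-approximation, so $\tau_{n+1}^{\ell-1}\G\tau_n X$ is the largest $\HH_{n+1}^{\ell-1}$-submodule of the injective $\G\tau_n^\ell X$. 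Because $\HH_{n+1}^\ell\subseteq\HH_{n+1}^{\ell-1}$, the largest $\HH_{n+1}^\ell$-submodule of $\G\tau_n^\ell X$ is automatically contained in $\tau_{n+1}^{\ell-1}\G\tau_n X$, so the problem reduces to showing that $\tau_{n+1}^{\ell-1}\alpha_X$ identifies $\tau_{n+1}^\ell\G X$ with the largest $\HH_{n+1}^\ell$-submodule of $\tau_{n+1}^{\ell-1}\G\tau_n X$.

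The main obstacle is this last identification, which I plan to handle by transport along the chain of equivalences in Lemma \ref{XY l}(c). Concretely, the composite functor $\tau_{n+1}^{\ell-1}$ restricts to an equivalence between appropriate intersections of the Serre filtrations $\GG_{n+1}^\bullet$ and $\HH_{n+1}^\bullet$, and its quasi-inverse sends the pair $(\tau_{n+1}^{\ell-1}\G\tau_n X,\tau_{n+1}^{\ell-1}\alpha_X(\tau_{n+1}^\ell\G X))$ back to the pair $(\G\tau_n X,\alpha_X(\tau_{n+1}\G X))$. Since the equivalences in Lemma \ref{XY l}(c) preserve the Serre-subcategory structure, the "largest $\HH_{n+1}^\ell$-submodule" operation on the former side corresponds to the "largest $\HH_{n+1}^1$-submodule" operation on the latter; the identification then follows from the $\ell=1$ base case via Proposition \ref{functorial}(e) applied to $X$. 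The delicate verification is this compatibility between the transport $\tau_{n+1}^{\pm(\ell-1)}$ and the formation of the largest subcategory-submodule, which I expect to extract from the Serre-subcategory properties of $\GG_{n+1}$ and $\HH_{n+1}$ established in Lemma \ref{XY2}(b) together with the left-exactness of $\tau_{n+1}$ on the relevant subcategories.
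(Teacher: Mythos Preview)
Your overall strategy---induction on $\ell$ for (a), and the deduction of (b) from (a) via the identification of $\Hom_\Gamma(\Gamma_\ell,-)$ with the largest $\HH_{n+1}^\ell$-submodule---matches the paper exactly. The gap is in the second half of your inductive step.

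After using the inductive hypothesis on $\alpha^{\ell-1}_{\tau_nX}$, you are left with showing that $\tau_{n+1}^{\ell-1}\alpha_X$ exhibits $\tau_{n+1}^\ell\G X$ as the largest $\HH_{n+1}^\ell$-submodule of $\tau_{n+1}^{\ell-1}\G\tau_nX$. Your plan is to transport this back along $\tau_{n+1}^{-(\ell-1)}$ to the $\ell=1$ statement for $X$. But the claimed transport fails: the injective $\Gamma$-module $\G\tau_nX$ has, in general, composition factors outside $\simple^{\ell-1}\Gamma$, so $\G\tau_nX\notin\GG_{n+1}^{\ell-1}$ and the quasi-inverse of $\tau_{n+1}^{\ell-1}\colon\GG_{n+1}^{\ell-1}\to\HH_{n+1}^{\ell-1}$ applied to $\tau_{n+1}^{\ell-1}\G\tau_nX$ does \emph{not} return $\G\tau_nX$. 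Consequently the asserted correspondence between ``largest $\HH_{n+1}^\ell$-submodule'' upstairs and ``largest $\HH_{n+1}^1$-submodule of $\G\tau_nX$'' downstairs has no foundation, and Proposition~\ref{functorial}(e) for $X$ cannot be invoked.

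The paper circumvents this by reversing the factorization: write
\[
\alpha^\ell_X=\alpha_{\tau_n^{\ell-1}X}\circ\tau_{n+1}\alpha^{\ell-1}_X,
\]
take any $f\colon\tau_{n+1}^\ell Y\to\G\tau_n^\ell X$ with $Y\in\GG_{n+1}^\ell$, and first apply the $\ell=1$ case (Proposition~\ref{functorial}(e)) to the object $\tau_n^{\ell-1}X$ to factor $f$ through $\alpha_{\tau_n^{\ell-1}X}$, obtaining $g\colon\tau_{n+1}^\ell Y\to\tau_{n+1}\G\tau_n^{\ell-1}X$. The point is that \emph{now} both source and target of $g$ lie in $\HH_{n+1}$, since each is visibly in the image of $\tau_{n+1}$; hence the single-step equivalence $\tau_{n+1}\colon\GG_{n+1}\to\HH_{n+1}$ of Lemma~\ref{XY2}(a) yields $g=\tau_{n+1}h$ for some $h\colon\tau_{n+1}^{\ell-1}Y\to\G\tau_n^{\ell-1}X$, and the inductive hypothesis applied to $h$ finishes the argument. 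Peeling off one $\tau_{n+1}$ at the outer end (rather than $\ell-1$ at the inner end) is precisely what guarantees both ends of the morphism to be transported lie in the domain of the equivalence.
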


\begin{proof}
(a) Fix $\ell>0$ and assume that the assertion is true for $\ell-1$.
By Lemma \ref{XY l}(c), any object in $\HH_{n+1}^\ell$ can be written as $\tau_{n+1}^\ell Y$ with $Y\in\GG_{n+1}^\ell$.
Take any morphism $f:\tau_{n+1}^\ell Y\to\G\tau_n^\ell X$.
We write $\alpha_X^\ell$ as a composition
\[\tau_{n+1}^\ell\G X\xrightarrow{\tau_{n+1}\alpha^{\ell-1}_X}\tau_{n+1}\G\tau_n^{\ell-1}X\xrightarrow{\alpha_{\tau_n^{\ell-1}X}}\G\tau_n^\ell X.\]
By Proposition \ref{functorial}(e), there exists $g:\tau_{n+1}^\ell Y\to\tau_{n+1}\G\tau_n^{\ell-1}X$ such that $f=(\alpha_{\tau_n^{\ell-1}X})g$.
By Lemma \ref{XY l}(c), there exists $h:\tau_{n+1}^{\ell-1}Y\to\G\tau_n^{\ell-1}X$ such that $g=\tau_{n+1}h$.
By the inductive hypothesis, we have $\Im h\subset\tau_{n+1}^{\ell-1}\G X$.
Thus we have $\Im f\subset\tau_{n+1}^\ell\G X$.

(b) Since $\HH_{n+1}^\ell=\mod\Gamma_{\ell}$, the assertion follows immediately from (a). 
\end{proof}

We have the following description of the cone of $\Gamma$.

\begin{lemma}\label{tau_{n+1} calculation}
For $\ell\ge0$, we have $\tau_{n+1}^\ell(D\Gamma)\simeq D\Gamma_\ell$ as $\Gamma$-modules.
\end{lemma}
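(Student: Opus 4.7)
The plan is to express $\tau_{n+1}^\ell(D\Gamma)$ via Lemma \ref{right approximation}(b) and then identify it with $D\Gamma_\ell$ through a tensor--hom manipulation. Since $\G \colon \MM \to \add D\Gamma$ is an equivalence and $M$ is a basic additive generator of $\MM$, we have $\G M \simeq D\Gamma$ as right $\Gamma$-modules. Applying Lemma \ref{right approximation}(b) with $X = M$ then gives
$$\tau_{n+1}^\ell(D\Gamma) \simeq \tau_{n+1}^\ell \G M \simeq \Hom_\Gamma(\Gamma_\ell, \G\tau_n^\ell M),$$
so the task reduces to identifying the right-hand side with $D\Gamma_\ell$.

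The key intermediate step is to show $\tau_n^\ell M \simeq (1-e_\ell) M$ as $\Lambda$-modules. By Proposition \ref{indecomposable in M}(b), each indecomposable summand of the basic module $M$ has a unique description $\tau_n^j I$ with $I \in \II(\MM)$ indecomposable and $0 \le j \le \ell_I$, each appearing once. Then $\tau_n^\ell M$ has non-zero summands precisely $\tau_n^k I$ with $\ell \le k \le \ell_I$, each with multiplicity one, and Lemma \ref{e ell} provides exactly the same description for $(1-e_\ell) M$, establishing the isomorphism. Applying $\G$ and using the standard identification of $\Hom_\Lambda((1-e_\ell)M, M)$ with the left ideal $\Gamma(1-e_\ell) = \{g \in \Gamma \mid g e_\ell = 0\}$ (via extending maps by zero on $e_\ell M$), I would obtain $\G\tau_n^\ell M \simeq D(\Gamma(1-e_\ell))$ as right $\Gamma$-modules.

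The final computation is via tensor--hom adjunction:
$$\Hom_\Gamma(\Gamma_\ell, D(\Gamma(1-e_\ell))) \simeq D(\Gamma_\ell \otimes_\Gamma \Gamma(1-e_\ell)) \simeq D(\Gamma_\ell (1-e_\ell)).$$
Since $e_\ell \in \Gamma e_\ell \Gamma$ vanishes in $\Gamma_\ell$, we have $\Gamma_\ell(1-e_\ell) = \Gamma_\ell$, yielding $D\Gamma_\ell$. The main obstacle I anticipate is not the ideas themselves but the careful bookkeeping of left/right $\Gamma$-module structures at each step: in particular, confirming that $\G = D\Hom_\Lambda(-, M)$ sends the inclusion $(1-e_\ell)M \hookrightarrow M$ to the correct right $\Gamma$-module and that the tensor--hom adjunction is $\Gamma$-equivariant with respect to the bimodule structure on $\Gamma_\ell$ needed to invoke Lemma \ref{right approximation}(b) at the outset.
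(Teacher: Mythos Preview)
Your argument is correct and very close to the paper's. Both rest on Lemma~\ref{right approximation} together with the identification $\tau_n^\ell M \simeq (1-e_\ell)M$; you invoke part~(b) of that lemma and finish by a tensor--hom computation, whereas the paper invokes part~(a) and concludes by comparing two minimal right $\HH_{n+1}^\ell$-approximations of $D\Gamma$ (namely the obvious inclusion $D\Gamma_\ell \hookrightarrow D\Gamma$ and the inclusion $\tau_{n+1}^\ell\G M \hookrightarrow \G X \oplus \G\tau_n^\ell M \simeq D\Gamma$, after checking that $\Hom_\Gamma(\HH_{n+1}^\ell, \G X) = 0$ for the complement $X$ of $\tau_n^\ell M$ in $M$).
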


\begin{proof}
Clearly the inclusion $D\Gamma_\ell\to D\Gamma$ is a minimal right $\HH_{n+1}^\ell$-approximation of the $\Gamma$-module $D\Gamma$.

On the other hand, $\tau_{n+1}^\ell\G M\subset\G\tau_n^\ell M$ is a right $\HH_{n+1}^\ell$-approximation by Lemma \ref{right approximation}.
Since we assume that $M$ is basic, we have $M\simeq X\oplus\tau_n^\ell M$ as $\Lambda$-modules,
where $X$ and $\tau_n^\ell M$ have no non-zero direct summands in common.
Then $\tau_n^{-\ell}X=0$ holds.
Since $\soc\G X\notin\HH_{n+1}^\ell$ holds by Lemma \ref{XY l}(a), we have $\Hom_\Gamma(\HH_{n+1}^\ell,\G X)=0$.
Consequently, the minimal right $\HH_{n+1}^\ell$-approximation of $D\Gamma\simeq(\G X)\oplus(\G\tau_n^\ell M)$
is given by $\tau_{n+1}^\ell\G M=\tau_{n+1}^\ell(D\Gamma)$.
\end{proof}

We have the following description of the cone of $\Gamma$.

\begin{proposition}\label{structure}
The cone of $\Gamma$ is Morita equivalent to
\[\def\arraystretch{.5}\left(\begin{array}{cccc}
{\Gamma^0}&{0}&{0}&{\cdots}\\
{\Gamma^1}&{\Gamma^1}&{0}&{\cdots}\\
{\Gamma^2}&{\Gamma^2}&{\Gamma^2}&{\cdots}\\
{\vdots}&{\vdots}&{\vdots}&{\ddots}
\end{array}\right)
\ \mbox{ and }\ 
\def\arraystretch{.5}\left(\begin{array}{cccc}
{\Gamma_0}&{\Gamma_{1}}&{\Gamma_{2}}&{\cdots}\\
{0}&{\Gamma_{1}}&{\Gamma_{2}}&{\cdots}\\
{0}&{0}&{\Gamma_{2}}&{\cdots}\\
{\vdots}&{\vdots}&{\vdots}&{\ddots}
\end{array}\right).\]
\end{proposition}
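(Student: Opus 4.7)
The strategy is to exhibit an explicit additive generator $N$ of $\NN$ and to compute $\End_\Gamma(N)$ block by block. Combining Corollary \ref{indecomposable} (classification of indecomposables in $\NN$), Proposition \ref{key} (vanishing of $\tau_{n+1}^\ell\G X$ precisely when $\ell>\ell_X$), and Lemma \ref{tau_{n+1} calculation}, I obtain
\[
N \;\simeq\; \bigoplus_{\ell=0}^{L}\tau_{n+1}^\ell(D\Gamma) \;\simeq\; \bigoplus_{\ell=0}^{L}D\Gamma_\ell,
\]
where $L:=\max\{\ell_I : I\in\II(\MM)\text{ indecomposable}\}$; the first identification holds because $\tau_{n+1}^\ell(D\Gamma)=\tau_{n+1}^\ell\G M$ collects exactly the nonzero images $\tau_{n+1}^\ell\G X$ with $X\in\MM$ indecomposable and $\ell\le\ell_X$, and as $(\ell,X)$ varies these exhaust the indecomposables of $\NN$.

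For the second matrix description I would compute $\Hom_\Gamma(D\Gamma_j,D\Gamma_i)\simeq\Hom_{\Gamma^{\op}}(\Gamma_i,\Gamma_j)$ via $k$-duality, i.e.\ as left $\Gamma$-linear maps between factor algebras. Lemma \ref{e ell} (and its proof) yields the chain $\langle e_0\rangle\subseteq\langle e_1\rangle\subseteq\cdots$ of two-sided ideals, so for $j\ge i$ the image of $\langle e_i\rangle$ in $\Gamma_j$ is zero, every $y\in\Gamma_j$ defines a well-defined morphism $\bar x\mapsto xy$, and one gets $\Hom=\Gamma_j$; for $j<i$ the vanishing is immediate from Theorem \ref{n+1-rigid}(e) applied to $\tau_{n+1}^j(D\Gamma)$ and $\tau_{n+1}^i(D\Gamma)$. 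The composition of two such morphisms pulls back to the natural multiplication $\Gamma_j\otimes_\Gamma\Gamma_k\to\Gamma_k$ obtained via the surjection $\Gamma_j\twoheadrightarrow\Gamma_k$ (for $k\ge j$), which is precisely matrix multiplication in the stated upper-triangular algebra with entries $\Gamma_\ell$.

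For the first matrix I would run the mirror argument on the projective side: the dual additive generator $\bigoplus_\ell\Gamma^\ell$, supplied by the dual of Lemma \ref{tau_{n+1} calculation} (whose proof is symmetric, using $\GG_{n+1}^\ell$ in place of $\HH_{n+1}^\ell$ and the chain $\langle e^0\rangle\subseteq\langle e^1\rangle\subseteq\cdots$), together with the right-module version of Theorem \ref{n+1-rigid}(e), yields the lower-triangular matrix algebra with entries $\Gamma^\ell$. Alternatively, via the equivalence $\tau_{n+1}^\ell\colon\GG_{n+1}^\ell\xrightarrow{\sim}\HH_{n+1}^\ell$ of Lemma \ref{XY l}(c), one can recognise the first matrix directly as a Morita-equivalent presentation of the second.

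The principal obstacle is the mirror argument for the first matrix, since the required dual of Lemma \ref{tau_{n+1} calculation} is not explicitly stated in the excerpt and must be derived; a secondary but delicate point is verifying that the composition law in $\End_\Gamma\bigl(\bigoplus_\ell D\Gamma_\ell\bigr)$ matches matrix multiplication exactly, which depends on careful bookkeeping of left/right module conventions through the $k$-duality.
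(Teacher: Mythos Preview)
Your approach is essentially the same as the paper's: take $N=\bigoplus_{\ell\ge0}\tau_{n+1}^\ell(D\Gamma)$ as an additive generator of $\NN$, use Lemma~\ref{tau_{n+1} calculation} to identify each summand with $D\Gamma_\ell$, compute $\Hom_\Gamma(D\Gamma_i,D\Gamma_j)\simeq\Hom_{\Gamma^{\op}}(\Gamma_j,\Gamma_i)=\Gamma_i$ for $i\ge j$ (via the chain of ideals $\langle e_0\rangle\subseteq\langle e_1\rangle\subseteq\cdots$), and invoke Theorem~\ref{n+1-rigid}(e) for the vanishing when $i<j$. The paper does exactly this, but more tersely: it does not spell out the composition law, and after computing the second matrix it stops, leaving the first matrix implicit. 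Your observation that the first presentation requires either a dual argument (a version of Lemma~\ref{tau_{n+1} calculation} for the $\Gamma^\ell$) or passage through the equivalences $\tau_{n+1}^\ell:\GG_{n+1}^\ell\xrightarrow{\sim}\HH_{n+1}^\ell$ of Lemma~\ref{XY l}(c) is accurate; the paper does not supply this step explicitly either, so you are not missing anything that the paper provides.
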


\begin{proof}
We only have to show that $\End_\Gamma(\bigoplus_{\ell\ge0}\tau_{n+1}^\ell(D\Gamma))$ has the desired form.
By Lemma \ref{tau_{n+1} calculation}, we have
\[\Hom_\Gamma(\tau_{n+1}^i(D\Gamma),\tau_{n+1}^j(D\Gamma))=\Hom_\Gamma(D\Gamma_i,D\Gamma_j)=\Hom_{\Gamma^{\op}}(\Gamma_j,\Gamma_i)=\Gamma_i\]
for any $i\ge j$.
By Theorem \ref{n+1-rigid}(e), we have $\Hom_\Gamma(\tau_{n+1}^i(D\Gamma),\tau_{n+1}^j(D\Gamma))=0$ for any $i<j$. Thus the assertion follows.
\end{proof}

The following crucial result gives a sufficient condition for $\Lambda$ such that $\Gamma$ is absolutely $(n+1)$-complete.

\begin{lemma}\label{criterion for whole module category}
Let $\Lambda$ be an absolutely $n$-complete algebra. Assume that there exist surjections
\[\Lambda=\Lambda_0\to\Lambda_{1}\to\Lambda_{2}\to\cdots\]
of algebras such that $\tau_n^\ell(D\Lambda)\simeq D\Lambda_\ell$ as $\Lambda$-modules for any $\ell\ge0$.
Then the cone $\Gamma$ of $\Lambda$ is absolutely $(n+1)$-complete.
\end{lemma}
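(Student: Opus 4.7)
The plan reduces the statement to showing $\Gamma\in\NN$. By Theorem \ref{main}, $\Gamma$ is $(n+1)$-complete, so $\gl\Gamma\le n+1$ by Lemma \ref{tau and ext 2}. Proposition \ref{1-complete}(b) then reformulates absolute $(n+1)$-completeness as the existence of an absolute $(n+1)$-cluster tilting object in $\mod\Gamma$. Applying the last lemma of Section \ref{section: preliminaries} (with $n$ replaced by $n+1$) and invoking the $(n+1)$-rigidity of $\NN$ from Theorem \ref{n+1-rigid}(a), the statement reduces to $\Gamma\in\NN$.

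By Lemma \ref{tau_{n+1} calculation}, $\NN=\add\{D\Gamma_\ell:\ell\ge0\}$, and every indecomposable projective $\Gamma$-module has the form $\F X$ for some indecomposable $X\in\MM$. By Proposition \ref{indecomposable in M}(b), each such $X$ can be written uniquely as $X=\tau_n^iI$ with $I\in\II(\MM)$ indecomposable and $0\le i\le\ell_I$. I would aim to show that $\F\tau_n^iI$ is an indecomposable summand of $D\Gamma_i$. First, $\F\tau_n^iI$ is annihilated by $e_i$ and hence a $\Gamma_i$-module: since the right $\Gamma$-action on $\F X=\Hom_\Lambda(M,X)$ is by precomposition, $\F X\cdot e_i=0$ is equivalent to $\Hom_\Lambda(e_iM,X)=0$; this follows from Lemma \ref{e ell} ($\add(e_iM)=\add\{\tau_n^a(D\Lambda):a<i\}$) and Lemma \ref{tau_n on M}(e) ($\Hom_\Lambda(\tau_n^a(D\Lambda),\tau_n^iI)=0$ for $a<i$). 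Second, $\F\tau_n^iI$ is injective as a $\Gamma_i$-module and thus a summand of its injective cogenerator $D\Gamma_i$.

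The second point is where the hypothesis does the real work. Using $\tau_n^\ell(D\Lambda)\simeq D\Lambda_\ell$ and the surjections $\Lambda\to\Lambda_\ell$, one applies the method of Proposition \ref{structure} a level earlier to identify $\Gamma\simeq\End_\Lambda(\bigoplus_\ell D\Lambda_\ell)$ with a lower-triangular matrix algebra over $\Lambda_0,\Lambda_1,\ldots$. Under this description $\Gamma_i$ becomes the analogous truncated matrix algebra, and its indecomposable projective-injective modules match bijectively with $\F\tau_n^iI$ for $I\in\II(\MM)$ with $\ell_I\ge i$; the counts on each side agree via Corollary \ref{indecomposable}(a), which parametrizes the indecomposable summands of $D\Gamma_i=\tau_{n+1}^i(D\Gamma)$ as $\tau_{n+1}^i\G\tau_n^kI'$ for indecomposable $I'\in\II(\MM)$ and $0\le k\le\ell_{I'}-i$.

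The main obstacle is producing the explicit matching between the projective $\Gamma$-modules $\F\tau_n^iI$ and the indecomposable summands $\tau_{n+1}^i\G Y$ of $D\Gamma_i$: the correspondence is not canonical (for instance at $i=0$ it reduces to $\F I\simeq\G\nu_\Lambda^{-1}(I)$ via Proposition \ref{functorial}(a), while for $i\ge 1$ the relevant $Y$ is more subtle), and one has to exploit the matrix-algebra description together with Proposition \ref{functorial}(f) and Lemma \ref{right approximation}(a) to trace the $\Lambda_\ell$-structure through and identify each $\tau_{n+1}^i\G Y$ with a projective $\Gamma$-module of the predicted form.
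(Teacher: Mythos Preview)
Your reduction to $\Gamma\in\NN$ is correct and essentially dual to the paper's approach, which instead shows $\PP(\NN)\subset\add\Gamma$ directly; both reductions are completed by the same computation, which you are missing.

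The obstacle you identify in your last two paragraphs is real but has a much simpler resolution than the matrix-algebra/counting approach you propose. The paper first records three elementary consequences of the hypothesis $\tau_n^i(D\Lambda)\simeq D\Lambda_i$ (using Lemmas \ref{e ell} and \ref{tau_n on M}(e), one of which you already rediscovered):
\begin{itemize}
\item $(1-e_\ell)M$ is a $\Lambda_\ell$-module;
\item $\Hom_\Lambda(e_\ell M,D\Lambda_\ell)=0$;
\item $e_\ell M=\langle e_\ell\rangle M$.
\end{itemize}
These feed into a direct chain of isomorphisms
\[
\Hom_\Gamma(\Gamma_\ell,\G\Lambda)=\Hom_\Gamma(\Gamma_\ell,DM)=D(\Gamma_\ell\otimes_\Gamma M)=D(M/\langle e_\ell\rangle M)=D((1-e_\ell)M)=\Hom_\Lambda(M,D\Lambda_\ell)=\F\tau_n^\ell(D\Lambda).
\]
This is exactly the identity you need: since $\G\Lambda$ is a summand of $\G M=D\Gamma$, the left side is a summand of $\Hom_\Gamma(\Gamma_\ell,D\Gamma)=D\Gamma_\ell$, so every indecomposable projective $\Gamma$-module $\F\tau_n^\ell I$ lies in $\add D\Gamma_\ell\subset\NN$. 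The paper reads the same identity in the other direction: combined with Lemma \ref{right approximation}(b), it shows each indecomposable $\tau_{n+1}^\ell\G X\in\PP(\NN)$ (where $P:=\tau_n^\ell X\in\PP(\MM)=\add\Lambda$ by absolute $n$-completeness) is a summand of the projective $\Gamma$-module $\F\tau_n^\ell(D\Lambda)$. Either way, no explicit matching of indecomposables or matrix-algebra bookkeeping is required.
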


\begin{proof}
(i) First we shall show the following assertions.
\begin{itemize}
\item $(1-e_\ell)M$ is a $\Lambda_{\ell}$-module,
\item $\Hom_\Lambda(e_\ell M,D\Lambda_{\ell})=0$,
\item $e_\ell M=\langle e_\ell\rangle M$.
\end{itemize}

Since $\tau_n^i(D\Lambda)\simeq D\Lambda_i$, the first assertion follows from Lemma \ref{e ell}.
By Lemmas \ref{tau_n on M}(e) and \ref{e ell} again, we have $\Hom_\Lambda(e_\ell M,D\Lambda_{\ell})=0$
and $\Hom_\Lambda(e_\ell M,(1-e_\ell)M)=0$.
This implies that $e_\ell M$ is a sub $\Gamma^{\op}$-module of $M$, and we have $e_\ell M=\langle e_\ell\rangle M$.

(ii) Next we shall show the assertion.

We only have to show that $\PP(\NN)=\add\Gamma$.
By Corollary \ref{indecomposable}, any indecomposable object in $\PP(\NN)$ can be written as $\tau_{n+1}^{\ell}\G X$
for some indecomposable object $X\in\MM$ such that $P:=\tau_n^\ell X$ belongs to $\PP(\MM)$.
Since $\Lambda$ is absolutely $n$-complete, $P$ is a projective $\Lambda$-module.
By Lemma \ref{right approximation}(b), we have $\tau_{n+1}^{\ell}\G X=\Hom_\Gamma(\Gamma_{\ell},\G P)$, which is a direct summand of
\[\Hom_\Gamma(\Gamma_{\ell},\G\Lambda)=\Hom_\Gamma(\Gamma_{\ell},DM)=D(\Gamma_{\ell}\otimes_\Gamma M)=D(M/\langle e_\ell\rangle M).\]
Using observations in (i), we have
\[D(M/\langle e_\ell\rangle M)=D(M/e_\ell M)=D((1-e_\ell)M)=\Hom_{\Lambda_\ell}((1-e_\ell)M,D\Lambda_{\ell})=\Hom_\Lambda(M,D\Lambda_{\ell}).\]
This is a projective $\Gamma$-module, and we have the assertion.
\end{proof}

Now we are ready to prove Theorem \ref{recall}.

By Lemmas \ref{tau_{n+1} calculation} and \ref{criterion for whole module category},
we only have to show that the $m\times m$ triangular matrix ring $\Lambda:=T_m^{(1)}(F)$ satisfies
the condition for $n=1$ in Lemma \ref{criterion for whole module category}.
Let $\{f_1,\cdots,f_m\}$ be a complete set of orthogonal primitive idempotents of $\Lambda$ such that
\[f_iJ_{\Lambda}\simeq f_{i+1}\Lambda\]
for any $1\le i<m$ as $\Lambda$-modules. Put
\[\Lambda_{\ell}:=\Lambda/\langle f_1+\cdots+f_\ell\rangle.\]
Then one can easily check that
\[\tau^\ell(D\Lambda)\simeq T_{m-\ell}^{(1)}(F)\simeq D\Lambda_\ell\]
holds as $\Lambda$-modules.
Thus $\Lambda$ satisfies the condition for $n=1$ in Lemma \ref{criterion for whole module category}.
\qed

\section{Absolute $n$-cluster tilting subcategories}\label{section: Absolute}

In this section we shall study algebras with absolute $n$-cluster tilting objects and prove Theorem \ref{iterated Auslander} as an application.
Let us start with the following easy observations.

\begin{lemma}\label{easy gl.dim}
Let $\Lambda$ be a finite dimensional algebra with an absolute $n$-cluster tilting object $M$ ($n\ge1$).
\begin{itemize}
\item[(a)] $\Hom_\Lambda(M,I)$ is a projective-injective $\End_\Lambda(M)$-module for any injective $\Lambda$-module $I$.
\item[(b)] We have $\gl\Lambda=\id M_\Lambda$.
\item[(c)] For any indecomposable direct summand $X$ of $M$, we have either $X$ is projective or $\pd X_\Lambda\ge n$.
\end{itemize}
\end{lemma}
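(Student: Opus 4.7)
\textbf{Proof plan for Lemma~\ref{easy gl.dim}.} The key input is that $M$ is simultaneously a generator and a cogenerator of $\mod\Lambda$. Indeed, since $\add M$ is an absolute $n$-cluster tilting subcategory, it contains both $\Lambda$ and $D\Lambda$, and in particular every injective $\Lambda$-module lies in $\add M$. Throughout I will write $\Gamma:=\End_\Lambda(M)$ and use the standard equivalence $\Hom_\Lambda(M,-)\colon \add M\xrightarrow{\sim}\pr\Gamma$.

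For (a), let $I$ be an injective $\Lambda$-module. Since $I\in\add M$, the module $\Hom_\Lambda(M,I)$ is a direct summand of a finite direct sum of copies of $\Gamma=\Hom_\Lambda(M,M)$, hence projective. To check injectivity, I would write $I=\nu_\Lambda Q$ for some projective $Q\in\add\Lambda$ and use the chain of natural isomorphisms
\[
\Hom_\Lambda(M,\nu_\Lambda Q)=\Hom_\Lambda\bigl(M,D\Hom_\Lambda(Q,\Lambda)\bigr)\simeq D\bigl(M\otimes_\Lambda\Hom_\Lambda(Q,\Lambda)\bigr)\simeq D\Hom_\Lambda(Q,M),
\]
where the last identification uses that $Q$ is finitely generated projective. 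Since the equivalence $\add M\simeq\pr\Gamma$ identifies $Q$ with $\Hom_\Lambda(M,Q)$, one checks that $D\Hom_\Lambda(Q,M)\simeq\nu_\Gamma\Hom_\Lambda(M,Q)$, which is the injective $\Gamma$-module associated to the projective $\Hom_\Lambda(M,Q)$. Hence $\Hom_\Lambda(M,I)$ is projective-injective.

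For (b), I would invoke the standard fact that $\gl\Lambda=\id\Lambda_\Lambda$ for any finite-dimensional algebra. Since $\Lambda\in\add M$, this yields $\gl\Lambda=\id\Lambda_\Lambda\le\id M_\Lambda$. The opposite inequality $\id M_\Lambda\le\gl\Lambda$ is trivial, so equality holds. For (c), let $X$ be an indecomposable summand of $M$ which is not projective. By Proposition~\ref{n-AR translation}(b) applied to the $n$-cluster tilting subcategory $\add M$, the object $\tau_n X$ is a non-zero indecomposable in $\add M$. Since $\tau_nX=0$ if and only if $\pd X_\Lambda<n$ (as recalled just after the definition of $\tau_n$), we conclude $\pd X_\Lambda\ge n$.

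The main point requiring care is the injectivity in part (a), where one must correctly handle the bimodule structures in the Nakayama identifications; the remaining assertions (b) and (c) are essentially formal consequences of $M$ being a generator-cogenerator and of Proposition~\ref{n-AR translation}.
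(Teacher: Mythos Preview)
Your arguments for (a) and (c) are correct and match the paper's reasoning. For (a) the paper simply writes $\Hom_\Lambda(M,I)=D\Hom_\Lambda(\nu^-I,M)$, which is exactly your Nakayama computation; for (c) the paper phrases it as ``immediate from $\Ext^i_\Lambda(X,\Lambda)=0$ for $0<i<n$'', which is equivalent to your observation that $\tau_nX\neq0$.

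Your argument for (b), however, has a genuine gap. The claimed ``standard fact'' that $\gl\Lambda=\id\Lambda_\Lambda$ for any finite-dimensional algebra is false: any self-injective algebra that is not semisimple (for example $\Lambda=k[x]/(x^2)$) has $\id\Lambda_\Lambda=0$ but $\gl\Lambda=\infty$. Since the lemma does not assume $\gl\Lambda<\infty$, and since algebras of infinite global dimension can possess absolute $n$-cluster tilting objects (e.g.\ certain self-injective algebras), you cannot bypass this case. What you actually need is the inequality $\gl\Lambda\le\id M_\Lambda$, and your chain $\gl\Lambda=\id\Lambda_\Lambda\le\id M_\Lambda$ does not establish it.

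The paper instead uses the fact (from \cite[Prop.~2.4.1]{I4}) that every $X\in\mod\Lambda$ admits a finite $\add M$-resolution
\[
0\to M_n\to\cdots\to M_1\to X\to0.
\]
A standard dimension-shifting argument along this resolution (using that $\Ext^i_\Lambda(-,M_j)$ vanishes for $i>\id M_\Lambda$) yields $\id X_\Lambda\le\id M_\Lambda$ for every $X$, hence $\gl\Lambda\le\id M_\Lambda$. This is the missing ingredient in your proof of (b).
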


\begin{proof}
(a) $\Hom_\Lambda(M,I)$ is projective by $I\in\add M$. It is injective by $\Hom_\Lambda(M,I)=D\Hom_\Lambda(\nu^-I,M)$ and $\nu^-I\in\add M$.

(b) Since any $X\in\mod\Lambda$ has an exact sequence
\[0\to M_n\to\cdots\to M_1\to X\to 0\]
with $M_i\in\add M$ by \cite[Prop. 2.4.1(2-$\ell$)]{I4}, we have that $\id X_\Lambda\le\id M_\Lambda$.

(c) Immediate from $\Ext^i_\Lambda(X,\Lambda)=0$ for any $0<i<n$.
\end{proof}

Now we prove the following key result.

\begin{proposition}\label{gl.dim n}
Let $\Lambda$ be a finite dimensional algebra with an absolute $n$-cluster tilting object $M$ ($n\ge1$). Let $\Gamma=\End_\Lambda(M)$.
Then $\Ext^i_\Gamma(D\Gamma,\Gamma)=0$ for any $0<i\le n$ if and only if $\gl\Lambda\le n$.
\end{proposition}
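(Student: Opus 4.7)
The plan is to relate the minimal injective resolution of $M$ in $\mod\Lambda$ with that of $\Gamma$ in $\mod\Gamma$ via the equivalence $\F:\add M\to\add\Gamma$ from Proposition~\ref{functorial}. The key identity I will use is
\[
\Hom_\Gamma(D\Gamma,\F J)\simeq \F\nu_\Lambda^- J\quad\text{for }J\in\inj\Lambda,
\]
which follows by combining Proposition~\ref{functorial}(a) ($\G\simeq \F\nu_\Lambda$ on $\add\Lambda$) and (b) ($\G\simeq\nu_\Gamma\F$ on $\MM$): one obtains $\F J\simeq \G\nu_\Lambda^- J\simeq \nu_\Gamma\F\nu_\Lambda^- J$, and $\Hom_\Gamma(D\Gamma,-)$ is quasi-inverse to $\nu_\Gamma$ on projective-injective $\Gamma$-modules.

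For $(\Leftarrow)$, suppose $\gl\Lambda\le n$. Lemma~\ref{easy gl.dim}(b) gives $\id M_\Lambda\le n$, so take an injective resolution $0\to M\to J_0\to\cdots\to J_n\to 0$ with $J_i\in\inj\Lambda\subseteq\add M$. Applying Lemma~\ref{hom and ext^n}(a) with $W=X=M$ (using $n$-rigidity of $M$, and $\Ext^n_\Lambda(M,J_j)=0$ by injectivity of $J_j$) yields the injective resolution
\[
0\to\Gamma\to\F J_0\to\cdots\to\F J_n\to\Ext^n_\Lambda(M,M)\to 0
\]
of $\Gamma$, with each $\F J_i$ projective-injective by Lemma~\ref{easy gl.dim}(a). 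Apply $\Hom_\Gamma(D\Gamma,-)$: the key identity above transforms this into the complex $\F\nu_\Lambda^- J_0\to\cdots\to\F\nu_\Lambda^- J_n\to\Hom_\Gamma(D\Gamma,\Ext^n_\Lambda(M,M))\to 0$ whose cohomology computes $\Ext^i_\Gamma(D\Gamma,\Gamma)$. I identify the underlying complex $\nu_\Lambda^- J_\bullet$ in $\mod\Lambda$ with $\Hom_{\Lambda^{\op}}(-,\Lambda)$ applied to the projective resolution $0\to DJ_n\to\cdots\to DJ_0\to DM\to 0$ of $DM$ in $\mod\Lambda^{\op}$ (obtained by dualizing the injective resolution of $M$); its cohomology is therefore $\Ext^i_{\Lambda^{\op}}(DM,\Lambda)$, which vanishes for $0<i<n$ by $n$-rigidity of $DM$ together with $\Lambda\in\add DM$, equals $\nu_\Lambda^- M$ at $i=0$, and equals $\tau_n^- M$ at $i=n$. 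Since each $\nu_\Lambda^- J_i\in\add\Lambda\subseteq\add M$, rigidity gives $R^j\F(\nu_\Lambda^- J_i)=0$ for $0<j<n$, so a hyper-cohomology (or ladder-of-short-exact-sequences) argument transfers the vanishing of $\Ext^i_{\Lambda^{\op}}(DM,\Lambda)$ in degrees $0<i<n$ to $\Ext^i_\Gamma(D\Gamma,\Gamma)=0$ for $0<i<n$; the case $i=n$ then follows from the identification $\Ext^n_\Lambda(M,M)\simeq D\F\tau_n M$ (Proposition~\ref{functorial}(c)), whose contribution cancels the degree-$n$ cohomology.

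For $(\Rightarrow)$, I reverse the construction. By Theorem~\ref{Auslander correspondence} the minimal injective resolution of $\Gamma$ has projective-injective terms $I_0,\ldots,I_n$, and projective-injective $\Gamma$-modules are exactly $\F(\inj\Lambda)$ (Lemma~\ref{easy gl.dim}(a) and its converse via Proposition~\ref{functorial}(a)); write $I_i=\F J_i$ with $J_i\in\inj\Lambda$. Full faithfulness of $\F:\add M\to\add\Gamma$ lifts the initial segment of the resolution to a complex $M\to J_0\to\cdots\to J_n$ in $\mod\Lambda$, and the hypothesis $\Ext^i_\Gamma(D\Gamma,\Gamma)=0$ for $0<i\le n$, interpreted through the same identifications in reverse, forces this $\Lambda$-complex to be exact, giving an injective resolution of $M$ of length $\le n$; hence $\id M\le n$, so $\gl\Lambda\le n$ by Lemma~\ref{easy gl.dim}(b). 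The main obstacle, in both directions, is the delicate interaction of the degree-$n$ cohomology of $\F\nu_\Lambda^- J_\bullet$ with the tail term $\Hom_\Gamma(D\Gamma,\Ext^n_\Lambda(M,M))$: because $\nu_\Lambda^- J_i\in\add\Lambda$ fails to be $\F$-acyclic at degree $n$, the hyper-cohomology bookkeeping must carefully absorb the degree-$n$ contributions into the tail term in $(\Leftarrow)$, and mirror this in reverse to conclude exactness of the lifted $\Lambda$-complex in $(\Rightarrow)$.
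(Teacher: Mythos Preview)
Your proposal has a genuine gap at the heart of both directions: the ``hyper-cohomology'' step is only asserted, not carried out, and it does not go through as easily as you suggest.

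Concretely, in $(\Leftarrow)$ you reduce to showing that the complex $\F\nu_\Lambda^- J_0\to\cdots\to\F\nu_\Lambda^- J_n\to\Hom_\Gamma(D\Gamma,\Ext^n_\Lambda(M,M))$ is exact in degrees $1,\ldots,n$. Breaking $\nu_\Lambda^- J_\bullet$ into short exact sequences with kernels $L_i$ (so $L_0=\nu_\Lambda^- M$), exactness at position $i$ is equivalent to $\Ext^1_\Lambda(M,L_{i-1})=0$; dimension-shifting then reduces all of these to the single condition $\Ext^j_\Lambda(M,\nu_\Lambda^- M)=0$ for $1\le j\le n$. But $\nu_\Lambda^- M$ is not known to lie in $\add M$, so $n$-rigidity does not apply and this vanishing is not established. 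Your spectral-sequence sketch hits exactly this wall on the $E_2$-page of the second spectral sequence: for $0<i<n$ the abutment is $\Ext^i_\Lambda(M,\nu_\Lambda^- M)$, which you have not shown to vanish. The degree-$n$ ``cancellation'' is likewise only asserted; you would need to identify the last map explicitly with $\F$ applied to the canonical surjection $\nu_\Lambda^- J_n\to\tau_n^- M$, and then still control the interaction with the nonvanishing $R^n\F$ on projectives. Your own final paragraph flags this as the ``main obstacle'' --- it is, and it remains open in your write-up. The $(\Rightarrow)$ direction is only the phrase ``reverse the construction'' and inherits the same gap.

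The paper avoids this difficulty by a different route: rather than applying $\Hom_\Gamma(D\Gamma,-)$ to an injective resolution of $\Gamma$, it applies $\Hom_\Gamma(-,\Gamma)$ to the projective resolution $0\to\F M\to\F I_0\to\cdots\to\F I_n\to C\to 0$ of the cokernel $C$, and observes (via Yoneda) that this is literally $\Hom_\Lambda(-,M)$ applied to the injective resolution of $M$. One is then analysing $\Hom_\Lambda(I_n,M)\to\cdots\to\Hom_\Lambda(I_0,M)\to\Hom_\Lambda(M,M)$, whose exactness can be checked degree by degree using only $\Ext^i_\Lambda(D\Lambda,M)=0$ for $0<i<n$, a single application of Auslander--Reiten duality at the bottom degree, and the $n$-cluster tilting property to recognise $\Omega^{-n-1}M\in\add M$. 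This yields the clean equivalence with $f_n$ being split epi, i.e.\ $\id M_\Lambda\le n$, i.e.\ $\gl\Lambda\le n$. The contravariant set-up is what makes the argument close; your covariant set-up introduces the problematic module $\nu_\Lambda^- M$ that has no reason to be $\F$-acyclic.
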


\begin{proof}
Take an injective resolution
\begin{eqnarray}\label{injective resolution of M}
0\to M \to I_0\stackrel{f_0}{\longrightarrow}
\cdots\stackrel{f_{n-1}}{\longrightarrow}I_n\stackrel{f_n}{\longrightarrow}\Omega^{-n-1}M\to0.
\end{eqnarray}
Applying $\F=\Hom_\Lambda(M,-)$, we have an exact sequence
\begin{eqnarray}\label{injective resolution}
0\to\Gamma\to\F I_0\to\cdots\to\F I_n\to C\to0
\end{eqnarray}
where we use $\Ext^i_\Lambda(M,M)=0$ for any $0<i<n$.
By Lemma \ref{easy gl.dim}(a), each $\F I_i$ is a projective-injective $\Gamma$-module.
Since we have $\gl\Gamma\le n+1$ by Theorem \ref{Auslander correspondence}, any indecomposable summand of $C$ is injective or projective.
Conversely any indecomposable injective non-projective $\Gamma$-module $I$ is isomorphic to a summand of $C$ since
any indecomposable injective $\Gamma$-module appears in the minimal
injective resolution of the $\Gamma$-module $\Gamma$ by $\gl\Gamma<\infty$.
Hence we only have to show that $\Ext^i_\Gamma(C,\Gamma)=0$ holds for any $0<i\le n$ if and only if $\gl\Lambda\le n$.

Applying $\Hom_\Gamma(-,\Gamma)$ to the projective resolution (\ref{injective resolution}) of $C$ and using Yoneda's Lemma, 
we have an isomorphism
\[\begin{array}{ccccc}
\Hom_\Gamma(\F I_n,\Gamma)&\to\cdots\to&\Hom_\Gamma(\F I_0,\Gamma)&\to&\Hom_\Gamma(\Gamma,\Gamma)\\
\wr\uparrow&&\wr\uparrow&&\wr\uparrow\\
\Hom_\Lambda(I_n,M)&\to\cdots\to&\Hom_\Lambda(I_0,M)&\to&\Hom_\Lambda(M,M)
\end{array}\]
of complexes. Thus $\Ext^i_\Gamma(C,\Gamma)=0$ for any $0<i\le n$ if and only if the complex
\begin{eqnarray}\label{short cut}
\Hom_\Lambda(I_n,M)\stackrel{f_{n-1}}{\longrightarrow}\cdots
\stackrel{f_0}{\longrightarrow}\Hom_\Lambda(I_0,M)\to\Hom_\Lambda(M,M)
\end{eqnarray}
obtained by applying $\Hom_\Lambda(-,M)$ to \eqref{injective resolution of M} is exact.

Using the condition $\Ext^i_\Lambda(D\Lambda,M)=0$ for any $0<i<n$, one can easily check that the following conditions are equivalent.
\begin{itemize}
\item[$\bullet$] \eqref{short cut} is exact at $\Hom_\Lambda(I_i,M)$ for any $0<i<n$.
\item[$\bullet$] $\Ext^i_\Lambda(\Omega^{-n-1}M,M)=0$ for any $0<i<n$.
\end{itemize}
Since $M$ is an $n$-cluster tilting object in $\mod\Lambda$, this is equivalent to
\begin{itemize}
\item[$\bullet$] $\Omega^{-n-1}M\in\add M$.
\end{itemize}

\medskip
Again using the condition $\Ext^i_\Lambda(D\Lambda,M)=0$ for any $0<i<n$, one can easily check that the following conditions are equivalent.
\begin{itemize}
\item[$\bullet$] (\ref{short cut}) is exact at $\Hom_\Lambda(I_0,M)$.
\item[$\bullet$] $\Ext^1_\Lambda(\Im f_1,M)=0$.
\item[$\bullet$] $\cdots$
\item[$\bullet$] $\Ext^{n-1}_\Lambda(\Im f_{n-1},M)=0$.
\item[$\bullet$] $\Ext^n_\Lambda(\Omega^{-n-1}M,M)\stackrel{f_n}{\longrightarrow}\Ext^n_\Lambda(I_n,M)$ is injective.
\end{itemize}
Using Auslander-Reiten duality, this is equivalent to
\begin{itemize}
\item $\underline{\Hom}_\Lambda(\tau_n^-M,I_n)\stackrel{f_n}{\longrightarrow}\underline{\Hom}_\Lambda(\tau_n^-M,\Omega^{-n-1}M)$ is surjective.
\end{itemize}
Since we have $\add M=\add(\Lambda\oplus\tau_n^-M)$ by Proposition \ref{n-AR translation}(a), this is equivalent to
\begin{itemize}
\item $\Hom_\Lambda(M,I_n)\stackrel{f_n}{\longrightarrow}\Hom(M,\Omega^{-n-1}M)$ is surjective.
\end{itemize}

\medskip
Consequently (\ref{short cut}) is exact if and only if $\Omega^{-n-1}M\in\add M$ and
$\Hom_\Lambda(M,I_n)\stackrel{f_n}{\longrightarrow}\Hom_\Lambda(M,\Omega^{-n-1}M)$ is surjective.
This occurs if and only if $f_n$ is a split epimorphism if and only if $\id M_\Lambda\le n$ if and only if $\gl\Lambda\le n$ by Lemma \ref{easy gl.dim}(b).
\end{proof}

Now we prove the following crucial result, which gives the inductive step in our proof of Theorem \ref{iterated Auslander}.

\begin{proposition}\label{inverse iterate}
Let $\Lambda$ be a finite dimensional algebra with an absolute $n$-cluster tilting object $M$ ($n\ge1$).
If $\Gamma=\End_\Lambda(M)$ has an absolute $(n+1)$-cluster tilting subcategory, then $\gl\Lambda\le n\le\dom\Lambda$.
\end{proposition}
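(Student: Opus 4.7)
The proof splits into two parts corresponding to the two inequalities.

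\emph{Global dimension part.} By hypothesis, $\mod\Gamma$ admits an absolute $(n+1)$-cluster tilting subcategory $\CC'$. Being absolute, $\CC'$ contains both $\Gamma$ (the projectives) and $D\Gamma$ (the injectives), and its $(n+1)$-rigidity then forces
\[
\Ext^i_\Gamma(D\Gamma,\Gamma)=0\quad\text{for }0<i<n+1,
\]
in particular for $0<i\le n$. Since $M$ is by hypothesis an absolute $n$-cluster tilting object of $\Lambda$, I would apply Proposition \ref{gl.dim n} directly to this Ext-vanishing to conclude $\gl\Lambda\le n$.

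\emph{Dominant dimension part.} For $\dom\Lambda\ge n$ the strategy is to convert the abstract absolute hypothesis on $\Gamma$ into the concrete ``surjection'' structure on $\Lambda$ that appears in Lemma \ref{criterion for whole module category}. Absolute $(n+1)$-completeness of $\Gamma$ means $\PP(\NN)=\add\Gamma$; so by Corollary \ref{indecomposable}(b) combined with Lemma \ref{right approximation}(b), for each indecomposable $I\in\ind(D\Lambda)$ one obtains
\[
\tau_{n+1}^{\ell_I}\G I\;\cong\;\Hom_\Gamma\bigl(\Gamma_{\ell_I},\F\nu_\Lambda\tau_n^{\ell_I}I\bigr)\;\in\;\add\Gamma.
\]
Pulling these isomorphisms back via the equivalence $\F\colon\CC\to\pr\Gamma$, and tracking the idempotents $e_\ell\in\Gamma$ together with the bimodule action on $M$, would yield the surjections $\Lambda=\Lambda_0\twoheadrightarrow\Lambda_1\twoheadrightarrow\cdots$ satisfying $\tau_n^\ell(D\Lambda)\cong D\Lambda_\ell$ as $\Lambda$-modules, which is the converse of the sufficient condition in Lemma \ref{criterion for whole module category}. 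Using this surjection chain together with $\gl\Lambda\le n$ from the first part, one sees that the first $n$ terms of the minimal injective coresolution of $\Lambda$ are projective, i.e.\ $\dom\Lambda\ge n$.

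The global dimension step is clean: $(n+1)$-rigidity of $\CC'$ applied to the two canonical objects $\Gamma$ and $D\Gamma$ plus Proposition \ref{gl.dim n} gives it at once. The hard part is the dominant dimension: making the passage from the abstract identity $\PP(\NN)=\add\Gamma$ to the explicit surjection structure on $\Lambda$ rigorous is the technical heart of the proof, and requires careful bookkeeping of which idempotents $e_\ell$ and which $\Gamma_\ell$-module isomorphisms appear on each side.
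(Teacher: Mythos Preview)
Your global-dimension argument is correct and is exactly what the paper does: the absolute $(n+1)$-cluster tilting subcategory of $\mod\Gamma$ contains both $\Gamma$ and $D\Gamma$, so $(n+1)$-rigidity gives $\Ext^i_\Gamma(D\Gamma,\Gamma)=0$ for $0<i\le n$, and Proposition~\ref{gl.dim n} finishes.

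The dominant-dimension part, however, has a genuine gap. You propose to reverse-engineer the surjection chain $\Lambda=\Lambda_0\twoheadrightarrow\Lambda_1\twoheadrightarrow\cdots$ with $\tau_n^\ell(D\Lambda)\cong D\Lambda_\ell$ from the identity $\PP(\NN)=\add\Gamma$, and then read off $\dom\Lambda\ge n$. Two problems: first, the ``pulling back'' step is only gestured at, and it is not clear what the $\Lambda_\ell$ should be or why the bimodule bookkeeping produces algebra surjections rather than merely module isomorphisms. Second, and more seriously, even granting the surjection chain, there is no evident implication to $\dom\Lambda\ge n$. Lemma~\ref{criterion for whole module category} says that such a chain forces $\Gamma$ to be absolutely $(n+1)$-complete; it says nothing about the minimal injective coresolution of $\Lambda$, and I do not see how the condition $\tau_n^\ell(D\Lambda)\cong D\Lambda_\ell$ alone controls which injectives in that coresolution are projective.

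The paper's argument is entirely different and much more direct. Take the minimal injective resolution $0\to\Lambda\to I_0\to\cdots\to I_n\to0$ (length $\le n$ since $\gl\Lambda\le n$) and apply $\F=\Hom_\Lambda(M,-)$. The cokernel $X$ of the last map is an injective $\Gamma$-module (each $\F I_i$ is projective-injective and $\gl\Gamma\le n+1$), hence lies in the absolute $(n+1)$-cluster tilting subcategory, and so does $Y:=\tau_{n+1}X$. Now Lemma~\ref{long sequence}(b)(c) produces a projective resolution
\[
0\to\F\tau_nI_0\to\cdots\to\F\tau_nI_n\to Y\to0,
\]
so $\pd Y_\Gamma\le n$. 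By Lemma~\ref{easy gl.dim}(c) applied to $\Gamma$, the object $Y$ must then be projective. Minimality of the original resolution forces each $\F\tau_nf_i$ into the radical, so the above resolution is minimal; projectivity of $Y$ therefore kills all but the last term, giving $\tau_nI_i=0$ for $0\le i<n$. Hence $\pd(I_i)_\Lambda<n$, and Lemma~\ref{easy gl.dim}(c) applied to $\Lambda$ shows each $I_i$ ($i<n$) is projective, i.e.\ $\dom\Lambda\ge n$. This is the argument you should supply.
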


\begin{proof}
We have shown $\gl\Lambda\le n$ in Proposition \ref{gl.dim n}.
Take a minimal injective resolution
\begin{equation}\label{mir of Lambda}
0\to\Lambda\to I_0\xrightarrow{f_0}\cdots\xrightarrow{f_{n-2}}I_{n-1}
\xrightarrow{f_{n-1}}I_n\to0
\end{equation}
of the $\Lambda$-module $\Lambda$.
Applying $\F=\Hom_\Lambda(M,-)$, we have an exact sequence
\[0\to\F\Lambda\to\F I_0\xrightarrow{\F f_0}\cdots\xrightarrow{\F f_{n-2}}
\F I_{n-1}\xrightarrow{\F f_{n-1}}\F I_n\]
of $\Gamma$-modules since we have $\Ext_\Lambda^i(M,\Lambda)=0$ for any $0<i<n$.

Now we put $X:=\Cok(\F f_{n-1})$ and $Y:=\tau_{n+1}X$.
We have $\gl\Gamma\le n+1$ by Theorem \ref{Auslander correspondence}.
Since each $\F I_i$ is a projective-injective $\Gamma$-module by Lemma \ref{easy gl.dim}(a), we have that $X$ is an injective $\Gamma$-module.
Hence both $X$ and $Y$ belong to our absolute $(n+1)$-cluster tilting subcategory
by Proposition \ref{n-AR translation}(a).

Applying Lemma \ref{long sequence}(b) and (c) (replace $n$ there by $n+1$) to the sequence \eqref{mir of Lambda}, we have an exact sequence
\begin{eqnarray}\label{proj res of Y}
0=\F\tau_n\Lambda\to\F\tau_nI_0\xrightarrow{\F\tau_nf_0}\cdots
\xrightarrow{\F\tau_nf_{n-2}}\F\tau_nI_{n-1}\xrightarrow{\F\tau_nf_{n-1}}
\F\tau_nI_n\to Y\to0,
\end{eqnarray}
which gives a projective resolution of the $\Gamma$-module $Y$.
Thus we have $\pd Y_\Gamma\le n$.
By Lemma \ref{easy gl.dim}(c) (replace $n$ there by $n+1$), we have that $Y$ is a projective $\Gamma$-module.

Since each $f_i$ in \eqref{mir of Lambda} belongs to $J_{\mod\Lambda}$,
each $\F\tau_nf_i$ in \eqref{proj res of Y} belongs to $J_{\mod\Gamma}$.
Hence \eqref{proj res of Y} is a minimal projective resolution of $Y$.
Consequently we have $\tau_nI_i=0$ for any $0\le i<n$.
Thus we have $\pd(I_i)_\Lambda<n$ for any $0\le i<n$.
Again by Lemma \ref{easy gl.dim}(c), we have that $I_i$ is a projective
$\Lambda$-module for any $0\le i<n$. Thus $\dom\Lambda\ge n$ holds.
\end{proof}

Now we are ready to prove Theorem \ref{iterated Auslander}.

By Theorem \ref{recall}, we only have to show `only if' part.
The assertion for $n=1$ follows automatically from Proposition \ref{0-Auslander}.

We assume $n\ge2$ and put $\Lambda^{(n)}:=\Lambda$. 
By Theorem \ref{Auslander correspondence}, there exists a finite dimensional algebra $\Lambda^{(n-1)}$
and an absolute $(n-1)$-cluster tilting object $M^{(n-1)}$
in $\mod\Lambda^{(n-1)}$ such that $\Lambda^{(n)}$ is isomorphic to $\End_{\Lambda^{(n-1)}}(M^{(n-1)})$.
Clearly $\Lambda^{(n-1)}$ is also ring-indecomposable.
Applying Proposition \ref{inverse iterate} to $(\Lambda,\Gamma):=(\Lambda^{(n-1)},\Lambda^{(n)})$, we have $\gl\Lambda^{(n-1)}\le n-1\le\dom\Lambda^{(n-1)}$.

Repeating similar argument, we have a ring-indecomposable finite dimensional algebra $\Lambda^{(i)}$ ($1\le i\le n$) satisfying the following conditions:
\begin{itemize}
\item[(a)] $\gl\Lambda^{(i)}\le i\le\dom\Lambda^{(i)}$,
\item[(b)] $\Lambda^{(i)}$ has an absolute $i$-cluster tilting object $M^{(i)}$,
\item[(c)] $\Lambda^{(i+1)}$ is isomorphic to $\End_{\Lambda^{(i)}}(M^{(i)})$.
\end{itemize}
Since $\gl\Lambda^{(1)}\le 1\le\dom\Lambda^{(1)}$,
we have that $\Lambda^{(1)}$ is Morita equivalent to $T_m^{(1)}(F)$ for some division algebra $F$ and $m\ge1$ by Proposition \ref{0-Auslander}.

Using the conditions (b) and (c) inductively, we have that $\Lambda^{(i)}$ is Morita equivalent
to our absolutely $i$-complete algebra $T_m^{(i)}(F)$ for any $1\le i\le n$
since any $\add M^{(i)}$ is a unique absolute $i$-cluster tilting
subcategory by Theorem \ref{unique n-cluster}.
Thus we have the assertion.
\qed

\section{$n$-cluster tilting in derived categories}\label{section: derived}

Throughtout this section, let $\Lambda$ be a finite dimensional algebra with $\id{}_{\Lambda}\Lambda=\id\Lambda_\Lambda<\infty$. 
We denote by $\DD:=\KK^{\rm b}(\pr\Lambda)$
the homotopy category of bounded complexes of finitely generated projective $\Lambda$-modules,
and we identify it with $\KK^{\rm b}(\inj\Lambda)$.

Let us start with the following simple observation.

\begin{lemma}\label{finiteness}
Let $\Lambda$ be a finite dimensional algebra with $\gl\Lambda<\infty$. 
\begin{itemize}
\item[(a)] For any $X\in\DD$, there exist only finitely many integers $i$ satisfying $\Hom_{\DD}(\mod\Lambda,X[i])\neq0$.
\item[(b)] $\mod\Lambda$ is a functorially finite subcategory of $\DD$.
\end{itemize}
\end{lemma}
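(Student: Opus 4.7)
The strategy rests on the hypothesis $\gl\Lambda<\infty$, which forces $\DD=\KK^{\rm b}(\pr\Lambda)=\KK^{\rm b}(\inj\Lambda)$ and gives every $Y\in\mod\Lambda$ both a finite projective resolution and a finite injective resolution. Since bounded complexes of projectives are K-projective and bounded complexes of injectives are K-injective, morphism spaces in $\DD$ reduce to homotopy classes of chain maps whenever one of the two arguments is so presented; this is the computational tool underlying both parts.

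For (a), I would represent $X$ as a bounded complex of injectives $X\simeq(J^a\to J^{a+1}\to\cdots\to J^b)$ concentrated in cohomological degrees $[a,b]$. Viewing any $Y\in\mod\Lambda$ as a stalk complex in degree $0$, the degree-$0$ component of $X[i]$ is precisely $J^i$, which vanishes for $i\notin[a,b]$. Any chain map $Y\to X[i]$ must therefore be zero outside that range, so $\Hom_{\DD}(Y,X[i])=0$ uniformly in $Y$ for $i\notin[a,b]$, proving (a).

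For the contravariant half of (b), I would keep the injective presentation of $X$ and set $C:=\ker(d^0\colon J^0\to J^1)$, a submodule of the finitely generated module $J^0$, hence in $\mod\Lambda$. The inclusion $\iota\colon C\hookrightarrow J^0$ placed in degree $0$ is a chain map $C\to X$ since $d^0\iota=0$ by construction. For any $Y\in\mod\Lambda$, a chain map $Y\to X$ has its degree-$0$ component landing in $\ker d^0=C$, while null-homotopies form exactly the image of $d^{-1}_*\colon\Hom_\Lambda(Y,J^{-1})\to\Hom_\Lambda(Y,C)$. Thus $\Hom_{\DD}(Y,X)$ is a quotient of $\Hom_\Lambda(Y,C)$, and the natural map $\Hom_\Lambda(Y,C)\to\Hom_{\DD}(Y,X)$ is surjective, showing that $C\to X$ is a right $\mod\Lambda$-approximation.

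For the covariant half, I would dualize: represent $X\simeq(P^a\to\cdots\to P^b)$ as a bounded complex of finitely generated projectives and put $C':=\Cok(d^{-1}\colon P^{-1}\to P^0)$. The projection $P^0\twoheadrightarrow C'$ in degree $0$ assembles into a chain map $X\to C'$, and the K-projectivity of $X$ yields by a symmetric analysis that $\Hom_\Lambda(C',Y)\to\Hom_{\DD}(X,Y)$ is surjective for every $Y\in\mod\Lambda$, exhibiting $X\to C'$ as a left $\mod\Lambda$-approximation. I anticipate no substantial obstacle: the only real technical point is to verify that these bounded presentations make $\Hom_{\DD}$ coincide with homotopy classes of chain maps, after which each step reduces to a routine bookkeeping of degree-$0$ components and null-homotopies.
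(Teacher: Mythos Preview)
Your proposal is correct and essentially matches the paper's proof. For (b), your $C=\ker(d^0)$ is precisely the paper's $Z^0$, and the paper leaves the covariant half (which you spell out via projectives and $\Cok d^{-1}$) to duality. For (a), the paper phrases the bound via the cohomology amplitude of $X$ together with $\gl\Lambda$, while you go straight to a bounded injective model; these are the same argument packaged slightly differently.
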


\begin{proof}
(a) Take a sufficiently large integer $k$ such that $H^i(X)=0$ holds if $i<-k$ or $k<i$.
Then we have $\Hom_{\DD}(\mod\Lambda,X[i])=0$ if either $i<-k$ or $k+\gl\Lambda<i$.

(b) We only show contravariant finiteness.
Any $X\in\DD$ is isomorphic to a bounded complex $(\cdots\to I^i\to I^{i+1}\to\cdots)$ of injective $\Lambda$-modules.
It is easily checked that the natural map $Z^0\to X$ is a right $(\mod\Lambda)$-approximation of $X$.
\end{proof}

The following easy observation is quite useful.

\begin{lemma}\label{split}
Let $\Lambda$ be a finite dimensional algebra with $\gl\Lambda\le n$.
\begin{itemize}
\item[(a)] If $X\in\DD$ satisfies $H^i(X)=0$ for any $0<i<n$, then $X\simeq Y\oplus Z$ for some $Y\in\DD^{\le0}$ and $Z\in\DD^{\ge n}$.
\item[(b)] If $X\in\DD$ satisfies $H^i(X)=0$ for any integer $i\notin n\Z$, then $X$ is isomorphic to $\bigoplus_{\ell\in\Z}H^{\ell n}(X)[-\ell n]$.
\end{itemize}
\end{lemma}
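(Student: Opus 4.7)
My plan is to prove (a) first and then deduce (b) by induction using (a).

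For (a), I would form the standard truncation triangle
\[Y \lar X \lar Z \lar Y[1]\]
with $Y := \tau^{\le 0}X \in \DD^{\le 0}$ and $Z := \tau^{\ge 1}X$; since $\gl\Lambda \le n < \infty$, the identification $\DD \simeq D^{\rm b}(\mod\Lambda)$ shows these truncations lie in $\DD$. The hypothesis $H^i(X) = 0$ for $0 < i < n$ forces $Z = \tau^{\ge n}X \in \DD^{\ge n}$, so the triangle is of the desired shape and it only remains to show it splits, i.e.\ that the connecting morphism $Z \to Y[1]$ is zero. This reduces to the vanishing $\Hom_{\DD}(Z,Y[1]) = 0$.

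For this Hom-vanishing I would proceed by d\'evissage on cohomological length. Applying the rotated truncation triangles repeatedly to $Z$ and to $Y[1]$ (and using the long exact sequence of Homs in a triangulated category), it suffices to check the vanishing on stalk complexes, i.e.\ on pairs of the form $Z' = M[-a]$ with $a \ge n$ and $Y' = N[-b]$ with $b \ge 1$ (so that $Y'[1] = N[-(b-1)]$ with $b-1 \ge 0$); in fact the relevant pair $(Z,Y[1])$ reduces to summands with $a \ge n$ and $-(b-1) \le -1$, giving
\[\Hom_{\DD}(M[-a], N[-(b-1)]) \;\simeq\; \Ext^{\,a + (b-1)}_\Lambda(M,N),\]
with $a + (b-1) \ge n + 1 > \gl\Lambda$, so this Ext vanishes. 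This yields the splitting $X \simeq Y \oplus Z$.

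For (b), I would induct on the number $k$ of integers $\ell$ with $H^{\ell n}(X) \ne 0$. The case $k \le 1$ is the standard fact that a complex with cohomology concentrated in a single degree is isomorphic in $\DD$ to that cohomology suitably shifted (again using $\gl\Lambda<\infty$ to construct a projective representative). For the inductive step, let $\ell_0$ be the smallest such $\ell$, and apply (a) to $X[\ell_0 n]$: the hypothesis on $X$ gives $H^i(X[\ell_0 n]) = 0$ for $0 < i < n$, so (a) yields $X[\ell_0 n] \simeq Y \oplus Z$ with $Y \in \DD^{\le 0}$ and $Z \in \DD^{\ge n}$. Since $H^i(X[\ell_0 n]) = 0$ for $i < 0$ by minimality of $\ell_0$, the complex $Y$ has cohomology only in degree $0$, hence $Y \simeq H^{\ell_0 n}(X)$; and $Z$ has strictly fewer nonzero cohomology groups, all concentrated in multiples of $n$. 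Applying the inductive hypothesis to $Z$ and shifting back by $-\ell_0 n$ assembles the required direct sum decomposition $X \simeq \bigoplus_{\ell \in \Z} H^{\ell n}(X)[-\ell n]$.

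The main obstacle is the Hom-vanishing step in (a); once one has set up the d\'evissage cleanly, it is routine, but some care is needed because neither $Z$ nor $Y[1]$ need be concentrated in a single degree, so one must iterate truncations on both arguments and track the degree ranges to confirm that every resulting Ext group sits in degree strictly greater than $\gl\Lambda$.
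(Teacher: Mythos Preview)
Your approach is correct, though the index bookkeeping in your d\'evissage for (a) is garbled: stalks of $Y\in\DD^{\le0}$ sit in degrees $\le0$, so they are $N[-b]$ with $b\le0$ (not $b\ge1$), and then $\Hom_{\DD}(M[-a],N[-b][1])\simeq\Ext_\Lambda^{\,a-b+1}(M,N)$ with $a-b+1\ge n+1>\gl\Lambda$. Once the signs are straightened out, the vanishing goes through and the truncation triangle splits as you claim.

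The paper takes a more concrete route for (a). It represents $X$ by a bounded complex $(I^\bullet,d^\bullet)$ of injectives and notes that the hypothesis yields an exact sequence
\[0\to Z^0\to I^0\to\cdots\to I^{n-1}\to\Im d^{n-1}\to0;\]
since $\gl\Lambda\le n$, the module $\Im d^{n-1}$ is itself injective, so the inclusion $\Im d^{n-1}\hookrightarrow I^n$ splits, and this splits off the truncation $Y:=(\cdots\to I^{n-1}\to\Im d^{n-1}\to0)\in\DD^{\le0}$ as a direct summand of $X$ at the level of complexes. No d\'evissage is needed. Your argument is instead purely triangulated-categorical: you identify the obstruction to splitting the standard truncation triangle as a class in $\Hom_{\DD}(\DD^{\ge n},\DD^{\le-1})$ and kill it by the global-dimension bound via d\'evissage on both arguments. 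Both proofs use $\gl\Lambda\le n$ in essentially the same place (forcing an Ext-group in degree $>n$ to vanish, equivalently an $n$-th cosyzygy to be injective), but the paper's is shorter and gives an explicit chain-level splitting, while yours is more portable to abstract $t$-structure settings.

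For (b) the paper writes only ``Immediate from (a)'', which is exactly your inductive argument.
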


\begin{proof}
(a) We can assume that $X$ is a bounded complex $(\cdots\to I^i\stackrel{d^i}{\to}I^{i+1}\to\cdots)$ of finitely generated injective $\Lambda$-modules.
Put
\[Y:=(\cdots\to I^{n-2}\to I^{n-1}\to\Im d^{n-1}\to0\to\cdots)\in\DD^{\le0}.\]
By our assumption, we have an exact sequence
\[0\to Z^0\to I^0\to I^1\to\cdots\to I^{n-2}\stackrel{}{\to}I^{n-1}\stackrel{d^{n-1}}{\to}I^{n}.\]
It follows from $\gl\Lambda\le n$ that $\Im d^{n-1}$ is an injective $\Lambda$-module. 
Thus the inclusion map $f:\Im d^{n-1}\to I^n$ splits, so there exists $g:I^n\to\Im d^{n-1}$ such that $gf=1_{\Im d^{n-1}}$.
We have the following chain homomorphism.
\[\begin{array}{cccccccccc}
Y=&(\cdots&\to&I^{n-1}&\to&\Im d^{n-1}&\to&0&\to&\cdots)\\
&&&\parallel&&\downarrow^f&&\downarrow\\
X=&(\cdots&\to&I^{n-1}&\stackrel{d^{n-1}}{\to}&I^n&\to&I^{n+1}&\to&\cdots)\\
&&&\parallel&&\downarrow^g&&\downarrow\\
Y=&(\cdots&\to&I^{n-1}&\to&\Im d^{n-1}&\to&0&\to&\cdots)
\end{array}\]
Thus $Y$ is a direct summand of $X$, and we have the assertion.

(b) Immediate from (a).
\end{proof}

Now we are ready to prove Theorem \ref{derived}. Put $\BB:=\CC[n\Z]$.

(i) We shall show that $\BB$ is a functorially finite subcategory of $\DD$.

We only show contravariant finiteness. Fix $X\in\DD$.
By Lemma \ref{finiteness}(a), there exist only finitely many integers $\ell$ such that $\Hom_{\DD}(\CC[\ell n],X)\neq0$.
Since $\CC$ is a functorially finite subcategory of $\mod\Lambda$, 
we have that $\CC[\ell n]$ is a contravariantly finite subcategory of
$\DD$ for any $\ell$ by Lemma \ref{finiteness}(b).
Thus there exists a right $\BB$-approximation of $X$.

(ii) We shall show that $\BB$ is $n$-rigid.
We only have to show $\Hom_{\DD}(\CC[kn],\CC[\ell n+i])=0$ for $k,\ell\in\Z$ and $i$ ($0<i<n$).
If $k>\ell$, then this is clearly zero. If $k<\ell$, then this is zero by $\gl\Lambda=n$.
If $k=\ell$, then this is again zero by $n$-rigidity of $\CC$.
Thus $\BB$ is $n$-rigid.

(iii) Assume that $X\in\DD$ satisfies $\Hom_{\DD}(\BB,X[i])=0$ for any $0<i<n$.
Since $\Lambda[-\ell n]\in\BB$ for any $\ell\in\Z$, we have
\[H^{\ell n+i}(X)\simeq\Hom_{\DD}(\Lambda[-\ell n],X[i])=0\]
for any $\ell\in\Z$ and $0<i<n$.
Thus $H^i(X)=0$ holds for any integer $i\notin n\Z$.
Applying Lemma \ref{split}(b), we have $X\simeq\bigoplus_{\ell\in\Z}H^{\ell n}(X)[-\ell n]$.
Moreover we have
\[\Ext^i_\Lambda(\CC,H^{\ell n}(X))\simeq\Hom_{\DD}(\CC,H^{\ell n}(X)[i])=0\]
for any $0<i<n$. Since $\CC$ is an $n$-cluster tilting subcategory of $\mod\Lambda$, we have $H^{\ell n}(X)\in\CC$ for any $\ell\in\Z$.
Thus $X\in\BB$.

(iv) Dually we have that if $X\in\DD$ satisfies $\Hom_{\DD}(X,\BB[i])=0$ for any $0<i<n$, then $X\in\BB$.
Thus we conclude that $\BB$ is an $n$-cluster tilting subcategory of $\DD$.
\qed

\medskip
In the rest of this section we shall prove Theorem \ref{consequence}.

Let $(\DD^{\le0},\DD^{\ge0})$ be the standard $t$-structure of $\DD$, so
\begin{eqnarray*}
\DD^{\le0}&:=&\{X\in\DD\ |\ H^i(X)=0\ \mbox{ for any }i>0\},\\
\DD^{\ge0}&:=&\{X\in\DD\ |\ H^i(X)=0\ \mbox{ for any }i<0\}.
\end{eqnarray*}
Our question whether $\UU_n(\Lambda)$ forms an $n$-cluster tilting subcategory of $\DD$
is closely related to the following conditions for $\Lambda$.

\begin{definition}
Define the conditions (S$_n$) and (T$_n$) for $\Lambda$ as follows:
\begin{itemize}
\item[(S$_n$)] $\SSS_n\DD^{\ge0}\subset\DD^{\ge0}$.
\item[(T$_n$)] $\SSS_n^\ell\DD^{\ge0}\subset\DD^{\ge1}$
for sufficiently large $\ell$.
\end{itemize}
\end{definition}

It is easily shown that (S$_n$) is equivalent to
$\SSS_n^{-1}\DD^{\le0}\subset\DD^{\le0}$, and (T$_n$) is equivalent to
$\SSS_n^{-\ell}\DD^{\le0}\subset\DD^{\le-1}$ for sufficiently large $\ell$.

%

We have the following sufficient conditions for (S$_n$) and (T$_n$).

\begin{proposition}\label{t-structure}
Let $\Lambda$ be a finite dimensional algebra.
\begin{itemize}
\item[(a)] $\id{}_{\Lambda}\Lambda=\id\Lambda_\Lambda\le n$ holds
if and only if (S$_n$) holds.
\item[(b)] $\Lambda$ is $\tau_n$-finite if and only if
$\gl\Lambda\le n$ and (T$_n$) hold.
\end{itemize}
\end{proposition}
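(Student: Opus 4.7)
The plan is to analyze $\SSS_n$ through its cohomological action. The basic computation is: for $M\in\mod\Lambda$ and a projective resolution $P_\bullet\to M$, one has $\SSS M\simeq\nu P_\bullet\simeq D\RHom_\Lambda(M,\Lambda)$, whence $H^{-i}(\SSS M)=D\Ext_\Lambda^i(M,\Lambda)$; consequently $\SSS_n M=(\SSS M)[-n]$ has its cohomology supported in degrees $[n-\id\Lambda_\Lambda,n]$ and $H^0(\SSS_n M)=D\Ext^n_\Lambda(M,\Lambda)$, which recovers Lemma \ref{tau and s_n} when $\gl\Lambda\le n$.

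For part (a), the forward direction is then immediate on modules and passes to general $X\in\DD^{\ge0}$ via a standard truncation argument: writing $X$ as a finite iterated extension of its cohomology shifts $H^i(X)[-i]$ for $i\ge0$, each piece satisfies $\SSS_n(H^i(X)[-i])=(\SSS_n H^i(X))[-i]\in\DD^{\ge i}\subset\DD^{\ge0}$, and extension-closure of $\DD^{\ge0}$ gives $\SSS_n X\in\DD^{\ge0}$. For the converse, I apply (S$_n$) to an arbitrary simple $\Lambda$-module $S\in\mod\Lambda\subset\DD^{\ge0}$: the containment $\SSS_n S\in\DD^{\ge0}$ translates to $D\Ext_\Lambda^i(S,\Lambda)=0$ for $i>n$, and running over all simples yields $\id\Lambda_\Lambda\le n$; the two-sided equality $\id{}_\Lambda\Lambda=\id\Lambda_\Lambda$ is the section's standing hypothesis.

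For part (b), the $\tau_n$-finite hypothesis supplies $\gl\Lambda\le n$ (hence (S$_n$) by part (a)) together with a uniform $\ell_0$ with $\tau_n^{\ell_0}=0$ on $\mod\Lambda$ (as noted just after Definition \ref{complete}). To obtain (T$_n$) I again decompose an arbitrary $X\in\DD^{\ge0}$ by its cohomology shifts. Iterating (S$_n$) gives $\SSS_n^\ell(H^i(X)[-i])\in\DD^{\ge i}\subset\DD^{\ge1}$ for every $i\ge1$ and every $\ell$. For the $i=0$ term, $\SSS_n^\ell H^0(X)\in\DD^{\ge0}$ by (S$_n$), and Lemma \ref{tau and s_n} gives $H^0(\SSS_n^\ell H^0(X))=\tau_n^\ell H^0(X)=0$ for all $\ell\ge\ell_0$, forcing $\SSS_n^\ell H^0(X)\in\DD^{\ge1}$; by extension-closure, $\SSS_n^\ell X\in\DD^{\ge1}$. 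The converse is a one-liner: under $\gl\Lambda\le n$, applying (T$_n$) to $D\Lambda\in\mod\Lambda\subset\DD^{\ge0}$ yields $\tau_n^\ell(D\Lambda)=H^0(\SSS_n^\ell D\Lambda)=0$ for $\ell$ large, whence $\tau_n$-finiteness.

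The one delicate point is verifying that the module-level bound $\ell_0$ transfers, without enlargement, to a uniform bound on all of $\DD^{\ge0}$. The truncation-plus-extension reduction above is precisely what makes this work, by isolating the $H^0$-layer as the sole possible source of a nonzero $H^0(\SSS_n^\ell X)$.
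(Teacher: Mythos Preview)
Your argument is correct and reaches the same conclusions, but the paper takes a different route for (a) and a shorter one for (b). For (a), rather than filtering $X\in\DD^{\ge0}$ by its cohomology, the paper works directly with complexes: realise $X$ as a bounded complex of injectives in degrees $\ge0$, use $\pd(D\Lambda)_\Lambda\le n$ to rewrite it as a complex of projectives in degrees $\ge-n$, then apply $\nu[-n]$ to land back in injectives in degrees $\ge0$. This stays entirely inside $\DD=\KK^b(\pr\Lambda)=\KK^b(\inj\Lambda)$, whereas your filtration argument tacitly passes through $\DD^b(\mod\Lambda)$, since the modules $H^i(X)$---and the simples $S$ you invoke for the converse---need not lie in $\DD$ when $\gl\Lambda=\infty$; this is harmless because $\SSS_n$ extends to $\DD^b(\mod\Lambda)$ under the standing Gorenstein hypothesis, but it is worth making explicit. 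For (b), your cohomology decomposition is unnecessary: Lemma~\ref{tau and s_n}(a) already furnishes $H^0(\SSS_n^\ell X)\simeq\tau_n^\ell H^0(X)$ for \emph{every} $X\in\DD^{\ge0}$, not only for modules, so once $\tau_n^\ell=0$ on $\mod\Lambda$ and (S$_n$) guarantees $\SSS_n^\ell X\in\DD^{\ge0}$, one obtains $\SSS_n^\ell X\in\DD^{\ge1}$ in a single step; this is the paper's argument.
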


\begin{proof}
(a) To show `only if' part, take any object $X\in\DD^{\ge0}$.
Then $X$ is isomorphic to a bounded complex
\[X\simeq(\cdots\to 0\to 0\to I^0\to I^1\to\cdots)\]
of injective $\Lambda$-modules.
Since $\pd(D\Lambda)_{\Lambda}\le n$, we have that $X$ is isomorphic to a bounded complex
\[X\simeq(\cdots\to 0\to 0\to P^{-n}\to P^{1-n}\to\cdots)\]
of projective $\Lambda$-modules by taking a projective resolution.
Applying $\SSS_n$, we have
\[\SSS_nX\simeq(\cdots\to 0\to 0\to\stackrel{0}{\nu(P^{-n})}\to\stackrel{1}{\nu(P^{1-n})}\to\cdots)\in\DD^{\ge0}.\]

Conversely, assume $\id\Lambda_\Lambda>n$.
Take $X\in\mod\Lambda$ such that $\Ext^{n+1}_\Lambda(X,\Lambda)\neq0$.
Since $H^{-1}(\SSS_nX)\simeq D\Ext^{n+1}_\Lambda(X,\Lambda)\neq0$,
we have $\SSS_n\DD^{\ge0}\ {\not\subset}\ \DD^{\ge0}$.

To prove (b), we need the following relationship between two functors
$\tau_n$ and $\SSS_n$.

\begin{lemma}\label{tau and s_n}
Let $\Lambda$ be a finite dimensional algebra satisfying $\gl\Lambda\le n$.
\begin{itemize}
\item[(a)] For any $\ell\ge0$, we have an isomorphism $\tau_n^\ell H^0(-)
\to H^0(\SSS_n^\ell-)$ of functors $\DD^{\ge0}\to\mod\Lambda$.
\item[(b)] For any $\ell\ge0$, we have an isomorphism $\tau_n^{-\ell}H^0(-)
\to H^0(\SSS_n^{-\ell}-)$ of functors $\DD^{\le0}\to\mod\Lambda$.
\end{itemize}
\end{lemma}

\begin{proof}
(a) We only have to show the case $\ell=1$.
We have a morphism $\gamma:H^0(-)\to\id$ of endofunctors $\DD^{\ge0}\to\DD^{\ge0}$.
We shall show that $H^0(\SSS_n\gamma)$ gives the desired isomorphism.
For any $X\in\DD^{\ge0}$, we have a triangle $Y[-1]\to H^0(X)\xrightarrow{\gamma_X}X\to Y$ with $Y\in\DD^{\ge1}$.
Since (S$_n$) holds by $\gl\Lambda\le n$ and Proposition \ref{t-structure}(a),
we have a triangle
$\SSS_nY[-1]\to\SSS_nH^0(X)\xrightarrow{\SSS_n\gamma_X}\SSS_nX\to\SSS_nY$
with $\SSS_nY\in\DD^{\ge1}$. Applying $H^0$, we have an isomorphism
\[\tau_nH^0(X)\simeq DH^n(\RHom_\Lambda(H^0(X),\Lambda))\simeq H^0(\SSS_nH^0(X))\xrightarrow{H^0(\SSS_n\gamma_X)}H^0(\SSS_nX).\]

(b) This is shown dually.
\end{proof}

Now we shall show Proposition \ref{t-structure}(b).
Both conditions imply $\gl\Lambda\le n$.
By (a), we have that $\Lambda$ satisfies (S$_n$).
By Lemma \ref{tau and s_n}(a), we have that $\tau_n^\ell=0$ holds for
sufficiently large $\ell$ if and only if (T$_n$) holds.
Thus the assertion holds.
\end{proof}

Now we can prove Proposition \ref{left right}.

This is a direct consequence of Proposition \ref{t-structure}(b)
since both global dimension and the condition (T$_n$) are left-right
symmetric.\qed

\medskip
We give easy properties of the condition (T$_n$).

\begin{lemma}\label{derived T_n}
Let $\Lambda$ be a finite dimensional algebra satisfying (T$_n$).
\begin{itemize}
\item[(a)] For any $X,Y\in\DD$, there exist only finitely many integers $\ell$ satisfying $\Hom_{\DD}(X,\SSS_n^\ell Y)\neq0$.
\item[(b)] If a finite dimensional algebra $\Gamma$ is derived equivalent to $\Lambda$, then it also satisfies (T$_n$).
\end{itemize}
\end{lemma}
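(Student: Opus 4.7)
For (a), the plan is to iterate (T$_n$) and then invoke Serre duality. Fix $\ell_0$ with $\SSS_n^{\ell_0}\DD^{\ge 0}\subset\DD^{\ge 1}$; using that $\SSS_n$ commutes with the shift and that $\DD^{\ge k}=\DD^{\ge 0}[-k]$, a straightforward induction will give $\SSS_n^{k\ell_0}\DD^{\ge 0}\subset\DD^{\ge k}$ for every $k\ge 0$. For $X\in\DD^{\le a}$ and $Y\in\DD^{\ge -b}$, the shift $Y[-b]\in\DD^{\ge 0}$ yields $\SSS_n^{k\ell_0}Y\in\DD^{\ge k-b}$, and the standard orthogonality $\Hom_{\DD}(\DD^{\le a},\DD^{\ge a+1})=0$ kills $\Hom_{\DD}(X,\SSS_n^{k\ell_0}Y)$ once $k>a+b$. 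To go from the arithmetic progression $\{k\ell_0\}$ to all $\ell\gg 0$, I would write a general large $\ell=k\ell_0+r$ with $0\le r<\ell_0$ and rewrite $\Hom_{\DD}(X,\SSS_n^\ell Y)=\Hom_{\DD}(\SSS_n^{-r}X,\SSS_n^{k\ell_0}Y)$; the $\ell_0$ objects $\SSS_n^{-r}X$ share a common cohomological upper bound, so a single threshold~$k$ handles every residue. The case $\ell\to-\infty$ will then follow from Serre duality, $\Hom_{\DD}(X,\SSS_n^\ell Y)\simeq D\Hom_{\DD}(Y,\SSS_n^{1-\ell}X[n])$, applied to the pair $(Y,X[n])$.

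For (b), let $F\colon\DD_\Lambda\to\DD_\Gamma$ be a triangle equivalence. Finite injective dimension is preserved under derived equivalence (Happel), so $\SSS_n$ is defined on $\DD_\Gamma$, and because the Serre functor and the shift are intrinsic, $F$ commutes with $\SSS_n$. By Rickard's theorem, $F\simeq\RHom_\Lambda(T,-)$ for some tilting complex $T$, and by absorbing a shift into $F$ I may assume $T\in\DD_\Lambda^{[-p,0]}$ with $p\ge 0$. Set $\mathcal{A}:=F^{-1}\DD_\Gamma^{\ge 0}=\{Y\in\DD_\Lambda\mid\Hom_\Lambda(T,Y[i])=0\text{ for all }i<0\}$. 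Translating (T$_n$) for $\Gamma$ through $F$, the target becomes $\SSS_n^\ell\mathcal{A}\subset\mathcal{A}[-1]$ for $\ell$ sufficiently large.

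The key step is to sandwich $\mathcal{A}$ between two standard aisles of $\DD_\Lambda$: $\DD_\Lambda^{\ge 0}\subset\mathcal{A}\subset\DD_\Lambda^{\ge -p}$. The left inclusion is immediate from $T\in\DD_\Lambda^{\le 0}$ and $\Hom(\DD^{\le 0},\DD^{\ge 1})=0$. For the right inclusion, I would consider the tilted heart $\HH=\{Y\mid\Hom_\Lambda(T,Y[i])=0\text{ for all }i\ne 0\}$, which $F$ identifies with $\mod\Gamma$. Writing $T^*:=F\Lambda$, one reads off $T^*\in\DD_\Gamma^{[0,p]}$ from the complex shape of $T$; then for $Y\in\HH$ the identity $H^j(Y)\simeq\Hom_\Gamma(T^*,(FY)[j])$ forces $H^j(Y)=0$ for $j\notin[-p,0]$, so $\HH\subset\DD_\Lambda^{[-p,0]}$, and extension-closure of $\{\HH[-i]\mid i\ge 0\}$ inside $\DD_\Lambda^{\ge -p}$ gives $\mathcal{A}\subset\DD_\Lambda^{\ge -p}$. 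Combining this sandwich with the iterated form of (T$_n$) from part~(a), now applied to $\DD_\Lambda^{\ge -p}=\DD_\Lambda^{\ge 0}[p]$, yields $\SSS_n^\ell\mathcal{A}\subset\DD_\Lambda^{\ge 1}\subset\mathcal{A}[-1]$ for $\ell$ large, and transferring via $F$ finishes (T$_n$) for $\Gamma$. The step I expect to be the main obstacle is precisely the heart bound $\HH\subset\DD_\Lambda^{[-p,0]}$: extracting $T^*\in\DD_\Gamma^{[0,p]}$ from the shape of $T$ and then converting Hom-vanishing against $T^*$ into cohomological vanishing of objects in $\HH$ requires careful bookkeeping across the two $t$-structures, and is the only place where one really has to look inside the derived equivalence rather than argue formally.
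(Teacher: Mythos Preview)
Your argument is correct and follows the same overall ``sandwich and iterate'' strategy as the paper, but for part~(b) you work harder than necessary. The paper never touches the tilted heart: identifying $\DD_\Lambda$ with $\DD_\Gamma$ via the equivalence, it simply observes that the tilting complex is bounded, so $\Gamma\in\DD_\Lambda^{\le k}$ and $\Lambda\in\DD_\Gamma^{\le k}$ for some $k$. Since $\DD_\Gamma^{\le 0}$ is generated from $\Gamma$ by extensions and non-negative shifts (and symmetrically for $\Lambda$), the inclusions $\DD_\Gamma^{\le 0}\subset\DD_\Lambda^{\le k}$ and $\DD_\Lambda^{\le 0}\subset\DD_\Gamma^{\le k}$ are immediate; one then runs the iterated dual form of (T$_n$) to obtain $\SSS_n^{-\ell(2k+1)}\DD_\Gamma^{\le 0}\subset\DD_\Lambda^{\le -k-1}\subset\DD_\Gamma^{\le -1}$. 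Your sandwich $\DD_\Lambda^{\ge 0}\subset\mathcal A\subset\DD_\Lambda^{\ge -p}$ is the $\ge$-version of exactly these inclusions, but you reach the second one via the heart $\HH$ and an amplitude computation for $T^*=F\Lambda$ --- precisely the step you flagged as the main obstacle. The paper's formulation shows this detour can be bypassed entirely.

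For part~(a) your argument is the paper's one-line proof spelled out in detail: the paper just cites $\Hom_{\DD}(X,\DD^{\le -k})=0=\Hom_{\DD}(X,\DD^{\ge k})$ for large $k$ and implicitly iterates (T$_n$) together with its dual form $\SSS_n^{-\ell}\DD^{\le 0}\subset\DD^{\le -1}$ (recorded just before the lemma) to push $\SSS_n^\ell Y$ outside the window in both directions. Your Serre-duality swap $\Hom_{\DD}(X,\SSS_n^\ell Y)\simeq D\Hom_{\DD}(Y,\SSS_n^{1-\ell}X[n])$ for $\ell\to-\infty$ is an equally valid alternative to invoking the dual form directly.
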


\begin{proof}
(a) Since we have $\Hom_{\DD}(X,\DD^{\le -k})=0=\Hom_{\DD}(X,\DD^{\ge k})$ for sufficiently large $k$, we have the assertion.

(b) We denote by $(\DD_\Lambda^{\le0},\DD_\Lambda^{\ge0})$ and $(\DD_\Gamma^{\le0},\DD_\Gamma^{\ge0})$
the $t$-structures of $\DD$ given by the standard $t$-structures of $\Lambda$ and $\Gamma$ respectively.
We have $\Gamma\in\DD_\Lambda^{\le k}$ and $\Lambda\in\DD_\Gamma^{\le k}$ for sufficiently large $k$.
Then we have $\DD_\Gamma^{\le 0}\subset\DD_\Lambda^{\le k}$ and $\DD_\Lambda^{\le 0}\in\DD_\Gamma^{\le k}$.
Then we have
\[\SSS_n^{-\ell(2k+1)}\DD_\Gamma^{\le0}\subset\SSS_n^{-\ell(2k+1)}\DD_\Lambda^{\le k}\subset\DD_\Lambda^{\le-k-1}\subset\DD_\Gamma^{\le-1}.\]
Thus $\Gamma$ also satisfies (T$_n$).
\end{proof}

We also need the result below. The assertion (b) was independently given
by Amiot in \cite[Prop. 5.4.2]{Am1} (see also \cite[Th. 4.10]{Am2})
and Barot-Fernandez-Platzeck-Pratti-Trepode \cite{BFPPT} for the case $n=2$.

\begin{proposition}\label{sufficient condition}
Let $\Lambda$ be a finite dimensional algebra.
\begin{itemize}
\item[(a)] If $\Lambda$ satisfies (S$_n$), then $\UU_n(\Lambda)$ is $n$-rigid.
\item[(b)] If $\Lambda$ satisfies $\gl\Lambda\le n$ and (T$_n$), then $\UU_n(\Lambda)$ is an $n$-cluster tilting subcategory of $\DD$.
\end{itemize}
\end{proposition}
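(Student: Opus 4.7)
For (a), my plan is to exploit the fact that $\SSS_n$ is an autoequivalence of $\DD$ to reduce $n$-rigidity of $\UU_n(\Lambda)$ to the vanishing of $\Hom_\DD(\Lambda,\SSS_n^\ell\Lambda[k])$ for all $\ell\in\Z$ and $0<k<n$. When $\ell=0$ this is just $\Ext^k_\Lambda(\Lambda,\Lambda)=0$. For $\ell\ge 1$, I would apply Serre duality using $\SSS=\SSS_n[n]$ to obtain $\Hom_\DD(\Lambda,\SSS_n^\ell\Lambda[k])\simeq D\Hom_\DD(\SSS_n^{\ell-1}\Lambda,\Lambda[n-k])$; iterated (S$_n$) places $\SSS_n^{\ell-1}\Lambda\in\DD^{\ge 0}$, while $\Lambda[n-k]\in\DD^{\le -1}$ since $0<n-k$, so the Hom vanishes by the $t$-structure axioms. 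For $\ell\le -1$, the equivalent dual formulation $\SSS_n^{-1}\DD^{\le 0}\subset\DD^{\le 0}$ places $\SSS_n^\ell\Lambda\in\DD^{\le 0}$, hence $\SSS_n^\ell\Lambda[k]\in\DD^{\le -1}$ for $k\ge 1$, and the Hom again vanishes since $\Lambda\in\DD^{\ge 0}$.

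For (b), I would first observe that $\gl\Lambda\le n$ implies $\id\Lambda_\Lambda\le n$, so (S$_n$) holds by Proposition~\ref{t-structure}(a) and part (a) supplies the $n$-rigidity of $\UU_n(\Lambda)$. For functorial finiteness, I would invoke Lemma~\ref{derived T_n}(a): for any $X\in\DD$ only finitely many $\ell\in\Z$ satisfy $\Hom_\DD(\SSS_n^\ell\Lambda,X)\neq 0$, and each such Hom space is finite-dimensional over $k$; assembling generating sets over these finitely many indices produces a right $\UU_n(\Lambda)$-approximation of $X$, and the construction of left approximations is dual.

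The principal obstacle is the orthogonality characterization: showing that every $X\in\DD$ satisfying $\Hom_\DD(\UU_n(\Lambda),X[k])=0$ for $0<k<n$ lies in $\UU_n(\Lambda)$. Since $\UU_n(\Lambda)$ is $\SSS_n$-stable, this hypothesis is equivalent to $H^k(\SSS_n^m X)=0$ for all $m\in\Z$ and $0<k<n$. My plan is to take a right $\UU_n(\Lambda)$-approximation $U\to X$ with cone $C$, giving a triangle $U\to X\to C\to U[1]$. From the induced long exact sequence, together with the $n$-rigidity of $\UU_n(\Lambda)$ and the defining surjectivity of the right approximation, one deduces $\Hom_\DD(\UU_n(\Lambda),C[j])=0$ for $0\le j\le n-2$; equivalently, the shifted cone $C[-1]$ satisfies the same orthogonality condition as $X$.

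The final step is an induction: measuring $X$ by an invariant controlled by the $t$-structure amplitude of its $\SSS_n$-orbit (finite by Lemma~\ref{derived T_n}(a), which is exactly where (T$_n$) is genuinely used), one arranges that the right-approximation construction strictly decreases this invariant, so that $C[-1]\in\UU_n(\Lambda)$ by induction. Combined with the triangle above and Lemma~\ref{split} applied to $\SSS_n^m X$, the summands of $X$ are then forced to lie in $\UU_n(\Lambda)$, and the dual argument handles left orthogonality. The delicate technical point, and the real heart of the proof, is devising the correct inductive invariant so that the cone construction actually terminates; this is precisely the content that (T$_n$) contributes beyond the mere $n$-rigidity provided by (S$_n$).
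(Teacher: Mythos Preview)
Your argument for (a) is correct and essentially parallel to the paper's, though you invoke Serre duality where the paper works more directly: for $\ell>0$ it simply observes $\SSS_n^\ell\Lambda=\SSS_n^{\ell-1}(D\Lambda)[-n]\in\DD^{\ge n}$ by (S$_n$), and for $\ell\le 0$ it uses $\SSS_n^{-\ell}\Lambda\in\DD^{\le 0}$. Both routes amount to the same $t$-structure vanishing.

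For (b), your setup (rigidity via (a), functorial finiteness via Lemma~\ref{derived T_n}(a), and the approximation-cone recursion showing $C[-1]$ inherits the orthogonality) is sound, but you leave a genuine gap: you never specify the inductive invariant, and you explicitly flag its construction as ``the real heart of the proof'' without carrying it out. It is not at all clear that passing to the cone of a right $\UU_n(\Lambda)$-approximation decreases any $t$-amplitude-type invariant of the $\SSS_n$-orbit; without this, the recursion has no base case and the argument does not terminate. Your closing appeal to Lemma~\ref{split} is also vague about how it interacts with the triangle to force $X$ into $\UU_n(\Lambda)$.

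The paper bypasses this entirely with a direct, non-inductive argument. For indecomposable $X$, (T$_n$) provides a unique $\ell$ with $\SSS_n^\ell X\in\DD^{\le 0}$ but $Y:=\SSS_n^{\ell+1}X\notin\DD^{\le 0}$. Writing $\SSS_n^{-1}Y$ as a complex of projectives concentrated in degrees $\le 0$ and applying $\SSS_n=\nu[-n]$ exhibits $Y$ as a complex of injectives concentrated in degrees $\le n$. The orthogonality hypothesis gives $H^i(Y)=0$ for $0<i<n$, so Lemma~\ref{split}(a) together with indecomposability and $Y\notin\DD^{\le 0}$ forces $Y\in\DD^{\ge n}$; hence $Y$ is the stalk complex $H^n(Y)[-n]$ with $H^n(Y)$ injective, so $Y\in\add(D\Lambda)[-n]\subset\UU_n(\Lambda)$. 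This uses (T$_n$) in a single, concrete way---to locate the critical $\SSS_n$-translate---rather than as fuel for an unspecified descent.
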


\begin{proof}
(a) Since $\Lambda\in\DD^{\le0}$, we have $\SSS_n^{-\ell}\Lambda\in\DD^{\le0}$ for any $\ell\ge0$ by (S$_n$).
This implies $\Hom_{\DD}(\Lambda,\SSS_n^{-\ell}\Lambda[i])=0$ for any $\ell\ge0$ and $0<i<n$.

On the other hand, we have $\SSS_n^\ell\Lambda=\SSS_n^{\ell-1}(D\Lambda)[-n]\in\DD^{\ge n}$ for any $\ell>0$ by (S$_n$).
This implies $\Hom_{\DD}(\Lambda,\SSS_n^\ell\Lambda[i])=0$ for any $\ell>0$ and $0<i<n$.

(b) By (a) and Proposition \ref{t-structure}(a), we have that $\UU_n(\Lambda)$ is $n$-rigid.
By Lemma \ref{derived T_n}(a), we have that $\UU_n(\Lambda)$ is a functorially finite subcategory of $\DD$.

Fix any indecomposable object $X\in\DD$.
Since $\UU_n(\Lambda)$ is closed under $\SSS_n^{\pm1}$, the following conditions are equivalent (e.g. \cite[Prop. 3.5]{IY}).
\begin{itemize}
\item $\Hom_{\DD}(\UU_n(\Lambda),X[i])=0$ holds for any $0<i<n$,
\item $\Hom_{\DD}(X,\UU_n(\Lambda)[i])=0$ holds for any $0<i<n$.
\end{itemize}
Thus it remains to show that, if these conditions are satisfied, then $X\in\UU_n(\Lambda)$.
By (T$_n$), there exists an integer $\ell$ such that $\SSS_n^\ell X\in\DD^{\le0}$ and $\SSS_n^{\ell+1}X\notin\DD^{\le0}$.
Put $Y:=\SSS_n^{\ell+1}X$. Then $\SSS_n^{-1}Y\in\DD^{\le0}$ is isomorphic to a bounded complex
\[\SSS_n^{-1}Y\simeq(\cdots\to P^{-1}\to P^0\to0\to0\to\cdots)\]
of projective $\Lambda$-modules, and $Y$ is isomorphic to a bounded complex
\begin{eqnarray}\label{s_ny}
Y\simeq(\cdots\to\stackrel{n-1}{\nu(P^{-1})}\to\stackrel{n}{\nu(P^0)}\to0\to0\to\cdots)
\end{eqnarray}
of injective $\Lambda$-modules.
On the other hand, since $\Hom_{\DD}(\Lambda,Y[i])=0$ for any $0<i<n$, we have $H^i(Y)=0$ for any $0<i<n$.
Since $Y$ is indecomposable and does not belong to $\DD^{\le0}$, we have $Y\in\DD^{\ge n}$ by Lemma \ref{split}(a).
Thus the complex (\ref{s_ny}) is exact except $\nu(P^0)$. This implies $Y=H^n(Y)[-n]$ and that $H^n(Y)$ is an injective $\Lambda$-module.
Consequently we have $Y\in\add(D\Lambda)[-n]\subset\UU_n(\Lambda)$ and $X=\SSS_n^{-\ell-1}Y\in\UU_n(\Lambda)$.
\end{proof}

\medskip
Now we are ready to prove Theorem \ref{consequence}.

We only have to show the latter assertion.
By Proposition \ref{t-structure}(b), we have that $\Lambda$ satisfies (T$_n$).
Let $T\in\DD$ be a tilting complex of $\Lambda$ such that
$\Gamma=\End_{\DD}(T)$ satisfies $\gl\Gamma\le n$.
Since $\Gamma$ is derived equivalent to $\Lambda$,
we can identify $\DD$ with $\KK^{\rm b}(\pr\Gamma)$, and
we have that $\Gamma$ satisfies (T$_n$) by Lemma \ref{derived T_n}(b).
Thus $\UU_n(T)=\UU_n(\Gamma)$ is an $n$-cluster tilting subcategory of
$\DD$ by Proposition \ref{sufficient condition}(b).
\qed

\section{Auslander-Reiten quivers and relations}
\label{section: n-AR quivers}

Throughout this section, we assume that the base field $k$ is algebraically closed for simplicity.
For an arrow or a path $a$ in a quiver $Q$, we denote by $s(a)$ the start vertex and by $e(a)$ the end vertex.

\begin{definition}\label{quiver definition}
\begin{itemize}
\item[(a)] A \emph{weak translation quiver} $Q=(Q_0,Q_1,\tau)$ consists of a quiver $(Q_0,Q_1)$ with a bijection
\[\tau:Q_P\to Q_I\]
for fixed subsets $Q_P$ and $Q_I$ of $Q_0$.
Here we do not assume any relationship between $\tau$ and arrows in $Q$.
We write $\tau x=0$ symbolically for any $x\in Q_0\backslash Q_P$.
\item[(b)] Let $\Lambda$ be an $n$-complete algebra and $\MM=\MM_n(D\Lambda)$ the $\tau_n$-closure of $D\Lambda$.
Define a weak translation quiver $Q=(Q_0,Q_1,\tau_n)$ called the \emph{Auslander-Reiten quiver} of $\MM$ as follows:
\begin{itemize}
\item[$\bullet$] $Q_0$ (respectively, $Q_P$, $Q_I$) is the set of isoclasses of indecomposable objects in $\MM$ (respectively, $\MM_P$, $\MM_I$).
\item[$\bullet$] For $X,Y\in Q_0$, put $d_{XY}:=\dim_k(J_{\MM}(X,Y)/J_{\MM}^2(X,Y))$
and draw $d_{XY}$ arrows from $X$ to $Y$.
\item[$\bullet$] $\tau_n:Q_P\to Q_I$ is given by the equivalence $\tau_n:\MM_P\to\MM_I$.
\end{itemize}
\item[(c)] Again let $\Lambda$ be an $n$-complete algebra and $\UU=\UU_n(D\Lambda)$ the $\SSS_n$-closure of $D\Lambda$.
Define a weak translation quiver $Q=(Q_0,Q_1,\SSS_n)$ called the \emph{Auslander-Reiten quiver} of $\UU$ similarly,
where we put $Q_P=Q_I:=Q_0$ and we define $\SSS_n:Q_0\to Q_0$ by the equivalence $\SSS_n:\UU\to\UU$.
\end{itemize}
\end{definition}

For the case $n=1$, the Auslander-Reiten quivers of $\MM=\mod\Lambda$ and $\UU=\DD^{\rm b}(\mod\Lambda)$ are usual one \cite{ARS,ASS,H}.

In the rest, let $\CC$ be either $\MM$ of $\UU$ in Definition \ref{quiver definition}.
By the following well-known fact, all source morphisms in $\CC$ give the Auslander-Reiten quiver.

\begin{lemma}\label{source quiver}
Let $X,Y\in\CC$ be indecomposable objects and $f_0:X\to M_1$ a source morphism.
Then $d_{XY}$ is equal to the number of $Y$ appearing in the direct sum decomposition of $M_1$.
\end{lemma}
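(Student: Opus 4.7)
The strategy is to analyze the natural $k$-linear map
\[\alpha\colon\Hom_{\CC}(M_1,Y)\to J_{\CC}(X,Y)/J_{\CC}^2(X,Y),\qquad g\mapsto\overline{gf_0},\]
and to identify its kernel. I would first decompose $M_1=Y^m\oplus N$ with $Y\notin\add N$, so that $m$ is the multiplicity of $Y$ in $M_1$ and the goal becomes $d_{XY}=m$. Denote the components of $f_0$ by $f_0^{(i)}\colon X\to Y$ for $1\le i\le m$ and $f_0^{(N)}\colon X\to N$. Surjectivity of $\alpha$ is immediate from the defining property of left almost split morphisms, since every $\xi\in J_{\CC}(X,Y)$ factors as $\xi=gf_0$. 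Next, $\alpha$ kills the subspace $J_{\CC}(Y,Y)\cdot\Hom_{\CC}(Y^m,Y)$ (both factors lie in $J_{\CC}$) as well as $\Hom_{\CC}(N,Y)\cdot\pi_N$ (any morphism $N\to Y$ lies in $J_{\CC}$ since $Y\notin\add N$, and $f_0^{(N)}\in J_{\CC}$). Consequently $\alpha$ descends to a surjection
\[\tilde\alpha\colon k^m\cong\bigl(\End_{\CC}(Y)/J_{\CC}(Y,Y)\bigr)^m\to J_{\CC}(X,Y)/J_{\CC}^2(X,Y),\qquad(\lambda_i)\mapsto\sum_i\lambda_i\overline{f_0^{(i)}},\]
where indecomposability of $Y$ together with algebraic closedness of $k$ supplies the first identification.

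The main obstacle is the injectivity of $\tilde\alpha$, and this is exactly where left minimality of $f_0$ enters. Suppose $\sum_i\lambda_if_0^{(i)}\in J_{\CC}^2(X,Y)$. Writing this as $\sum_jh_jk_j$ with $h_j,k_j\in J_{\CC}$ and factoring each $k_j\in J_{\CC}(X,Z_j)$ through $f_0$ by the left almost split property, I can rewrite the relation as $hf_0=\sum_i\lambda_if_0^{(i)}$ with $h\in J_{\CC}\cdot\Hom_{\CC}(M_1,Y)$. Setting $\Lambda:=\sum_i\lambda_i\pi_i\colon Y^m\to Y$ extended by zero to $M_1$ and $g:=\Lambda-h\colon M_1\to Y$, we obtain $gf_0=0$.

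For each scalar $c\in k$ and each $i\in\{1,\dots,m\}$ consider
\[\phi_{c,i}:=\id_{M_1}+c\,\iota_i\circ g\colon M_1\to M_1,\]
where $\iota_i\colon Y\hookrightarrow M_1$ is the inclusion of the $i$-th copy of $Y$. Then $\phi_{c,i}f_0=f_0+c\,\iota_i(gf_0)=f_0$, so left minimality forces $\phi_{c,i}$ to be an automorphism for every $c$. However, in the block decomposition $M_1=Y_1\oplus\cdots\oplus Y_m\oplus N$ the endomorphism $\phi_{c,i}$ is block upper triangular with the identity on every row except the $i$-th, whose diagonal entry equals $(1+c\lambda_i)\id_Y-c\,h|_{Y_i}$; since $h|_{Y_i}\in J_{\CC}(Y,Y)=\rad\End_{\CC}(Y)$, this diagonal block is invertible precisely when $1+c\lambda_i\ne 0$. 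If $\lambda_i$ were non-zero, the choice $c=-\lambda_i^{-1}$ would produce a non-invertible $\phi_{c,i}$, contradicting left minimality. Hence $\lambda_i=0$ for every $i$, $\tilde\alpha$ is injective, and therefore $d_{XY}=\dim_kJ_{\CC}(X,Y)/J_{\CC}^2(X,Y)=m$.
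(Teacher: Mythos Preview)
Your proof is correct and follows the same approach as the paper: both establish the isomorphism $\Hom_{\CC}(M_1,Y)/J_{\CC}(M_1,Y)\simeq J_{\CC}(X,Y)/J_{\CC}^2(X,Y)$ induced by $f_0$, and then read off the multiplicity using $\End_{\CC}(Y)/J_{\CC}(Y,Y)\cong k$. The paper simply asserts this isomorphism as a well-known consequence of $f_0$ being a source morphism, whereas you spell out the surjectivity (from the left almost split property) and the injectivity (from left minimality) in detail.

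One small inaccuracy: your matrix $\phi_{c,i}=\id_{M_1}+c\,\iota_i g$ is not literally ``block upper triangular''; rather, it differs from the identity only in block row $i$. If you reorder the summands so that $Y_i$ comes last, it becomes block \emph{lower} triangular with identities on the diagonal except in the last slot, and your invertibility criterion then follows. Alternatively (and more cleanly), you could argue directly: if some $\lambda_i\neq 0$ then $g|_{Y_i}=\lambda_i\id_Y-h|_{Y_i}$ is a unit in the local ring $\End_{\CC}(Y)$, so $g\colon M_1\to Y$ is split epi with a section $s$; then $\psi:=\id_{M_1}-sg$ satisfies $\psi f_0=f_0$ but $g\psi=0$, so $\psi$ is not an automorphism, contradicting left minimality. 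This avoids the block analysis entirely.
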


\begin{proof}
The source morphism $f_0:X\to M_1$ induces an isomoprhism
\[\Hom_{\CC}(M_1,Y)/J_{\CC}(M_1,Y)\simeq J_{\CC}(X,Y)/J_{\CC}^2(X,Y).\]
Since we assumed that $k$ is algebraically closed, we have that $\dim_k(\Hom_{\CC}(M_1,Y)/J_{\CC}(M_1,Y))$ is equal to
the number of $Y$ appearing in the direct sum decomposition of $M_1$. Thus we have the assertion.
\end{proof}

We consider a presentation of the category $\CC$ by its Auslander-Reiten quiver with relations.

\begin{definition}
For a quiver $Q$, define an additive category $P(Q)$ called the
\emph{path category} of $Q$ as follows.
\begin{itemize}
\item The set of indecomposable objects in $P(Q)$ is $Q_0$.
\item For any $x,y\in Q_0$, $\Hom_{P(Q)}(x,y)$ is a $k$-vector space with the basis consisting of all paths from $x$ to $y$ in $Q$.
\end{itemize}
\end{definition}

The presentation of $\CC$ can be decided similarly to algebras.
By (a) below, the category $\CC$ is equivalent to some factor category
$P(Q)/I$ of the path category $P(Q)$ of the Auslander-Reiten quiver $Q$
of $\CC$.
By (b) below, the first two terms of source sequences in $\CC$ describe
generators of $I$.

\begin{lemma}\label{presentation}
Let $Q$ be the Auslander-Reiten quiver of $\CC$.
\begin{itemize}
\item[(a)] Assume that we have a morphism $\PPP(a)\in J_{\CC}(X,Y)$
for any arrow $a:X\to Y$ in $Q$, and that
$\{\PPP(a)\ |\ s(a)=X,\ e(a)=Y\}$ forms a $k$-basis of
$J_{\CC}(X,Y)/J^2_{\CC}(X,Y)$ for any $X,Y\in Q_0$.
Then $\PPP$ extends uniquely to a full dense functor $\PPP:P(Q)\to\CC$.
\item[(b)] Assume that we have a full dense functor $\PPP:P(Q)\to\CC$, and that any $x\in Q_0$ has the source sequence with the first two terms
\[\PPP(x)\xrightarrow{(a)}\bigoplus_{a\in Q_1,\ s(a)=x}\PPP(e(a))\xrightarrow{(\PPP(r_{a,i}))}\bigoplus_{1\le i\le m_x}\PPP(e(r_{a,i})).\]
Then the kernel of $\PPP$ is generated by
$\{\sum_{a\in Q_1,\ s(a)=x}r_{a,i}a\ |\ x\in Q_0,\ 1\le i\le m_x\}$.
\end{itemize}
\end{lemma}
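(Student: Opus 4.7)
For part (a), I extend $\PPP$ to morphisms of $P(Q)$ by $\PPP(a_n\cdots a_1) := \PPP(a_n)\circ\cdots\circ\PPP(a_1)$ on paths, $\PPP(1_x) := 1_{\PPP(x)}$, and by $k$-linearity; uniqueness is automatic since $P(Q)$ is free as an additive $k$-category on the arrows of $Q$, and density is immediate since $Q_0$ indexes the indecomposables of $\CC$. For fullness, the category $\CC$ is Hom-finite with $J_\CC^k(X,Y)=0$ for $k$ sufficiently large (depending on $X,Y$), so the hypothesis that $\{\PPP(a)\}$ is a $k$-basis of $J_\CC(X,Y)/J_\CC^2(X,Y)$ in each component lifts inductively: compositions of $k$ arrow images span $J_\CC^k/J_\CC^{k+1}$, and this filtration terminates componentwise. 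Combined with $\PPP(1_x) = 1_{\PPP(x)}$ and Krull--Schmidt, this yields surjectivity of $\PPP$ on every $\Hom$-space.

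For part (b), let $I := \ker\PPP$ and let $I_0$ denote the two-sided ideal of $P(Q)$ generated by the elements $\rho_{x,i} := \sum_{a:\,s(a)=x} r_{a,i}\cdot a$. The inclusion $I_0 \subseteq I$ is immediate: $\PPP(\rho_{x,i})$ is the $i$-th component of $f_1\circ f_0$ for the source sequence at $\PPP(x)$, which vanishes since source sequences are complexes by the exactness of~\eqref{X source2}.

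For $I\subseteq I_0$, take $f\in\Hom_{P(Q)}(x,y)$ with $\PPP(f)=0$. Any constant term (possible only if $x=y$) must vanish since $1_{\PPP(x)}\notin J_\CC$, so $f = \sum_{a:\,s(a)=x} g_a\cdot a$ with $g_a\in\Hom_{P(Q)}(e(a),y)$. Then $(\PPP(g_a))_a\circ f_0 = 0$, and by the exactness of~\eqref{X source2} applied at $\PPP(y)$ there exists $(k_i)_i : \bigoplus_i\PPP(e(r_{a,i}))\to\PPP(y)$ with $\PPP(g_a)=\sum_i k_i\circ\PPP(r_{a,i})$ for every arrow $a$ starting at $x$. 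Lifting each $k_i$ via fullness from (a) to $\tilde k_i\in\Hom_{P(Q)}(e(r_{a,i}),y)$ and setting $h_a := g_a-\sum_i\tilde k_i\cdot r_{a,i}\in I$, one obtains
\[
f \;=\; \sum_i\tilde k_i\cdot\rho_{x,i}\;+\;\sum_a h_a\cdot a \;\in\; I_0\;+\;I\cdot J_{P(Q)},
\]
so varying $f$ yields $I = I_0 + I\cdot J_{P(Q)}$.

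The main obstacle is closing the argument by Nakayama's lemma. Part~(a) provides componentwise isomorphisms $J_{P(Q)}/J_{P(Q)}^2\xrightarrow{\sim} J_\CC/J_\CC^2$, forcing $I\subseteq J_{P(Q)}^2$; moreover the vanishing of $J_\CC^N(\PPP(x),\PPP(y))$ for $N$ sufficiently large in each component gives $J_{P(Q)}^N\cap\Hom_{P(Q)}(x,y)\subseteq I$, so $I$ is locally admissible. Iterating $I=I_0+I\cdot J_{P(Q)}$ yields $I\subseteq I_0+I\cdot J_{P(Q)}^k$ for every $k$; in each fixed component $\Hom_{P(Q)}(x,y)$, choosing $k$ beyond the relevant nilpotency bound absorbs the residual term into $I_0$ by further application of the reduction to the intermediate Hom-spaces along every path, realizing the standard quiver-with-relations Nakayama argument and yielding $I=I_0$. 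Making this final componentwise termination rigorous, in particular handling the case of the derived category $\CC=\UU$ where the underlying additive category is infinite while each Hom-space remains finite-dimensional, is the most delicate technical point.
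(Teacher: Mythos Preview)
Your argument is the standard one that the paper outsources to \cite[Prop.~3.1(b), 3.6]{BIRSm}; the paper's own proof consists only of the remark that $Q$ is locally finite and acyclic (so the path category agrees with its completion), followed by the citation. So you are supplying what the paper defers.

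The reduction $I\subseteq I_0+I\cdot J_{P(Q)}$ in (b) is correct and is the heart of the matter. The point you rightly flag as delicate---termination for $\CC=\UU$---can be tightened as follows. Both fullness in (a) and the Nakayama step in (b) rest on local nilpotency of $J_{\CC}$ and of $J_{P(Q)}$. For $\CC=\MM$ both are immediate since the quiver is finite. For $\CC=\UU$, use the level structure: by Lemma~\ref{U directed}(a) every morphism goes from level $\ell$ to level $\le\ell$, and within each level the category is equivalent to $\MM$ via $\SSS_{n+1}^{\ell}\G$, whose radical satisfies $J_{\MM}^{N_0}=0$ for some fixed $N_0$. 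In a composition of $N$ radical morphisms between indecomposables at levels $\ell$ and $m$, at most $\ell-m$ of the factors change level, so by pigeonhole some maximal constant-level segment contains at least $(N-(\ell-m))/(\ell-m+1)$ factors; once this exceeds $N_0$ the segment, and hence the whole composition, vanishes. This gives $J_{\UU}^{N}(X,Y)=0$ for $N$ large. The same level count bounds path lengths in $Q$, so $\Hom_{P(Q)}(x,y)$ is finite-dimensional; then in your iterated inclusion $I\subseteq I_0+I\cdot J_{P(Q)}^{k}$ the residual term $(I\cdot J_{P(Q)}^{k})(x,y)$ vanishes once $k$ exceeds the maximal path length from $x$ to $y$, and $I(x,y)=I_0(x,y)$ follows without further manoeuvres.
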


\begin{proof}
Since $Q$ is locally finite and acyclic, the path category of $Q$ coincides with its complete path category.
We refer to \cite[Prop. 3.1(b)]{BIRSm} for (a), and to \cite[Prop. 3.6]{BIRSm} for (b).
\end{proof}

If $Q$ is the Auslander-Reiten quiver of $\CC$, then we often identify
objects of $\CC$ with those of $P(Q)$, and we denote the image $\PPP(a)$
of a morphism $a$ in $P(Q)$ under $\PPP$ by the same letter $a$.

\subsection{Cones and cylinders of weak translation quivers}

Throughout this subsection, let $\Lambda$ be an $n$-complete algebra with the $\tau_n$-closure $\MM=\MM_n(D\Lambda)=\add M$ of $D\Lambda$.
We denote by $Q=(Q_0,Q_1,\tau_n)$ the Auslander-Reiten quiver of $\MM$.
Then $\Gamma:=\End_\Lambda(M)$ is $(n+1)$-complete by Theorem \ref{main},
so satisfies the conditions (A$_{n+1}$)--(C$_{n+1}$), (S$_{n+1}$) and
(T$_{n+1}$). We denote by
\[\NN=\MM_{n+1}(D\Gamma)\ \mbox{ and }\ \UU=\UU_{n+1}(D\Gamma)\]
the $\tau_{n+1}$-closure and the $\SSS_{n+1}$-closure of $D\Gamma$ respectively.
They are $(n+1)$-cluster tilting subcategories of $\mod\Gamma$ and
$\KK^{\rm b}(\pr\Gamma)$ respectively by Theorem \ref{consequence}.
The aim of this subsection is to draw the Auslander-Reiten quivers of
$\NN$ and $\UU$ respectively by using $Q$.
The key construction is the following.

\begin{definition}\label{quiver}
Let $Q=(Q_0,Q_1,\tau)$ be a weak translation quiver in general.
\begin{itemize}
\item[(a)] We define a weak translation quiver $Q'=(Q'_0,Q_1',\tau')$ called the \emph{cone} of $Q$ as follows:
\begin{itemize}
\item[$\bullet$] $Q_0':=\{(x,\ell)\ |\ x\in Q_0,\ \ell\ge0,\ \tau^\ell x\neq0\}$.
\item[$\bullet$] There are the following two kinds of arrows.
\begin{itemize}
\item ${(x,\ell)}_1:(x,\ell)\to(\tau x,\ell-1)$ for any $(x,\ell)\in Q'_0$ satisfying $\ell>0$.
\item $(a,\ell):(x,\ell)\to(y,\ell)$ for any arrow $a:x\to y$ in $Q$ satisfying $(x,\ell),(y,\ell)\in Q'_0$.
\end{itemize}
\item[$\bullet$] $Q'_P:=\{(x,\ell)\in Q'_0\ |\ (x,\ell+1)\in Q'_0\}$ and
$Q'_I:=\{(x,\ell)\in Q'_0\ |\ \ell>0\}$.
\item[$\bullet$] Define a bijection $\tau':Q'_P\to Q'_I$ by $\tau'(x,\ell):=(x,\ell+1)$.
\end{itemize}
\item[(b)] We define a weak translation quiver $Q''=(Q''_0,Q''_1,\tau'')$ called the \emph{cylinder} of $Q$ as follows:
\begin{itemize}
\item[$\bullet$] $Q_0''=Q''_P=Q''_I:=Q_0\times\Z=\{(x,\ell)\ |\ x\in Q_0,\ \ell\in\Z\}$.
\item[$\bullet$] There are the following two kinds of arrows.
\begin{itemize}
\item ${(x,\ell)}_1:(x,\ell)\to(\tau x,\ell-1)$ for any $(x,\ell)\in Q''_0$ satisfying $x\in Q_P$.
\item $(a,\ell):(x,\ell)\to(y,\ell)$ for any arrow $a:x\to y$ in $Q$ and $\ell\in\Z$.
\end{itemize}
\item[$\bullet$] Define a bijection $\tau'':Q''_0\to Q''_0$ by $\tau''(x,\ell):=(x,\ell+1)$.
\end{itemize}
\end{itemize}
\end{definition}

To simplify our description of relations below,
we use the following convention:
When we consider the path category $P(Q')$,
we regard $(x,\ell)$ as a zero object if it
does not belong to $Q'_0$,
and regard $(a,\ell)$ as a zero morphism
if it does not belong to $Q'_1$.
We use the same convention for $P(Q'')$.

\begin{example}\normalfont
Consider the following translation quivers, where dotted arrows indicate $\tau$.
\[\xymatrix@C=0.3cm@R0.1cm{
&&{\scriptstyle 3}\ar[dr]\\
&{\scriptstyle 5}\ar[dr]\ar[ur]&&{\scriptstyle 2}\ar[dr]\ar@{..>}[ll]\\
{\scriptstyle 6}\ar[ur]&&{\scriptstyle 4}\ar[ur]\ar@{..>}[ll]&&{\scriptstyle 1}\ar@{..>}[ll]}\ \ \ \ \ \ \ \ \ \ \xymatrix@C=0.3cm@R0.1cm{
&{\scriptstyle 5}\ar[dr]&&{\scriptstyle 2}\ar@{..>}[ll]\\
{\scriptstyle 6}\ar[dr]\ar[ur]&&{\scriptstyle 3}\ar[dr]\ar[ur]\ar@{..>}[ll]\\
&{\scriptstyle 4}\ar[ur]&&{\scriptstyle 1}\ar@{..>}[ll]}\]
Their cones are given by the following, which coincide with the Auslander-Reiten quivers in Section \ref{section: nCT1}.
{\tiny\[\xymatrix@C=0.2cm@R0.1cm{
&&(3,0)\ar[dr]\\
&(5,0)\ar[dr]\ar[ur]
&&(2,0)\ar[dr]\ar@{..>}[dddl]\\
(6,0)\ar[ur]
&&(4,0)\ar[ur]\ar@{..>}[dddl]
&&(1,0)\ar@{..>}[dddl]\\
\\
&&(2,1)\ar[uuul]\ar[dr]\\
&(4,1)\ar[uuul]\ar[ur]
&&(1,1)\ar[uuul]\ar@{..>}[dddl]\\
\ \\
\ \\
&&(1,2)\ar[uuul]}
\ \ \ \ \ \ \ \ \ \xymatrix@C=0.2cm@R0.1cm{
&(5,0)\ar[dr]
&&(2,0)\ar@{..>}[ddddl]\\
(6,0)\ar[dr]\ar[ur]
&&(3,0)\ar[dr]\ar[ur]\ar@{..>}[ddddl]\\
&(4,0)\ar[ur]
&&(1,0)\ar@{..>}[ddddl]\\
\\
&&(2,1)\ar[uuuul]\\
&(3,1)\ar[dr]\ar[ur]\ar[uuuul]\\
&&(1,1)\ar[uuuul]
}\]}
Their cylinders are given by the following.
{\tiny\[\xymatrix@C=0.2cm@R0.1cm{
&&&&&(3,-1)\ar[dr]\ar@{..>}[dddl]&&\cdots\\
&&&&(5,-1)\ar[dr]\ar[ur]\ar@{..>}[dddl]
&&(2,-1)\ar[dr]\ar@{..>}[dddl]\\
&&&(6,-1)\ar[ur]\ar@{..>}[dddl]
&&(4,-1)\ar[ur]\ar@{..>}[dddl]
&&(1,-1)\ar@{..>}[dddl]\\
&&&&(3,0)\ar[dr]\ar@{..>}[dddl]\\
&&&(5,0)\ar[dr]\ar[ur]\ar@{..>}[dddl]
&&(2,0)\ar[dr]\ar[uuul]\ar@{..>}[dddl]\\
&&(6,0)\ar[ur]\ar@{..>}[dddl]
&&(4,0)\ar[ur]\ar[uuul]\ar@{..>}[dddl]
&&(1,0)\ar[uuul]\ar@{..>}[dddl]\\
&&&(3,1)\ar[dr]\\
&&(5,1)\ar[dr]\ar[ur]&&(2,1)\ar[uuul]\ar[dr]\\
&(6,1)\ar[ur]&&(4,1)\ar[uuul]\ar[ur]
&&(1,1)\ar[uuul]\\
&&\cdots}
\ \ \ \ \ \xymatrix@C=0.2cm@R0.1cm{
&&&(5,-1)\ar[dr]\ar@{..>}[dddl]&&(2,-1)\ar@{..>}[dddl]&\cdots\\
&&(6,-1)\ar[dr]\ar[ur]\ar@{..>}[dddl]&&(3,-1)\ar[dr]\ar[ur]\ar@{..>}[dddl]\\
&&&(4,-1)\ar[ur]\ar@{..>}[dddl]&&(1,-1)\ar@{..>}[dddl]\\
&&(5,0)\ar[dr]\ar@{..>}[dddl]&&(2,0)\ar[uuul]\ar@{..>}[dddl]\\
&(6,0)\ar[dr]\ar[ur]\ar@{..>}[dddl]&&(3,0)\ar[uuul]\ar[dr]\ar[ur]\ar@{..>}[dddl]\\
&&(4,0)\ar[ur]\ar@{..>}[dddl]&&(1,0)\ar[uuul]\ar@{..>}[dddl]\\
&(5,1)\ar[dr]&&(2,1)\ar[uuul]\\
(6,1)\ar[dr]\ar[ur]&&(3,1)\ar[dr]\ar[ur]\ar[uuul]\\
\cdots&(4,1)\ar[ur]&&(1,1)\ar[uuul]
}\]}
\end{example}

To draw the Auslander-Reiten quivers of $\NN$ and $\UU$, we introduce some notations.
We use the functor
\[\G:=D\Hom_\Lambda(-,M):\mod\Lambda\to\mod\Gamma.\]
For any path $p=a_1\cdots a_m$ in $Q$ and $\ell\in\Z$, we define
a path in $Q'$ and $Q''$ by
\[(p,\ell):=(a_1,\ell)\cdots(a_m,\ell).\]
For any morphism $r=\sum_pc_pp$ in $P(Q)$ with paths $p$ in $Q$ and $c_p\in k$, we define a morphism in $P(Q')$ and $P(Q'')$
by $(r,\ell):=\sum_pc_p(p,\ell)$.
For any arrow $a:X\to Y$ in $Q$, we choose a morphism
$\tau_n^-a:\tau_n^-X\to\tau_n^-Y$ in $P(Q)$ whose image under
the functor $\PPP:P(Q)\to\MM$ gives the image of $a$ under the functor
$P(Q)\stackrel{\PPP}{\to}\MM\stackrel{\tau_n^-}{\to}\MM$.

We have the following presentation of the category $\NN=\MM_{n+1}(D\Gamma)$.

\begin{theorem}\label{cone}
Under the above circumstances,
the Auslander-Reiten quiver of $\NN$ is given by
the cone $Q'=(Q'_0,Q'_1,\tau_{n+1})$ of the Auslander-Reiten quiver
$Q=(Q_0,Q_1,\tau_n)$ of $\MM$.
Moreover, $\NN$ is presented by the quiver $Q'$ with relations
\begin{itemize}
\item[$\bullet$] $(r,\ell)=0$ for any relation $r=0$ for $\MM$ and $\ell\ge0$,
\item[$\bullet$] ${(Y,\ell)}_1\cdot(\tau_n^-a,\ell)=(a,\ell-1)\cdot{(X,\ell)}_1$ for any arrow $a:\tau_nX\to Y$ in $Q$ and $\ell>0$.
\end{itemize}
\end{theorem}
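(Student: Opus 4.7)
The plan is to match the cone $Q'$ to the Auslander--Reiten data of $\NN$ in three stages---vertices and translation, arrows, relations---and then to read off the relations from source sequences via Lemma \ref{presentation}(b). By Corollary \ref{indecomposable}(a), indecomposables in $\NN$ are parametrised by pairs $(X,\ell)$ with $X\in Q_0$ and $0\le\ell\le\ell_X$ through $(X,\ell)\mapsto\tau_{n+1}^{\ell}\G X$, and $\ell\le\ell_X$ is the same as $\tau_n^{\ell}X\ne 0$, matching $Q'_0$. Corollary \ref{indecomposable}(b) identifies $\PP(\NN)$ with $\{(X,\ell_X)\}$, i.e.\ with the vertices for which $(X,\ell+1)\notin Q'_0$, matching $\NN_P$ with $Q'_P$; dually $\II(\NN)=\add D\Gamma$ corresponds to level $0$, matching $\NN_I$ with $Q'_I$. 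Theorem \ref{n+1-rigid}(b) then identifies $\tau_{n+1}$ with $\tau'$.

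For arrows, Lemma \ref{source quiver} reduces matters to source morphisms. At $(X,0)$, Proposition \ref{injective source} shows a source morphism is $\G$ applied to a source morphism $X\to M_1$ in $\MM$; decomposing $M_1$ yields exactly the horizontal arrows $(a,0):(X,0)\to(Y,0)$ for each arrow $a:X\to Y$ of $Q$. At $(X,\ell)$ with $\ell>0$, Proposition \ref{source} provides a source morphism into $(\tau_{n+1}^{\ell}\G M_1)\oplus(\tau_{n+1}^{\ell-1}\G\tau_nX)$, decomposing into the horizontal arrows $(a,\ell)$ together with the single vertical arrow ${(X,\ell)}_1:(X,\ell)\to(\tau_nX,\ell-1)$. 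This matches $Q'_1$.

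The relations are then extracted from the first two maps of source sequences. At $(X,0)$ the sequence of Proposition \ref{injective source} is $\G$ applied to the source sequence of $X$ in $\MM$, so the relations produced are precisely $(r,0)=0$ for every $\MM$-relation $r=0$ at $X$. At $(X,\ell)$ with $\ell>0$, Proposition \ref{n+1-almost split}(b) provides a mapping-cone source sequence whose second map has $2\times 2$ block shape; its upper row contributes $\tau_{n+1}^{\ell}\G\tau_n^-(f_1f_0)=0$ which, iterated along the full source sequence of $X$ via Lemma \ref{tau correspondence}(a), yields $(r,\ell)=0$ for every $\MM$-relation $r$ at $X$, while its lower row yields the single additional identity
\[
\tau_{n+1}^{\ell-1}\beta_{M_1}\cdot\tau_{n+1}^{\ell}\G\tau_n^-f_0 \;=\; \tau_{n+1}^{\ell-1}\G f_0\cdot\tau_{n+1}^{\ell-1}\beta_{\tau_nX}.
\]

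The main obstacle will be translating this homological identity into the stated combinatorial commutation ${(Y,\ell)}_1\cdot(\tau_n^-a,\ell)=(a,\ell-1)\cdot{(X,\ell)}_1$ in $P(Q')$. Unpacking $\beta_Y=\G\iota_Y\circ\alpha_{\tau_n^-Y}$ via Lemma \ref{beta}, the restriction of $\tau_{n+1}^{\ell-1}\beta_{M_1}$ to the summand of $M_1$ indexed by an arrow $a:\tau_nX\to Y$ produces a non-zero morphism $(\tau_n^-Y,\ell)\to(Y,\ell-1)$ which, by Proposition \ref{source} together with the uniqueness of source morphisms up to scalar, must coincide with the vertical arrow written as ${(Y,\ell)}_1$ in the theorem. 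Substituting this identification arrow by arrow converts the displayed identity into the stated commutation. A closing dimension count, comparing the relations produced here with $\dim_k J_{\NN}^2/J_{\NN}^3$ via Theorem \ref{n+1-rigid}(c) and $\G:\MM\simeq\add D\Gamma$, confirms no further relations are needed, and Lemma \ref{presentation}(b) delivers the presentation.
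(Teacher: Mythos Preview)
Your proof follows the paper's approach closely: vertices via Corollary \ref{indecomposable}, arrows via Propositions \ref{injective source} and \ref{source}, relations via the first two terms of the source sequences in Propositions \ref{injective source} and \ref{n+1-almost split}(b) together with Lemma \ref{presentation}(b). Two small points are worth tightening. First, your identification of the restriction of $\tau_{n+1}^{\ell-1}\beta_{M_1}$ with the vertical arrow via ``uniqueness up to scalar'' is unnecessarily indirect and leaves a gap (you would still need to rule out a $J_\NN^2$-contribution and fix the scalar); the paper simply \emph{defines} $(X,\ell)_1:=\tau_{n+1}^{\ell-1}\alpha_X$ and then computes directly from Lemma \ref{beta} that $\tau_{n+1}^{\ell-1}\beta_{M_1}$ equals ${(\tau_n^-M_1,\ell)_1\choose 0}$ on the decomposition $M_1\simeq(\tau_n\tau_n^-M_1)\oplus I$, so the commutation relation drops out literally rather than up to scalar. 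Second, your closing dimension count is superfluous: Lemma \ref{presentation}(b) already asserts that the relations extracted from the first two maps of the source sequences \emph{generate} the kernel of $\PPP$, so no separate check that ``no further relations are needed'' is required (and the references you cite, Theorem \ref{n+1-rigid}(c) and the equivalence $\G:\MM\simeq\add D\Gamma$, do not in any obvious way compute $\dim_k J_\NN^2/J_\NN^3$).
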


\begin{proof}
For any object $X\in\MM$ and $\ell\ge0$, we put
\[(X,\ell):=\tau_{n+1}^\ell\G X\in\NN.\]
Under this notation, we have a bijection between $Q'_0$ and isoclasses of
indecomposable objects in $\NN$ by Corollary \ref{indecomposable}.
Moreover $Q'_P$ (respectively, $Q'_I$) corresponds to isoclasses of
indecomposable objects in $\NN_P$ (respectively, $\NN_I$), and the
equivalence $\tau_{n+1}:\NN_P\to\NN_I$ corresponds to the bijection
$\tau':Q'_P\to Q'_I$.

For any morphism $a:X\to Y$ in $\MM$ (or an arrow $a:X\to Y$ in $Q$) and $\ell\ge0$, we define a morphism
\[(a,\ell):=\tau_{n+1}^\ell\G a:(X,\ell)=\tau_{n+1}^\ell\G X\to(Y,\ell)=\tau_{n+1}^\ell\G Y\]
in $\NN$. For any object $(X,\ell)\in\NN$ with $\ell>0$, we define a morphism
\[{(X,\ell)}_1:=\tau_{n+1}^{\ell-1}\alpha_X:(X,\ell)=\tau_{n+1}^\ell\G X\to(\tau_nX,\ell-1)=\tau_{n+1}^{\ell-1}\G\tau_nX\]
in $\NN$. Under these notations, we shall describe all arrows and relations
starting at each vertex $(X,\ell)\in Q'_0$.
We divide into two cases.

(i) Consider the case $(X,\ell)$ with $\ell=0$.

Let $X\stackrel{f_0}{\to}M_1\stackrel{f_1}{\to}M_2$ be the first two terms of the source sequence of $X$ in $\MM$.
By Proposition \ref{injective source}, we have the first two terms
$(X,0)\stackrel{(f_0,0)}{\longrightarrow}(M_1,0)\stackrel{(f_1,0)}{\longrightarrow}(M_2,0)$
of the source sequence of $(X,0)$ in $\NN$.
By Lemma \ref{source quiver}, all arrows starting at $(X,0)$ are given
by $(a,0)$ for each arrow $a$ in $Q$ starting at $X$.
By Lemma \ref{presentation}(b), all relations starting at $(X,0)$ are given
by $(r,0)=0$ for each relation $r=0$ in $\MM$ starting at $X$.

(ii) Consider the case $(X,\ell)$ with $\ell>0$.

Let $\tau_nX\stackrel{f_0}{\to}M_1\stackrel{f_1}{\to}M_2$ be the first two terms of the source sequence of $\tau_nX$ in $\MM$.
By Proposition \ref{n+1-almost split}, we have the first two terms 
\[(X,\ell)\xrightarrow{{(\tau_n^- f_0,\ell)\choose\tau_{n+1}^{\ell-1}\beta_{\tau_nX}}}(\tau_n^{-}M_1,\ell)\oplus(\tau_nX,\ell-1)
\xrightarrow{{(\tau_n^- f_1,\ell)  \ \ \ \ \ 0\ \ \ \ \choose\tau_{n+1}^{\ell-1}\beta_{M_1}\ -(f_0,\ell-1)}}
(\tau_n^{-}M_2,\ell)\oplus(M_1,\ell-1)\]
of the $(n+1)$-almost split sequence of $(X,\ell)$ in $\NN$. 
By our definition of $\beta$ in Lemma \ref{beta}, we have
$\tau_{n+1}^{\ell-1}\beta_{\tau_nX}={(X,\ell)}_1$ and
\[\tau_{n+1}^{\ell-1}\beta_{M_1}\simeq{\tau_{n+1}^{\ell-1}\alpha_{\tau_n^-M_1}\choose0}
={{(\tau_n^-M_1,\ell)}_1\choose0}:
(\tau_n^{-}M_1,\ell)\to(M_1,\ell-1)\simeq(\tau_n\tau_n^-M_1,\ell-1)\oplus
(I,\ell-1)\]
for a decomposition $M_1\simeq(\tau_n\tau_n^-M_1)\oplus I$ with $I\in\II(\MM)$.
By Lemma \ref{source quiver}, all arrows starting at $(X,0)$ are given
by $(X,\ell)_1$ and $(a,0)$ for each arrow $a$ in $Q$ starting at $X$.
By Lemma \ref{presentation}(b), all relations starting at $(X,\ell)$
appear in equalities
\[(\tau_n^- f_0,\ell)\cdot(\tau_n^- f_1,\ell)=0\ \mbox{ and }\ {{(\tau_n^-M_1,\ell)}_1\choose0}\cdot(\tau_n^- f_0,\ell)
=(f_0,\ell-1)\cdot{(X,\ell)}_1.\]
The former equality gives relations $(r,\ell)=0$ for each relation
$r=0$ in $\MM$ starting at $X$.
The latter one gives relations ${(Y,\ell)}_1\cdot(\tau_n^-a,\ell)=
(a,\ell-1)\cdot{(X,\ell)}_1$ for each arrow $a:\tau_nX\to Y$ in $Q$.
(Notice that $(Y,\ell)_1$ which does not belong to $Q'_1$ appears
in the lower half of the morphism ${{(\tau_n^-M_1,\ell)}_1\choose0}$.)

Thus we have the desired assertions.
\end{proof}

Next we shall give a presentation of the category $\UU=\UU_{n+1}(D\Gamma)$.
We need the following information, which is similar to Lemma \ref{tau directed}.

\begin{lemma}\label{U directed}
Fix an indecomposable object $X\in\MM$ and $\ell\in\Z$. Take a source morphism $f_0:X\to M_1$ in $\MM$.
\begin{itemize}
\item[(a)] Any morphism $\SSS_{n+1}^{\ell}\G X\to\SSS_{n+1}^i\G M$
with $i>\ell$ is zero.
\item[(b)] Any morphism $\SSS_{n+1}^{\ell}\G X\to\SSS_{n+1}^\ell\G M$
which is not a split monomorphism factors through
$\SSS_{n+1}^\ell\G f_0:\SSS_{n+1}^\ell\G X\to\SSS_{n+1}^\ell\G M_1$.
\item[(c)] If $X\in\MM_P$, then $\SSS_{n+1}\G X\simeq
\tau_{n+1}\G X$ and any morphism
$\SSS_{n+1}^{\ell}\G X\to\SSS_{n+1}^i\G M$ with $i<\ell$ factors through
$\SSS_{n+1}^{\ell-1}\alpha_X:\SSS_{n+1}^\ell\G X\simeq
\SSS_{n+1}^{\ell-1}\tau_{n+1}\G X\to\SSS_{n+1}^{\ell-1}\G\tau_nX$.
\item[(d)] If $X\in\PP(\MM)$, then any morphism $\SSS_{n+1}^{\ell}\G X\to\SSS_{n+1}^i\G M$ with $i<\ell$ is zero.
\end{itemize}
\end{lemma}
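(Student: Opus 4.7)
The plan is to mimic Lemma \ref{tau directed}, exploiting two features: $\SSS_{n+1}$ is an autoequivalence of $\DD$, and for $X\in\MM$ the object $\G X$ lies in the heart $\mod\Gamma$. Since $\Gamma$ is $(n+1)$-complete (Theorem \ref{main}), it is $\tau_{n+1}$-finite, so both (S$_{n+1}$) and (T$_{n+1}$) hold (Proposition \ref{t-structure}). For (a), applying the autoequivalence $\SSS_{n+1}^{-\ell}$ reduces the claim to $\Hom_\DD(\G X,\SSS_{n+1}^j\G M)=0$ for $j:=i-\ell>0$. Condition (S$_{n+1}$) keeps $\SSS_{n+1}^j\G M$ in $\DD^{\ge 0}$, so this Hom equals $\Hom_\Gamma(\G X,H^0(\SSS_{n+1}^j\G M))=\Hom_\Gamma(\G X,\tau_{n+1}^j\G M)$ by Lemma \ref{tau and s_n}(a), which vanishes by Theorem \ref{n+1-rigid}(e). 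For (b), the same shift reduces to factoring a non-split-mono $\G X\to\G M$ through $\G f_0$, which is simply the defining property of the source morphism $f_0$ combined with $\G$ being an equivalence $\MM\to\add D\Gamma$.

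For the first assertion of (c), with $X\in\MM_P$ Proposition \ref{key}(c) gives $\G X\in\GG_{n+1}$; a minimal projective resolution $P_\bullet\to\G X$ of length $\le n+1$ represents $\SSS_{n+1}\G X$ by the complex $\nu P_\bullet$ in degrees $0,\dots,n+1$, whose $j$th cohomology is $D\Ext^{n+1-j}_\Gamma(\G X,\Gamma)$. The $\GG_{n+1}$ condition kills all cohomology outside degree zero, where one recovers $\tau_{n+1}\G X$; hence $\SSS_{n+1}\G X\simeq\tau_{n+1}\G X$. For the factorization claim, shift by $\SSS_{n+1}^{-i}$, set $k:=\ell-i>0$, and reduce to factoring a morphism $\SSS_{n+1}^k\G X\to\G M$ through $\SSS_{n+1}^{k-1}\alpha_X$. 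If $k\le\ell_X$, iterating the identification $\SSS_{n+1}\simeq\tau_{n+1}$ on $\GG_{n+1}$-objects (valid for $\tau_{n+1}^j\G X$ with $j<\ell_X$, and for $\tau_{n+1}^j\G\tau_n X$ with $j<\ell_X-1$, both by Proposition \ref{key}(c)) converts everything into its module-theoretic counterpart, and Lemma \ref{tau directed}(c) applied to $(\ell,i)=(k,0)$ finishes. If $k>\ell_X$, then $\tau_{n+1}^{\ell_X}\G X\in\PP(\NN)$ has $\pd\le n$, so a projective-resolution computation shows $\SSS_{n+1}^{\ell_X+1}\G X$ is concentrated in degrees $1,\dots,n+1$; iterating (S$_{n+1}$) keeps $\SSS_{n+1}^k\G X$ in $\DD^{\ge 1}$, and the Hom to $\G M\in\DD^{\le 0}$ is automatically zero, making the factorization vacuous. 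Part (d) is identical in spirit: for $X\in\PP(\MM)$ one has $\ell_X=0$ and $\G X\in\PP(\NN)$ already has $\pd\le n$, so the same t-structure computation puts $\SSS_{n+1}^k\G X\in\DD^{\ge 1}$ for all $k\ge 1$ and kills every morphism to $\G M$.

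The main obstacle is organizing the case split in (c): one must recognize that $\SSS_{n+1}^k\G X$ stays inside the heart (and there agrees with $\tau_{n+1}^k\G X$, so that the already-proven Lemma \ref{tau directed}(c) applies verbatim) precisely for $k\le\ell_X$, while for $k>\ell_X$ the complex leaves the heart by exactly the right amount to land in $\DD^{\ge 1}$, where the target $\G M$ is invisible. Pinning down that the projective-dimension bound on $\tau_{n+1}^{\ell_X}\G X$ gives the correct degree shift, uniformly with the preceding module-theoretic step, is the delicate point.
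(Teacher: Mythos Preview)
Your arguments for (a), (b), and the first assertion of (c) are correct; for (a) you shift in the opposite direction from the paper and invoke Theorem~\ref{n+1-rigid}(e), which is a perfectly valid alternative.

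There is, however, a genuine gap in the $k>\ell_X$ branch of (c) and in (d). You claim that once $\SSS_{n+1}^k\G X\in\DD^{\ge1}$, the Hom to $\G M\in\DD^{\le0}$ is ``automatically zero'' by a t-structure argument. This is false: the t-structure axiom gives $\Hom_{\DD}(\DD^{\le0},\DD^{\ge1})=0$, not the reverse direction you need (for instance $\Hom_{\DD}(S[-1],S)\simeq\Ext^1_\Gamma(S,S)$ is typically nonzero). The conclusion is nonetheless correct, but for a different reason: $\G M=D\Gamma$ is injective, so $\Hom_{\DD}(Z,D\Gamma)\simeq DH^0(Z)$, which vanishes for $Z\in\DD^{\ge1}$.

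Once you see this, the paper's route to (c) is considerably shorter and avoids your case split. After shifting by $\SSS_{n+1}^{-i}$ and writing $k=\ell-i$, factoring a morphism $\SSS_{n+1}^{k}\G X\to D\Gamma$ through $\SSS_{n+1}^{k-1}\alpha_X$ is, via $\Hom_{\DD}(-,D\Gamma)\simeq DH^0(-)$, equivalent to injectivity of $H^0(\SSS_{n+1}^{k-1}\alpha_X)$. Lemma~\ref{tau and s_n}(a) (with $n$ replaced by $n+1$) identifies this with $\tau_{n+1}^{k-1}\alpha_X$, which is injective because $\alpha_X$ is a monomorphism and $\tau_{n+1}$ preserves monomorphisms. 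This works uniformly for all $k\ge1$, so neither the separate treatment of $k>\ell_X$ nor the somewhat delicate functorial identification of $\SSS_{n+1}^{k-1}\alpha_X$ with $\tau_{n+1}^{k-1}\alpha_X$ as actual morphisms (which your $k\le\ell_X$ case uses without full justification) is required. The paper's (d) proceeds analogously: shift by $\SSS_{n+1}^{-\ell}$, note $\SSS_{n+1}^{i-\ell}\G M\in\DD^{\le-n-1}$, and use $\pd(\G X)_\Gamma\le n$ to kill $\Hom_{\DD}(\G X,\DD^{\le-n-1})$.
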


\begin{proof}
(a) Since
$\SSS_{n+1}^{\ell-i}\G M=\SSS_{n+1}^{1+\ell-i}(\Gamma[n+1])\in\DD^{\le -n-1}$
holds by (S$_{n+1}$), we have
\[\Hom_{\DD}(\SSS_{n+1}^{\ell}\G X,\SSS_{n+1}^i\G M)
\simeq\Hom_{\DD}(\SSS_{n+1}^{\ell-i}\G X,D\Gamma)
\simeq DH^0(\SSS_{n+1}^{\ell-i}\G X)=0.\]

(b) This is clear since the functor $\SSS_{n+1}^\ell\G:\MM\to\UU$ is fully faithful.

(c) Since $\G X\in\NN_P$ by Corollary \ref{indecomposable}(b), we have
$\SSS_{n+1}\G X\simeq\tau_{n+1}\G X$ by (C$_{n+1}$).
Applying $\SSS_{n+1}^{-i}$, we only have to show that
{\small\[DH^0(\SSS_{n+1}^{\ell-i-1}\alpha_X):DH^0(\SSS_{n+1}^{\ell-i-1}\G\tau_nX)\simeq\Hom_{\DD}(\SSS_{n+1}^{\ell-i-1}\G\tau_nX,D\Gamma)\to
DH^0(\SSS_{n+1}^{\ell-i}\G X)\simeq
\Hom_{\DD}(\SSS_{n+1}^{\ell-i}\G X,D\Gamma)\]}
is surjective.
By Lemma \ref{tau and s_n} (replace $n$ there by $n+1$), this is equal
to the dual of
\[\tau_{n+1}^{\ell-i-1}\alpha_X:\tau_{n+1}^{\ell-i}\G X\to
\tau_{n+1}^{\ell-i-1}\G\tau_nX.\]
This is injective since $\alpha_X$ is injective and the functor $\tau_{n+1}$ preserves monomorphisms.


(d) We have $\pd(\G X)_\Gamma\le n$ by Corollary \ref{indecomposable}(b).
Since $\SSS_{n+1}^{i-\ell}\G M=\SSS_{n+1}^{1+i-\ell}(\Gamma[n+1])\in
\DD^{\le -n-1}$ holds by (S$_{n+1}$), we have 
\[\Hom_{\DD}(\SSS_{n+1}^{\ell}\G X,\SSS_{n+1}^i\G M)\simeq\Hom_{\DD}(\G X,\SSS_{n+1}^{i-\ell}\G M)=0.\]
\end{proof}

Consequently, we have the result (a) below which is an analogue of
Proposition \ref{source}, and the result (b) below which is an analogue
of Corollary \ref{indecomposable}.

\begin{proposition}\label{S approximation}
\begin{itemize}
\item[(a)] For any object $X\in\MM$ and $\ell\in\Z$, take a source morphism $f_0:X\to M_1$ in $\MM$.
\begin{itemize}
\item[(i)] If $X\in\PP(\MM)$, then a left almost split morphism of $\SSS_{n+1}^\ell\G X$ in $\UU$ is given by
$\SSS_{n+1}^\ell\G f_0:\SSS_{n+1}^\ell\G X\to\SSS_{n+1}^\ell\G M_1$.
\item[(ii)] If $X\in\MM_P$, then a left almost split morphism of $\SSS_{n+1}^\ell\G X$ in $\UU$ is given by
\[{\SSS_{n+1}^\ell\G f_0\choose\SSS_{n+1}^{\ell-1}\alpha_X}:\SSS_{n+1}^\ell\G X\to(\SSS_{n+1}^\ell\G M_1)\oplus(\SSS_{n+1}^{\ell-1}\G\tau_nX).\]
\end{itemize}
\item[(b)] A bijection from isoclasses of indecomposable objects in $\UU$
to pairs $(X,\ell)$ of isoclasses of indecomposable objects $X\in\MM$ and
$\ell\in\Z$ is given by $\SSS_{n+1}^\ell\G X\leftrightarrow(X,\ell)$.
\end{itemize}
\end{proposition}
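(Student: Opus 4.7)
The plan is to derive both parts directly from the four cases of Lemma \ref{U directed}, using (b) to get a concrete parametrization of indecomposables before tackling (a).

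First I would prove (b). Since $\G:\MM\to\add D\Gamma$ is an equivalence and $\SSS_{n+1}$ is an autoequivalence of $\DD$, every indecomposable object of $\UU=\add\{\SSS_{n+1}^\ell(D\Gamma)\mid\ell\in\Z\}$ is isomorphic to some $\SSS_{n+1}^\ell\G X$ with $X\in\MM$ indecomposable, and each such object is indecomposable. For uniqueness, suppose $\SSS_{n+1}^\ell\G X\simeq\SSS_{n+1}^{\ell'}\G X'$ with $\ell'>\ell$. The identity would give a nonzero morphism $\SSS_{n+1}^\ell\G X\to\SSS_{n+1}^{\ell'}\G X'$, contradicting Lemma \ref{U directed}(a). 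Hence $\ell=\ell'$, and then $\G X\simeq\G X'$ gives $X\simeq X'$ via the equivalence $\G$.

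Next I would prove (a). Let $Y$ be an arbitrary indecomposable object of $\UU$; by (b) we may write $Y=\SSS_{n+1}^i\G M$ for some indecomposable $M\in\MM$ and $i\in\Z$. To check the candidate morphism is left almost split, it suffices to show that every morphism from $\SSS_{n+1}^\ell\G X$ to $Y$ that is not a split monomorphism factors through the candidate. Lemma \ref{U directed} decomposes this into cases according to the sign of $i-\ell$: for $i>\ell$ the Hom space vanishes by part (a); for $i=\ell$ any non-split morphism factors through $\SSS_{n+1}^\ell\G f_0$ by part (b); and for $i<\ell$ either the Hom space vanishes when $X\in\PP(\MM)$ (part (d), giving case (i)) or every morphism factors through $\SSS_{n+1}^{\ell-1}\alpha_X$ when $X\in\MM_P$ (part (c), giving case (ii)). Assembling the three or four applicable cases in each sub-case gives the desired left almost split property.

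The hard part has already been absorbed into Lemma \ref{U directed}, so what remains is essentially bookkeeping. The only subtle point I anticipate is checking that the candidate in (ii) is well-posed: for $X\in\MM_P$ Corollary \ref{indecomposable}(b) yields $\G X\in\NN_P$, hence by condition (C$_{n+1}$) we have $\SSS_{n+1}\G X\simeq\tau_{n+1}\G X$, so applying $\SSS_{n+1}^{\ell-1}$ to $\alpha_X:\tau_{n+1}\G X\to\G\tau_nX$ does give a morphism $\SSS_{n+1}^\ell\G X\to\SSS_{n+1}^{\ell-1}\G\tau_nX$, as required.
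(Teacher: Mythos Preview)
Your proof is correct and follows essentially the same approach as the paper: both parts are deduced directly from the four cases of Lemma \ref{U directed}, with (b) using part (a) of that lemma for uniqueness of $\ell$ and full faithfulness of $\SSS_{n+1}^\ell\G$ for uniqueness of $X$, and (a) assembling parts (a)--(d) according to the sign of $i-\ell$. The only cosmetic difference is that you establish (b) before (a), whereas the paper records (a) first.
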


\begin{proof}
(a) This is immediate from Lemma \ref{U directed}(a)--(d).

(b) Any indecomposable object in $\UU$ is isomorphic to $\SSS_{n+1}^\ell\G X$ for some $X\in\MM$ and $\ell\in\Z$.
If $\SSS_{n+1}^\ell\G X\simeq\SSS_{n+1}^{\ell'}\G X'$, then $\ell=\ell'$ holds by Lemma \ref{U directed}(a), and $X\simeq X'$ holds since the functor
$\SSS_{n+1}^\ell\G:\MM\to\UU$ is fully faithful.
Thus we have the desired bijection.
\end{proof}

We have the following presentation of the category $\UU=\UU_{n+1}(D\Gamma)$.

\begin{theorem}\label{cylinder}
Under the above circumstances, the Auslander-Reiten quiver of $\UU$
is given by the cylinder $Q''=(Q''_0,Q''_1,\SSS_{n+1})$ of the
Auslander-Reiten quiver $Q=(Q_0,Q_1,\tau_n)$ of $\MM$.
Moreover, $\UU$ is presented by the quiver $Q''$ with relations
\begin{itemize}
\item[$\bullet$] $(r,\ell)=0$ for any relation $r=0$ for $\MM$ and $\ell\in\Z$,
\item[$\bullet$] ${(Y,\ell)}_1\cdot(a,\ell)=0$ for any arrow $a:X\to Y$ in $Q$ with $X\in Q\backslash Q_P$ and $\ell\in\Z$,
\item[$\bullet$] ${(Y,\ell)}_1\cdot(\tau_n^-a,\ell)=(a,\ell-1)\cdot{(X,\ell)}_1$ for any arrow $a:\tau_nX\to Y$ in $Q$ and $\ell\in\Z$.
\end{itemize}
\end{theorem}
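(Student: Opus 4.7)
The plan is to parallel the proof of Theorem \ref{cone} almost line by line, with Proposition \ref{S approximation} in the role that Corollary \ref{indecomposable} and Proposition \ref{source} played for the cone. By Proposition \ref{S approximation}(b), the assignment $(X,\ell)\mapsto\SSS_{n+1}^\ell\G X$ is a bijection between $Q''_0=Q_0\times\Z$ and the isoclasses of indecomposables in $\UU$, and the autoequivalence $\SSS_{n+1}$ corresponds to the translation $\tau'':(X,\ell)\mapsto(X,\ell+1)$. I will set $(a,\ell):=\SSS_{n+1}^\ell\G a$ for morphisms $a$ in $\MM$, and $(X,\ell)_1:=\SSS_{n+1}^{\ell-1}\alpha_X$ for $X\in\MM_P$ (using that $\SSS_{n+1}\G X\simeq\tau_{n+1}\G X$ by Lemma \ref{U directed}(c), so $\alpha_X$ makes sense in $\UU$). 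Then Proposition \ref{S approximation}(a) together with Lemma \ref{source quiver} reads off the arrows out of $(X,\ell)$: when $X\in\PP(\MM)$ only the horizontal arrows $(a,\ell)$ appear, while when $X\in\MM_P$ the extra vertical arrow $(X,\ell)_1$ is also present. This matches the arrow set $Q''_1$ of Definition \ref{quiver}(b) exactly.

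To read off relations via Lemma \ref{presentation}(b), I must compute the first two terms of the source sequence in $\UU$ at each $(X,\ell)$. For $X\in\MM_P$ the strategy of Proposition \ref{n+1-almost split} lifts unchanged: I apply $\SSS_{n+1}^\ell\G$ to the $\tau_n^-$-transform of an $n$-almost split sequence of $\tau_nX$ in $\MM$, glue to $\SSS_{n+1}^{\ell-1}\G$ of the original sequence via $\SSS_{n+1}^{\ell-1}\beta$, and form the mapping cone. This produces an $(n+1)$-almost split sequence whose second map yields, after Lemma \ref{hom and ext^n}(b) applied inside $\UU$, exactly the relations $(r,\ell)=0$ for each relation $r=0$ of $\MM$ and the commutative squares ${(Y,\ell)}_1\cdot(\tau_n^- a,\ell)=(a,\ell-1)\cdot(X,\ell)_1$ for each arrow $a:\tau_nX\to Y$ in $Q$.

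The case $X\in\PP(\MM)$ is where $\UU$ genuinely differs from the cone $\NN$, and is the main obstacle. Here $\G X\in\PP(\NN)$ so $\tau_{n+1}\G X=0$, but $\SSS_{n+1}\G X\simeq\nu(\G X)[-n-1]$ is nonzero in $\UU$, concentrated in cohomological degree $n{+}1$. The source sequence at $(X,\ell)$ can therefore no longer be obtained by lifting a sequence in $\MM$: its second term must pick up, besides $\SSS_{n+1}^\ell\G M_2$, the vertical drops $\SSS_{n+1}^{\ell-1}\G\tau_nY$ for every indecomposable summand $Y\in\MM_P$ of $M_1$, linked in by $\SSS_{n+1}^{\ell-1}\alpha_Y$. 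Since $\tau_nX=0$, no horizontal term at $(X,\ell)$ can balance these drops, and the commutative square relation of the $\MM_P$ case degenerates to the zero relation $(Y,\ell)_1\cdot(a,\ell)=0$. The hard part will be constructing this source sequence explicitly, checking its left minimality, and confirming via the $(n+1)$-rigidity of $\UU$ (from Theorem \ref{consequence} and Proposition \ref{sufficient condition}) together with Lemma \ref{tau and s_n} that the $n+2$ terms and the three families of relations produced generate the full kernel of $\PPP:P(Q'')\to\UU$; uniqueness of source sequences up to isomorphism then completes the presentation.
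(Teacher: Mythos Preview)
Your overall plan is the same as the paper's, but you have the relative difficulty of the two cases inverted, and one of your proposed tools does not apply in $\UU$.

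For $X\in\PP(\MM)$ you describe constructing the full $(n{+}2)$-term source sequence, checking minimality, and invoking $(n{+}1)$-rigidity as ``the hard part''. None of this is needed. Lemma \ref{presentation}(b) requires only the first \emph{two} terms of the source sequence. The first is $(f_0,0):(X,0)\to(M_1,0)$ by Proposition \ref{S approximation}(a)(i). For the second, take any $g:(M_1,0)\to(Z,m)$ with $g\cdot(f_0,0)=0$. If $m=0$ then $g=(h,0)$ with $hf_0=0$ in $\MM$, so $h$ factors through $f_1$ and $g$ through $(f_1,0)$. If $m<0$ then the composite $g\cdot(f_0,0)$ is automatically zero by Lemma \ref{U directed}(d), so no constraint is imposed, and $g$ factors through $(M_1,0)_1$ by Lemma \ref{U directed}(c)(d) applied to the summands of $M_1$. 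Hence the second map is ${(f_1,0)\choose(M_1,0)_1}$, and the relations $(r,0)=0$ and $(Y,0)_1\cdot(a,0)=0$ drop out immediately.

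For $X\in\MM_P$ you say the strategy of Proposition \ref{n+1-almost split} ``lifts unchanged'' and then appeal to ``Lemma \ref{hom and ext^n}(b) applied inside $\UU$''. That lemma is about exact sequences in an abelian category and does not make sense in the triangulated category $\UU$; you cannot verify the source-sequence property this way. The paper instead observes that for $\ell=1$ the objects $\SSS_{n+1}\G X$ and $\tau_{n+1}\G X$ agree (Lemma \ref{U directed}(c)), so the first two terms of the $(n{+}1)$-almost split sequence in $\NN$ from Proposition \ref{n+1-almost split} sit inside $\UU$ verbatim. It then uses Proposition \ref{S approximation}(a)(ii) to identify the source morphism in $\UU$, and invokes \cite[Prop.~3.9]{IY} to conclude that these two terms are also the beginning of a source sequence in $\UU$. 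This transfer step is the actual content of the $\MM_P$ case; your plan does not supply it.
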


\begin{proof}
The proof is similar to that of Theorem \ref{cone}.
For any object $X\in\MM$, we put
\[(X,\ell):=\SSS_{n+1}^\ell\G X\in\UU.\]
Under this notation, we have a bijection between $Q_0''$ and isoclasses
of indecomposable objects in $\UU$ by Proposition \ref{S approximation}(b).
For any morphism $a:X\to Y$ in $\MM$ (or an arrow $a:X\to Y$ in $Q$) and $\ell\ge0$, we define a morphism
\[(a,\ell):=\SSS_{n+1}^\ell\G a:(X,\ell)=\SSS_{n+1}^\ell\G X\to(Y,\ell)=\SSS_{n+1}^\ell\G Y.\]
For any object $(X,\ell)\in\UU$, we define a morphism
\[{(X,\ell)}_1:=\SSS_{n+1}^{\ell-1}\alpha_X:(X,\ell)=\SSS_{n+1}^\ell\G X\to(\tau_nX,\ell-1)=\SSS_{n+1}^{\ell-1}\G\tau_nX.\]
Under these notations, we shall describe all arrows and relations starting at each vertex $(X,\ell)\in Q''_0$.
Since $\SSS_{n+1}$ is an autoequivalence of $\UU$, we only have to consider two cases (i) and (ii) below.

(i) Consider the case $(X,0)$ with $X\in\PP(\MM)$.

Let $X\stackrel{f_0}{\to}M_1\stackrel{f_1}{\to}M_2$ be the first two terms
of the source sequence of $X$ in $\MM$.
By Proposition \ref{S approximation}(a) and Lemma \ref{U directed}(d),
it is easily checked that the first two terms
of the source sequence of $(X,0)$ in $\UU$ is given by
\[(X,0)\stackrel{(f_0,0)}{\longrightarrow}(M_1,0)\xrightarrow{{(f_1,0)\choose{(M_1,0)}_1}}(M_2,0)\oplus(\tau_nM_1,1).\]
By Lemma \ref{presentation}(b), all relations starting at $(X,0)$
are given by equalities
\[(f_1,0)\cdot(f_0,0)=0\ \mbox{ and }\ {(M_1,0)}_1\cdot(f_0,0)=0.\]
The former equality gives a relation $(r,0)=0$ for each relation
$r=0$ in $\MM$ starting at $X$.
The latter equality gives a relation ${(Y,0)}_1\cdot(a,0)=0$ for each
arrow $a:X\to Y$ in $Q$.

(ii) Consider the case $(X,1)$ with $X\in\MM_P$.

Let $\tau_nX\stackrel{f_0}{\to}M_1\stackrel{f_1}{\to}M_2$ be the first two terms of the source sequence of $\tau_nX$ in $\MM$.
By Proposition \ref{n+1-almost split}, we have the first two terms 
\begin{equation}\label{2 terms}
(X,1)\xrightarrow{{(\tau_n^- f_0,1)\choose\beta_{\tau_nX}}}(\tau_n^{-}M_1,1)\oplus(\tau_nX,0)
\xrightarrow{{(\tau_n^- f_1,1) \ \ \ 0\ \ \ \choose\ \ \beta_{M_1}\ \ \ -(f_0,0)}}
(\tau_n^{-}M_2,1)\oplus(M_1,0)
\end{equation}
of the $(n+1)$-almost split sequence of $(X,1)$ in $\NN$.
We have $\beta_{\tau_nX}=\alpha_X={(X,1)}_1$ and
\[\beta_{M_1}\simeq{\alpha_{\tau_n^-M_1}\choose0}={{(\tau_n^-M_1,1)}_1\choose0}:
(\tau_n^{-}M_1,1)\to(M_1,0)\simeq(\tau_n\tau_n^-M_1,0)\oplus(I,0)\]
for a decomposition $M_1\simeq(\tau_n\tau_n^-M_1)\oplus I$ with $I\in\II(\MM)$.

By Proposition \ref{S approximation} and \cite[Prop. 3.9]{IY},
we have that \eqref{2 terms} is the first two terms of a source
sequence of $(X,1)$ also in $\UU$.
By Lemma \ref{presentation}(b), all relations starting at $(X,1)$
are given by equalities
\[(\tau_n^- f_0,1)\cdot(\tau_n^- f_1,1)=0\ \mbox{ and }\ {{(\tau_n^-M_1,1)}_1\choose0}\cdot(\tau_n^- f_0,1)=(f_0,0)\cdot{(X,1)}_1.\]
The former equality gives a relation $(r,1)=0$ for each relation $r=0$
in $\MM$ starting at $X$. The latter one gives a relation
${(Y,1)}_1\cdot(\tau_n^-a,1)=(a,0)\cdot{(X,1)}_1$ for each arrow
$a:\tau_nX\to Y$ in $Q$.
\end{proof}

\subsection{Examples}

Throughout this subsection, let $Q=(Q_0,Q_1)$ be a Dynkin quiver and
$\Lambda^{(1)}:=kQ$ the path algebra of $Q$. Let $n\ge1$.
By Corollary \ref{main2}, we have an $n$-complete algebra $\Lambda^{(n)}$
with the cone $\Lambda^{(n+1)}$ for any $n\ge1$.
We denote by
\[\MM^{(n)}:=\MM_n(D\Lambda^{(n)})\ \mbox{ and }\ \UU^{(n)}:=\UU_n(D\Lambda^{(n)})\]
the $\tau_n$-closure and the $\SSS_n$-closure of $D\Lambda^{(n)}$ respectively.
They are $n$-cluster tilting subcategories of $\mod\Lambda^{(n)}$ and
$\KK^{\rm b}(\pr\Lambda^{(n)})$ respectively.
Let us draw the Auslander-Reiten quivers of $\MM^{(n)}$ and $\UU^{(n)}$.
As usual, we denote by $\tau=\tau_1:\mod\Lambda^{(1)}\to\mod\Lambda^{(1)}$
the Auslander-Reiten translation of $\Lambda^{(1)}$.
Let $I_x$ be the indecomposable injective $\Lambda^{(1)}$-module corresponding
to the vertex $x\in Q_0$ and
\[\ell_x:=\sup\{\ell\ge0\ |\ \tau^\ell I_x\neq0\}.\]
Since $Q$ is a Dynkin quiver, we have $\ell_x<\infty$ for any $x\in Q_0$.
For $\ell\in\Z$, we put
\[\Delta_\ell^{(n)}:=\{(\ell_1,\cdots,\ell_n)\in\Z^n\ |\ \ell_1,\cdots,\ell_n\ge0,\ \ell_1+\cdots+\ell_n\le \ell\}.\]
For $1\le i\le n$, we put
\[\mbox{\boldmath $e$}_i:=(\stackrel{1}{0},\cdots,\stackrel{i-1}{0},\stackrel{i}{1},\stackrel{i+1}{0},\cdots,\stackrel{n}{0})\in\Z^n\ \mbox{ and }\ 
\mbox{\boldmath $v$}_i:=\left\{\begin{array}{cc}
-\mbox{\boldmath $e$}_i&i=1,\\
\mbox{\boldmath $e$}_{i-1}-\mbox{\boldmath $e$}_i&1<i\le n.
\end{array}\right.\]

\begin{definition}
Let $Q$ be a Dynkin quiver and $n\ge1$.
\begin{itemize}
\item[(a)] We define a weak translation quiver $Q^{(n)}=(Q_0^{(n)},Q_1^{(n)},\tau_n)$ as follows:
\begin{itemize}
\item[$\bullet$] $Q_0^{(n)}:=\{(x,\mbox{\boldmath $\ell$})\ |\ x\in Q_0,\ \mbox{\boldmath $\ell$}\in\Delta_{\ell_x}^{(n)}\}$.
\item[$\bullet$] There are the following $(n+1)$ kinds of arrows
if their start and end vertices belong to $Q_0^{(n)}$.
\begin{itemize}
\item $(a^*,\mbox{\boldmath $\ell$}):(x,\mbox{\boldmath $\ell$})\to(w,\mbox{\boldmath $\ell$})$ for any arrow $a:w\to x$ in $Q$.
\item $(b,\mbox{\boldmath $\ell$}):(x,\mbox{\boldmath $\ell$})\to(y,\mbox{\boldmath $\ell$}+\mbox{\boldmath $v$}_1)$ for any arrow $b:x\to y$ in $Q$.
\item $(x,\mbox{\boldmath $\ell$})_i:(x,\mbox{\boldmath $\ell$})\to(x,\mbox{\boldmath $\ell$}+\mbox{\boldmath $v$}_i)$ for any $1<i\le n$.
\end{itemize}
\item[$\bullet$] $Q_P^{(n)}:=\{(x,\mbox{\boldmath $\ell$})\in Q_0^{(n)} |\ (x,\mbox{\boldmath $\ell$}+\mbox{\boldmath $e$}_n)\in Q_0^{(n)}\}$ and
$Q_I^{(n)}:=\{(x,\mbox{\boldmath $\ell$})\in Q_0^{(n)} |\ (x,\mbox{\boldmath $\ell$}-\mbox{\boldmath $e$}_n)\in Q_0^{(n)}\}$.
\item[$\bullet$] Define a bijection $\tau_n:Q_P^{(n)}\to Q_I^{(n)}$ by
$\tau_n(x,\mbox{\boldmath $\ell$}):=(x,\mbox{\boldmath $\ell$}+\mbox{\boldmath $e$}_n)$.
\end{itemize}
\item[(b)] We define a weak translation quiver $\widetilde{Q}^{(n)}=(\widetilde{Q}_0^{(n)},\widetilde{Q}_1^{(n)},\SSS_n)$ as follows:
\begin{itemize}
\item[$\bullet$] $\widetilde{Q}_0^{(n)}=\widetilde{Q}_P^{(n)}=\widetilde{Q}_I^{(n)}
:=\{(x,\ell_1,\cdots,\ell_n)\ |\ x\in Q_0,\ (\ell_1,\cdots,\ell_{n-1})\in\Delta_{\ell_x}^{(n-1)},\ \ell_n\in\Z\}$.
\item[$\bullet$] There are the following $(n+1)$ kinds of arrows
if their start and end vertices belong to $\widetilde{Q}_0^{(n)}$.
\begin{itemize}
\item $(a^*,\mbox{\boldmath $\ell$}):(x,\mbox{\boldmath $\ell$})\to(w,\mbox{\boldmath $\ell$})$ for any arrow $a:w\to x$ in $Q$.
\item $(b,\mbox{\boldmath $\ell$}):(x,\mbox{\boldmath $\ell$})\to(y,\mbox{\boldmath $\ell$}+\mbox{\boldmath $v$}_1)$ for any arrow $b:x\to y$ in $Q$.
\item $(x,\mbox{\boldmath $\ell$})_i:(x,\mbox{\boldmath $\ell$})\to(x,\mbox{\boldmath $\ell$}+\mbox{\boldmath $v$}_i)$ for any $1<i\le n$.
\end{itemize}
\item[$\bullet$] Define a bijection $\SSS_n:\widetilde{Q}_0^{(n)}\to \widetilde{Q}_0^{(n)}$ by
$\SSS_n(x,\mbox{\boldmath $\ell$}):=(x,\mbox{\boldmath $\ell$}+\mbox{\boldmath $e$}_n)$.
\end{itemize}
\end{itemize}
\end{definition}

To simplify our description of relations below,
we use the following convention:
When we consider the path category $P(Q^{(n)})$,
we regard $(x,\mbox{\boldmath $\ell$})$ as a zero object if it
does not belong to $Q^{(n)}_0$,
and regard $(a^*,\mbox{\boldmath $\ell$})$, e.t.c.
as a zero morphism if it does not belong to $Q^{(n)}_1$.
We use the same convention for $P(\widetilde{Q}^{(n)})$.

Now we have the presentations of $\MM^{(n)}$ and $\UU^{(n)}$ as follows.

\begin{theorem}\label{n-AR quiver}
Under the above circumstances, we have the following assertions.
\begin{itemize}
\item[(a)] The Auslander-Reiten quivers of $\MM^{(n)}$ and $\UU^{(n)}$ are given by $Q^{(n)}$ and $\widetilde{Q}^{(n)}$ respectively.
\item[(b)] The categories $\MM^{(n)}$ and $\UU^{(n)}$ are presented by quivers $Q^{(n)}$ and $\widetilde{Q}^{(n)}$ with the following relations respectively:
For any $\mbox{\boldmath $\ell$}\in\Z^n$ and $1<i,j\le n$,
\begin{eqnarray*}
&(w,\mbox{\boldmath $\ell$})_i\cdot(a^*,\mbox{\boldmath $\ell$})=(a^*,\mbox{\boldmath $\ell$}+\mbox{\boldmath $v$}_i)\cdot(x,\mbox{\boldmath $\ell$})_i&\mbox{for any arrow $a:w\to x$ in $Q$,}\\
&(y,\mbox{\boldmath $\ell$}+\mbox{\boldmath $v$}_1)_i\cdot(b,\mbox{\boldmath $\ell$})=(b,\mbox{\boldmath $\ell$}+\mbox{\boldmath $v$}_i)\cdot(x,\mbox{\boldmath $\ell$})_i&\mbox{for any arrow $b:x\to y$ in $Q$,}\\
&(x,\mbox{\boldmath $\ell$}+\mbox{\boldmath $v$}_j)_i\cdot(x,\mbox{\boldmath $\ell$})_j=(x,\mbox{\boldmath $\ell$}+\mbox{\boldmath $v$}_i)_j\cdot(x,\mbox{\boldmath $\ell$})_i&\mbox{for any $x\in Q_0$,}\\
&{\displaystyle\sum_{a\in Q_1,\ e(a)=x}(a,\mbox{\boldmath $\ell$})\cdot(a^*,\mbox{\boldmath $\ell$})
=\sum_{b\in Q_1,\ s(b)=x}(b^*,\mbox{\boldmath $\ell$}+\mbox{\boldmath $v$}_1)\cdot(b,\mbox{\boldmath $\ell$})}&\mbox{for any $x\in Q_0$.}
\end{eqnarray*}
\end{itemize}
\end{theorem}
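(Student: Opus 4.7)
The plan is to proceed by induction on $n$, applying Theorems \ref{cone} and \ref{cylinder} at the inductive step. For the base case $n=1$, the algebra $\Lambda^{(1)}=kQ$ is representation-finite hereditary with $Q$ Dynkin, so $\MM^{(1)}=\mod kQ$. The classical theorem of Gabriel--Riedtmann identifies the AR-quiver of $\mod kQ$, together with the mesh relations coming from AR-sequences, with the translation quiver $Q^{(1)}$: the indecomposable $\tau^\ell I_x$ sits at $(x,\ell)$ for $0\le\ell\le\ell_x$, the irreducible morphisms coming from arrows $a:w\to x$ and $b:x\to y$ of $Q$ yield the arrows $(a^*,\ell)$ and $(b,\ell)$, and the mesh relation at each non-boundary vertex is exactly relation 4 (relations 1--3 are vacuous for $n=1$). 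The parallel statement for $\UU^{(1)}=\KK^{\rm b}(\pr kQ)$ with AR-quiver $\Z Q=\widetilde{Q}^{(1)}$ is equally classical.

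For the inductive step applied to $\MM^{(n+1)}$, the algebra $\Lambda^{(n)}$ is $n$-complete with cone $\Lambda^{(n+1)}$ by Corollary \ref{main2}, so Theorem \ref{cone} describes $\MM^{(n+1)}$ via the cone $(Q^{(n)})'$ of the AR-quiver $Q^{(n)}$ of $\MM^{(n)}$ (known by induction). I identify $((x,\mbox{\boldmath $\ell$}),m)\leftrightarrow(x,\mbox{\boldmath $\ell$},m)$: the cone's defining condition $(x,\mbox{\boldmath $\ell$}+m\mbox{\boldmath $e$}_n)\in Q_0^{(n)}$ unwinds to $|\mbox{\boldmath $\ell$}|+m\le\ell_x$, which is exactly $(\mbox{\boldmath $\ell$},m)\in\Delta_{\ell_x}^{(n+1)}$. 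Under this bijection, the cone's new arrow ${((x,\mbox{\boldmath $\ell$}),m)}_1$ becomes $(x,\mbox{\boldmath $\ell$},m)_{n+1}$ via $\mbox{\boldmath $v$}_{n+1}=\mbox{\boldmath $e$}_n-\mbox{\boldmath $e$}_{n+1}$; the old arrows $(a,m)$ for $a\in Q_1^{(n)}$ become arrows of types $a^*$, $b$, and $(x,\cdot)_i$ for $1<i\le n$ at level $\ell_{n+1}=m$; and the cone translation sends $(x,\mbox{\boldmath $\ell$},m)\mapsto(x,\mbox{\boldmath $\ell$},m+1)$, matching the defined $\tau_{n+1}$ of $Q^{(n+1)}$.

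To match the relations, the cone relations $(r,m)=0$ for each relation $r$ of $\MM^{(n)}$ yield, by the inductive hypothesis, relations 1--3 with $i,j\le n$ and relation 4 at level $\ell_{n+1}=m$. The cross-level cone relations, parametrized by arrows $a:\tau_n(x,\mbox{\boldmath $\ell$})\to Y$ in $Q^{(n)}$, require computing $\tau_n^-a$ case by case: $\tau_n^-(a_0^*,\mbox{\boldmath $\ell$}+\mbox{\boldmath $e$}_n)=(a_0^*,\mbox{\boldmath $\ell$})$, $\tau_n^-(b_0,\mbox{\boldmath $\ell$}+\mbox{\boldmath $e$}_n)=(b_0,\mbox{\boldmath $\ell$})$, and $\tau_n^-(x,\mbox{\boldmath $\ell$}+\mbox{\boldmath $e$}_n)_j=(x,\mbox{\boldmath $\ell$})_j$ for $1<j\le n$ (as morphisms in the path category). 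Substituting, each cross-level cone relation becomes the missing case of relation 1, 2, or 3 of $Q^{(n+1)}$ with $i=n+1$, completing the list in part (b) for $n+1$.

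For $\UU^{(n+1)}$, I apply Theorem \ref{cylinder} analogously: the cylinder $(Q^{(n)})''$ matches $\widetilde{Q}^{(n+1)}$ under the same $((x,\mbox{\boldmath $\ell$}),m)\leftrightarrow(x,\mbox{\boldmath $\ell$},m)$ identification, with $\SSS_{n+1}(x,\mbox{\boldmath $\ell$})=(x,\mbox{\boldmath $\ell$}+\mbox{\boldmath $e$}_{n+1})$. The cylinder carries an additional family of relations $(Y,m)_1\cdot(a,m)=0$ for arrows $a:X\to Y$ in $Q^{(n)}$ with $X\notin Q_P^{(n)}$ (boundary projective); at such boundary vertices the arrow $(x,\mbox{\boldmath $\ell$},m)_{n+1}$ is zero in $\widetilde{Q}^{(n+1)}$, so the right-hand sides of relations 1--3 vanish, and these degenerate cases automatically produce the cylinder's extra relations. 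The main obstacle throughout is the detailed bookkeeping: the bijection of vertices, the correspondence of arrows, and especially the case analysis on arrow types in $Q^{(n)}$ needed to compute $\tau_n^-a$ and identify each cross-level cone/cylinder relation with the correct case of relations 1, 2, or 3 in $Q^{(n+1)}$ and $\widetilde{Q}^{(n+1)}$.
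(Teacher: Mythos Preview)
Your proposal is correct and follows essentially the same approach as the paper: induction on $n$ with the classical base case $n=1$, and the inductive step carried out by identifying $Q^{(n+1)}$ (respectively $\widetilde{Q}^{(n+1)}$) with the cone (respectively cylinder) of $Q^{(n)}$ and then invoking Theorems \ref{cone} and \ref{cylinder}. The paper records the same vertex/arrow identifications $((x,\mbox{\boldmath $\ell$}),\ell_{n+1})\leftrightarrow(x,\mbox{\boldmath $\ell$},\ell_{n+1})$ and simply asserts that the relations match; your version supplies the case analysis on arrow types and the explicit computation of $\tau_n^-a$ that the paper leaves implicit.
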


We have the quiver with relations of $\Lambda^{(n+1)}$
by taking the opposite of those of $\MM^{(n)}$ for any $n\ge1$.

\begin{proof}
It is well known that the assertions are valid for $n=1$ \cite{H}.

Clearly $Q^{(n)}$ and $\widetilde{Q}^{(n)}$ are the cone and the cylinder
of $Q^{(n-1)}$ respectively under the following identifications
for $x\in Q_0$, $a\in Q_1$ and $\mbox{\boldmath $\ell$}\in\Z^{n-1}$ and 
\begin{eqnarray*}
(x,\mbox{\boldmath $\ell$},\ell_n)\longleftrightarrow
((x,\mbox{\boldmath $\ell$}),\ell_n),&&
(a,\mbox{\boldmath $\ell$},\ell_n)\longleftrightarrow
((a,\mbox{\boldmath $\ell$}),\ell_n),\\
(a^*,\mbox{\boldmath $\ell$},\ell_n)\longleftrightarrow
((a^*,\mbox{\boldmath $\ell$}),\ell_n),&&
(x,\mbox{\boldmath $\ell$},\ell_n)_i\longleftrightarrow
\left\{\begin{array}{cc}
((x,\mbox{\boldmath $\ell$})_i,\ell_n)&1\le i<n,\\
((x,\mbox{\boldmath $\ell$}),\ell_n)_1&i=n.
\end{array}\right.
\end{eqnarray*}
It is easily checked that our relations for $Q^{(n)}$ and
$\widetilde{Q}^{(n)}$ are obtained from our relations of $Q^{(n-1)}$
by applying Theorems \ref{cone} and \ref{cylinder} respectively.
Thus the assertion follows inductively.
\end{proof}

\begin{example}\normalfont
For simplicity, we denote by
\[x\ell_1\cdots \ell_n\ \ \ (\mbox{respectively, }\ \ x\ell_1\cdots\dot{\ell}_i\cdots\ell_n,\ \ \ a^*\ell_1\cdots \ell_n,\ \ \ b\ell_1\cdots \ell_n)\]
the vertex $(x,\mbox{\boldmath $\ell$})\in Q_0^{(n)}$ (respectively, the arrow $(x,\mbox{\boldmath $\ell$})_i$, $(a^*,\mbox{\boldmath $\ell$})$, $(b,\mbox{\boldmath $\ell$})\in Q_1^{(n)}$) for $\mbox{\boldmath $\ell$}=(\ell_1,\cdots,\ell_n)$.

(a) Let $Q$ be the quiver 
${\scriptstyle\bf 1}\stackrel{a}{\longrightarrow}{\scriptstyle\bf 2}\stackrel{b}{\longrightarrow}{\scriptstyle\bf 3}\stackrel{c}{\longrightarrow}{\scriptstyle\bf 4}$ of type $A_4$.
In this case we have $\Lambda^{(n)}=T_4^{(n)}(k)$ in Theorem \ref{recall}.
Then the Auslander-Reiten quiver of $\MM^{(1)}$ is the following.
{\tiny\[\xymatrix@C=0.5cm@R0.3cm{
&&&{\scriptstyle\bf 40}\ar[dr]^{c^*0}\\
&&{\scriptstyle\bf 31}\ar[ur]^{c1}\ar[dr]^{b^*1}&&{\scriptstyle\bf 30}\ar[dr]^{b^*0}\\
&{\scriptstyle\bf 22}\ar[ur]^{b2}\ar[dr]^{a^*2}&&{\scriptstyle\bf 21}\ar[ur]^{b1}\ar[dr]^{a^*1}&&{\scriptstyle\bf 20}\ar[dr]^{a^*0}\\
{\scriptstyle\bf 13}\ar[ur]^{a3}&&{\scriptstyle\bf 12}\ar[ur]^{a2}&&{\scriptstyle\bf 11}\ar[ur]^{a1}&&{\scriptstyle\bf 10}}\]}
The Auslander-Reiten quiver of $\MM^{(2)}$ is the following.
{\tiny\[\xymatrix@C=0.5cm@R0.3cm{
&&&{\scriptstyle\bf 400}\ar[dr]^{c^*00}\\
&&{\scriptstyle\bf 310}\ar[ur]^{c10}\ar[dr]^{b^*10}&&{\scriptstyle\bf 300}\ar[dr]^{b^*00}\\
&{\scriptstyle\bf 220}\ar[ur]^{b20}\ar[dr]^{a^*20}&&{\scriptstyle\bf 210}\ar[ur]^{b10}\ar[dr]^{a^*10}&&{\scriptstyle\bf 200}\ar[dr]^{a^*00}\\
{\scriptstyle\bf 130}\ar[ur]^{a30}&&{\scriptstyle\bf 120}\ar[ur]^{a20}&&{\scriptstyle\bf 110}\ar[ur]^{a10}&&{\scriptstyle\bf 100}\\
 &&&{\scriptstyle\bf 301}\ar[dr]^{b^*01}\ar[uuul]^{30\dot{1}}\\
 &&{\scriptstyle\bf 211}\ar[ur]^{b11}\ar[dr]^{a^*11}\ar[uuul]^{21\dot{1}}&&{\scriptstyle\bf 201}\ar[dr]^{a^*01}\ar[uuul]^{20\dot{1}}\\
 &{\scriptstyle\bf 121}\ar[ur]^{a21}\ar[uuul]^{12\dot{1}}&&{\scriptstyle\bf 111}\ar[ur]^{a11}\ar[uuul]^{11\dot{1}}&&{\scriptstyle\bf 101}\ar[uuul]^{10\dot{1}}\\
\ \\
&&&{\scriptstyle\bf 202}\ar[dr]^{a^*02}\ar[uuul]^{20\dot{2}}\\
&&{\scriptstyle\bf 112}\ar[ur]^{a12}\ar[uuul]^{11\dot{2}}&&{\scriptstyle\bf 102}\ar[uuul]^{10\dot{2}}\\
\ \\
\ \\
&&&{\scriptstyle\bf 103}\ar[uuul]^{10\dot{3}}
}\]}
The Auslander-Reiten quiver of $\MM^{(3)}$ is the following.
{\tiny\[\xymatrix@C=0.2cm@R0.2cm{
&&&&&&&&&&&&&{\scriptstyle 4000}\ar[dr]&&&&\\
&&&&&&&&&&&&{\scriptstyle 3100}\ar[ur]\ar[dr]&&{\scriptstyle 3000}\ar[dr]&&&\\
&&&&&&&&&&&{\scriptstyle 2200}\ar[ur]\ar[dr]&&{\scriptstyle 2100}\ar[ur]\ar[dr]&&{\scriptstyle 2000}\ar[dr]&&\\
&&&&&&&&{\scriptstyle 3001}\ar[dr]\ar[drrrrr]&&
{\scriptstyle 1300}\ar[ur]&&{\scriptstyle 1200}\ar[ur]&&{\scriptstyle 1100}\ar[ur]&&{\scriptstyle 1000}\\
&&&&&&&{\scriptstyle 2101}\ar[ur]\ar[dr]\ar[drrrrr]&&{\scriptstyle 2001}\ar[dr]\ar[drrrrr]&&
&&{\scriptstyle 3010}\ar[dr]\ar[uuul]\\
&&&&&&{\scriptstyle 1201}\ar[ur]\ar[drrrrr]&&{\scriptstyle 1101}\ar[ur]\ar[drrrrr]&&{\scriptstyle 1001}\ar[drrrrr]
&&{\scriptstyle 2110}\ar[ur]\ar[dr]\ar[uuul]&&{\scriptstyle 2010}\ar[dr]\ar[uuul]&&\\
&&&{\scriptstyle 2002}\ar[dr]\ar[drrrrr]&&&&&&&
&{\scriptstyle 1210}\ar[ur]\ar[uuul]&&{\scriptstyle 1110}\ar[ur]\ar[uuul]&&{\scriptstyle 1010}\ar[uuul]&&\\
&&{\scriptstyle 1102}\ar[ur]\ar[drrrrr]&&{\scriptstyle 1002}\ar[drrrrr]&&
&&{\scriptstyle 2011}\ar[dr]\ar[uuul]\ar[drrrrr]\\
&&&&&&&{\scriptstyle 1111}\ar[ur]\ar[uuul]\ar[drrrrr]&&{\scriptstyle 1011}\ar[uuul]\ar[drrrrr]&
&&&{\scriptstyle 2020}\ar[dr]\ar[uuul]\\
{\scriptstyle 1003}\ar[drrr]&\ \ \ \ \ \ \ \ \ \ \ \ &&&&&&&&&
&&{\scriptstyle 1120}\ar[ur]\ar[uuul]&&{\scriptstyle 1020}\ar[uuul]\\
&&&{\scriptstyle 1012}\ar[uuul]\ar[drrrrr]\\
&&&&&&&&{\scriptstyle 1021}\ar[uuul]\ar[drrrrr]\\
&&&&&&&&&&&&&{\scriptstyle 1030}\ar[uuul]
}\]}

On the other hand, the Auslander-Reiten quiver of $\UU^{(1)}$ is the following.
{\tiny\[\xymatrix@C=0.5cm@R0.3cm{
&&{\scriptstyle\bf 41}\ar[dr]^{c^*1}&&{\scriptstyle\bf 40}\ar[dr]^{c^*0}&&{\scriptstyle\bf 4-1}\ar[dr]^{c^*-1}\\
&{\scriptstyle\bf 32}\ar[ur]^{c2}\ar[dr]^{b^*2}&&{\scriptstyle\bf 31}\ar[ur]^{c1}\ar[dr]^{b^*1}&&{\scriptstyle\bf 30}\ar[dr]^{b^*0}\ar[ur]^{c0}&&
{\scriptstyle\bf 3-1}\\
\cdots&&{\scriptstyle\bf 22}\ar[ur]^{b2}\ar[dr]^{a^*2}&&{\scriptstyle\bf 21}\ar[ur]^{b1}\ar[dr]^{a^*1}&&{\scriptstyle\bf 20}\ar[dr]^{a^*0}\ar[ur]^{b0}&&\cdots\\
&{\scriptstyle\bf 13}\ar[ur]^{a3}&&{\scriptstyle\bf 12}\ar[ur]^{a2}&&{\scriptstyle\bf 11}\ar[ur]^{a1}&&{\scriptstyle\bf 10}}\]}
The Auslander-Reiten quiver of $\UU^{(2)}$ is the following.
{\tiny\[\xymatrix@C=0.5cm@R0.3cm{
&&&&&{\scriptstyle\bf 400}\ar[dr]^{c^*00}&&\cdots\\
&&&&{\scriptstyle\bf 310}\ar[ur]^{c10}\ar[dr]^{b^*10}&&{\scriptstyle\bf 300}\ar[dr]^{b^*00}\\
&&&{\scriptstyle\bf 220}\ar[ur]^{b20}\ar[dr]^{a^*20}&&{\scriptstyle\bf 210}\ar[ur]^{b10}\ar[dr]^{a^*10}&&{\scriptstyle\bf 200}\ar[dr]^{a^*00}\\
&&{\scriptstyle\bf 130}\ar[ur]^{a30}&&{\scriptstyle\bf 120}\ar[ur]^{a20}&&{\scriptstyle\bf 110}\ar[ur]^{a10}&&{\scriptstyle\bf 100}\\
 &&&&{\scriptstyle\bf 401}\ar[dr]^{c^*01}&&\\
 &&&{\scriptstyle\bf 311}\ar[ur]^{c11}\ar[dr]^{b^*11}&&{\scriptstyle\bf 301}\ar[dr]^{b^*01}\ar[uuuul]^{30\dot{1}}\\
 &&{\scriptstyle\bf 221}\ar[ur]^{b21}\ar[dr]^{a^*21}&&{\scriptstyle\bf 211}\ar[ur]^{b11}\ar[dr]^{a^*11}\ar[uuuul]^{21\dot{1}}&&{\scriptstyle\bf 201}\ar[dr]^{a^*01}\ar[uuuul]^{20\dot{1}}\\
 &{\scriptstyle\bf 131}\ar[ur]^{a31}&&{\scriptstyle\bf 121}\ar[ur]^{a21}\ar[uuuul]^{12\dot{1}}&&{\scriptstyle\bf 111}\ar[ur]^{a11}\ar[uuuul]^{11\dot{1}}&&{\scriptstyle\bf 101}\ar[uuuul]^{10\dot{1}}\\
  &&&{\scriptstyle\bf 402}\ar[dr]^{c^*02}&&\\
  &&{\scriptstyle\bf 312}\ar[ur]^{c12}\ar[dr]^{b^*12}&&{\scriptstyle\bf 302}\ar[dr]^{b^*02}\ar[uuuul]^{30\dot{2}}\\
  &{\scriptstyle\bf 222}\ar[ur]^{b22}\ar[dr]^{a^*22}&&{\scriptstyle\bf 212}\ar[ur]^{b12}\ar[dr]^{a^*12}\ar[uuuul]^{21\dot{2}}&&{\scriptstyle\bf 202}\ar[dr]^{a^*02}\ar[uuuul]^{20\dot{2}}\\
  {\scriptstyle\bf 132}\ar[ur]^{a32}&&{\scriptstyle\bf 122}\ar[ur]^{a22}\ar[uuuul]^{12\dot{2}}&&{\scriptstyle\bf 112}\ar[ur]^{a12}\ar[uuuul]^{11\dot{2}}&&{\scriptstyle\bf 102}\ar[uuuul]^{10\dot{2}}\\
  &\cdots
 }\]}
The Auslander-Reiten quiver of $\UU^{(3)}$ is the following.
{\tiny\[\xymatrix@C=0.2cm@R0.2cm{
&&&&&&&&&&&&&{\scriptstyle 4000}\ar[dr]&&&\cdots&\\
&&&&&&&&&&&&{\scriptstyle 3100}\ar[ur]\ar[dr]&&{\scriptstyle 3000}\ar[dr]&&&\\
&&&&&&&{\scriptstyle 4001}\ar[dr]&&&&{\scriptstyle 2200}\ar[ur]\ar[dr]&&{\scriptstyle 2100}\ar[ur]\ar[dr]&&{\scriptstyle 2000}\ar[dr]&&\\
&&&&&&{\scriptstyle 3101}\ar[ur]\ar[dr]&&{\scriptstyle 3001}\ar[dr]\ar[drrrrr]&&
{\scriptstyle 1300}\ar[ur]&&{\scriptstyle 1200}\ar[ur]&&{\scriptstyle 1100}\ar[ur]&&{\scriptstyle 1000}\\
&{\scriptstyle 4002}\ar[dr]&&&&{\scriptstyle 2201}\ar[ur]\ar[dr]&&{\scriptstyle 2101}\ar[ur]\ar[dr]\ar[drrrrr]&&{\scriptstyle 2001}\ar[dr]\ar[drrrrr]&&
&&{\scriptstyle 3010}\ar[dr]\ar[uuul]\\
{\scriptstyle 3102}\ar[ur]\ar[dr]&&{\scriptstyle 3002}\ar[dr]\ar[drrrrr]&&{\scriptstyle 1301}\ar[ur]&&
{\scriptstyle 1201}\ar[ur]\ar[drrrrr]&&{\scriptstyle 1101}\ar[ur]\ar[drrrrr]&&{\scriptstyle 1001}\ar[drrrrr]
&&{\scriptstyle 2110}\ar[ur]\ar[dr]\ar[uuul]&&{\scriptstyle 2010}\ar[dr]\ar[uuul]&&\\
&{\scriptstyle 3202}\ar[ur]\ar[dr]\ar[drrrrr]&&{\scriptstyle 2002}\ar[dr]\ar[drrrrr]&&&&{\scriptstyle 3011}\ar[dr]\ar[uuul]&&&
&{\scriptstyle 1210}\ar[ur]\ar[uuul]&&{\scriptstyle 1110}\ar[ur]\ar[uuul]&&{\scriptstyle 1010}\ar[uuul]&&\\
{\scriptstyle 1202}\ar[ur]\ar[drrrrr]&&{\scriptstyle 1102}\ar[ur]\ar[drrrrr]&&{\scriptstyle 1002}\ar[drrrrr]&&
{\scriptstyle 2111}\ar[ur]\ar[dr]\ar[uuul]&&{\scriptstyle 2011}\ar[dr]\ar[uuul]\ar[drrrrr]\\
&{\scriptstyle 3012}\ar[dr]\ar[uuul]&&&&{\scriptstyle 1211}\ar[ur]\ar[uuul]&&{\scriptstyle 1111}\ar[ur]\ar[uuul]\ar[drrrrr]
&&{\scriptstyle 1011}\ar[uuul]\ar[drrrrr]&
&&&{\scriptstyle 2020}\ar[dr]\ar[uuul]\\
{\scriptstyle 2112}\ar[ur]\ar[dr]&&{\scriptstyle 2012}\ar[dr]\ar[uuul]\ar[drrrrr]&&&&&&&&
&&{\scriptstyle 1120}\ar[ur]\ar[uuul]&&{\scriptstyle 1020}\ar[uuul]\\
&{\scriptstyle 1112}\ar[ur]\ar[uuul]\ar[drrrrr]&&{\scriptstyle 1012}\ar[uuul]\ar[drrrrr]&&&&{\scriptstyle 2021}\ar[dr]\ar[uuul]\\
&&&&&&{\scriptstyle 1121}\ar[ur]\ar[uuul]&&{\scriptstyle 1021}\ar[uuul]\ar[drrrrr]\\
&{\scriptstyle 2022}\ar[dr]\ar[uuul]&&&&&&&&&&&&{\scriptstyle 1030}\ar[uuul]\\
{\scriptstyle 1122}\ar[ur]&&{\scriptstyle 2022}\ar[uuul]\ar[drrrrr]\\
&&&&&&&{\scriptstyle 1031}\ar[uuul]\\
\\
\cdots&{\scriptstyle 1032}\ar[uuul]\\
}\]}

(b) Let $Q$ be the quiver $\def\arraystretch{.5}\begin{array}{c}
{\scriptstyle\bf 2}\stackrel{a}{\longrightarrow}{\scriptstyle\bf 1}\stackrel{c}{\longleftarrow}{\scriptstyle\bf 4}\\
\ \ \uparrow^b\\
{\scriptstyle\bf 3}
\end{array}$ of type $D_4$.
The Auslander-Reiten quiver of $\MM^{(1)}$ is the following.
{\tiny\[\xymatrix@C=0.5cm@R0.3cm{
&{\scriptstyle\bf 22}\ar[dr]^{a2}&&{\scriptstyle\bf 21}\ar[dr]^{a1}&&{\scriptstyle\bf 20}\\
{\scriptstyle\bf 12}\ar[ur]^{a^*2}\ar[dr]^{c^*2}\ar[r]^{b^*2}&{\scriptstyle\bf 32}\ar[r]^{b2}&{\scriptstyle\bf 11}\ar[ur]^{a^*1}\ar[dr]^{c^*1}\ar[r]^{b^*1}&{\scriptstyle\bf 31}\ar[r]^{b1}&{\scriptstyle\bf 10}\ar[dr]^{c^*0}\ar[ur]^{a^*0}\ar[r]^{b^*0}&{\scriptstyle\bf 30}\\
&{\scriptstyle\bf 42}\ar[ur]^{c2}&&{\scriptstyle\bf 41}\ar[ur]^{c1}&&{\scriptstyle\bf 40}}\]}
The Auslander-Reiten quiver of $\MM^{(2)}$ is the following.
{\tiny\[\xymatrix@C=0.5cm@R0.3cm{
&{\scriptstyle\bf 220}\ar[dr]^{a20}&&{\scriptstyle\bf 210}\ar[dr]^{a10}&&{\scriptstyle\bf 200}\\
{\scriptstyle\bf 120}\ar[ur]^{a^*20}\ar[dr]^{c^*20}\ar[r]^{b^*20}&{\scriptstyle\bf 320}\ar[r]^{b20}&{\scriptstyle\bf 110}\ar[ur]^{a^*10}\ar[dr]^{c^*10}\ar[r]^{b^*10}&{\scriptstyle\bf 310}\ar[r]^{b10}&{\scriptstyle\bf 100}\ar[dr]^{c^*00}\ar[ur]^{a^*00}\ar[r]^{b^*00}&{\scriptstyle\bf 300}\\
&{\scriptstyle\bf 420}\ar[ur]^{c20}&&{\scriptstyle\bf 410}\ar[ur]^{c10}&&{\scriptstyle\bf 400}\\
&&{\scriptstyle\bf 211}\ar[dr]^{a11}\ar[uuul]^{21\dot{1}}&&{\scriptstyle\bf 201}\ar[uuul]^{20\dot{1}}\\
&{\scriptstyle\bf 111}\ar[ur]^{a^*11}\ar[dr]^{c^*11}\ar[r]^{b^*11}\ar[uuul]^{11\dot{1}}&{\scriptstyle\bf 311}\ar[r]^{b11}\ar[uuul]^{31\dot{1}}&{\scriptstyle\bf 101}\ar[ur]^{a^*01}\ar[dr]^{c^*01}\ar[r]^{b^*01}\ar[uuul]^{10\dot{1}}&{\scriptstyle\bf 301}\ar[uuul]^{30\dot{1}}\\
&&{\scriptstyle\bf 411}\ar[ur]^{c11}\ar[uuul]^{41\dot{1}}&&{\scriptstyle\bf 401}\ar[uuul]^{40\dot{1}}\\
&&&{\scriptstyle\bf 202}\ar[uuul]^{20\dot{2}}\\
&&{\scriptstyle\bf 102}\ar[ur]^{a^*02}\ar[dr]^{c^*02}\ar[r]^{b^*02}\ar[uuul]^{10\dot{2}}&{\scriptstyle\bf 302}\ar[uuul]^{30\dot{2}}\\
&&&{\scriptstyle\bf 402}\ar[uuul]^{40\dot{2}}
}\]}
The Auslander-Reiten quiver of $\MM^{(3)}$ is the following.
{\tiny\[\xymatrix@C=0.2cm@R0.2cm{
&&&&&&&&&&&&
&{\scriptstyle 2200}\ar[dr]&&{\scriptstyle 2100}\ar[dr]&&{\scriptstyle 2000}\\
&&&&&&&&&&&&
{\scriptstyle 1200}\ar[ur]\ar[dr]\ar[r]&{\scriptstyle 3200}\ar[r]&{\scriptstyle 1100}\ar[ur]\ar[dr]\ar[r]&{\scriptstyle 3100}\ar[r]&{\scriptstyle 1000}\ar[dr]\ar[ur]\ar[r]&{\scriptstyle 3000}\\
&&&&&&
&{\scriptstyle 2101}\ar[dr]\ar[drrrrrrr]&&{\scriptstyle 2001}\ar[drrrrrrr]&&&
&{\scriptstyle 4200}\ar[ur]&&{\scriptstyle 4100}\ar[ur]&&{\scriptstyle 4000}\\
&&&&&&
{\scriptstyle 1101}\ar[ur]\ar[dr]\ar[r]\ar[drrrrrrr]&{\scriptstyle 3101}\ar[r]\ar[drrrrrrr]&{\scriptstyle 1001}\ar[ur]\ar[dr]\ar[r]\ar[drrrrrrr]&{\scriptstyle 3001}\ar[drrrrrrr]&&&
&&{\scriptstyle 2110}\ar[dr]\ar[uuul]&&{\scriptstyle 2010}\ar[uuul]\\
&{\scriptstyle 2002}\ar[drrrrrrr]&&&&&
&{\scriptstyle 4101}\ar[ur]\ar[drrrrrrr]&&{\scriptstyle 4001}\ar[drrrrrrr]&&&
&{\scriptstyle 1110}\ar[ur]\ar[dr]\ar[r]\ar[uuul]&{\scriptstyle 3110}\ar[r]\ar[uuul]&{\scriptstyle 1010}\ar[ur]\ar[dr]\ar[r]\ar[uuul]&{\scriptstyle 3010}\ar[uuul]\\
{\scriptstyle 1002}\ar[ur]\ar[dr]\ar[r]\ar[drrrrrrr]&{\scriptstyle 3002}\ar[drrrrrrr]&&&&&
&&{\scriptstyle 2011}\ar[uuul]\ar[drrrrrrr]&&&&
&&{\scriptstyle 4110}\ar[ur]\ar[uuul]&&{\scriptstyle 4010}\ar[uuul]\\
&{\scriptstyle 4002}\ar[drrrrrrr]&&&&&
&{\scriptstyle 1011}\ar[ur]\ar[dr]\ar[r]\ar[uuul]\ar[drrrrrrr]&{\scriptstyle 3011}\ar[uuul]\ar[drrrrrrr]&&&&
&&&{\scriptstyle 2020}\ar[uuul]\\
&&&&&&
&&{\scriptstyle 4011}\ar[uuul]\ar[drrrrrrr]&&&&
&&{\scriptstyle 1020}\ar[ur]\ar[dr]\ar[r]\ar[uuul]&{\scriptstyle 3020}\ar[uuul]\\
&&&&&&\ \ \ \ \ \ \ \ \ \ &&&&&&
&&&{\scriptstyle 4020}\ar[uuul]
}\]}
\end{example}


\begin{thebibliography}{13}
\bibitem[Am1]{Am1} C. Amiot, \textit{Sur les petites cat\'egories
  triangul\'es}, PhD Thesis.
\bibitem[Am2]{Am2} C. Amiot, \textit{Cluster categories for algebras of
  global dimension 2 and quivers with potential},
  to appear in Ann. Inst. Fourier (Grenoble), arXiv:0805.1035.
\bibitem[ASS]{ASS} I. Assem, D. Simson, A. Skowro\'nski,
  \textit{Elements of the representation theory of associative
    algebras. Vol. 1. Techniques of representation theory}, London
  Mathematical Society Student Texts, 65. Cambridge University Press,
  Cambridge, 2006.
\bibitem[ABST]{ABST} I. Assem, T. Brustle, R. Schiffler, G. Todorov,
  \textit{$m$-cluster categories and $m$-replicated algebras},
  J. Pure Appl. Algebra 212 (2008), no. 4, 884--901.
\bibitem[A1]{A1} M. Auslander,
  \textit{Representation dimension of Artin algebras},
  Lecture notes, Queen Mary College, London, 1971.
\bibitem[A2]{A2} M. Auslander,
  \textit{Functors and morphisms determined by objects},
  Representation theory of algebras (Proc. Conf., Temple Univ.,
  Philadelphia, Pa., 1976), pp. 1--244. Lecture Notes in Pure
  Appl. Math., Vol. 37, Dekker, New York, 1978.
\bibitem[A3]{A3} M. Auslander,
  \textit{Rational singularities and almost split sequences},
  Trans. Amer. Math. Soc.  \textbf{293}  (1986),  no. 2, 511--531.
\bibitem[ABr]{ABr} M. Auslander, M. Bridger, \textit{Stable module theory},
  Memoirs of the American Mathematical Society, No. 94 American
  Mathematical Society,  Providence, R.I. 1969.
\bibitem[ABu]{ABu} M. Auslander, R-O. Buchweitz, \textit{The homological
    theory of maximal Cohen-Macaulay approximations},
Colloque en l'honneur de Pierre Samuel (Orsay,
1987). Mem. Soc. Math. France (N.S.) No. \textbf{38} (1989), 5--37.
\bibitem[AR1]{AR1} M. Auslander, I. Reiten,
  \textit{Stable equivalence of dualizing $R$-varieties},
  Advances in Math. \textbf{12} (1974), 306--366.
\bibitem[AR2]{AR2} M. Auslander, I. Reiten,
  \textit{Almost split sequences for rational double points},
  Trans. Amer. Math. Soc.  \textbf{302}  (1987),  no. 1, 87--97. 
\bibitem[AR3]{AR3}  M. Auslander, I. Reiten,
  \textit{Applications of contravariantly finite subcategories},
  Adv. Math.  \textbf{86}  (1991),  no. 1, 111--152. 
\bibitem[ARS]{ARS} M. Auslander, I. Reiten, S. O. Smalo,
  \textit{Representation theory of Artin algebras},
  Cambridge Studies in Advanced Mathematics, \textbf{36}. Cambridge
  University Press, Cambridge, 1995.
\bibitem[ASm]{ASm} M. Auslander, S. O. Smalo,
  \textit{Almost split sequences in subcategories},
  J. Algebra \textbf{69} (1981), no. 2, 426--454. 
\bibitem[ASo]{ASo} M. Auslander, O. Solberg,
  \textit{Gorenstein algebras and algebras with dominant dimension
  at least $2$}, Comm. Algebra \textbf{21} (1993), no. 11, 3897--3934.
\bibitem[BaM]{BaM} K. Baur, R. Marsh, \textit{A geometric description of
    $m$-cluster categories}, Trans. Amer. Math. Soc. 360 (2008), no. 11, 5789--5803.
\bibitem[BGRS]{BGRS} R. Bautista, P. Gabriel, A. V. Ro\u\i ter, L. Salmeron,
  \textit{Representation-finite algebras and multiplicative bases},
  Invent. Math. \textbf{81} (1985),  no. 2, 217--285.
\bibitem[BK]{BK} A. I. Bondal, M. M. Kapranov,
  \textit{Representable functors, Serre functors, and reconstructions},
  Izv. Akad. Nauk SSSR Ser. Mat.  \textbf{53}  (1989),  no. 6, 1183--1205,
  1337;  translation in  Math. USSR-Izv.  \textbf{35}  (1990),  no. 3, 519--541.
\bibitem[BG]{BG} K. Bongartz, P. Gabriel, 
  \textit{Covering spaces in representation-theory},
  Invent. Math.  \textbf{65} (1981/82), no. 3, 331--378.
\bibitem[BIRSc]{BIRSc} A. Buan, O. Iyama, I. Reiten, J. Scott,
  \textit{Cluster structures for 2-Calabi-Yau categories and unipotent groups},
  Compos. Math. 145 (2009), no. 4, 1035--1079.
\bibitem[BIRSm]{BIRSm} A. Buan, O. Iyama, I. Reiten, D. Smith,
  \textit{Mutation of cluster-tilting objects and potentials},
  to appear in Amer. J. Math., arXiv:0804.3813.
\bibitem[BuM]{BuM} A. Buan, R. Marsh, \textit{Cluster-tilting theory},
  Trends in representation theory of algebras and related topics, 1--30,
  Contemp. Math., 406, Amer. Math. Soc., Providence, RI, 2006.
\bibitem[BMRRT]{BMRRT} A. Buan, R. Marsh, I. Reiten, M. Reineke,
  G. Todorov, \textit{Tilting theory and cluster combinatorics}, Adv. in
  Math. \textbf{204} (2006), no. 2, 572--618. 
\bibitem[BT]{BT} A. Buan, H. Thomas, \textit{Coloured quiver mutation
  for higher cluster categories},
  Adv. Math. 222 (2009), no. 3, 971--995.
\bibitem[BIKR]{BIKR} I. Burban, O. Iyama, B. Keller, I. Reiten,
  \textit{Cluster tilting for one-dimensional hypersurface singularities},
  Adv. Math. \textbf{217} (2008), no. 6, 2443--2484.
\bibitem[EH]{EH} K. Erdmann, T. Holm,
  \textit{Maximal $n$-orthogonal modules for selfinjective algebras},
  Proc. Amer. Math. Soc. \textbf{136} (2008) no. 9, 3069--3078.
\bibitem[BFPPT]{BFPPT} M. Barot, E. Fernandez, M. Platzeck, N. Pratti, S. Trepode,
  \textit{From iterated tilted algebras to cluster-tilted algebras},
  arXiv:0811.1328.
\bibitem[FZ]{FZ} S. Fomin, A. Zelevinsky,
  \textit{Cluster algebras I: Foundations},
  J. Amer. Math. Soc. \textbf{15}, no. 2 (2002), 497--529.
\bibitem[G]{G} P. Gabriel,
  \textit{The universal cover of a representation-finite algebra},
  Representations of algebras (Puebla, 1980),  pp. 68--105,
  Lecture Notes in Math., \textbf{903}, Springer, Berlin-New York, 1981.
\bibitem[GKO]{GKO} C. Geiss, B. Keller, S. Oppermann, in preparation.
\bibitem[GLS1]{GLS1} C. Geiss, B. Leclerc, J. Schr\"oer,
  \textit{Rigid modules over preprojective algebras},
  Invent. Math. \textbf{165} (2006), no. 3, 589--632. 
\bibitem[GLS2]{GLS2} C. Geiss, B. Leclerc, J. Schr\"oer,
  \textit{Partial flag varieties and preprojective algebras},
  Ann. Inst. Fourier (Grenoble) \textbf{58} (2008), no. 3, 825--876. 
\bibitem[GLS3]{GLS3} C. Geiss, B. Leclerc, J. Schr\"oer,
  \textit{Cluster algebra structures and semicanoncial bases for unipotent
  groups}, arXiv:math/0703039.
\bibitem[H]{H} D. Happel,
  \textit{Triangulated categories in the representation theory of
  finite-dimensional algebras},
  London Mathematical Society Lecture Note Series, 119.
  Cambridge University Press, Cambridge, 1988.
\bibitem[HI]{HI} Z. Huang, O. Iyama, \textit{Auslander-type conditions
    and cotorsion pairs}, J. Algebra \textbf{318} (2007), no. 1, 93--110.
\bibitem[HZ1]{HZ1} Z. Huang, X. Zhang,
  \textit{The existence of maximal $n$-orthogonal subcategories},
  J. Algebra 321 (2009), no. 10, 2829--2842.
\bibitem[HZ2]{HZ2} Z. Huang, X. Zhang,
  \textit{Higher Auslander algebras admitting trivial maximal
  orthogonal subcategories}, arXiv:0903.0761.
\bibitem[HZ3]{HZ3} Z. Huang, X. Zhang,
  \textit{Trivial Maximal 1-Orthogonal Subcategories For Auslander's
  1-Gorenstein Algebras}, arXiv:0903.0762.
\bibitem[IT1]{IT1} K. Igusa, G. Todorov,
  \textit{Radical layers of representable functors},
  J. Algebra  \textbf{89}  (1984),  no. 1, 105--147.
\bibitem[IT2]{IT2} K. Igusa, G. Todorov,
 \textit{A characterization of finite Auslander-Reiten quivers},
 J. Algebra  \textbf{89}  (1984),  no. 1, 148--177. 
\bibitem[I1]{I1}  O. Iyama, \textit{$\tau$-categories III: Auslander orders and Auslander-Reiten quivers},
  Algebr. Represent. Theory \textbf{8} (2005), no. 5, 601--619.
\bibitem[I2]{I2}  O. Iyama, \textit{Symmetry and duality on $n$-Gorenstein rings},
  J. Algebra  \textbf{269}  (2003),  no. 2, 528--535.
\bibitem[I3]{I3}  O. Iyama, \textit{Higher-dimensional
    Auslander-Reiten theory on maximal orthogonal subcategories},
  Adv. Math. \textbf{210} (2007), no. 1, 22--50.
\bibitem[I4]{I4} O. Iyama,  \textit{Auslander correspondence},
  Adv. Math. \textbf{210} (2007), no. 1, 51--82.
\bibitem[I5]{I5} O. Iyama, \textit{Auslander-Reiten theory revisited},
  Trends in representation theory of algebras and related topics, 349--397,
  EMS Ser. Congr. Rep., Eur. Math. Soc., Zurich, 2008.
\bibitem[IH]{IH} O. Iyama, M. Herschend,
  \textit{n-representation-finite algebras and fractionally Calabi-Yau algebras},
  arXiv:0908.3510.
\bibitem[IO1]{IO1} O. Iyama, S. Oppermann,
  \textit{n-representation-finite algebras and n-APR tilting},
  to appear in Trans. Amer. Math. Soc., arXiv:0909.0593.
\bibitem[IO2]{IO2} O. Iyama, S. Oppermann,
  \textit{Stable categories of higher preprojective algebras},
  arXiv:0912.3412.
\bibitem[IR]{IR} O. Iyama, I. Reiten, \textit{Fomin-Zelevinsky mutation
  and tilting modules over Calabi-Yau algebras}, Amer. J. Math. \textbf{130}
(2008), no. 4, 1087--1149.
\bibitem[IY]{IY} O. Iyama, Y. Yoshino, \textit{Mutation in triangulated
  categories and rigid Cohen-Macaulay modules},
  Invent. Math. \textbf{172} (2008), no. 1, 117--168.
\bibitem[JH1]{JH1} P. Jorgensen, T. Holm, \textit{Cluster categories,
  selfinjective algebras, and stable Calabi-Yau dimensions: type A},
  arXiv:math/0610728.
\bibitem[JH2]{JH2} P. Jorgensen, T. Holm, \textit{Cluster categories,
  selfinjective algebras, and stable Calabi-Yau dimensions: types D and E},
  arXiv:math/0612451.
\bibitem[K]{K} B. Keller, \textit{Cluster algebras, quiver representations
  and triangulated categories}, arXiv:0807.1960.
\bibitem[KR1]{KR1} B. Keller, I. Reiten,
  \textit{Cluster-tilted algebras are Gorenstein and stably Calabi-Yau},
  Adv. Math. \textbf{211} (2007), 123-151.
\bibitem[KR2]{KR2} B. Keller, I. Reiten,
  \textit{Acyclic Calabi-Yau categories. With an appendix by Michel Van den Bergh},
  Compos. Math. 144 (2008), no. 5, 1332--1348.
\bibitem[KMV]{KMV} B. Keller, D. Murfet, M. Van den Bergh, \textit{On two examples
    by Iyama and Yoshino}, arXiv:0803.0720.
\bibitem[KZ]{KZ} S. Koenig, B. Zhu, \textit{From triangulated categories
  to abelian categories: cluster tilting in a general framework},
  Math. Z. \textbf{258} (2008), no. 1, 143--160.
\bibitem[L]{L} M. Lada,
  \textit{Relative homology and maximal $l$-orthogonal modules},
  J. Algebra 321 (2009), no. 10, 2798--2811.
\bibitem[M]{M} Y. Miyashita, \textit{Tilting modules of finite projective dimension},
  Math. Z.  \textbf{193}  (1986),  no. 1, 113--146.
\bibitem[OT]{OT} S. Oppermann, H. Thomas,
  \textit{Higher dimensional cluster combinatorics and representation theory},
  arXiv:1001.5437.
\bibitem[RV]{RV} I. Reiten, M. Van den Bergh,
  \textit{Two-dimensional tame and maximal orders of finite representation type},
  Mem. Amer. Math. Soc. \textbf{80} (1989).
\bibitem[Re]{Re} I. Reiten, \textit{Tilting theory and cluster algebras},
  preprint.
\bibitem[Rie]{Rie} C. Riedtmann,
  \textit{Algebren, Darstellungsk\"ocher, \"Uberlagerungen und zur\"uck},
  Comment. Math. Helv.  \textbf{55}  (1980), no. 2, 199--224.
\bibitem[Rin]{Rin} C. M. Ringel, \textit{Appendix: Some remarks
  concerning tilting modules and tilted algebras. Origin. Relevance.
  Future}, Handbook of tilting theory, 413--472,
  London Math. Soc. Lecture Note Ser., 332,
  Cambridge Univ. Press, Cambridge, 2007.
\bibitem[T]{T} H. Thomas, \textit{Defining an $m$-cluster category},
  J. Algebra  \textbf{318}  (2007),  no. 1, 37--46.
\bibitem[V1]{V1} M. Van den Bergh, \textit{Three-dimensional flops and
  noncommutative rings}, Duke Math. J.  \textbf{122}  (2004),
  no. 3, 423--455. 
\bibitem[V2]{V2} M. Van den Bergh, \textit{Non-commutative crepant
  resolutions}, The legacy of Niels Henrik Abel,  749--770,
  Springer, Berlin, 2004.
\bibitem[W]{W} A. Wr\r{a}lsen, \emph{Rigid objects in higher cluster categories},
  J. Algebra 321 (2009), no. 2, 532--547.
\bibitem[Z]{Z} B. Zhu, \textit{Generalized cluster complexes via
  quiver representations}, J. Algebraic Combin.  \textbf{27} (2008),
  no. 1, 35--54.
\bibitem[ZZ]{ZZ} Y. Zhou, B. Zhu, \textit{Cluster combinatorics of $d$-cluster
  categories}, J. Algebra 321 (2009), no. 10, 2898--2915.
\end{thebibliography}
\end{document}